\numberwithin{equation}{section}
\theoremstyle{plain}
\newtheorem{thm}{Theorem}[section]
\def\[#1\]{\begin{align}#1\end{align}}
\def\*[#1\]{\begin{align*}#1\end{align*}}
\newcommand{\Reals}{\mathbb{R}}
\newcommand{\Nats}{\mathbb{N}}
\newcommand{\PosReals}{\Reals_{> 0}}
\newcommand{\dee}{\mathrm{d}}
\DeclareMathOperator*{\newlim}{\mathrm{lim}\vphantom{\mathrm{infsup}}}
\DeclareMathOperator*{\newmin}{\mathrm{min}\vphantom{\mathrm{infsup}}}
\DeclareMathOperator*{\newmax}{\mathrm{max}\vphantom{\mathrm{infsup}}}
\DeclareMathOperator*{\newinf}{\mathrm{inf}\vphantom{\mathrm{infsup}}}
\DeclareMathOperator*{\newsup}{\mathrm{sup}\vphantom{\mathrm{infsup}}}
\renewcommand{\lim}{\newlim}
\renewcommand{\min}{\newmin}
\renewcommand{\max}{\newmax}
\renewcommand{\inf}{\newinf}
\renewcommand{\sup}{\newsup}
\newcommand{\cF}{\mathcal F}
\newcommand{\BorelSets}[1]{\mathcal{B}[#1]}
\newcommand{\NSE}[1]{{^{*}#1}}
\newcommand{\ST}{\mathsf{st}}
\newcommand{\HReals}{\NSE{\Reals}}
\newcommand{\NS}[1]{\mathrm{NS}(#1)}
\newcommand{\cD}{\mathcal{D}}
\newcommand{\cQ}{\mathcal{Q}}
\newcommand{\cA}{\mathcal{A}}
\newtheorem{open problem}{Open Problem}
\newcommand{\Loeb}[1]{\overline{#1}}
\newcommand{\interior}[1]{%
  {\kern0pt#1}^{\mathrm{o}}%
}
\newcommand{\refproof}[1]{See \cref{#1} for \IfSubStr{#1}{,}{proofs}{a proof}. }
\newif\iflongform
\newtheorem{lemma}{Lemma}
\newtheorem{cor}{Corrolary}
\newtheorem{rem}{rem}
\newtheorem{claim}{Claim}
\newtheorem{defn}{Definition}
\newtheorem{assumption}{Assumption}
\def \be{\begin{equs}}
\def \ee{\end{equs}}
\newcommand{\FM}[1]{\mathcal{M}(#1)}
\newcommand{\topology}{\mathcal{T}}
\newcommand{\expect}{\mathbb{E}}
\newcommand{\E}{\mathbb{E}}
\newcommand{\Prob}{\mathbb{P}}
\newcommand{\IProb}{\mathbb{Q}}
\newcommand{\kernel}{\{P_{x}(\cdot)\}_{x \in X}}
\newcommand{\kerp}{\{G_{i}(\cdot)\}_{i \in S}}
\newcommand{\asfc}{C t_{L}}
\newcommand{\lak}{\mathcal{T}_{L}}
\newcommand{\trk}{\mathcal{T}^{(S)}}
\newcommand{\gkernel}{\{g(x,1,\cdot)\}_{x\in X}}
\newcommand{\cP}{\mathcal{P}}
\begin{document}
\begin{frontmatter}
\title{Mixing Times and Hitting Times for General Markov Processes}
\runtitle{Mixing and Hitting Times}

\begin{aug}
  \author{\fnms{Robert M.}  \snm{ Anderson}\corref{}\ead[label=e1]{anderson@econ.berkeley.edu }},
  \author{\fnms{Duamu} \snm{Haosui}\ead[label=e2]{duanmuhaosui@berkeley.edu}}
  \and
  \author{\fnms{Aaron}  \snm{Smith}%
  \ead[label=e3]{asmi28@uottawa.ca}%
  \ead[label=u1,url]{http://www.foo.com}}

  \runauthor{R. Anderson et al.}

  \affiliation{University of California, Berkeley and University of Ottawa}

  \address{\printead{e1}}
  \address{\printead{e2}}
  \address{\printead{e3}}

\end{aug}

\begin{abstract}
The hitting and mixing times are two fundamental quantities associated with Markov chains. In \citet{finitemixhit, oliveira2012mixing}, the authors show that the mixing times and ``worst-case" hitting times of reversible Markov chains on finite state spaces are equal up to some universal multiplicative constant. We use tools from nonstandard analysis to extend this result to reversible Markov chains on general state spaces that satisfy the strong Feller property. We also show how this asymptotic equivalence can be used to find bounds on the mixing times of a large class of Markov chains used in MCMC, such as typical Gibbs samplers and Metroplis-Hastings chains, even though they usually do not satisfy the strong Feller property. 
\end{abstract}


\end{frontmatter}

\section{Introduction}
Two of the most-studied quantities in the Markov chain literature are the \textit{mixing time} and \textit{hitting times} associated with a chain. In \citep{finitemixhit, oliveira2012mixing}, the authors showed that these quantities are equal up to universal constants for reversible discrete time Markov processes with finite state space. In this paper, we use Nonstandard Analysis to extend this result to discrete time Markov processes on $\sigma$-compact state spaces that satisfy the \textit{strong Feller property} (see \ref{mixhit}). As in the context of \citep{finitemixhit, oliveira2012mixing}, it is generally easier to get upper bounds on hitting times, and it is generally easier to get lower bounds on mixing times. These results let us estimate whichever is more convenient.

Recall that the mixing time measures the number of steps a Markov chain must take to (approximately) forget its initial condition. This quantity is fundamental in computer science and computational statistics, where it measures the efficiency of algorithms based on Markov chains; it is also important in the statistical physics literature, where it provides a way to qualitatively describe the behaviour of a material (see \textit{e.g.} overviews \citep{markovmix, diaconis2009markov,montenegro2006mathematical,aldous2002reversible,meyn2012markov}). 
The hitting time measures the number of steps a Markov chain must take before entering a set for the first time. This quantity is not always directly relevant for applications, but it is usually easier to compute or estimate and many tools have been developed for estimating hitting times and relating them to other quantities of interest (see \textit{e.g.} the role of hitting time calculations in the theory of metastability \citep{bovier2006metastability}).

\subsection{Nonstandard Analysis} 
In this paper, we extend known results about Markov Processes with a finite state space to those with a continuum state space. Our arguments are based on nonstandard analysis, which allows construction of a single object---a hyperfinite probability space---that satisfies all the first order logic properties of a finite probability space, but can simultaneously be viewed via the Loeb construction (\citep{Loeb75}) as a measure-theoretic probability space. This construction often allows one to make discrete arguments about the hyperfinite probability space, and then use the Loeb construction to express the results in measure-theoretic terms. 

In order to do this, one has to establish approriate notions of liftings (hyperfinite processes that sit ``above'' the measure-theoretic objects of interest) and pushdowns (projections of hyperfinite objects to the measure-theoretic objects).  These liftings and pushdowns form a ``dictionary'' that must be chosen specifically to represent the type of probabilistic process of interest.  Dictionaries for Lebesgue Integration, Brownian Motion and It\^o Integration were given in \citep{andersonisrael} and \citep{anderson87}, for stochastic differential equations in \citep{Keisler87}, and for Markov
chains in \citep{Markovpaper}.   One of the main contributions of this paper is an expansion of the dictionary for Markov chains in \citep{Markovpaper}. This expansion lets us translate the proofs of existing discrete results to obtain several new results, and we anticipate it being useful for the translation of further Markov chain results in the future.

\subsection{Related Literature}  \label{SecRelLit}

\subsubsection{Computational Statistics}

Although Markov chains on infinite state spaces occur in many areas, we are especially interested in the mixing properties of Markov chains used in Markov chain Monte Carlo (MCMC) algorithms. Most algorithms used in MCMC do \textit{not} satisfy the strong Feller condition, and so our main result, Theorem \ref{mixhit}, does not apply directly. In Section \ref{statapp}, we explain how our main result can still be applied to popular MCMC chains.

We note that most chains used in MCMC are \textit{geometrically ergodic} but do not have finite mixing times. Our main results can still be applied in this situation, and this is the subject of a companion paper.

\subsubsection{Equivalence and Sensitivity}

There are many different ways to measure the time it takes for a Markov chain to ``get random." The present paper belongs to the large literature, started in \citep{aldous1982some,aldous1997mixing}, devoted to understanding how much different measures of this time can disagree. 

These ``equivalence" results are closely related to the problem of studying the \textit{sensitivity} of Markov chains to qualitatively-small changes (see \textit{e.g.} \citep{addario2017mixing,hermon2018sensitivity}) and to the study of \textit{perturbations} of Markov chains (see \textit{e.g.} \citep{mitrophanov2005sensitivity,herve2014approximating,pillai2014ergodicity,rudolf2018perturbation,bardenet2017markov,negrea2017error}).  While perturbations have been studied on very general state space, to our knowledge all research related to sensitivity has been focused on Markov chains on discrete state spaces.

Finally, the relationship between hitting and mixing times has been refined since \citep{finitemixhit, oliveira2012mixing}; see \textit{e.g.} \citep{basu2015characterization}.

\subsection{Overview of the Paper}
In Section \ref{secpre}, we give basic definitions and inequalities related to mixing and hitting times. We also state the main results. 

In Section \ref{sechyprob} and Section \ref{sechymarkov}, we introduce hyperfinite representations for probability spaces and discrete-time Markov processes developed in \citep{Markovpaper}. Namely, we show that, for every discrete-time Markov process satisfying appropriate conditions, there exists a corresponding hyperfinite Markov process. 

In Section \ref{secmha}, we show that the mixing times and hitting times of a discrete-time Markov process on compact state space can be approximated by the mixing times and hitting times of its corresponding hyperfinite Markov process. This leads to a proof in Section \ref{SecMixHitCompact} that mixing times and hitting times are asymptotically equivalent for discrete-time Markov processes on compact state space satisfying Section \ref{assumptiondsf}. We extend to $\sigma$-compact spaces in Section \ref{secsigcomp}. 

Finally, in Section \ref{statapp} and Section \ref{AppOtherExt} we show how to apply our results to some popular  chains from statistics.

Various elementary proofs and lemmas are deferred to the appendices.

\section{Preliminaries and Main Results}\label{secpre}

We fix a $\sigma$-compact metric state space $X$ endowed with Borel $\sigma$-algebra $\BorelSets X$ and let $\{P_{x}(\cdot)\}_{x\in X}$ denote the transition kernel of a Markov process with unique stationary measure $\pi$. Throughout the paper, we include $0$ in $\Nats$. For $x\in X$, $t\in \Nats$ and $A\in \BorelSets X$, we write $P_{x}^{(t)}(A)$ or $P^{(t)}(x,A)$ for the transition probability from $x$ to $A$ in $t$ steps. We write $P_{x}(A)$ and $P(x,A)$ as an abbreviation for $P_{x}^{(1)}(A)$ and $P^{(1)}(x,A)$, respectively. Recall that $\{P_{x}(\cdot)\}_{x\in X}$ is said to be \emph{reversible} if
\[ \label{EqDefRev}
\int_{A}P(x,B)\pi(\dee x)=\int_{B}P(x,A)\pi(\dee x).
\]
for every $A,B\in \BorelSets X$.

For probability measures $\mu, \nu$ on $(X, \BorelSets X)$, we denote by
\[
\parallel \mu - \nu \parallel = \sup_{A \in \BorelSets X} |\mu(A) - \nu(A)|
\]
the usual \textit{total variation distance} between $\mu$ and $\nu$.  Our main result will require the following continuity condition on $\{P_{x}(\cdot)\}_{x\in X}$.

\begin{assumption}{DSF}\label{assumptiondsf}
The transition kernel $\{P_{x}(\cdot)\}_{x\in X}$ satisfies the \emph{strong Feller property} if for every $x\in X$ and every $\epsilon>0$ there exists $ \delta>0$ such that
\[
(\forall y\in X) (|y-x|<\delta \implies ( \parallel P_{x}(\cdot) - P_{y}(\cdot) \parallel <\epsilon)).
\]
\end{assumption}

We define the mixing time:

\begin{defn}\label{defmix}
Let $\epsilon\in \PosReals$.
The mixing time $t_{m}(\epsilon)$ of $\{P_{x}(\cdot)\}_{x\in X}$ is
\[
\min\{t\geq 0: d(t)\leq \epsilon\},
\]
where $d(t)=\sup_{x\in X}\parallel P^{(t)}(x,\cdot)-\pi(\cdot) \parallel$.
\end{defn}

It is clear that $d(t)$ is a non-increasing function. The ``lazy" transition kernel associated with $\{P_{x}(\cdot)\}_{x\in X}$ is:

\begin{defn}\label{deflazy}
The \emph{lazy kernel} $\{P_{L}(x,\cdot)\}_{x\in X}$ of a transition kernel $\{P(x,\cdot)\}_{x\in X}$ is given by
\[
P_{L}(x,A)=\frac{1}{2}P(x,A)+\frac{1}{2}\delta(x,A)
\]
for every $x\in X$ and every $A\in \BorelSets X$, where
$$
\delta(x,A)= \begin{cases}
1, \qquad x\in A \\
0, \qquad x\not\in A.
\end{cases}
$$

\end{defn}

For $\epsilon\in \PosReals$, we denote the mixing time of the lazy chain by $t_{L}(\epsilon)$.  For notational convenience, we will simply write $t_{L}$ and $t_m$ when $\epsilon=\frac{1}{4}$. 

We now denote by $\{X_{t}\}_{t \in \Nats}$ a Markov chain with transition kernel $\{P_{x}(\cdot)\}_{x\in X}$ and arbitrary starting point $X_{0} = x_{0} \in X$. Recall that the \textit{hitting time} of a set $A \in \BorelSets X$ for this Markov chain is defined to be:
\[
\tau_{A} = \min \{ t \in \Nats \, : \, X_{t} \in A \}.
\]

We now introduce the maximum hitting time of large sets.

\begin{defn}
Let $\alpha\in \PosReals$. The maximum hitting time with respect to $\alpha$ is
\[
t_{H}(\alpha)=\sup\{\expect_{x}(\tau_{A}): x\in X, A\in \BorelSets X\ \text{such that}\ \pi(A)\geq \alpha\},
\]
where $\expect$ is the expectation of a measure in the product space which generates the underlying Markov process and its subscript refers to the starting point $X_{0}$.
\end{defn}

We now quote the main results from \citep{finitemixhit, oliveira2012mixing}: 

\begin{thm}[{\citep[][Thm.~1.1]{finitemixhit}};{\citep[][Thm.~1.3]{oliveira2012mixing}}]\label{fmixhit}
Let $0 < \alpha<\frac{1}{2}$.
Then there exist universal positive constants $c'_{\alpha},c_{\alpha}$ so that for every finite reversible Markov process
\[
c'_{\alpha}t_{H}(\alpha)\leq t_{L}\leq c_{\alpha}t_{H}(\alpha).
\]
\end{thm}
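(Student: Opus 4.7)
The plan is to prove the two inequalities separately, since they are genuinely different in difficulty and require different techniques. The lower bound on $t_L$ in terms of $t_H(\alpha)$ is essentially a standard ``mixing implies hitting'' argument, while the upper bound on $t_L$ is the delicate direction and is where reversibility must be used in an essential way.

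For the easy direction, namely $c'_\alpha t_H(\alpha) \leq t_L$, I would proceed as follows. If the lazy chain has mixed at time $t_L$, then for every $x\in X$ and every $A\in \BorelSets X$ with $\pi(A)\geq \alpha$ we have $P_L^{(t_L)}(x,A)\geq \pi(A)-\tfrac{1}{4}$. By iterating the mixing bound (using submultiplicativity of the total variation distance in integer multiples of $t_L$), one obtains a $k=k(\alpha)$ so that $P_L^{(k t_L)}(x,A)\geq \alpha/2$ uniformly in $x$. A geometric argument, independence across blocks of length $k t_L$ via the Markov property, then gives $\expect_x \tau_A \leq 2k t_L/\alpha$, which is the desired bound with $c'_\alpha = \alpha/(2k(\alpha))$. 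This direction does not require reversibility.

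For the hard direction, namely $t_L \leq c_\alpha t_H(\alpha)$, the strategy I would follow is via stopping rules and strong stationary-type times. For reversible lazy chains, the mixing time is controlled (up to a universal constant) by the optimal mean of any randomized stopping rule $T$ that produces a sample from $\pi$, i.e.\ $t_L\lesssim \inf_T \sup_x \expect_x[T]$. The plan is then to construct such a stopping rule from the hitting times of a well-chosen family of large sets: given a state $x$, identify a set $A_x$ of $\pi$-mass at least some $\alpha'=\alpha'(\alpha)$ on which the chain, upon first hitting, has distribution close to $\pi|_{A_x}$, and then ``wait out'' the remaining mass. Reversibility enters through the Cauchy--Schwarz/L$^2$-contraction estimates that guarantee such an $A_x$ exists, and through the fact that the hitting time of $A_x$ is bounded by $t_H(\alpha')\leq C_\alpha t_H(\alpha)$. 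Averaging and possibly a halving argument on $\alpha$ then yields the bound with $c_\alpha$ depending only on $\alpha$.

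The main obstacle will be the construction of the stopping rule in the hard direction: one must simultaneously ensure that (i) the distribution at the stopping time is exactly $\pi$, (ii) the expected stopping time is $O(t_H(\alpha))$, and (iii) the construction works uniformly over starting states. This requires combining a spectral/functional-analytic input from reversibility with a combinatorial decomposition of the state space into sets of controlled $\pi$-mass, and is what makes the result non-trivial even in the finite setting. The easy direction, by contrast, is essentially a direct application of the Markov property together with the definition of total variation mixing.
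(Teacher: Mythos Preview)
The paper does not prove Theorem~\ref{fmixhit} at all: it is quoted verbatim from \cite{finitemixhit,oliveira2012mixing} and used as a black box (via transfer) in the proof of Theorem~\ref{mixhitcompact}. So there is no ``paper's own proof'' to compare against. That said, two remarks on your sketch are in order.

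Your easy direction is correct and in fact coincides with what the paper \emph{does} prove in the general setting: see Lemma~\ref{maxhitless} (Lemma~\ref{LemmaEasyDirection} in the appendix), which carries out exactly the iteration-plus-geometric-tail argument you describe, without reversibility.

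Your hard direction has a genuine gap. You correctly identify the intermediate quantity $t_{\mathrm{stop}}$ and the equivalence $t_L\asymp t_{\mathrm{stop}}$ for reversible lazy chains (Aldous, Lov\'asz--Winkler), but the stopping rule you propose does not work: there is no reason, for a general reversible chain, that one can find a set $A_x$ with $\pi(A_x)\geq\alpha'$ on which the \emph{first-hitting distribution} from $x$ is close to $\pi|_{A_x}$. The hitting distribution on a large set can be arbitrarily far from the restricted stationary measure (think of a barbell graph: from one bell, the hitting distribution on the other bell is concentrated at the bridge endpoint). The actual arguments in \cite{finitemixhit,oliveira2012mixing} avoid this obstacle entirely. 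Peres--Sousi build the stopping rule recursively, using that the \emph{mean} hitting time of large sets is small to show $t_{\mathrm{stop}}\lesssim t_H(\alpha)$ via a sequence of hits combined with the halting-state characterization of optimal stopping rules; Oliveira goes through return probabilities and an $\ell^2$ argument specific to reversibility. Your outline names the right obstacle but the proposed construction does not clear it; as written this is a plan rather than a proof.
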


Throughout the paper, we denote by $\mathcal{M}$ the collection of discrete time reversible transition kernels with a stationary distribution on a $\sigma$-compact metric state space satisfying Section \ref{assumptiondsf}.
Note that transition kernels on finite state spaces belong to $\mathcal{M}$. The main result of this paper generalizes  Theorem \ref{fmixhit} to $\mathcal{M}$:  

\begin{thm}\label{mixhit}
Let $0 < \alpha<\frac{1}{2}$.
Then there exist universal constants $0<a_{\alpha},a'_{\alpha}<\infty$ such that, for every $\kernel \in \mathcal{M}$, we have
\[
a'_{\alpha}t_{H}(\alpha)\leq t_{L}\leq a_{\alpha}t_{H}(\alpha).
\]
\end{thm}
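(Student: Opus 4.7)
The plan is to reduce the continuous-state result to its discrete-state analogue (Theorem \ref{fmixhit}) via the hyperfinite Markov chain machinery sketched in Sections \ref{sechyprob}--\ref{sechymarkov}. I would first treat the case where $X$ is compact, then bootstrap to $\sigma$-compact $X$ by exhausting $X$ with an increasing sequence of compact sets and controlling the contribution of the tail via $\pi$-integrability (this is the content planned for Section \ref{secsigcomp}).

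In the compact case, my first step is to lift the given kernel $\{P_x(\cdot)\}_{x\in X} \in \mathcal M$ to a hyperfinite Markov chain on a hyperfinite state space $S$, using the dictionary from \citep{Markovpaper}. Reversibility and the existence of a stationary measure transfer straightforwardly; the strong Feller property (Assumption \ref{assumptiondsf}) is essential for ensuring that the hyperfinite kernel $\widehat P$ is $S$-continuous in the right sense, so that its Loeb-pushdown agrees with the original kernel $\pi$-almost surely. Since $\widehat P$ is internal and the internal cardinality of $S$ is hyperfinite, the transfer principle applied to Theorem \ref{fmixhit} yields constants $c'_\alpha, c_\alpha$ (the same as in the finite case) with
\[
c'_\alpha\,\widehat t_H(\alpha) \;\le\; \widehat t_L \;\le\; c_\alpha\,\widehat t_H(\alpha),
\]
where $\widehat t_L$ and $\widehat t_H(\alpha)$ are the internal lazy mixing time and maximum hitting time of $\widehat P$ (with respect to the internal counting-type stationary measure).

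The heart of the argument, and the content I would place in Section \ref{secmha}, is to show
\[
\widehat t_L \;\approx\; t_L \quad\text{and}\quad \widehat t_H(\alpha) \;\approx\; t_H(\alpha),
\]
where $\approx$ means equal up to an infinitesimal multiplicative factor on the nonstandard side so that the corresponding standard parts coincide up to the universal constants. For the mixing time comparison I would fix $\epsilon = 1/4$, use the strong Feller property together with compactness of $X$ to obtain a uniform modulus of continuity for $x \mapsto P^{(t)}(x,\cdot)$ in total variation, and then push down the internal total variation distance; this gives $\sup_{x \in S}\|\widehat P^{(t)}(x,\cdot) - \widehat\pi\| \approx \sup_{x\in X}\|P^{(t)}(x,\cdot)-\pi\|$ for any near-standard $t$, yielding $\widehat t_L \approx t_L$. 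For the hitting time comparison I would show two things: every standard Borel set $A \subseteq X$ with $\pi(A) \ge \alpha$ lifts to an internal set $\widehat A \subseteq S$ with $\widehat\pi(\widehat A) \gtrsim \alpha$ and comparable expected hitting time from any near-standard start; conversely, the pushdown of an internal set with $\widehat\pi \ge \alpha$ has standard measure at least $\alpha$ (up to an infinitesimal), with comparable expected hitting time. Strong Feller is again used to rule out pathological behaviour at the boundary of $\widehat A$, where hitting times of the original chain and its lifting could otherwise disagree.

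The main obstacle I anticipate is precisely the hitting-time comparison: mixing time is a supremum over starting states of a globally continuous functional, so strong Feller handles it cleanly, but the hitting time involves a supremum over \emph{all} large sets, and a priori a near-optimal internal set $\widehat A$ for $\widehat t_H(\alpha)$ need not arise as the lifting of any standard Borel set. Overcoming this requires a density or regularity argument — using the $\sigma$-compactness, Borel regularity of $\pi$, and the strong Feller property — to approximate internal sets by standard sets without significantly decreasing their $\pi$-measure or increasing the expected hitting time. Once these two approximation lemmas are in hand, combining them with the transferred finite inequality gives $a'_\alpha\, t_H(\alpha) \le t_L \le a_\alpha\, t_H(\alpha)$ for $X$ compact; the $\sigma$-compact extension then follows by standard truncation arguments restricting to a compact subset $K_n \uparrow X$ with $\pi(K_n) \to 1$.
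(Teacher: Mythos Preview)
Your overall strategy---lift to a hyperfinite chain, transfer Theorem~\ref{fmixhit}, then compare the hyperfinite mixing and hitting times to the standard ones---matches the paper's approach in the compact case. However, there are three specific points where your plan diverges from the paper in ways that matter.

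First, and most importantly, you set yourself an unnecessarily hard task by aiming for two-sided comparisons $\widehat t_L \approx t_L$ and $\widehat t_H(\alpha)\approx t_H(\alpha)$. The paper only proves the \emph{one-sided} inequalities $t_L \le \widehat t_L$ (Corollary~\ref{mixcor}) and $\widehat\tau_g(\alpha)\le \tau_g(\alpha)$ (Theorem~\ref{hitlemma}, stated for the large hitting time of Definition~\ref{largehit}), and these suffice for the upper bound $t_L \le a_\alpha t_H(\alpha)$ once chained with the transferred finite result. The lower bound $a'_\alpha t_H(\alpha)\le t_L$ is obtained by a completely elementary argument (Lemma~\ref{maxhitless}) that needs neither reversibility nor strong Feller nor nonstandard analysis. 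This sidesteps exactly the obstacle you flag: there is no need to push down a near-optimal internal set $\widehat A$ to a standard Borel set, because that direction of the hitting-time comparison is never invoked. The easy direction---every internal $A\subset S$ with $\pi'(A)\ge\alpha$ yields the $^*$Borel set $\bigcup_{a\in A}B(a)$ with $^*\pi$-measure $\ge\alpha$---is all that is used.

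Second, your claim that ``reversibility \ldots\ transfer[s] straightforwardly'' is not quite right. The hyperfinite kernel $G$ built from the $B(s)$-cells is only \emph{approximately} reversible with a \emph{weakly} stationary $\pi'$ (Definition~\ref{defnstationary}, Theorem~\ref{wkstationary}). To apply the transfer of Theorem~\ref{fmixhit} one needs genuine $^*$reversibility, and the paper constructs an auxiliary kernel $H$ that is exactly $^*$reversible with $^*$stationary distribution $\pi'$ and infinitesimally close to $G$ in total variation (Theorem~\ref{closereverse}, Claim~\ref{starproperty}). This step is not automatic.

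Third, for the $\sigma$-compact extension the paper does not use a bare ``truncation to $K_n\uparrow X$''; it uses the \emph{trace chain} $g^{(S)}$ on compact $S$ (Definition~\ref{DefTraceChain}), which is again a reversible strong-Feller kernel (Lemma~\ref{tracedsf}), and shows $\overline t_L \le \sup_{S}\overline t_L^{(S)}$ and $\sup_{S}\tau_g^{(S)}(\alpha)\le \tau_g(\alpha)$ (Lemmas~\ref{LemmaIneqMixComp} and~\ref{mhitboundlemma}). A naive restriction of the kernel to $K_n$ would not produce a Markov chain, so some construction of this type is needed; your outline is too vague here to tell whether you had the trace in mind.
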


The \textit{first} inequality in Theorem \ref{mixhit} is straightforward and well-known (see \textit{e.g.} Lemma \ref{maxhitless}). The second is more difficult. The compact version of Theorem \ref{mixhit} is proved in Theorem \ref{mixhitcompact} and the general version is proved in Theorem \ref{mixhitpf}.

\subsection{Equivalent Form of Mixing Times and Hitting times}\label{secequivalent}
In this section, we define several quantities that are asymptotically equivalent to the mixing times and the maximum hitting times defined in the previous section. These equivalent forms play important roles throughout the entire paper, since they are easier to work with for general Markov processes. First, let 
\[
\overline{d}(t)=\sup_{x,y\in X}\parallel P^{(t)}(x,\cdot)-P^{(t)}(y,\cdot) \parallel.
\]

We recall two important results on $\overline{d}(t)$:\footnote{The referenced proofs are stated for discrete spaces, but the arguments apply immediately in the current setting.}

\begin{lemma}[{\citep[][Lemma.~4.11]{markovmix}}]\label{mixequal}
For every $t\in \Nats$, we have $d(t)\leq \overline{d}(t)\leq 2d(t)$.
\end{lemma}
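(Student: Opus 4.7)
The plan is to prove the two inequalities separately, each using only the triangle inequality for total variation together with the fact that $\pi$ is stationary for $\{P_{x}(\cdot)\}_{x \in X}$.

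For the lower bound $d(t) \leq \overline{d}(t)$, I would first exploit stationarity to write $\pi(\cdot) = \int_{X} P^{(t)}(y, \cdot) \, \pi(\dee y)$. Then, for any fixed $x \in X$ and any $A \in \BorelSets{X}$,
\[
|P^{(t)}(x, A) - \pi(A)| = \left| \int_{X} \bigl(P^{(t)}(x, A) - P^{(t)}(y, A)\bigr) \pi(\dee y) \right| \leq \int_{X} \parallel P^{(t)}(x, \cdot) - P^{(t)}(y, \cdot) \parallel \pi(\dee y),
\]
where the final inequality uses that the integrand dominates $|P^{(t)}(x, A) - P^{(t)}(y, A)|$ pointwise in $y$. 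Taking the supremum over $A \in \BorelSets{X}$ on the left (the right is already independent of $A$), then bounding the integrand by $\overline{d}(t)$ and finally taking $\sup_{x}$ yields $d(t) \leq \overline{d}(t)$.

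For the upper bound $\overline{d}(t) \leq 2 d(t)$, I would simply apply the triangle inequality for total variation with $\pi$ as the intermediate measure: for any $x, y \in X$,
\[
\parallel P^{(t)}(x, \cdot) - P^{(t)}(y, \cdot) \parallel \leq \parallel P^{(t)}(x, \cdot) - \pi(\cdot) \parallel + \parallel \pi(\cdot) - P^{(t)}(y, \cdot) \parallel \leq 2 d(t).
\]
Taking the supremum over $x, y \in X$ gives the desired bound.

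There is no serious obstacle here; the only point requiring a bit of care is the interchange of supremum and integral in the lower bound, but this is handled by first extracting the supremum over $A$ inside the integrand (using that total variation is the sup over $A$ of the signed differences), which is valid because the bound $\parallel P^{(t)}(x, \cdot) - P^{(t)}(y, \cdot) \parallel$ is uniform in $A$. The proof is measure-theoretically identical to the discrete argument cited from \citep{markovmix}, confirming the footnote's claim.
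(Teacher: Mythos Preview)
Your proof is correct and is precisely the standard argument from \citep[Lemma~4.11]{markovmix} that the paper cites; the paper itself does not supply a proof but merely references this result, noting in a footnote that the discrete-space argument carries over unchanged to the present setting.
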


\begin{lemma}[{\citep[][Lemma.~4.12]{markovmix}}]\label{submulti}
The function $\overline{d}$ is sub-multiplicative. That is, for $s, t \in \Nats$,
\[
\overline{d}(s+t)\leq \overline{d}(s)\overline{d}(t).
\]
\end{lemma}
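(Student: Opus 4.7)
The plan is to combine the Chapman--Kolmogorov equations with the Hahn--Jordan decomposition of the signed measure $P^{(s)}(x,\cdot) - P^{(s)}(y,\cdot)$. Fix $x, y \in X$ and $A \in \BorelSets X$. First I would use Chapman--Kolmogorov to express
\[
P^{(s+t)}(x, A) - P^{(s+t)}(y, A) = \int_{X} P^{(t)}(z, A)\,[P^{(s)}(x,\cdot) - P^{(s)}(y,\cdot)](\dee z),
\]
reducing the problem to bounding an integral against a signed measure whose total variation is already controlled by $\overline{d}(s)$.

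Next I would apply Hahn--Jordan to write $P^{(s)}(x,\cdot) - P^{(s)}(y,\cdot) = \rho - \sigma$, where $\rho$ and $\sigma$ are mutually singular non-negative measures on $X$ with common total mass $c = \parallel P^{(s)}(x,\cdot) - P^{(s)}(y,\cdot) \parallel \leq \overline{d}(s)$. The case $c = 0$ is trivial, so assume $c > 0$ and normalize $\tilde\rho := \rho/c$ and $\tilde\sigma := \sigma/c$ to probability measures. Applying Fubini then recasts the right-hand side as
\[
c \int_{X}\!\!\int_{X}\bigl[P^{(t)}(z, A) - P^{(t)}(w, A)\bigr]\,\tilde\rho(\dee z)\,\tilde\sigma(\dee w),
\]
whose integrand is bounded pointwise in absolute value by $\overline{d}(t)$, directly from the definition of $\overline{d}$. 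Combining these two estimates yields $|P^{(s+t)}(x, A) - P^{(s+t)}(y, A)| \leq \overline{d}(s)\,\overline{d}(t)$, and taking the supremum over $A$, $x$, and $y$ delivers the claim.

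The only step that requires any care is the Hahn--Jordan decomposition on the general measurable space $(X, \BorelSets X)$, but this is classical for signed measures with finite total variation and poses no real obstacle; measurability of the Jordan components is automatic. I do not foresee any serious difficulty---the argument is essentially a one-line coupling computation (couple optimally at time $s$, evolve independently for $t$ further steps, apply a union bound) rephrased in purely measure-theoretic language, which is why the analogous proof given in \citep{markovmix} for the discrete setting transfers verbatim.
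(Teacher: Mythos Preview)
The paper does not supply its own proof of this lemma; it simply quotes the result from \citep[Lemma~4.12]{markovmix} and remarks in a footnote that the discrete-space argument there carries over verbatim to the present setting. Your proposal is a correct proof and is precisely the measure-theoretic translation of that coupling argument (as you yourself note), so there is nothing to compare: your write-up is exactly the kind of justification the paper's footnote is gesturing at.
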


For every $\epsilon\in \PosReals$, define the \emph{standardized mixing time} to be
\[
\overline{t}_{m}(\epsilon)=\min\{t\geq 0: \overline{d}(t)\leq \epsilon\}.
\]

Similarly, we define
\[
\overline{t}_{L}(\epsilon)=\min\{t\geq 0: \overline{d}_{L}(t)\leq \epsilon\}.
\]
For convenience, we write $\overline{t}_{m}$ and $\overline{t}_{L}$ when $\epsilon=\frac{1}{4}$. The following well-known equivalence between mixing times and standard mixing times follows immediately from \ref{mixequal} and \ref{submulti}:

\begin{lemma}\label{mixequivalent}
For every transition kernel $\{P_{x}(\cdot)\}_{x\in X}$, we have $\overline{t}_{m}\leq 2t_{m}\leq 2\overline{t}_{m}$.
\end{lemma}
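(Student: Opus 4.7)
The plan is to prove the two-sided bound $\overline{t}_m \leq 2t_m \leq 2\overline{t}_m$ by splitting into two independent inequalities, using \ref{mixequal} and \ref{submulti} as the only ingredients.

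First I would establish the easier bound $t_m \leq \overline{t}_m$ (equivalently $2t_m \leq 2\overline{t}_m$). By \ref{mixequal}, $d(t) \leq \overline{d}(t)$ for every $t \in \Nats$. In particular, at $t = \overline{t}_m$ we have $d(\overline{t}_m) \leq \overline{d}(\overline{t}_m) \leq \tfrac{1}{4}$, so by the definition of $t_m$ as the minimum such time it follows that $t_m \leq \overline{t}_m$.

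Next I would prove the more delicate bound $\overline{t}_m \leq 2t_m$. By \ref{mixequal}, $\overline{d}(t_m) \leq 2d(t_m) \leq 2 \cdot \tfrac{1}{4} = \tfrac{1}{2}$. Applying \ref{submulti} with $s = t = t_m$ gives
\[
\overline{d}(2 t_m) \leq \overline{d}(t_m)\,\overline{d}(t_m) \leq \tfrac{1}{2} \cdot \tfrac{1}{2} = \tfrac{1}{4},
\]
so $\overline{t}_m \leq 2 t_m$ by the definition of $\overline{t}_m$. Combining the two bounds yields the claim.

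The only potential obstacle is a subtle one: confirming that the threshold $\tfrac{1}{4}$ was chosen precisely so that $2d(t_m) \leq \tfrac{1}{2}$, which is exactly the regime in which squaring via submultiplicativity pulls us back below $\tfrac{1}{4}$. For different thresholds one would need to iterate submultiplicativity more times, but at $\epsilon = \tfrac{1}{4}$ a single squaring suffices. No other machinery is needed and no structural hypothesis on $\{P_x(\cdot)\}_{x \in X}$ (reversibility, strong Feller, compactness) enters the argument.
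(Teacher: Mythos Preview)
Your proof is correct and is precisely the standard argument the paper has in mind: the paper does not spell out a proof but simply notes that the lemma ``follows immediately from \ref{mixequal} and \ref{submulti},'' which is exactly the two-step argument you give.
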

%

Next, we define the \emph{large hitting time}:

\begin{defn}\label{largehit}
Let $\alpha\in \PosReals$. The large hitting time with respect to $\alpha$ is
\[
\tau_{g}(\alpha)=\min\{t\in \Nats: \inf\{\Prob_{x}(\tau_{A}\leq t): x\in X, A\in \BorelSets X \ \text{such that}\ \pi(A)\geq \alpha\}>0.9\}
\]
where $\Prob$ is a measure in the product space which generates the underlying Markov process and its subscript gives the starting point of the Markov process.
\end{defn}

Unsurprisingly, the maximum hitting time is asymptotically equivalent to the large hitting time:

\begin{lemma}\label{maxlarge}
For every $\alpha\in \PosReals$ and every Markov process,
we have $0.1\tau_{g}(\alpha)\leq t_{H}(\alpha)\leq 2\tau_{g}(\alpha)$.
\end{lemma}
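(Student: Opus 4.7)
The plan is to prove the two inequalities separately using only elementary probabilistic tools: Markov's inequality for the lower bound, and the Markov property applied at the deterministic times $kS$ for the upper bound. Neither direction uses reversibility or the strong Feller property.

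For the inequality $0.1\,\tau_g(\alpha) \leq t_H(\alpha)$, set $T := t_H(\alpha)$ (we may assume $T < \infty$, otherwise nothing is to prove). Then for every $x \in X$ and every $A \in \BorelSets{X}$ with $\pi(A) \geq \alpha$ we have $\expect_x(\tau_A) \leq T$, so Markov's inequality, combined with the fact that $\tau_A$ takes values in $\Nats$, gives
\[
\Prob_x(\tau_A > s) \;\leq\; \frac{\expect_x(\tau_A)}{s+1} \;\leq\; \frac{T}{s+1} \qquad (s \in \Nats).
\]
Choosing $s$ to be the least non-negative integer with $T/(s+1) < 0.1$ makes the right-hand side strictly less than $0.1$ uniformly over all admissible $(x,A)$; a short case analysis (separating $10T \in \Nats$ from $10T \notin \Nats$) shows that this minimal $s$ satisfies $s \leq 10T$, so by the definition of $\tau_g$ we obtain $\tau_g(\alpha) \leq s \leq 10\,t_H(\alpha)$.

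For the inequality $t_H(\alpha) \leq 2\,\tau_g(\alpha)$, write $S := \tau_g(\alpha)$ and fix $x \in X$ together with $A \in \BorelSets{X}$ satisfying $\pi(A) \geq \alpha$. The definition of $\tau_g$ gives $\Prob_y(\tau_A \leq S) > 0.9$ for every $y \in X$. Applying the Markov property at the deterministic time $kS$ and using this uniform bound at the state $X_{kS}$ yields
\[
\Prob_x(\tau_A > (k+1)S \mid \tau_A > kS) \;<\; 0.1,
\]
so by iteration $\Prob_x(\tau_A > kS) \leq 0.1^k$ for every $k \in \Nats$. Summing tail probabilities,
\[
\expect_x(\tau_A) \;=\; \sum_{t \geq 0} \Prob_x(\tau_A > t) \;\leq\; S \sum_{k \geq 0} \Prob_x(\tau_A > kS) \;\leq\; S \sum_{k \geq 0} 0.1^k \;=\; \tfrac{10}{9}\,S,
\]
and taking the supremum over admissible $(x,A)$ yields $t_H(\alpha) \leq \tfrac{10}{9}\,\tau_g(\alpha) < 2\,\tau_g(\alpha)$.

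The main ``obstacle'' here is purely bookkeeping: one must handle the off-by-one arising when Markov's inequality is applied to an integer-valued random variable, and one must verify that the Markov property may be invoked at the deterministic time $kS$ on the general state space $X$. The latter is standard, since $\{\tau_A > kS\}$ lies in $\sigma(X_0,\ldots,X_{kS})$, and so no finer regularity of $\{P_x(\cdot)\}_{x \in X}$ is required; in particular, neither reversibility nor the strong Feller assumption enters the proof of this lemma.
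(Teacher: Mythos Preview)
Your proof is correct and follows essentially the same approach as the paper: the upper bound $t_H(\alpha)\leq 2\tau_g(\alpha)$ is proved identically via the Markov property and a geometric tail sum, while for $0.1\,\tau_g(\alpha)\leq t_H(\alpha)$ the paper picks a near-witness $(x_0,A_0)$ at time $\tau_g(\alpha)-1$ and uses $\expect(\tau)\geq c\,\Prob(\tau\geq c)$ directly, whereas you argue contrapositively via Markov's inequality---but these are just two orientations of the same elementary tail bound.
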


\begin{proof}
See Appendix \ref{SecElEq}.
\end{proof}

The following is an immediate consequence of \ref{maxlarge} and Theorem \ref{fmixhit}. 

\begin{thm}\label{mhequal}
Let $0 < \alpha<\frac{1}{2}$.
Then there exist universal positive constants $c'_{\alpha},c_{\alpha}$ so that for every finite reversible Markov process
\[
c'_{\alpha}\tau_{g}(\alpha)\leq t_{L}\leq c_{\alpha}\tau_{g}(\alpha).
\]
\end{thm}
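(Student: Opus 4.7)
The plan is to simply chain together the two cited results, since Theorem \ref{mhequal} is asserted to be an immediate consequence of Lemma \ref{maxlarge} and Theorem \ref{fmixhit}. Let $c'_\alpha, c_\alpha$ denote the universal constants from Theorem \ref{fmixhit}, so that for any finite reversible Markov process we have
\[
c'_\alpha t_H(\alpha) \leq t_L \leq c_\alpha t_H(\alpha).
\]
Lemma \ref{maxlarge} sandwiches $t_H(\alpha)$ between $0.1\,\tau_g(\alpha)$ and $2\,\tau_g(\alpha)$ for every Markov process (and in particular for the finite reversible ones under consideration here).

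To obtain the upper bound in the statement, I would substitute the upper estimate $t_H(\alpha) \leq 2\tau_g(\alpha)$ from Lemma \ref{maxlarge} into the right-hand inequality of Theorem \ref{fmixhit}, giving $t_L \leq 2 c_\alpha \tau_g(\alpha)$. For the lower bound, I would substitute the lower estimate $0.1\,\tau_g(\alpha) \leq t_H(\alpha)$ into the left-hand inequality of Theorem \ref{fmixhit}, giving $0.1\, c'_\alpha \tau_g(\alpha) \leq t_L$. Renaming the new universal constants (for instance, $\tilde c'_\alpha \defas 0.1\, c'_\alpha$ and $\tilde c_\alpha \defas 2 c_\alpha$), both of which are finite and positive since $c'_\alpha, c_\alpha$ are, yields the claim.

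There is essentially no obstacle here: the argument is a one-line composition of two inequalities, and no reversibility or finiteness hypothesis beyond those already required by Theorem \ref{fmixhit} needs to be invoked, since Lemma \ref{maxlarge} holds in full generality. The only thing to verify is that the quantifiers line up, i.e.\ that both inequalities from Lemma \ref{maxlarge} apply to the same class of finite reversible Markov processes for which Theorem \ref{fmixhit} is stated; this is immediate since Lemma \ref{maxlarge} imposes no restriction on the process.
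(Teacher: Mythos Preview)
Your proposal is correct and matches the paper's approach exactly: the paper states that Theorem \ref{mhequal} is an immediate consequence of Lemma \ref{maxlarge} and Theorem \ref{fmixhit}, and your argument is precisely the chaining of those two inequalities with the obvious rescaling of constants.
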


\subsection{Notation from Nonstandard Analysis}
In this paper, we use nonstandard analysis, a powerful machinery derived from mathematical logic, as our main toolkit. For those who are not familiar with nonstandard analysis, \citep{Markovpaper,nsbayes} provide reviews tailored to statisticians and probabilists. \citep{NSAA97,NDV,NAW} provide thorough introductions. 

We briefly introduce the setting and notation from nonstandard analysis.
We use $\NSE{}$ to denote the nonstandard extension map taking elements, sets, functions, relations, etc., to their nonstandard counterparts.
In particular, $\HReals$ and $\NSE{\Nats}$ denote the nonstandard extensions of the reals and natural numbers, respectively.
An element $r\in \HReals$ is \emph{infinite} if $|r|>n$ for every $n\in \Nats$ and is \emph{finite} otherwise. An element $r \in \HReals$ with $r > 0$ is \textit{infinitesimal} if $r^{-1}$ is infinite. For $r,s \in \HReals$, we use the notation $r \approx s$ as shorthand for the statement ``$|r-s|$ is infinitesimal," and similarly we use use $r \gtrapprox s$ as shorthand for the statement ``either $r \geq s$ or $r \approx s$."

Given a topological space $(X,\topology)$,
the monad of a point $x\in X$ is the set $\bigcap_{ U\in \topology \, : \, x \in U}\NSE{U}$.
An element $x\in \NSE{X}$ is \emph{near-standard} if it is in the monad of some $y\in X$.
We say $y$ is the standard part of $x$ and write $y=\ST(x)$. Note that such $y$ is unique.
We use $\NS{\NSE{X}}$ to denote the collection of near-standard elements of $\NSE{X}$
and we say $\NS{\NSE{X}}$ is the \emph{near-standard part} of $\NSE{X}$.
The standard part map $\ST$ is a function from $\NS{\NSE{X}}$ to $X$, taking near-standard elements to their standard parts.
In both cases, the notation elides the underlying space $Y$ and the topology $T$,
because the space and topology will always be clear from context.
For a metric space $(X,d)$, two elements $x,y\in \NSE{X}$ are \emph{infinitely close} if $\NSE{d}(x,y)\approx 0$.
An element $x\in \NSE{X}$ is near-standard if and only if it is infinitely close to some $y\in X$.
An element $x\in \NSE{X}$ is finite if there exists $y\in X$ such that $\NSE{d}(x,y)<\infty$ and is infinite otherwise.

Let $X$ be a topological space endowed with Borel $\sigma$-algebra $\BorelSets X$ and
let $\FM{X}$ denote the collection of all finitely additive probability measures on $(X,\BorelSets X)$.
An internal probability measure $\mu$ on $(\NSE{X},\NSE{\BorelSets X})$ is an element of $\NSE{\FM{X}}$.
Namely, an internal probability measure $\mu$ on $(\NSE{X},\NSE{\BorelSets X})$
is an internal function from $\NSE{\BorelSets X}\to \NSE{[0,1]}$ such that
\begin{enumerate}
\item $\mu(\emptyset)=0$;
\item $\mu(\NSE{X})=1$; and
\item $\mu$ is hyperfinitely additive.
\end{enumerate}
The Loeb space of the internal probability space $(\NSE{X},\NSE{\BorelSets X}, \mu)$ is a countably additive probability space $(\NSE{X},\Loeb{\NSE{\BorelSets X}}, \Loeb{\mu})$ such that
\[
\Loeb{\NSE{\BorelSets X}}=\{A\subset \NSE{X}|(\forall \epsilon>0)(\exists A_i,A_o\in \NSE{\BorelSets X})(A_i\subset A\subset A_o\wedge \mu(A_o\setminus A_i)<\epsilon)\}
\]
and
\[
\Loeb{\mu}(A)=\sup\{\ST(\mu(A_i))|A_i\subset A,A_i\in \NSE{\BorelSets X}\}=\inf\{\ST(\mu(A_o))|A_o\supset A,A_o\in \NSE{\BorelSets X}\}.
\]

Every standard model is closely connected to its nonstandard extension via the \emph{transfer principle}, which asserts that a first order statement is true in the standard model is true if and only if it is true in the nonstandard model.
Finally, given a cardinal number $\kappa$, a nonstandard model is called $\kappa$-saturated if the following condition holds:
let $\cF$ be a family of internal sets, if $\cF$ has cardinality less than $\kappa$ and $\cF$ has the finite intersection property, then the total intersection of $\cF$ is non-empty. In this paper, we assume our nonstandard model is as saturated as we need (see \textit{e.g.} {\citep[][Thm.~1.7.3]{NSAA97}} for the existence of $\kappa$-saturated nonstandard models for any uncountable cardinal $\kappa$).

\section{Hyperfinite Representation of Probability Spaces}\label{sechyprob}

In this section, we give an overview of hyperfinite representation for probability spaces developed in \citep{Markovpaper}.
All the proofs can be found in {\citep[][Section.~6]{Markovpaper}}.
We use similar notation to \citep{Markovpaper} and \citep{nsbayes}.
The following theorem gives a nonstandard characterization for compact topological spaces. 

\begin{thm} [{\citep[][Thm.~4.1.13]{AR65}}]\label{HBfinite}
A topological space $X$ is compact if and only if every $x\in \NSE{X}$ is near-standard. 
\end{thm}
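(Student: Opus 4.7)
The plan is to prove both directions by combining the transfer principle with saturation, in the standard way.

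For the forward direction, I would argue by contraposition. Suppose $x \in \NSE{X}$ is not near-standard, so $x$ lies in no monad. Then for every $y \in X$ there exists an open neighborhood $U_y$ of $y$ with $x \notin \NSE{U_y}$; otherwise $x \in \NSE{U}$ for every open $U$ containing $y$, i.e., $x$ would lie in the monad of $y$. The family $\{U_y\}_{y \in X}$ covers $X$, so by compactness there is a finite subcover $U_{y_1}, \ldots, U_{y_n}$. Applying transfer to the first-order statement $X = \bigcup_{i=1}^{n} U_{y_i}$ gives $\NSE{X} = \bigcup_{i=1}^{n} \NSE{U_{y_i}}$, which contradicts $x \in \NSE{X}$ together with $x \notin \NSE{U_{y_i}}$ for all $i$.

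For the reverse direction, assume every $x \in \NSE{X}$ is near-standard, and let $\{U_\alpha\}_{\alpha \in I}$ be an arbitrary open cover of $X$. Suppose for contradiction that it admits no finite subcover, and consider the internal family $\cF = \{\NSE{X} \setminus \NSE{U_\alpha} : \alpha \in I\}$. For any finite $F \subseteq I$, the set $X \setminus \bigcup_{\alpha \in F} U_\alpha$ is nonempty by hypothesis, and transfer then gives $\NSE{X} \setminus \bigcup_{\alpha \in F} \NSE{U_\alpha} \neq \emptyset$, so $\cF$ has the finite intersection property. Saturation (applied at a cardinality exceeding $|I|$) produces some $x$ lying in every member of $\cF$, i.e., $x \in \NSE{X}$ with $x \notin \NSE{U_\alpha}$ for all $\alpha \in I$. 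By hypothesis $x$ lies in the monad of some $y \in X$; since the $U_\alpha$ cover $X$, we have $y \in U_\beta$ for some $\beta$, and then $x \in \NSE{U_\beta}$ by definition of the monad, contradicting $x \notin \NSE{U_\beta}$.

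The main obstacle is a bookkeeping one rather than a conceptual one: the reverse direction needs the nonstandard model to be $\kappa$-saturated for a cardinal $\kappa$ strictly greater than $|I|$, where $I$ indexes an arbitrary open cover. This is not automatic, but the paper explicitly assumes its nonstandard model is as saturated as needed, so the argument goes through uniformly. Everything else is a direct application of transfer and the definitions of monad and near-standard element.
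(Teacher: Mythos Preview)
The paper does not prove this theorem; it merely quotes it from the literature (Robinson's book) as a foundational fact, so there is no paper proof to compare against. Your argument is the standard one and is correct: transfer of a finite subcover for the forward direction, and the finite intersection property plus saturation for the converse, with your caveat about needing enough saturation exactly matching the paper's blanket assumption that the model is ``as saturated as we need.''
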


In the following, we use the common notation $d(x,A) = \inf \{y \in X \, : \, d(x,y) \}$ for every $x\in X$ and every $A\subset X$.

We now introduce the concept of hyperfinite representation of a Heine-Borel metric space $X$. The intuition is to take a ``large enough" portion of $\NSE{X}$ containing $X$ and then partition it into hyperfinitely many pieces of *Borel sets with infinitesimal radius. We then pick one ``representative" from each piece to form a hyperfinite set. The formal definition is given below.

\begin{defn}\label{hyperapproxsp}
Let $(X,d)$ be a metric space satisfying the Heine-Borel condition. Let $\delta\in {^{*}\Reals^{+}}$ be an infinitesimal and $r$ be an infinite hyperreal number. A $(\delta,r)$-hyperfinite representation of $X$ is a tuple $(S,\{B(s)\}_{s\in S})$ such that

\begin{enumerate}
\item $S$ is a hyperfinite subset of $\NSE{X}$.
\item $s\in B(s)\in \NSE{\BorelSets X}$ for every $s\in S$.
\item For every $s\in S$, the diameter of $B(s)$ is no greater than $\delta$.
\item $B(s_1)\cap B(s_2)=\emptyset$ for every $s_1\neq s_2\in S$.
\item For any $x\in \NS{^{*}X}$, ${^{*}d}(x,{^{*}X}\setminus \bigcup_{s\in S}B(s))>r$.
\item There exists $a_0\in X$ and some infinite $r_0$ such that
\[
\NS{^{*}X}\subset \bigcup_{s\in S}B(s)=\overline{U(a_0,r_0)}
\]
where $\overline{U(a_0,r_0)}=\{x\in {^{*}X}: {^{*}d}(x,a_0)\leq r_0\}$.
\end{enumerate}
The set $S$ is called the \emph{base set} of the hyperfinite representation. For every $x\in \bigcup_{s\in S}B(s)$, we use $s_{x}$ to denote the unique element in $S$ such that $x\in B(s_x)$.
\end{defn}

If $X$ is compact, we have $\NS{\NSE{X}}=\NSE{X}$ by \ref{HBfinite}. In this case, we can pick $S$ such that $\bigcup_{s\in S}B(s)=\NSE{X}$, and hence the second parameter of an $(\epsilon,r)$-hyperfinite representation is redundant. Thus, we shall simply work with an $\epsilon$-hyperfinite representation in the case where $X$ is compact.
The set $\overline{U(a_0,r_0)}$ can be seen as the *closure of the nonstandard open ball $U(a_0,r_0)$. As $X$ satisfies the Heine-Borel condition, by the transfer principle, $\overline{U(a_0,r_0)}$ is a *compact set. That is, $\overline{U(a_0,r_0)}$ satisfies all the first-order logic properties of a compact set.

The next theorem shows that hyperfinite representations always exist. 
Although the statement appears to be slightly stronger than {\citep[][Thm.~6.6]{Markovpaper}}, 
its proof is almost identical to the proof of {\citep[][Thm.~6.6]{Markovpaper}} hence is omitted. 

\begin{thm}\label{exhyper}
Let $X$ be a metric space satisfying the Heine-Borel condition. Then, for every positive infinitesimal $\delta$ and every positive infinite $r$, there exists an $(\delta,r)$-hyperfinite representation $(S_{\delta}^{r},\{B(s)\}_{s\in S_{\delta}^{r}})$ of ${^{*}X}$ such that $X\subset S_{\delta}^{r}$.
\end{thm}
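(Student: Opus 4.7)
The plan is to mirror the construction of Theorem 6.6 in \citep{Markovpaper}, with two modifications that together ensure $X \subset S_\delta^r$. The core is a standard finite partition lemma, pushed to the hyperfinite world by transfer and coupled with saturation to handle the uncountable prescribed set $X$.

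The first ingredient I would need is the standard finite lemma: for any compact $K$ in a metric space, any $\epsilon > 0$, and any finite $F \subset K$, there is a finite Borel partition $(S, \{B(s)\}_{s \in S})$ of $K$ with $F \subset S$, $s \in B(s)$, and $\mathrm{diam}(B(s)) \leq \epsilon$. This follows by covering $K$ with small open balls centered at each point of $F$ together with a finite subcover of the remainder, and disjointifying via $B_i = U_i \setminus \bigcup_{j<i} U_j$ with the $F$-centered balls ordered first. The second ingredient is a saturation step: the internal family $\{A \in \NSE{\mathcal{P}_{\mathrm{fin}}(X)} : x \in A\}$ indexed by $x \in X$ has the finite intersection property, so by saturation at cardinality exceeding $|X|$ there exists a hyperfinite internal $A \subset \NSE{X}$ with $X \subset A$.

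To combine these, I would fix $a_0 \in X$ and apply the transferred partition lemma inside the *closed ball $\overline{U(a_0, 2r)}$, which is *compact by transfer of Heine-Borel and contains $X$ since $\NSE{d}(x, a_0) = d(x, a_0) < \infty < 2r$ for each $x \in X$. Passing the internally *finite set $F = A \cap \overline{U(a_0, 2r)}$ as the prescribed set of representatives yields an internal tuple $(S_\delta^r, \{B(s)\}_{s \in S_\delta^r})$ with $X \subset F \subset S_\delta^r$. Conditions (1)--(4) and (6) of Definition \ref{hyperapproxsp} are immediate with $r_0 = 2r$; condition (5) follows from the triangle inequality, since near-standard points sit at finite distance from $a_0$ while the complement of $\overline{U(a_0, 2r)}$ sits at *distance $> 2r$, forcing the separation to exceed $r$.

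The main obstacle I expect is threading the order of operations correctly: the hyperfinite set $A$ must be produced by saturation \emph{before} invoking transfer of the partition lemma, and then passed in as an internal parameter to that transfer. The reverse order --- building the hyperfinite representation first and then trying to enlarge the base set to contain $X$ --- fails because $X$ is external, so no internal partition can be directly re-indexed by a set containing $X$ except via the detour through an internal hyperfinite $A \supset X$. Once this orchestration is in place, the rest is essentially the routine bookkeeping carried out in Theorem~6.6 of \citep{Markovpaper}.
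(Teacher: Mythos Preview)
Your proposal is correct and matches what the paper indicates: the paper omits the proof entirely, stating only that it is ``almost identical'' to that of \citep[][Thm.~6.6]{Markovpaper}, and your two modifications (the saturation step producing a hyperfinite $A \supset X$, and feeding $F = A \cap \overline{U(a_0,2r)}$ as an internal parameter into the transferred partition lemma) are precisely the adjustments needed to upgrade that construction to one with $X \subset S_\delta^r$. One small point worth tightening in the standard finite lemma: when you disjointify the $F$-centered balls, you must take their common radius strictly below $\min_{i\neq j} d(f_i,f_j)$ so that each $f_i$ survives in its own piece $B_i$; your phrase ``small open balls'' covers this, but it is the only place the argument could slip.
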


Suppose $X$ is a Heine-Borel metric space endowed with Borel $\sigma$-algebra $\BorelSets X$. Let $P$ be a probability measure on $(X,\BorelSets X)$. Let $S$ be the base set of a $(\delta,r)$-hyperfinite representation of $X$ for some positive infinitesimal $\delta$ and some positive infinite number $r$. The next theorem shows that we can define an internal probability measure on $(S,\mathcal{I}(S))$ that gives a ``nice" approximation of $P$.

\begin{thm}[{\citep[][Thm.~6.11]{Markovpaper}}]\label{representationthm}
Let $(X,\BorelSets X,P)$ be a Borel probability space
where $X$ is a metric space satisfying the Heine-Borel condition,
and let $(\NSE{X},{}^{*}\BorelSets X,{^{*}P})$ be its nonstandard extension.
For every positive infinitesimal $\delta$, every positive infinite $r$ and every $(\delta,r)$-hyperfinite representation $(S,\{B(s)\}_{s\in S})$ of ${^{*}X}$, define an internal probability measure $P'$ on $(S,\mathcal{I}(S))$ by letting $P'(\{s\})=\frac{\NSE{P}(B(s))}{{^{*}P(\bigcup_{t\in S}B(t))}}$ for every $s\in S$.
Then we have

\begin{enumerate}
\item $P'(\{s\})\approx {^{*}P}(B(s))$.

\item ${^{*}P(\bigcup_{s\in S}B(s))}\approx 1$.

\item $P(E)=\Loeb{P'}(\ST^{-1}(E)\cap S)$ for every $E\in \BorelSets X$.
\end{enumerate}
\end{thm}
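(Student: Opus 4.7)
My plan is to dispatch items (2) and (1) as short reductions and focus the main effort on item (3) via inner/outer regularity of $P$. For item (2), I would invoke tightness of the Borel probability measure $P$ on the Heine-Borel (hence $\sigma$-compact) metric space $X$: for every $\epsilon > 0$ there exists a compact $K_\epsilon \subset X$ with $P(K_\epsilon) > 1 - \epsilon$. By Theorem \ref{HBfinite} applied to $K_\epsilon$, $\NSE{K_\epsilon} \subset \NS{\NSE{X}}$, and Definition \ref{hyperapproxsp}(5) gives $\NS{\NSE{X}} \subset \bigcup_{s \in S} B(s)$. Therefore $\NSE{P}(\bigcup_{s \in S} B(s)) \geq \NSE{P}(\NSE{K_\epsilon}) = P(K_\epsilon) > 1 - \epsilon$ for every standard $\epsilon > 0$, which forces $\NSE{P}(\bigcup_{s \in S} B(s)) \approx 1$. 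Item (1) then follows from the identity
\[
P'(\{s\}) - \NSE{P}(B(s)) = \NSE{P}(B(s)) \cdot \frac{1 - \NSE{P}(\bigcup_t B(t))}{\NSE{P}(\bigcup_t B(t))},
\]
which is the product of a number in $\NSE{[0,1]}$ with an infinitesimal, hence infinitesimal.

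For item (3), I would use inner and outer regularity, writing $P(E) = \sup\{P(K) : K \subset E \text{ compact}\} = \inf\{P(U) : U \supset E \text{ open}\}$, and introduce the internal sandwich sets
\[
S^-_U = \{s \in S : B(s) \subset \NSE{U}\}, \quad S^+_K = \{s \in S : B(s) \cap \NSE{K} \neq \emptyset\}.
\]
Two observations tie these to $\ST^{-1}(E) \cap S$: (a) if $s \in S$ and $\ST(s) \in U$, the infinitesimal diameter of $B(s)$ from Definition \ref{hyperapproxsp}(3) together with openness of $U$ around $\ST(s)$ forces $B(s) \subset \NSE{U}$, so $\ST^{-1}(U) \cap S \subset S^-_U$; and (b) if $B(s) \cap \NSE{K} \neq \emptyset$, then because $K$ is compact (hence closed) and $B(s)$ has infinitesimal diameter, $s$ itself is near-standard with $\ST(s) \in K$, so $S^+_K \subset \ST^{-1}(K) \cap S$.

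Using pairwise disjointness of $\{B(s)\}_{s \in S}$ from Definition \ref{hyperapproxsp}(4) together with item (1), I would then compute $P'(S^-_U) \approx \NSE{P}(\bigsqcup_{s \in S^-_U} B(s)) \leq \NSE{P}(\NSE{U}) = P(U)$, and symmetrically $P'(S^+_K) \gtrapprox P(K)$ after using Definition \ref{hyperapproxsp}(5) to see $\NSE{K} \subset \bigcup_{s \in S^+_K} B(s)$. Combining these bounds with the inclusions above yields
\[
P(K) \leq \Loeb{P'}(\ST^{-1}(E) \cap S) \leq P(U)
\]
for every compact $K \subset E$ and open $U \supset E$, and regularity then squeezes $\Loeb{P'}(\ST^{-1}(E) \cap S) = P(E)$. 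The main obstacle is establishing Loeb measurability of $\ST^{-1}(E) \cap S$ and bookkeeping of non-near-standard points of $S$; the chain of inclusions $S^+_K \subset \ST^{-1}(K) \cap S \subset \ST^{-1}(U) \cap S \subset S^-_U$ with $\Loeb{P'}(S^+_K)$ and $\Loeb{P'}(S^-_U)$ both approaching $P(E)$ as $K$ and $U$ approximate $E$ delivers measurability and the value at once, and the tail mass outside $\bigcup_s B(s)$ is negligible by item (2).
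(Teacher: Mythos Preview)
The paper does not prove this theorem; it is quoted verbatim from \citep[][Thm.~6.11]{Markovpaper} with no argument given, so there is no in-paper proof to compare against. Your proposal is a correct and essentially standard proof of this representation result.

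A few small points. First, the containment $\NS{\NSE X}\subset\bigcup_{s\in S}B(s)$ is stated directly as item (6) of Definition~\ref{hyperapproxsp}; your derivation from item (5) is also valid but unnecessary. Second, when you write ``together with item (1), I would then compute $P'(S^-_U)\approx \NSE P\bigl(\bigsqcup_{s\in S^-_U}B(s)\bigr)$'', note that pointwise $\approx$ does not in general survive a hyperfinite sum. Here it does, either because all the errors $P'(\{s\})-\NSE P(B(s))$ have the same sign and their total over $S$ is $1-\NSE P(\bigcup_t B(t))\approx 0$, or more directly by writing $P'(A)=\NSE P\bigl(\bigcup_{s\in A}B(s)\bigr)/\NSE P\bigl(\bigcup_t B(t)\bigr)$ and invoking item (2). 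Either justification is fine, but make one of them explicit. Third, your sandwich $S^+_K\subset \ST^{-1}(E)\cap S\subset S^-_U$ with $P'(S^-_U)-P'(S^+_K)\le P(U)-P(K)+\text{(infinitesimal)}$ indeed simultaneously delivers Loeb measurability of $\ST^{-1}(E)\cap S$ and the value $P(E)$, so your ``main obstacle'' is already handled by the argument you sketch.

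This regularity-and-sandwich route is the classical one for pushdown results of this type, going back to Anderson's result quoted in the paper as Lemma~\ref{loebapproximate}, so your approach is exactly what one would expect the cited proof in \citep{Markovpaper} to look like.
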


\section{Hyperfinite Representation of General Markov Processes}\label{sechymarkov}

Let $\{ P_{x} \}_{x \in X}$ be the transition kernel of a discrete-time Markov process with state space $X$. We assume that $X$ is a metric space satisfying the Heine-Borel condition throughout the rest of the paper until otherwise mentioned. The transition probability can be viewed as a function $g: X\times \Nats\times \BorelSets X\to [0,1]$ by letting $g(x,t,A)=P_{x}^{(t)}(A)$ for every $x\in X$, $t\in \Nats$ and $A\in \BorelSets X$. We will use $g(x,t,A)$ and $P_{x}^{(t)}(A)$ interchangeably. For any $x\in X$ and $A\in \BorelSets X$, let $P_{x}^{(0)}(A)=1$ if $x\in A$ and $P_{x}^{(0)}(A)=0$ if $x\not\in A$.
We will construct a hyperfinite object to represent the Markov process $\{X_t\}_{t\in \Nats}$ associated with the transition kernel $g$. We fix a set $T=\{1,2,\dotsc,K\}$ for some infinite $K\in \NSE{\Nats}$ throughout the paper. A \emph{hyperfinite Markov process} is defined analogously to finite Markov processes. Namely, a hyperfinite Markov process is characterized by the following four ingredients:
\begin{enumerate}
\item A \emph{state space} $S$ which is a non-empty hyperfinite set.
\item A time line $T$.
\item A set $\{\nu_i: i\in S\}\subset \NSE{\Reals}$ where each $\nu_i\geq 0$ and $\sum_{i\in S}\nu_i=1$.
\item A set $\{p_{ij}\}_{i,j\in S}$ of non-negative hyperreals with $\sum_{j\in S}p_{ij}=1$ for every $i\in S$.
\end{enumerate}

The following theorem shows that it is always possible to construct a hyperfinite Markov processes with these parameters.

\begin{thm}[{\citep[][Thm.~7.2]{Markovpaper}}]\label{HMexist}
Fix a hyperfinite state space $S$, a time line $T$, a hyperfinite set $\{v_i\}_{i\in S}$ and a hyperfinite set $\{p_{ij}\}_{i,j\in S}$ that satisfy the immediately-preceding conditions.
Then there exists an internal probability triple $(\Omega,\cA,\Prob)$ with an internal stochastic process $\{X_t\}_{t\in T}$ defined on $(\Omega,\cA,\Prob)$ such that
\[
\Prob(X_0=i_0,X_{\delta t}=i_{\delta t},...X_{t}=i_t)=v_{i_{0}}p_{i_{0}i_{\delta t}}...p_{i_{t-\delta t}i_{t}}
\]
for all $t\in T$ and $i_0,....i_{t}\in S$.
\end{thm}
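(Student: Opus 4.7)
My plan is to prove Theorem \ref{HMexist} by direct transfer of the corresponding elementary construction for finite Markov chains. The standard fact is well-known: given any finite set $S$, any finite ordered time line $T = \{1, \dots, N\}$, any probability vector $(v_i)_{i \in S}$, and any stochastic matrix $(p_{ij})_{i,j \in S}$, one obtains a Markov chain with the specified one-step transitions and initial distribution by taking $\Omega = S^T$, letting $\mathcal A$ be the power set, defining $\Pr$ on singletons by the product formula
\[
\Pr(\{(i_0, i_1, \dots, i_N)\}) \;=\; v_{i_0}\, p_{i_0 i_1}\, p_{i_1 i_2}\cdots p_{i_{N-1} i_N},
\]
extending by additivity, and letting $X_t$ be the coordinate projection. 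The finite-dimensional distributions then match the claimed formula by collapsing the sums over the unobserved coordinates against the row-stochasticity $\sum_j p_{ij} = 1$. The total mass $\Pr(\Omega) = 1$ follows from the same collapsing, applied recursively from the last coordinate backward.

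This entire construction-plus-verification is expressible as a first-order statement parameterised by $(N, |S|, (v_i), (p_{ij}))$. Transfer immediately yields the hyperfinite version: for any hyperfinite $S$, any time line $T = \{1, \dots, K\}$ with $K \in \NSE{\Nats}$, and any internal data $(v_i)_{i\in S}$, $(p_{ij})_{i,j\in S}$ satisfying the assumptions, there is an internal probability triple $(\Omega, \cA, \Prob)$ together with an internal process $(X_t)_{t \in T}$ whose finite-dimensional distributions are given by the stated product formula. Concretely, one realises this by taking $\Omega$ to be the internal set $S^T$ of internal functions $T \to S$, $\cA$ its internal power set, and
\[
\Prob(\{\omega\}) \;=\; v_{\omega_0} \prod_{t=0}^{K-1} p_{\omega_t \omega_{t+1}},
\]
where the product is an internal hyperfinite product; $X_t(\omega) \defas \omega_t$. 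Internality of $\Omega$, $\cA$, and $\Prob$ follows from the internal definition principle applied to the internal data. Hyperfinite additivity and the normalisation $\Prob(\Omega) = 1$ are transfers of the standard finite statements.

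I do not expect any genuine obstacle here; the content of the theorem is precisely that the standard elementary construction survives transfer, and all the work of identifying the right ``finite'' fact to transfer has already been done in the problem statement itself. The only points worth double-checking are (i) that the collection of internal maps $T \to S$ is itself internal and hyperfinite of internal cardinality $|S|^K$, which follows from transfer of ``$|S^T| = |S|^{|T|}$'', and (ii) that the product formula defining $\Prob(\{\omega\})$ makes sense as an internal hyperfinite product indexed by $T$, which again is immediate by transfer of the corresponding finite product. Everything else reduces to routine bookkeeping inside the internal universe.
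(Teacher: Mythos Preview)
The paper does not supply its own proof of this statement; it is quoted verbatim as \citep[Thm.~7.2]{Markovpaper} and used as a black box. Your proposal---transfer of the elementary finite product-space construction $\Omega=S^{T}$, $\cA$ the internal power set, $\Prob$ defined on singletons by the product formula and extended by hyperfinite additivity, $X_t$ the coordinate projection---is correct and is the standard argument; it is almost certainly what the cited reference does as well.
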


As in \citep{Markovpaper}, we will construct a hyperfinite Markov process $\{X'_t\}_{t\in T}$ which is a ``nice" representation of $\{X_t\}_{t\in \Nats}$. Due to the similarities between finite objects and hyperfinite objects, $\{X'_t\}_{t\in T}$ inherits many key properties from finite Markov processes. $\{X'_t\}_{t\in T}$ will play an essential role throughout the paper.

Pick any positive infinitesimal $\delta$ and any positive infinite number $r$.
Let $(S,\{B(s)\}_{s\in S})$ be a $(\delta, r)$-hyperfinite representation of $\NSE{X}$.
Let us recall some key properties of $(S,\{B(s)\}_{s\in S})$.
\begin{enumerate}
\item $s\in B(s)$ for every $s\in S$.
\item For every $s\in S$, the diameter of $B(s)$ is no greater than $\delta$.
\item $B(s_1)\cap B(s_2)=\emptyset$ for every $s_1\neq s_2\in S$.
\item $\NS{^{*}X}\subset \bigcup_{s\in S}B(s)$.
\end{enumerate}
For every $x\in {^{*}X}$, we know that ${^{*}g}(x,1,.)$ is an internal probability measure on $({^{*}X}, {^{*}\BorelSets X})$. We can construct $(S, \{B(s)\}_{s\in S})$ so that:

\begin{lemma}[{\citep[][Lemma.~9.14]{Markovpaper}}]\label{dsfconsequence}
Suppose that $g$ satisfies Section \ref{assumptiondsf}. 
There exists a hyperfinite representation $(S,\{B(s)\}_{s\in S})$ of $\NSE{X}$ such that, for every $s\in S$, every positive $n\in \Nats$ and every $A\in \NSE{\BorelSets X}$, we have
\[
\NSE{g}(x_1,n,A)\approx \NSE{g}(x_2,n,A)
\]
for every $x_1,x_2\in B(s)$.
\end{lemma}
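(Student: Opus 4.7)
The plan is to build a hyperfinite representation in which each piece $B(s)$ is small on a scale that depends on $s$ itself --- small enough that Assumption DSF, transferred to $s$, forces $\NSE{P}_{x}$ to be infinitesimally close to $\NSE{P}_{s}$ for every $x\in B(s)$. Once this one-step bound is in hand, the claim for all $n\geq 1$ is automatic from Markov contraction, so the whole argument splits cleanly into (i) reduction to one step, (ii) construction of an internal pointwise ``size function'' $\rho(s)$, and (iii) construction of a hyperfinite partition respecting $\rho$.

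\medskip

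For (i), I would first record the standard fact $\parallel \mu Q - \nu Q\parallel\leq \parallel \mu - \nu\parallel$ for any Markov kernel $Q$ and any probability measures $\mu,\nu$, iterated to give
\[
\parallel \NSE{P}^{(n)}_{x_{1}}(\argdot)-\NSE{P}^{(n)}_{x_{2}}(\argdot)\parallel\ \leq\ \parallel \NSE{P}_{x_{1}}(\argdot)-\NSE{P}_{x_{2}}(\argdot)\parallel
\]
for every $n\geq 1$ and every $x_{1},x_{2}\in \NSE{X}$. Thus it suffices to exhibit a representation with $\parallel \NSE{P}_{x_{1}}(\argdot)-\NSE{P}_{x_{2}}(\argdot)\parallel\approx 0$ whenever $x_{1},x_{2}\in B(s)$. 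For (ii), Assumption DSF together with the axiom of choice yields a function $\delta^{*}\colon X\times \Reals_{>0}\to \Reals_{>0}$ with $d(x,y)<\delta^{*}(x,\epsilon)\Rightarrow \parallel P_{x}-P_{y}\parallel<\epsilon$; its transfer is an internal function on $\NSE{X}\times \NSE{\Reals}_{>0}$. Fixing a positive infinitesimal $\delta_{0}$ and an infinite $N\in \NSE{\Nats}$, I would define the internal map $\rho(s)\defas\min(\NSE{\delta^{*}}(s,1/N),\delta_{0})$, which is pointwise positive, uniformly bounded by $\delta_{0}$, and satisfies $\NSE{d}(s,y)<\rho(s)\Rightarrow \parallel \NSE{P}_{s}-\NSE{P}_{y}\parallel<1/N\approx 0$.

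\medskip

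For (iii), the internal family $\{\NSE{B}(s,\rho(s)/2):s\in \overline{U(a_{0},r_{0})}\}$ is an internal open cover of the internally compact set $\overline{U(a_{0},r_{0})}$, so by the transfer of Heine--Borel it admits a hyperfinite subcover indexed by some $\tilde S$. I would internally well-order $\tilde S$ and disjointify the subcover, dropping any center that is absorbed into an earlier cell; this produces a hyperfinite base set $S\subseteq \tilde S$ together with internal Borel pieces $\{B(s)\}_{s\in S}$ which satisfy items 1--6 of Definition \ref{hyperapproxsp}: $s\in B(s)$, the pieces are pairwise disjoint, each has diameter at most $\delta_{0}$, and their union is $\overline{U(a_{0},r_{0})}\supseteq \NS{\NSE{X}}$. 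The verification is then immediate: for $x_{1},x_{2}\in B(s)$, $\NSE{d}(s,x_{i})\leq \rho(s)$ gives $\parallel \NSE{P}_{s}-\NSE{P}_{x_{i}}\parallel<1/N$, so the triangle inequality combined with (i) yields $\parallel \NSE{P}^{(n)}_{x_{1}}(\argdot)-\NSE{P}^{(n)}_{x_{2}}(\argdot)\parallel<2/N\approx 0$, which is exactly $\NSE{g}(x_{1},n,A)\approx \NSE{g}(x_{2},n,A)$ for every $A\in \NSE{\BorelSets{X}}$.

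\medskip

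The main obstacle is the bookkeeping inside the disjointification in step (iii): the proof of Theorem \ref{exhyper} uses a single uniform infinitesimal diameter, whereas here I need to carry out essentially the same construction while tracking a size function that varies with the center, and ensure that the surviving centers still lie in their own cells and that the union still swallows $\NS{\NSE{X}}$. The rest of the argument --- the one-step-to-$n$-step reduction and the consequence of DSF via transfer --- is essentially formal.
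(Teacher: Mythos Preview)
The paper does not contain its own proof of this lemma: it is quoted verbatim as \citep[][Lemma~9.14]{Markovpaper}, so there is nothing here to compare your argument against directly. I will therefore assess your proposal on its own merits.

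Your overall strategy is sound and is the natural one: transfer the DSF modulus to an internal function, use it to produce a locally adapted infinitesimal radius $\rho(s)$, cover the $*$-compact set $\overline{U(a_0,r_0)}$ by the internal family $\{\NSE{B}(s,\rho(s)/2)\}$, extract a hyperfinite subcover, and disjointify. The reduction from $n$ steps to one step via $\parallel \mu Q-\nu Q\parallel\leq\parallel\mu-\nu\parallel$ is correct and clean.

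The gap is exactly where you suspect it, and it is a real one as written. Your disjointification rule ``drop any center that is absorbed into an earlier cell'' does \emph{not} preserve coverage: if $s_i$ is dropped because $s_i\in B(s_j)$ for some earlier $j$, there may still be points of $\overline{U(a_0,r_0)}$ that lie in $\NSE{B}(s_i,\rho(s_i)/2)$ and in no other ball of the subcover, and these points are now orphaned. The fix is simple but you should state it: do \emph{not} drop centers. Instead, disjointify first by setting $C(s_i)=\NSE{B}(s_i,\rho(s_i)/2)\setminus\bigcup_{j<i}\NSE{B}(s_j,\rho(s_j)/2)$, intersect with $\overline{U(a_0,r_0)}$, discard empty cells, and then for each surviving cell choose (by internal choice) a new representative $t_i\in C(s_i)$. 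Since $C(s_i)\subseteq \NSE{B}(s_i,\rho(s_i)/2)$, any $x_1,x_2\in C(s_i)$ satisfy $\NSE{d}(x_k,s_i)<\rho(s_i)$ and hence $\parallel \NSE{P}_{x_1}-\NSE{P}_{x_2}\parallel<2/N$, which is what you need; the representative $t_i$ need not be the original center $s_i$. With this adjustment, items 1--6 of Definition~\ref{hyperapproxsp} go through and your argument is complete.
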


We shall fix such a $(S, \{B(s)\}_{s\in S})$ for the rest of the paper. 
When $X$ is non-compact, $\bigcup_{s\in S}B(s)\neq {^{*}X}$. Hence, we need to truncate ${^{*}g}$ to be an internal probability measure on $\bigcup_{s\in S}B(s)$.

\begin{defn}\label{dtrunchain}
For $i\in \{0,1\}$, let $g'(x,i,A): \bigcup_{s\in S}B(s)\times {^{*}\BorelSets X} \to {^{*}[0,1]}$ be given by:
\[
g'(x,i,A)={^{*}g(x,i,A\cap\bigcup_{s\in S}B(s))}+\delta(x,A){^{*}g(x,i, {^{*}X}\setminus\bigcup_{s\in S}B(s))}.
\]
where $\delta(x,A)=1$ if $x\in A$ and $\delta(x,A)=0$ if otherwise.
\end{defn}

We now define a hyperfinite Markov process $\{X'_t\}_{t\in T}$ on $S$ by specifying its internal transition kernels.
We will use $G_{i}^{(t)}(\{j\})$ or $G_{ij}^{(t)}$ to denote the internal transition probability from $i$ to $j$ at time $t$.
For $i,j\in S$, define $G_{ij}^{(0)}=g'(i,0,B(j))$ and $G_{ij}=g'(i,1,B(j))$.
For every $t\in T$, we define $G_{ij}^{(t)}$ by the inductive formula $G_{ij}^{(t+1)} = \sum_{k} G_{ik} G_{kj}^{(t)}$.
For any internal set $A\subset S$ and any $i\in S$, let $G_{i}^{(0)}(A)=\sum_{j\in A}G_{ij}^{(0)}$ and $G_{i}(A)=\sum_{j\in A}G_{ij}$.
It follows from definition that $G_{ij}^{(0)}=1$ if $i=j$ and $G_{ij}^{(0)}=0$ otherwise.
It is straightforward to verify that $G_{i}^{(t)}(\cdot)$ defines an internal probability measure on $S$ for every $t\in \Nats$ and $i\in S$.

\begin{lemma}[{\citep[][Lemma.~8.13]{Markovpaper}}]\label{allinternal}
For any $i\in S$ and any $t\in \Nats$, $G_{i}^{(t)}(\cdot)$ is an internal probability measure on $(S,\mathcal{I}(S))$.
\end{lemma}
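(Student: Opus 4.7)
The plan is to prove the lemma by induction on $t$, with the base cases $t=0$ and $t=1$ handled directly from the definition of $g'$ and the inductive step handled via the recursion $G_{ij}^{(t+1)}=\sum_{k}G_{ik}G_{kj}^{(t)}$. Internality is automatic throughout: for each standard $t\in\Nats$, $G_{ij}^{(t)}$ is built from $G_{ij}$ by finitely many applications of internal hyperfinite sums and products, and $G_i^{(t)}(A)=\sum_{j\in A}G_{ij}^{(t)}$ is internal whenever $A\in\mathcal{I}(S)$ because $A$ is then hyperfinite. So the real content is to check non-negativity, that the total mass equals $1$, and hyperfinite additivity on internal subsets of $S$.

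For $t=0$: since $B(j)\cap B(i)=\emptyset$ for $j\neq i$ and $i\in B(i)\subset \bigcup_{s}B(s)$, the convention $g(x,0,A)=\delta(x,A)$ transfers to give $\NSE{g}(i,0,B(j))=\delta_{ij}$ and $\NSE{g}(i,0,\NSE{X}\setminus\bigcup_{s}B(s))=0$, so $G_{ij}^{(0)}=\delta_{ij}$ and $G_i^{(0)}$ is the internal point mass at $i$. For $t=1$: each $G_{ij}=g'(i,1,B(j))\geq 0$ since it is a sum of non-negative internal measures, and noting that the $\delta(i,B(j))$ factor in $g'$ vanishes unless $j=i$, the disjointness of the $B(j)$ and hyperfinite additivity of $\NSE{g}(i,1,\cdot)$ give
\[
\sum_{j\in S}G_{ij}=\sum_{j\in S}\NSE{g}(i,1,B(j))+\NSE{g}(i,1,\NSE{X}\setminus\textstyle\bigcup_{s}B(s))=\NSE{g}(i,1,\NSE{X})=1.
\]
For the inductive step, assuming $\sum_{j}G_{kj}^{(t)}=1$ for all $k$, the transfer of Fubini for hyperfinite sums and the inductive hypothesis give
\[
\sum_{j\in S}G_{ij}^{(t+1)}=\sum_{k\in S}G_{ik}\sum_{j\in S}G_{kj}^{(t)}=\sum_{k\in S}G_{ik}=1,
\]
and non-negativity of $G_{ij}^{(t+1)}$ is clear. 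Hyperfinite additivity of $A\mapsto\sum_{j\in A}G_{ij}^{(t+1)}$ on $\mathcal{I}(S)$ then follows by transferring finite additivity of sums over disjoint subsets of a hyperfinite set.

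The only genuinely delicate point is the truncation term in the definition of $g'$, which exists precisely so that the mass that $\NSE{g}(i,1,\cdot)$ assigns to $\NSE{X}\setminus\bigcup_{s}B(s)$ (nonzero in the non-compact setting) is absorbed into the $i$-coordinate, preserving the total mass of $G_i$. If I did not track this term carefully, the base case $t=1$ would yield $\sum_{j}G_{ij}\leq 1$ rather than equality, and the inductive argument would degenerate. Beyond this bookkeeping, the proof is routine transfer of the standard verification that a finite stochastic matrix raised to finite powers remains stochastic.
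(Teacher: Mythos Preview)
The paper does not supply its own proof of this lemma: it is quoted verbatim as Lemma~8.13 of \cite{Markovpaper} and left unproved here. So there is no in-paper argument to compare against; your proof stands on its own and is correct. The induction on standard $t$, with the base case $t=1$ handled by carefully tracking the truncation term in $g'$ so that the mass $\NSE{g}(i,1,\NSE{X}\setminus\bigcup_s B(s))$ is absorbed into $G_{ii}$, is exactly the natural argument (the transfer of ``powers of a stochastic matrix are stochastic''), and your remark about why the truncation is essential in the non-compact case is on point.
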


We now quote the following two key results from \citep{Markovpaper}.
\begin{thm}[{\citep[][Thm.~8.14]{Markovpaper}}]\label{Gapprox}
Suppose $\{g(x,1,\cdot)\}_{x\in X}$ satisfies Section \ref{assumptiondsf}.
Then for any $t\in \Nats$, any $s\in \NS{S}$ and any $A\in {^{*}\BorelSets X}$, we have ${^{*}g}(s,t,\bigcup_{a\in A\cap S}B(a))\approx G_{s}^{(t)}(A\cap S)$.
\end{thm}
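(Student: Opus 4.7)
The plan is induction on $t \in \Nats$, with Lemma~\ref{dsfconsequence} serving as the engine that pushes infinitesimal approximations through iterated transitions. For the base case $t = 0$, both sides reduce to the indicator $\bone_A(s)$: on the hyperfinite side by the definitions of $G^{(0)}$ and $g'$ combined with the disjointness of $\{B(s)\}_{s \in S}$ and the fact that $s \in B(s)$, and on the measure-theoretic side for exactly the same reason. For the $t = 1$ case, Definition~\ref{dtrunchain} expands $G_s(A \cap S) = \sum_{j \in A \cap S} {}^*g(s, 1, B(j)) + \bone_A(s) \cdot {}^*g(s, 1, {}^*X \setminus \bigcup_{k \in S} B(k))$, and hyperfinite additivity collapses the first sum to ${}^*g(s, 1, \bigcup_{a \in A \cap S} B(a))$. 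The remaining tail is infinitesimal for near-standard $s$: item~(5) of Definition~\ref{hyperapproxsp} places ${}^*X \setminus \bigcup_{k \in S} B(k)$ at infinite $\NSE{d}$-distance from $s$, and Assumption~\ref{assumptiondsf} combined with transfer of the tightness of $g(\ST(s), 1, \cdot)$ forces its ${}^*g(s,1,\cdot)$-mass to be infinitesimal.

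For the inductive step from $t$ to $t + 1$, I apply Chapman--Kolmogorov on both sides. On the measure side, the partition ${}^*X = \bigsqcup_{k \in S} B(k) \sqcup ({}^*X \setminus \bigcup_{k \in S} B(k))$ yields
\[
{}^*g(s, t+1, \textstyle\bigcup_{a \in A \cap S} B(a)) = \sum_{k \in S} \int_{B(k)} {}^*g(y, t, \textstyle\bigcup_{a \in A \cap S} B(a)) \, {}^*g(s, 1, \dee y) + (\mathrm{tail}).
\]
For each near-standard $k \in S$, Lemma~\ref{dsfconsequence} gives ${}^*g(y, t, \cdot) \approx {}^*g(k, t, \cdot)$ in total variation uniformly in $y \in B(k)$, so the integrand is infinitesimally close to the constant ${}^*g(k, t, \bigcup_{a \in A \cap S} B(a))$, which the inductive hypothesis replaces by $G_k^{(t)}(A \cap S)$. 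The $t = 1$ identification ${}^*g(s, 1, B(k)) \approx G_{sk}$ then reassembles the sum as $\sum_{k \in S} G_{sk} \, G_k^{(t)}(A \cap S) = G_s^{(t+1)}(A \cap S)$.

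The main obstacle is that term-by-term infinitesimal closeness does not automatically survive summation over a hyperfinite index set. I would address this via a standard saturation/overflow argument: Lemma~\ref{dsfconsequence} provides total-variation approximation at every standard level $1/n$ uniformly in $k$, and overflow produces an internal infinitesimal $\eta$ for which the approximation holds at level $\eta$ for \emph{all} $k \in S$ simultaneously, bounding the aggregate error by $\eta \cdot 1 \approx 0$. A parallel concentration argument disposes both of the tail and of the mass that ${}^*g(s, 1, \cdot)$ places on $B(k)$ with $k \in S \setminus \NS{S}$: since $\ST(s) \in X$ and $g(\ST(s), 1, \cdot)$ is tight on $X$, Assumption~\ref{assumptiondsf} forces all but an infinitesimal portion of ${}^*g(s, 1, \cdot)$ to sit on monads of standard points, so the non-near-standard indices contribute only infinitesimal mass to both sides. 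Iterating closes the induction for every $t \in \Nats$.
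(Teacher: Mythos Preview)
The paper does not prove this theorem; it is quoted verbatim from \citep[][Thm.~8.14]{Markovpaper} without argument. So there is no in-paper proof to compare against directly. That said, your induction-on-$t$ strategy is exactly the one the paper uses for the closely analogous Theorem~\ref{lazystar} (which in fact invokes Theorem~\ref{Gapprox} as a black box), and your handling of the base cases and the Chapman--Kolmogorov decomposition is correct.

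One methodological difference worth noting: where you invoke an overflow argument to pass from pointwise-infinitesimal errors to a uniform bound over the hyperfinite index set, the paper (in its proof of Theorem~\ref{lazystar} and also Theorem~\ref{stopprob}) instead uses Lemma~\ref{tvfunction} to control sums of the form $\sum_k f(k)\,\mu_1(\{k\}) - \sum_k f(k)\,\mu_2(\{k\})$ by $\|\mu_1 - \mu_2\|$, which sidesteps the need to argue uniformity explicitly. Both routes work. Your overflow formulation is cleaner in one respect: the statement of Lemma~\ref{dsfconsequence} is pointwise in $s \in S$, but since $\max_{s \in S} \sup_{x_1,x_2 \in B(s)} |{}^*g(x_1,n,A') - {}^*g(x_2,n,A')|$ is internal and achieved at some $s_0$, its being non-infinitesimal would contradict the lemma at $s_0$, so uniformity is automatic. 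Your concentration argument for the non-near-standard $k$ is likewise correct and necessary here (unlike in Theorem~\ref{stopprob}, where the paper assumes $X$ compact so that $\NS{S} = S$); the key point is that for any standard compact $K$, the internal set $S \cap {}^*K$ consists entirely of near-standard points and carries all but infinitesimal ${}^*g(s,1,\cdot)$-mass, so the same max-is-achieved argument gives uniform control of the induction-hypothesis error over $S \cap {}^*K$.
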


\begin{thm}[{\citep[][Lemma.~8.15]{Markovpaper}}]\label{disrepresent}
Suppose $\{g(x,1,\cdot)\}_{x\in X}$ satisfies Section \ref{assumptiondsf}. Then for any $s\in \NS{S}$, any $t\in \Nats$ and any $E\in \BorelSets X$, $g(\ST(s),t,E)=\overline{G}_{s}^{(t)}(\ST^{-1}(E)\cap S)$.
\end{thm}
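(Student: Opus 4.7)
I would view each side of the claimed identity as a set function on $\BorelSets{X}$ in the variable $E$, show both are Borel probability measures, and verify agreement on a rich enough $\pi$-system. Write $x_0 := \ST(s)$ and define $\mu(E) := g(x_0, t, E)$ and $\nu(E) := \overline{G}_s^{(t)}(\ST^{-1}(E) \cap S)$. That $\nu$ is a well-defined Borel probability measure requires (i) $\ST^{-1}(E) \cap S$ belongs to the Loeb $\sigma$-algebra for each Borel $E$, a standard fact for Loeb measures on $\sigma$-compact metric spaces, and (ii) $\overline{G}_s^{(t)}(S \cap \NS{S}) = 1$, which follows from Theorem \ref{Gapprox} applied with $A = \NSE{X}$ together with tightness of $g(x_0, t, \cdot)$ on the $\sigma$-compact space $X$: no mass escapes from the near-standard part in a standard number of steps starting from a near-standard state.

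By the $\pi$-$\lambda$ theorem, it suffices to show $\mu(F) = \nu(F)$ for every $F$ in the class $\mathcal{C}_\mu := \{F \subseteq X \text{ closed} : \mu(\partial F) = 0\}$. This class is a $\pi$-system (since $\partial(F_1 \cap F_2) \subseteq \partial F_1 \cup \partial F_2$) and it generates $\BorelSets{X}$, because for any closed $F$ the $\epsilon$-fattenings $F^\epsilon = \{x : d(x, F) \leq \epsilon\}$ lie in $\mathcal{C}_\mu$ for all but countably many $\epsilon > 0$ (the level sets $\{d(\cdot, F) = \epsilon\}$ are pairwise disjoint), and $F = \bigcap_n F^{\epsilon_n}$ for a suitable sequence $\epsilon_n \downarrow 0$.

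The internal quantity $\NSE{g}(s, t, \NSE{F})$ serves as the bridge between $\mu(F)$ and $\nu(F)$. Iterating Lemma \ref{dsfconsequence} $t$ times (routing the intermediate states through a common $B(\cdot)$ cell) gives $\NSE{g}(s, t, \NSE{F}) \approx \NSE{g}(x_0, t, \NSE{F})$, and by transfer the latter equals $g(x_0, t, F) = \mu(F)$. Meanwhile, Theorem \ref{Gapprox} with $A = \NSE{F}$ yields
\[
G_s^{(t)}(\NSE{F} \cap S) \approx \NSE{g}\Big(s, t, \bigcup_{a \in \NSE{F} \cap S} B(a)\Big),
\]
and because each $B(a)$ has infinitesimal diameter, the difference between the set on the right and $\NSE{F}$ is concentrated within the nonstandard extension of an arbitrarily small neighborhood of $\partial F$. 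The hypothesis $\mu(\partial F) = 0$ then forces this difference to be infinitesimal under $\NSE{g}(s, t, \cdot)$. Taking standard parts and using that $\NSE{F} \cap S$ is internal gives $\overline{G}_s^{(t)}(\NSE{F} \cap S) = \mu(F)$.

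Finally, I would replace $\NSE{F} \cap S$ by $\ST^{-1}(F) \cap S$ in the Loeb measure. Because $F$ is closed, the symmetric difference between these two sets lies in $(\ST^{-1}(\partial F) \cap S) \cup (S \setminus \NS{S})$; the second term has Loeb measure $0$ by (ii), and the first has Loeb measure $0$ by a monotone outer approximation of $\partial F$ by open neighborhoods $V_n \downarrow \partial F$ with $\mu(V_n) \to 0$, combined with inner regularity of $\overline{G}_s^{(t)}$ and the bound on $\overline{G}_s^{(t)}(\NSE{V_n^c}^{c} \cap S)$ obtained from the previous step applied to the closed approximants of $V_n^c$. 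This delicate handling of the boundary of $F$ and of escape to non-near-standard states is, in my view, the principal technical obstacle; the rest of the argument is a direct assembly of Theorem \ref{Gapprox}, Lemma \ref{dsfconsequence}, and the classical regularity theory of Loeb measures on $\sigma$-compact metric spaces.
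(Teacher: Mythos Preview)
The paper does not supply its own proof of this statement: it is quoted directly as Lemma~8.15 of \citep{Markovpaper} and no argument appears in the present text, so there is nothing in-paper to compare your proposal against.

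On the substance of your sketch, the strategy is sound but two points deserve tightening. First, your appeal to ``iterating Lemma~\ref{dsfconsequence}'' to get $\NSE{g}(s,t,\NSE F)\approx \NSE{g}(\ST(s),t,\NSE F)$ is not quite right: that lemma compares points in the \emph{same} cell $B(s)$, and $\ST(s)$ need not lie in $B(s)$. What you actually want is the direct transfer of Assumption~\ref{assumptiondsf}, which gives $\|\NSE{g}(s,1,\cdot)-\NSE{g}(\ST(s),1,\cdot)\|\approx 0$ from $s\approx \ST(s)$; a one-step coupling then propagates this to any finite $t$. Second, the detour through $\mu$-continuity sets and the subsequent boundary bookkeeping is correct in outline but heavier than necessary, and it is exactly where your write-up becomes sketchy. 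A shorter route, and the one typically used in this setting, is to invoke Lemma~\ref{loebapproximate} directly for the \emph{standard} measure $g(\ST(s),t,\cdot)$, obtaining $g(\ST(s),t,E)=\overline{\NSE{g}(\ST(s),t,\cdot)}(\ST^{-1}(E))$ for \emph{every} Borel $E$ with no boundary condition. Assumption~\ref{assumptiondsf} then identifies this Loeb measure with $\overline{\NSE{g}(s,t,\cdot)}$, and Theorem~\ref{Gapprox} (applied to internal inner and outer approximants of $\ST^{-1}(E)\cap S$) identifies the latter with $\overline{G}_s^{(t)}$ on $\ST^{-1}(E)\cap S$. This eliminates the $\pi$-system step and the symmetric-difference analysis you flag as the principal obstacle.
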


These theorems shows that the transition probabilities of $\{X_t\}_{t\in \Nats}$ agree with the Loeb extension of the internal transition probabilities of $\{X'_t\}_{t\in T}$ via standard part map. Such $\{X'_t\}_{t\in \Nats}$ is called a hyperfinite representation of $\{X_t\}_{t\in \Nats}$.

\subsection{Hyperfinite Representation of Lazy Chain}\label{seclazy}
For discrete-time Markov processes, one considers a lazy version of the original Markov process to avoid periodicity and near-periodicity issues. Let $g$ be the transition kernel of a discrete-time Markov process. We denote by $g_{L}$ the lazy kernel of $g$, given by the formula $g_L(x,1,A)=\frac{1}{2}g(x,1,A)+\frac{1}{2}\delta(x,A)$ for every $x\in X$ and every $A\in \BorelSets X$, where we recall $\delta(x,A)=1$ if $x\in A$ and $\delta_{x}(A)=0$ if $x\not\in A$.
Note that $g_{L}$ generally does not satisfy Section \ref{assumptiondsf} even if $g$ does. Suppose $g$ satisfies Section \ref{assumptiondsf} and let $G$ be a hyperfinite representation of $g$. In addition, let $\{X'_t\}_{t\in T}$ be a hyperfinite Markov process associated with the internal transition kernel $G$. Both $G$ and $\{X'_t\}_{t\in T}$ will be fixed until the applications in Section \ref{statapp}.

The lazy chain of $\{X'_t\}_{t\in T}$ is defined to be a hyperfinite Markov process with transition probabilities $L_{ij}^{(0)}=G_{ij}^{(0)}$ and $L_{ij}=\frac{1}{2}G_{ij}+\frac{1}{2}\Delta(i,j)$, where $\Delta(i,j)=1$ if $i=j$ and $\Delta(i,j)=0$ if $i\neq j$. Thus, for every $i\in S$ and $A\in \mathcal{I}(S)$ we have
\[
L_{i}(A)=\sum_{j\in A}L_{ij}=\sum_{j\in A}(\frac{1}{2}G_{ij}+\frac{1}{2}\Delta(i,j))=\frac{1}{2}G_{i}(A)+\frac{1}{2}\Delta(i,A)
\]
where $\Delta(i,A)=1$ if $i\in A$ and $\Delta(i,A)=0$ if $i\not\in A$. For every $i\in S$, $A\in \mathcal{I}(S)$ and every $t\in T$, we define $L_{i}^{(t+1)}(A)$ by the inductive formula $L_{i}^{(t+1)}(A)=\sum_{j\in S}L_{ij}L_{j}^{(t)}(A)$.

Before proving the main result of this section, we quote the following useful lemma.
\begin{lemma}[{\citep[][Lemma.~7.24]{Markovpaper}}] \label{tvfunction}
Let $P_1$ and $P_2$ be two internal probability measures on a hyperfinite set $S$. Then
\[
\parallel P_1(\cdot)-P_2(\cdot) \parallel \geq \NSE{\sup}_{f: S\to {^{*}[0,1]}}|\sum_{i\in S}P_{1}(\{i\})f(i)-\sum_{i\in S}P_{2}(\{i\})f(i)|,
\]
where $\parallel P_1(\cdot)-P_2(\cdot) \parallel=\NSE{\sup}_{A\in \mathcal{I}(S)}|P_{1}(A)-P_{2}(A)|$ and the $\NSE{\sup}$ is taken over all internal functions.
\end{lemma}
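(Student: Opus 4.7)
The plan is to obtain the inequality by transferring the standard duality between total variation distance and expectations of $[0,1]$-valued test functions from the finite case to the hyperfinite one. First I would establish the following standard finite statement: if $Q_1, Q_2$ are probability measures on a finite set $S$ and $f : S \to [0,1]$, then
\[
\Bigl|\sum_{i \in S} f(i)\bigl(Q_1(\{i\}) - Q_2(\{i\})\bigr)\Bigr| \;\leq\; \sup_{A \subseteq S} |Q_1(A) - Q_2(A)|.
\]
Once this standard inequality is in hand, the transfer principle will yield the hyperfinite analogue for every internal $f : S \to {^{*}[0,1]}$, and taking the internal supremum will give the lemma.

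For the standard finite inequality, I would use a Jordan-type decomposition. Let $A^+ = \{i \in S : Q_1(\{i\}) \geq Q_2(\{i\})\}$ and $A^- = S \setminus A^+$, and note that
\[
\sup_{A \subseteq S} |Q_1(A) - Q_2(A)| \;=\; (Q_1 - Q_2)(A^+) \;=\; (Q_2 - Q_1)(A^-).
\]
Now split $\sum_{i \in S} f(i)(Q_1 - Q_2)(\{i\})$ into the contributions from $A^+$ and from $A^-$. Because $0 \leq f \leq 1$, the $A^+$-contribution lies in $[0,(Q_1-Q_2)(A^+)]$ while the $A^-$-contribution lies in $[-(Q_2-Q_1)(A^-),0]$. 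Either both contributions are bounded in absolute value by $\|Q_1 - Q_2\|$, so their sum is too.

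Finally, the inequality displayed above is expressible as a bounded first-order sentence quantifying over finite sets $S$, over probability measures on $S$, and over functions $f : S \to [0,1]$. By the transfer principle, the corresponding $^*$-statement holds in the nonstandard universe: for any hyperfinite $S$, any two internal probability measures $P_1, P_2$ on $(S, \mathcal{I}(S))$, and any internal $f : S \to {^{*}[0,1]}$, the same bound holds with $\NSE{\sup}_{A \in \mathcal{I}(S)} |P_1(A) - P_2(A)|$ on the right-hand side. Since this bound is uniform over internal $f$, taking the internal supremum on the left gives exactly the inequality in the lemma. The only mild subtlety—and the main point to be careful about—is that the transferred sentence quantifies over \emph{internal} $f$ and \emph{internal} $A$, which is why the statement features $\NSE{\sup}$ rather than an external supremum; no external supremum claim is being made or needed.
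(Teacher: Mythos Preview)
The paper does not give its own proof of this lemma; it is quoted verbatim from \citep{Markovpaper} as Lemma~7.24 and used as a black box. So there is nothing to compare against, and your proposal stands on its own.

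Your argument is correct. The standard finite inequality is exactly the well-known duality for total variation distance, and your Jordan decomposition argument is the usual way to prove it: with $A^+$ and $A^-$ as you define them, the $A^+$-contribution lies in $[0,(Q_1-Q_2)(A^+)]$ and the $A^-$-contribution in $[-(Q_2-Q_1)(A^-),0]$, hence their sum lies in $[-\|Q_1-Q_2\|,\|Q_1-Q_2\|]$. (The sentence beginning ``Either both contributions\ldots'' is slightly garbled; presumably you meant ``Since both contributions\ldots''.) The passage by transfer is routine, and you are right to flag that transfer yields the inequality only for internal $f$ and internal $A$, which is exactly what the statement asserts via $\NSE{\sup}$.

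One small remark: in fact the standard inequality is an equality (take $f=\mathbf{1}_{A^+}$), so the lemma as stated is actually an equality too; but only the $\geq$ direction is used in the paper, so your one-sided proof suffices.
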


We now prove the following representation theorem, which is similar in spirit to Theorem \ref{Gapprox}:

\begin{thm}\label{lazystar}
Suppose $\{g(x,1,\cdot)\}_{x\in X}$ satisfies Section \ref{assumptiondsf}.
Then for any $t\in \Nats$, any $x\in \NS{\NSE{X}}$ and any $A\in {^{*}\BorelSets X}$,
we have ${^{*}g_{L}}(x,t,\bigcup_{a\in A\cap S}B(a))\approx L_{s_{x}}^{(t)}(A\cap S)$ where $s_{x}$ is the unique element in $S$ such that $x\in B(s_x)$.
\end{thm}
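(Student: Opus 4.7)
The plan is to reduce the statement about the lazy kernel to the known approximation result for the base kernel (Theorem \ref{Gapprox}) via the binomial identity
\[
g_{L}(x, t, E) \;=\; 2^{-t} \sum_{k=0}^{t} \binom{t}{k}\, g(x, k, E),
\]
where $g(x, 0, E) = \delta(x, E)$ is the identity kernel. This holds because, viewing kernels as operators acting on probability measures, $g_{L} = \tfrac{1}{2}(g + \mathrm{Id})$ and $g$ commutes with the identity kernel under Chapman--Kolmogorov composition (both $g*\mathrm{Id}$ and $\mathrm{Id}*g$ equal $g$). A short induction on $t$ verifies this expansion in the standard setting, and the transfer principle then yields the same identity for $\NSE{g_{L}}$ evaluated at $x \in \NSE{X}$ and $E \in \NSE{\BorelSets{X}}$. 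The entirely analogous calculation for the hyperfinite matrices, writing $L = \tfrac{1}{2}(G + \Delta)$, gives
\[
L_{s_{x}}^{(t)}(B) \;=\; 2^{-t} \sum_{k=0}^{t} \binom{t}{k}\, G_{s_{x}}^{(k)}(B), \qquad B \in \mathcal{I}(S).
\]

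Taking $E = \bigcup_{a \in A \cap S} B(a)$ and $B = A \cap S$ and subtracting, the theorem reduces to showing that for every standard $k \in \{0, 1, \dots, t\}$,
\[
\NSE{g}(x, k, E) \;\approx\; G_{s_{x}}^{(k)}(B).
\]
For $k = 0$ this is actually exact equality: since $\{B(s)\}_{s \in S}$ is pairwise disjoint and $x \in B(s_{x})$, we have $x \in E$ if and only if $s_{x} \in B$, so both sides equal the same indicator. For $k \geq 1$, I would first use Lemma \ref{dsfconsequence} to replace $x$ by $s_{x}$---both points lie in $B(s_{x})$---obtaining $\NSE{g}(x, k, E) \approx \NSE{g}(s_{x}, k, E)$. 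Because $x \in \NS{\NSE{X}}$ and $\NSE{d}(x, s_{x}) \leq \delta \approx 0$, the point $s_{x}$ is itself near-standard, so $s_{x} \in \NS{S}$ and Theorem \ref{Gapprox} then yields $\NSE{g}(s_{x}, k, E) \approx G_{s_{x}}^{(k)}(B)$; chaining the two approximations gives the claim.

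Since $t$ is a standard natural number, the sum has only $t+1$ terms and the coefficients $2^{-t}\binom{t}{k}$ are standard reals in $[0,1]$. A finite sum of infinitesimals with bounded standard coefficients is infinitesimal, so
\[
\NSE{g_{L}}(x, t, E) \,-\, L_{s_{x}}^{(t)}(B) \;\approx\; 0,
\]
which is the desired conclusion.

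The main obstacle is the preliminary algebraic setup for the binomial identity: one must confirm that kernel composition is associative, that $g$ commutes with the identity kernel, and that the resulting identity transfers in a form valid for internal $x$ and $E$. Once that is in place, the approximation is essentially automatic from Theorem \ref{Gapprox} and Lemma \ref{dsfconsequence} applied termwise, avoiding any direct induction over the lazy structure (which would otherwise force one to control the inductive hypothesis at non-near-standard $y$ appearing in the one-step integration).
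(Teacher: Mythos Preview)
Your argument is correct and takes a genuinely different route from the paper. The paper proves the theorem by induction on $t$: for the step from $n$ to $n+1$ it expands
\[
\NSE{g_{L}}(x,n+1,E)=\int \NSE{g_{L}}(y,n,E)\,\NSE{g_{L}}(x,1,\dee y),
\]
invokes the inductive hypothesis on the inner factor, and then compares the resulting sum with the hyperfinite recursion for $L^{(n+1)}$ via Lemma \ref{dsfconsequence} and Lemma \ref{tvfunction}, splitting into the cases $\NSE{g}(x,1,\bigcup_{s\neq s_x}B(s))\approx 0$ and $\not\approx 0$. Your binomial expansion bypasses all of this: by writing both $\NSE{g_{L}}(x,t,\cdot)$ and $L_{s_x}^{(t)}$ as the same standard-length convex combination of $\NSE{g}(x,k,\cdot)$ and $G_{s_x}^{(k)}$ respectively, you reduce the whole statement to one appeal each to Lemma \ref{dsfconsequence} and Theorem \ref{Gapprox} per index $k$, and a finite sum of infinitesimals. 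This is shorter and, as you correctly observe, it avoids having to justify the inductive hypothesis at every integration point $y\in\bigcup_{s\in S}B(s)$ rather than only at near-standard $y$---a point the paper's induction step glosses over. The tradeoff is that your argument leans on the specific algebraic form $g_L=\tfrac12(g+\mathrm{Id})$; the paper's inductive scheme would adapt to more general mixtures or perturbations where no commuting binomial expansion is available.
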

\begin{proof}
We prove it by induction on $t\in \Nats$. Let $t=1$. Pick any $x\in \NS{\NSE{X}}$ and any $A\in \NSE{\BorelSets X}$, by Lemma \ref{dsfconsequence} and Theorem \ref{Gapprox}, we have
\[
{^{*}g_{L}}(x,1,\bigcup_{a\in A\cap S}B(a))&=\frac{1}{2}\NSE{g}(x,1,\bigcup_{a\in A\cap S}B(a))+\frac{1}{2}\NSE{\delta}(x,\bigcup_{a\in A\cap S}B(a))\\
&\approx \frac{1}{2}\NSE{g}(s_x,1,\bigcup_{a\in A\cap S}B(a))+\frac{1}{2}\Delta(s_x,A\cap S)\\
&\approx \frac{1}{2}G_{s_{x}}(A\cap S)+\frac{1}{2}\Delta(s_{x},A\cap S)=L_{s_{x}}(A\cap S).
\]

Suppose the theorem holds for $t=n$. We now show that the theorem holds for $t=n+1$.
By the transfer of the Markov property, we have
\[\label{glcalstart}
&{^{*}g_{L}}(x,n+1,\bigcup_{a\in A\cap S}B(a))\\
&=\int \NSE{g_{L}}(y,n,\bigcup_{a\in A\cap S}B(a))\NSE{g_{L}}(x,1,\dee y)\\
&\approx \int_{\bigcup_{s\in S}B(s)}\NSE{g_{L}}(y,n,\bigcup_{a\in A\cap S}B(a))\NSE{g_{L}}(x,1,\dee y),
\]
where the last $\approx$ follows from the fact that $\NSE{g_{L}}(x,1,\bigcup_{s\in S}B(s))=1$.
By the induction hypothesis, we know that $\NSE{g_{L}}(y,n,\bigcup_{a\in A\cap S}B(a))\approx L_{s_{y}}^{(n)}(A\cap S)$ for every $y\in \bigcup_{s\in S}B(s)$.
Thus, we have
\[
&\int_{\bigcup_{s\in S}B(s)}\NSE{g_{L}}(y,n,\bigcup_{a\in A\cap S}B(a))\NSE{g_{L}}(x,1,\dee y)\\
&\approx \int_{\bigcup_{s\in S}B(s)}L_{s_{y}}^{(n)}(A\cap S)\NSE{g_{L}}(x,1,\dee y)\\
&=\sum_{s\in S}L_{s}^{(n)}(A\cap S)\NSE{g_{L}}(x,1,B(s))\\
&=\sum_{s\in S}L_{s}^{(n)}(A\cap S)(\frac{1}{2}\NSE{g}(x,1,B(s))+\frac{1}{2}\NSE{\delta}(x,B(s)))\\
&=L_{s_x}^{(n)}(A\cap S)(\frac{1}{2}\NSE{g}(x,1,B(s_x))+\frac{1}{2})+\frac{1}{2}\sum_{s\neq s_x}L_{s}^{(n)}(A\cap S)\NSE{g}(x,1,B(s)). \label{glcalend}
\]
We must now calculate the second term. By Lemma \ref{dsfconsequence}, we have $\NSE{g}(x,1,B(s_x))\approx \NSE{g}(s_x,1,B(s_x))$. By Definition \ref{dtrunchain}, we know that $\NSE{g}(s_x,1,B(s_x))\approx G_{s_{x}s_{x}}$. 

We will now show that
\[
\sum_{s\neq s_x}L_{s}^{(n)}(A\cap S)\NSE{g}(x,1,B(s))\approx \sum_{s\neq s_x}L_{s}^{(n)}(A\cap S)\NSE{g}(s_x,1,B(s)) 
\]
by considering two cases: $\NSE{g}(x,1,\bigcup_{s\neq s_x}B(s))\approx 0$ and $\NSE{g}(x,1,\bigcup_{s\neq s_x}B(s))\not\approx 0$. If $\NSE{g}(x,1,\bigcup_{s\neq s_x}B(s))\approx 0$, by Section \ref{assumptiondsf}, we have $\NSE{g}(s_x,1,\bigcup_{s\neq s_x}B(s))\approx 0$.
Thus, we have $\sum_{s\neq s_x}L_{s}^{(n)}(A\cap S)\NSE{g}(x,1,B(s))\approx \sum_{s\neq s_x}L_{s}^{(n)}(A\cap S)\NSE{g}(s_x,1,B(s))\approx 0$.
 
In the case $\NSE{g}(x,1,\bigcup_{s\neq s_x}B(s))\not\approx 0$, by Section \ref{assumptiondsf}, we have $\NSE{g}(s_x,1,\bigcup_{s\neq s_x}B(s))\not\approx 0$.
This allows us to define $P_1,P_2: \mathcal{I}(S)\to \NSE{[0,1]}$ by the formulae $P_1(A)=\frac{\NSE{g}(x,1,\bigcup_{s\in (A\cap S\setminus\{s_x\})}B(s))}{\NSE{g}(x,1,\bigcup_{s\neq s_x}B(s))}$
and $P_2(A)=\frac{\NSE{g}(s_x,1,\bigcup_{s\in (A\cap S\setminus\{s_x\})}B(s))}{\NSE{g}(s_x,1,\bigcup_{s\neq s_x}B(s))}$. Then both $P_1$ and $P_2$ are internal probability measures on $S$.
By Section \ref{assumptiondsf}, we know that $\parallel P_1(\cdot)-P_2(\cdot) \parallel\approx 0$.
By Lemma \ref{tvfunction}, this implies
\[
\sum_{s\neq s_x}L_{s}^{(n)}(A\cap S)\NSE{g}(x,1,B(s))\approx \sum_{s\neq s_x}L_{s}^{(n)}(A\cap S)\NSE{g}(s_x,1,B(s))
\]
in our second case as well, so this equality always holds.

By Definition \ref{dtrunchain}, we know that $\NSE{g}(s_x,1,B(s))=G_{s_{x}s}$ for $s\neq s_{x}$. Hence we always have $\frac{1}{2}\sum_{s\neq s_x}L_{s}^{(n)}(A\cap S)\NSE{g}(x,1,B(s))\approx \frac{1}{2}\sum_{s\neq s_x}L_{s}^{(n)}(A\cap S)G_{s_{x}s}$. Thus, combining \eqref{glcalstart} to \eqref{glcalend}, we have
\[
&{^{*}g_{L}}(x,n+1,\bigcup_{a\in A\cap S}B(a))\\
&=L_{s_x}^{(n)}(A\cap S)(\frac{1}{2}\NSE{g}(x,1,B(s_x))+\frac{1}{2})+\frac{1}{2}\sum_{s\neq s_x}L_{s}^{(n)}(A\cap S)\NSE{g}(x,1,B(s))\\
&\approx L_{s_x}^{(n)}(A\cap S)(\frac{1}{2}G_{s_{x}s_{x}}+\frac{1}{2})+\frac{1}{2}\sum_{s\neq s_x}L_{s}^{(n)}(A\cap S)G_{s_{x}s}.
\]

On the other hand, we have
\[
L_{s_x}^{(n+1)}(A\cap S)&=\sum_{s\in S}L_{s_{x}s}L_{s}^{(n)}(A\cap S)\\
&=\sum_{s\in S}(\frac{1}{2}G_{s_{x}s}+\frac{1}{2}\Delta(s_x,s))L_{s}^{(n)}(A\cap S)\\
&=\sum_{s\neq s_x}\frac{1}{2}G_{s_{x}s}L_{s}^{(n)}(A\cap S)+(\frac{1}{2}G_{s_{x}s_{x}}+\frac{1}{2})L_{s_x}^{(n)}(A\cap S).
\]
Thus, we can conclude that ${^{*}g_{L}}(x,n+1,\bigcup_{a\in A\cap S}B(a))\approx L_{s_{x}}^{(n+1)}(A\cap S)$. By induction, we have the desired result.
\end{proof}

The following well-known nonstandard representation theorem is due to Robert Anderson.
\begin{lemma} [{\citep[][Thm ~3.3]{anderson87}}]\label{loebapproximate}
Let $(X,\BorelSets X,\mu)$ be a $\sigma$-compact Borel probability space.
Then $\ST$ is measure preserving from $({^{*}X},\overline{^{*}\BorelSets X},\overline{^{*}\mu})$ to $(X,\BorelSets X,\mu)$.
That is, we have $\mu(E)=\overline{^{*}\mu}(\ST^{-1}(E))$ for all $E\in \BorelSets X$.
\end{lemma}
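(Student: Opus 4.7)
The plan is to reduce to the compact case of this result (the classical Anderson theorem, taken as known) via exhaustion by compacts, and then to invoke continuity of the Loeb measure. First, by $\sigma$-compactness, fix an increasing exhaustion $X = \bigcup_{n \geq 1} K_n$ by compact sets, so that $\mu(K_n) \uparrow 1$. For each $n$, $\NSE{K_n}$ is internal, and by transfer $\overline{\NSE{\mu}}(\NSE{K_n}) = \ST(\NSE{\mu}(\NSE{K_n})) = \mu(K_n)$; since every point of $\NSE{K_n}$ is near-standard by Theorem \ref{HBfinite}, we have $\bigcup_n \NSE{K_n} \subseteq \NS{\NSE{X}}$, and hence $\overline{\NSE{\mu}}(\NS{\NSE{X}}) = 1$ by countable additivity of the Loeb measure.

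Next I would verify that $\ST^{-1}(E)$ is Loeb measurable for every Borel $E \subseteq X$. The collection of such $E$ is a $\sigma$-algebra: $\ST^{-1}$ commutes with countable unions, and $\ST^{-1}(X \setminus E) = \NS{\NSE{X}} \setminus \ST^{-1}(E)$ with $\NS{\NSE{X}}$ Loeb measurable from the previous step. Hence it suffices to handle closed $F$, for which, writing $F_{1/m}$ for the open $1/m$-neighborhood of $F$ in $X$, one has
\[
\ST^{-1}(F) = \NS{\NSE{X}} \cap \bigcap_{m \geq 1} \NSE{F_{1/m}},
\]
a countable intersection of Loeb-measurable sets. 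With measurability in hand, apply the compact Anderson theorem on each $(K_n, \BorelSets{K_n}, \mu|_{K_n})$ to obtain $\overline{\NSE{\mu}}(\ST^{-1}(F)) = \mu(F)$ for any Borel $F \subseteq K_n$. Taking $F = E \cap K_n$ and using $\ST^{-1}(E \cap K_n) = \ST^{-1}(E) \cap \NSE{K_n}$ (since $K_n$ is closed, a near-standard point has its standard part in $K_n$ iff it lies in $\NSE{K_n}$), I get $\overline{\NSE{\mu}}(\ST^{-1}(E) \cap \NSE{K_n}) = \mu(E \cap K_n)$. Finally, since $\ST^{-1}(E) = \bigcup_n \ST^{-1}(E \cap K_n)$ is an increasing union, continuity of the Loeb measure yields
\[
\overline{\NSE{\mu}}(\ST^{-1}(E)) = \lim_{n \to \infty} \mu(E \cap K_n) = \mu(E).
\]

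The main obstacle I anticipate is the bridge from the compact Anderson theorem, which is naturally formulated for a Loeb space built intrinsically from $(K_n, \mu|_{K_n})$, to the Loeb measure on $\NSE{X}$ inherited from $(X, \mu)$. One must verify that restricting the Loeb measure $\overline{\NSE{\mu}}$ to the internal set $\NSE{K_n}$ agrees with the Loeb extension of $\NSE{\mu}|_{\NSE{K_n}}$; this is a standard but nontrivial compatibility of the Loeb construction with restriction. The $\sigma$-compactness hypothesis is essential: it both supplies the exhaustion and guarantees that $\NS{\NSE{X}}$ has full Loeb measure, without which the reduction to compacts would break down.
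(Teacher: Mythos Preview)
The paper does not prove this lemma at all: it is stated as a ``well-known nonstandard representation theorem due to Robert Anderson'' and cited from \cite{anderson87} without argument. So there is no paper proof to compare against; your proposal is an attempt to supply a proof of a cited result.

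That said, your argument has a genuine gap. The key identity you invoke,
\[
\ST^{-1}(E \cap K_n) = \ST^{-1}(E) \cap \NSE{K_n},
\]
is false as stated. The inclusion $\NSE{K_n} \subset \ST^{-1}(K_n)$ holds (compactness plus closedness of $K_n$), but the reverse fails: if $K_n = [0,1]$ and $x = 1 + \epsilon$ for positive infinitesimal $\epsilon$, then $\ST(x) = 1 \in K_n$ yet $x \notin \NSE{[0,1]}$. So ``a near-standard point has its standard part in $K_n$ iff it lies in $\NSE{K_n}$'' is simply wrong for closed $K_n$; you need $K_n$ to be \emph{open} for the forward direction.

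The fix is routine once noticed: choose the exhaustion so that $K_n \subset \interior{K_{n+1}}$ for every $n$ (possible in any $\sigma$-compact, locally compact metric space, and in particular under the Heine--Borel condition used throughout the paper). Then $\ST^{-1}(K_n) \subset \NSE{K_{n+1}}$, and you get the sandwich
\[
\ST^{-1}(E) \cap \NSE{K_n} \ \subset\  \ST^{-1}(E \cap K_n) \ \subset\  \ST^{-1}(E) \cap \NSE{K_{n+1}},
\]
from which the limiting argument goes through. Your closing paragraph correctly flags the compatibility of Loeb extension with restriction to $\NSE{K_n}$ as the other nontrivial point; that is indeed where the real work lies, and it is standard.
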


Note that every Heine-Borel space is $\sigma$-compact.
We also recall that the hyperfinite state space $S$ of $\{X'_t\}_{t\in T}$ contains $X$ as a subset.
We now present the following hyperfinite representation theorem for lazy chains. 
The proof is very similar to the proof of \ref{disrepresent} hence is omitted. 

\begin{thm}\label{lazystandard}
Suppose that the transition kernel of $\{X_t\}_{t\in \Nats}$ satisfies Section \ref{assumptiondsf}.
Then for every $x\in X$, every $t\in \Nats$ and every $E\in \BorelSets X$, we have $g_{L}(x,t,E)=\Loeb{L}_{x}^{(t)}(\ST^{-1}(E)\cap S)$.
\end{thm}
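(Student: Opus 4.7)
The strategy is to imitate the proof of Theorem \ref{disrepresent} nearly verbatim, substituting Theorem \ref{lazystar} for Theorem \ref{Gapprox} at every step. Since $X \subset S$, every $x \in X$ is its own representative, i.e.\ $s_x = x$, so Theorem \ref{lazystar} specializes to
\[
\NSE{g_L}\Bigl(x, t, \bigcup_{a \in A \cap S} B(a)\Bigr) \approx L_x^{(t)}(A \cap S)
\]
for every $A \in \NSE{\BorelSets X}$. Since $X$ is Heine--Borel, hence $\sigma$-compact, Anderson's representation theorem (Lemma \ref{loebapproximate}) applied to the Borel probability measure $g_L(x, t, \cdot)$ gives $g_L(x, t, E) = \Loeb{\NSE{g_L(x,t,\cdot)}}(\ST^{-1}(E))$, so it suffices to prove the Loeb-level identity $\Loeb{L_x^{(t)}}(\ST^{-1}(E) \cap S) = \Loeb{\NSE{g_L(x,t,\cdot)}}(\ST^{-1}(E))$.

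The natural bridge is the pushdown $\phi : \bigcup_{s \in S} B(s) \to S$ sending $y$ to $s_y$, for which $\phi^{-1}(A) = \bigcup_{a \in A} B(a)$ whenever $A \subset S$ is internal. In this language Theorem \ref{lazystar} reads $L_x^{(t)}(A) \approx \NSE{g_L}(x, t, \phi^{-1}(A))$, which is the infinitesimal version of a pushforward identity. The set-theoretic match-up is clean: because $y \in B(s)$ forces $\NSE{d}(y, s) \leq \delta \approx 0$, any near-standard $y$ satisfies $\ST(y) = \ST(s_y)$, and since $\ST^{-1}(E) \subset \NS{\NSE{X}} \subset \bigcup_{s \in S} B(s)$, we get $\phi^{-1}(\ST^{-1}(E) \cap S) = \ST^{-1}(E)$.

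To upgrade the internal infinitesimal identity to a Loeb-measure identity on the non-internal set $\ST^{-1}(E) \cap S$, I would use tightness of $g_L(x, t, \cdot)$ on the $\sigma$-compact space $X$: for every $\epsilon > 0$ there exist disjoint compacts $K \subset E$ and $K' \subset X \setminus E$ with $g_L(x, t, X \setminus (K \cup K')) < \epsilon$. Then $\NSE{K} \cap S$ and $\NSE{K'} \cap S$ are internal subsets of $\ST^{-1}(E) \cap S$ and of $S \setminus \ST^{-1}(E)$ respectively, and by Theorem \ref{lazystar} their $L_x^{(t)}$-measures are infinitely close to $g_L(x, t, K)$ and $g_L(x, t, K')$; sandwiching $\Loeb{L_x^{(t)}}(\ST^{-1}(E) \cap S)$ between $g_L(x, t, K)$ and $1 - g_L(x, t, K')$ and sending $\epsilon \to 0$ yields the claim. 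The main obstacle is precisely this need to approximate $\ST^{-1}(E) \cap S$ from both sides by internal sets whose $B(\cdot)$-unions also approximate $\ST^{-1}(E)$ from both sides in $\NSE{X}$, since open supersets of $E$ in $X$ do not in general correspond to internal supersets of $\ST^{-1}(E) \cap S$ in $S$; the symmetry trick of applying inner approximation simultaneously to $E$ and to $X \setminus E$ is what resolves it.
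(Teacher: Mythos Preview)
Your strategy---imitate the proof of Theorem \ref{disrepresent}, substituting Theorem \ref{lazystar} for Theorem \ref{Gapprox}---is exactly what the paper does (the paper in fact omits the proof, pointing to \ref{disrepresent}). The reduction via Lemma \ref{loebapproximate} and the observation $s_x=x$ for $x\in X$ are both correct, and the two-sided inner-regularity idea is the right one.

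There is, however, a genuine gap in the step where you assert that ``by Theorem \ref{lazystar} their $L_x^{(t)}$-measures are infinitely close to $g_L(x,t,K)$ and $g_L(x,t,K')$.'' Theorem \ref{lazystar} gives only
\[
L_x^{(t)}(\NSE{K}\cap S)\;\approx\;\NSE{g_L}\Bigl(x,t,\bigcup_{a\in \NSE{K}\cap S}B(a)\Bigr),
\]
and the set on the right need not contain $\NSE{K}$: for $y\in\NSE{K}$ one has $y\in B(s_y)$, but the representative $s_y$ may lie just outside $\NSE{K}$, so $s_y\notin\NSE{K}\cap S$ in general. Consequently you only obtain $\ST\bigl(L_x^{(t)}(\NSE{K}\cap S)\bigr)\le g_L(x,t,K)$, which is the wrong direction for your sandwich to close.

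The fix is easy: replace $\NSE{K}\cap S$ by the internal set $A_K:=\{s\in S: B(s)\cap\NSE{K}\neq\emptyset\}$. Then $A_K\subset\ST^{-1}(K)\cap S\subset\ST^{-1}(E)\cap S$ (because any $s\in A_K$ is within $\delta$ of a point of $\NSE{K}$), and now $\bigcup_{a\in A_K}B(a)\supset\NSE{K}$, so Theorem \ref{lazystar} yields $L_x^{(t)}(A_K)\gtrapprox g_L(x,t,K)$, and your sandwich goes through. Alternatively, and more in the spirit of the paper, observe that Theorem \ref{lazystar} says $L_x^{(t)}$ agrees up to infinitesimals on every internal set with the measure $P'$ of Theorem \ref{representationthm} built from $P=g_L(x,t,\cdot)$; their Loeb extensions therefore coincide, and part (3) of Theorem \ref{representationthm} delivers the conclusion in one line.
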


%
\subsection{Hyperfinite Representation of Stationary Distribution}
Let $\pi$ be a stationary distribution of $\kernel$. We construct an analogous object in the hyperfinite representation $\kerp$, called the ``weakly stationary distribution".

\begin{defn}\label{defnstationary}
Let $\Pi$ be an internal probability measure on $(S,\mathcal{I}(S))$. We say $\Pi$ is a weakly stationary distribution for $\kerp$ if there exists an infinite $t_0\in T$ such that for any $t\leq t_0$ and any $A\in \mathcal{I}(S)$ we have $\Pi(A)\approx\sum_{i\in S}\Pi(\{i\})G_{i}^{(t)}(A)$.
\end{defn}

In \citep{Markovpaper}, the authors show that weakly stationary distributions exist for hyperfinite representations of general state space continuous time Markov processes under moderate regularity conditions. In this section, we show that weakly stationary distributions exist for Markov processes with transition kernel satisfying Section \ref{assumptiondsf}. We start by giving an explicit construction of a weakly stationary distribution from the standard stationary distribution.

\begin{defn}\label{defwksta}
Let $\pi$ be the stationary distribution for $\kernel$. Let $\pi'$ be an internal probability measure on $(S,\mathcal{I}(S))$ satisfying
\begin{itemize}
\item For all $s\in S$, let $\pi'(\{s\})=\frac{\NSE{\pi}(B(s))}{\NSE{\pi}(\bigcup_{t\in S}B(t))}$.
\item For every internal set $A\subset S$, let $\pi'(A)=\sum_{s\in A}\pi'(\{s\})$.
\end{itemize}
\end{defn}

It is straightforward to verify from \ref{defwksta} that
\[ \label{EqPiPrimeSt}
\pi'(A)\approx \NSE{\pi}(\bigcup_{s\in A}B(s))
\]
 for every $A\in \mathcal{I}(S)$.
The following theorem relates $\pi'$ and $\pi$.

\begin{thm}[{\citep[][Lemma ~8.15]{Markovpaper}}]\label{starepresent}
$\pi'$ is an internal probability measure on $(S,\mathcal{I}(S))$. Moreover, for every $A\in \BorelSets X$, we have $\pi'(\ST^{-1}(A)\cap S)=\pi(A)$.
\end{thm}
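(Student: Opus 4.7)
The plan is to establish the two assertions of Theorem \ref{starepresent} separately, viewing the theorem as an essentially direct application of Theorem \ref{representationthm} to the stationary distribution $\pi$.

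First I would verify that $\pi'$ is an internal probability measure on $(S, \mathcal{I}(S))$. Non-negativity of each point mass and the normalization $\pi'(S) = 1$ are immediate from Definition \ref{defwksta}; the denominator $\NSE{\pi}(\bigcup_{t \in S} B(t))$ is strictly positive and in fact infinitely close to $1$ by the second conclusion of Theorem \ref{representationthm}. Hyperfinite additivity is baked into the second bullet of Definition \ref{defwksta}, and internality follows because $\pi'$ is specified by an internal formula involving the internal objects $\NSE{\pi}$ and the hyperfinite family $\{B(s)\}_{s \in S}$; hence $\pi' \in \NSE{\FM{S}}$.

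Next I would establish the measure-preserving identity. The key observation is that $\pi'$ as specified by Definition \ref{defwksta} is term-by-term equal to the internal measure $P'$ produced by Theorem \ref{representationthm} when one takes $P = \pi$ and uses the fixed hyperfinite representation $(S, \{B(s)\}_{s \in S})$. Invoking part (3) of Theorem \ref{representationthm} therefore yields $\pi(A) = \Loeb{\pi'}(\ST^{-1}(A) \cap S)$ for every $A \in \BorelSets{X}$, which is the claimed identity (with $\pi'$ on the left understood as its Loeb extension, since $\ST^{-1}(A) \cap S$ need not be internal).

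The main obstacle, modest as it is, reduces to bookkeeping: one must confirm that dividing by $\NSE{\pi}(\bigcup_{t \in S} B(t)) \approx 1$ does not disturb the Loeb measure of any set of interest, so that the raw assignment $s \mapsto \NSE{\pi}(B(s))$ and the normalized one $s \mapsto \pi'(\{s\})$ agree after push-down through $\ST$. All the real analytic content---that the monad structure on $X$ is respected by $\pi$, exploiting $\sigma$-compactness via Lemma \ref{loebapproximate}---is already packaged inside Theorem \ref{representationthm}, so no fundamentally new ingredient is required.
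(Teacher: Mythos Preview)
Your proposal is correct and matches the paper's approach: the paper does not prove this statement itself but cites it from \citep{Markovpaper}, and your observation that it is an immediate specialization of Theorem~\ref{representationthm} with $P=\pi$ is exactly the right justification (and indeed is how the cited lemma is established). Your remark that $\pi'(\ST^{-1}(A)\cap S)$ must be read as the Loeb extension $\Loeb{\pi'}(\ST^{-1}(A)\cap S)$ is also apt, since $\ST^{-1}(A)\cap S$ is in general only Loeb-measurable.
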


We now show that $\pi'$ is a weakly stationary distribution for $\kerp$.

\begin{thm}\label{wkstationary}
Suppose $\{g(x,1,\cdot)\}_{x\in X}$ satisfies Section \ref{assumptiondsf}.
Let $\pi$ be the stationary distribution of $\kernel$.
Then $\pi'$ satisfying Definition \ref{defwksta} is a weakly stationary distribution for $\kerp$.
\end{thm}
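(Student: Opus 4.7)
The plan is to first prove, for each standard $t \in \Nats$ and each internal $A \in \mathcal{I}(S)$, the approximate identity
\[
\pi'(A) \approx \sum_{i \in S} \pi'(\{i\}) G_i^{(t)}(A)
\]
uniformly in $A$, and then use overspill together with countable saturation to piece these per-step statements into a single infinite horizon $t_0$ working for all $s \leq t_0$ and all $A \in \mathcal{I}(S)$ simultaneously.

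For the per-$t$ claim, set $E_A := \bigcup_{s \in A} B(s)$. By \eqref{EqPiPrimeSt}, $\pi'(A) \approx \NSE{\pi}(E_A)$, and transferred stationarity of $\pi$ gives $\NSE{\pi}(E_A) = \int_{\NSE{X}} \NSE{g}(x,t,E_A)\,\dee\NSE{\pi}(x)$. Theorem \ref{representationthm} lets me restrict the integration to $\bigcup_s B(s)$ at only infinitesimal cost and expand as the hyperfinite sum $\sum_s \int_{B(s)} \NSE{g}(x,t,E_A)\,\dee\NSE{\pi}(x)$. Invoking Lemma \ref{dsfconsequence} to replace $\NSE{g}(x,t,E_A)$ by $\NSE{g}(s,t,E_A)$ on each atom $B(s)$, and absorbing the uniformly infinitesimal normalization difference $\sum_s |\pi'(\{s\}) - \NSE{\pi}(B(s))|$, yields the intermediate approximation $\pi'(A) \approx \sum_s \pi'(\{s\}) \NSE{g}(s,t,E_A)$. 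The uniform infinitesimality of the replacement step is a standard overspill argument: the internal function
\[
s \mapsto \NSE{\sup}_{x \in B(s),\, A' \in \mathcal{I}(S)} |\NSE{g}(x,t,E_{A'}) - \NSE{g}(s,t,E_{A'})|
\]
is infinitesimal at every $s \in S$ by Lemma \ref{dsfconsequence}, so for each standard $k$ the internal set where it is $< 1/k$ is all of $S$, which forces its $\NSE{\max}$ over $S$ to be infinitesimal as well.

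The final replacement $\NSE{g}(s,t,E_A) \approx G_s^{(t)}(A)$ comes from Theorem \ref{Gapprox}, but only for near-standard $s$. I bridge this with tightness: since $X$ is $\sigma$-compact, $\pi$ is tight, so for each standard $\epsilon > 0$ there is a compact $K_\epsilon \subset X$ with $\pi(K_\epsilon^c) < \epsilon$, and starting from $\NSE{K_\epsilon}$ together with Theorem \ref{starepresent} (and the fact that the atoms $B(s)$ have infinitesimal diameter) one produces an internal $S_\epsilon \subset \NS{S}$ with $\pi'(S \setminus S_\epsilon) \lessapprox \epsilon$. Splitting the sum at $S_\epsilon$, the $S_\epsilon$-part has infinitesimal error by another $\NSE{\max}$/overspill application, while the contribution from $S \setminus S_\epsilon$ is bounded by $\epsilon$; letting $\epsilon \downto 0$ through standard values finishes the per-$t$ claim uniformly in $A$. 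To extract the infinite $t_0$, define for each $k, N \in \Nats$ the internal set
\[
\Psi_{k,N} := \Ooof{t \in T \st t \geq N \text{ and } \NSE{\sup}_{s \leq t} \NSE{\sup}_{A \in \mathcal{I}(S)} \Bigl| \pi'(A) - \sum_{i \in S} \pi'(\{i\}) G_i^{(s)}(A) \Bigr| < 1/k}.
\]
Overspill plus the per-$t$ claim shows each $\Psi_{k,N}$ is non-empty, the family is downward directed, and countable saturation supplies $t_0 \in \bigcap_{k,N} \Psi_{k,N}$; such $t_0$ is infinite and makes the relevant supremum infinitesimal for every $s \leq t_0$.

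The main obstacle is the tightness bridge in the previous paragraph. Because $G$ was defined by truncating $\NSE{g}$ onto the hyperfinite carrier $\bigcup_s B(s)$, the approximation $\NSE{g}(s,t,E_A) \approx G_s^{(t)}(A)$ of Theorem \ref{Gapprox} genuinely fails for non-near-standard $s$, of which $S$ contains many whenever $X$ is non-compact. Tightness, available precisely because $X$ satisfies the Heine-Borel condition, is what confines the bulk of the $\pi'$-mass to the internal approximands $S_\epsilon$ of $\NS{S}$ and so closes the argument.
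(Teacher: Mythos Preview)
Your proof is correct and follows essentially the same route as the paper: both use transferred stationarity to write $\pi'(A)\approx\int\NSE{g}(x,t,E_A)\,\NSE{\pi}(\dee x)$, invoke Lemma~\ref{dsfconsequence} to replace $x$ by the representative $s$ on each atom $B(s)$, use tightness of $\pi$ (via a compact $K$ and $\NSE{K}\cap S\subset\NS{S}$) to make Theorem~\ref{Gapprox} applicable, and then pass to an infinite horizon. The only differences are cosmetic: you are more explicit about the uniformity needed to sum infinitesimals over the hyperfinite index set (the $\NSE{\max}$/overspill argument), and at the end you use countable saturation on the family $\{\Psi_{k,N}\}$ whereas the paper packages everything into the single internal set $\cD=\{t\in T:(\forall A)|\pi'(A)-\sum_s\pi'(\{s\})G_s^{(t)}(A)|<1/t\}$ and applies one overspill.
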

\begin{proof}
Pick an internal set $A\subset S$ and some $t\in \Nats$.
By the transfer principle, we have $\pi'(A)\approx \NSE{\pi}(\bigcup_{a\in A}B(a))=\int \NSE{g}(x,t, \bigcup_{a\in A}B(a))\NSE{\pi}(\dee x)$.
Pick $\epsilon>0$, there is a compact set $K\subset X$ such that $\sum_{s\in S}\pi'(\{s\})G_{s}^{(t)}(A)-\sum_{s\in \NSE{K}\cap S}\pi'(\{s\})G_{s}^{(t)}(A)<\epsilon$ and
$\int \NSE{g}(x,t, \bigcup_{a\in A}B(a))\NSE{\pi}(\dee x)-\int_{S_{K}} \NSE{g}(x,t, \bigcup_{a\in A}B(a))\NSE{\pi}(\dee x)<\epsilon$, where $S_K=\bigcup_{s\in \NSE{K}\cap S}B(s)$.
As our choice of $\epsilon$ is arbitrary, to show $\pi'(A)\approx \sum_{s\in S}\pi'(\{s\})G_{s}^{(t)}(A)$, it is sufficient to show that $\sum_{s\in \NSE{K}\cap S}\pi'(\{s\})G_{s}^{(t)}(A)\approx \int_{S_{K}} \NSE{g}(x,t, \bigcup_{a\in A}B(a))\NSE{\pi}(\dee x)$.

Note that we have
\[
\int_{S_{K}} \NSE{g}(x,t, \bigcup_{a\in A}B(a))\NSE{\pi}(\dee x)&=\int_{\bigcup_{s\in \NSE{K}\cap S}B(s)}\NSE{g}(x,t, \bigcup_{a\in A}B(a))\NSE{\pi}(\dee x)\\
&=\sum_{s\in \NSE{K}\cap S}\int_{B(s)}\NSE{g}(x,t,\bigcup_{a\in A}B(a))\NSE{\pi}(\dee x).
\]
By Lemma \ref{dsfconsequence}, we have
\[ \label{ApprEqBsNse}
\int_{B(s)}\NSE{g}(x,t,\bigcup_{a\in A}B(a))\NSE{\pi}(\dee x)\approx \int_{B(s)}\NSE{g}(s,t,\bigcup_{a\in A}B(a))\NSE{\pi}(\dee x).
\]
As $\sum_{s\in \NSE{K}\cap S}\NSE{\pi}(B(s))<1$, by Theorem \ref{Gapprox}, we have
\[
&\sum_{s\in \NSE{K}\cap S}\int_{B(s)}\NSE{g}(x,t,\bigcup_{a\in A}B(a))\NSE{\pi}(\dee x)\\
&\stackrel{\text{Eq. } \eqref{ApprEqBsNse}}{\approx} \sum_{s\in \NSE{K}\cap S}\int_{B(s)}\NSE{g}(s,t,\bigcup_{a\in A}B(a))\NSE{\pi}(\dee x)\\
&=\sum_{s\in \NSE{K}\cap S}\NSE{g}(s,t,\bigcup_{a\in A}B(a))\NSE{\pi}(B(s))\\
&\stackrel{ Lemma \ref{dsfconsequence}}{\approx} \sum_{s\in \NSE{K}\cap S}G_{s}^{(t)}(A)\NSE{\pi}(B(s))\\
&\stackrel{\text{Eq. } \eqref{EqPiPrimeSt}}{\approx} \sum_{s\in \NSE{K}\cap S}G_{s}^{(t)}(A)\pi'(\{s\}).
\]
Thus, we can conclude that $\pi'(A)\approx \sum_{s\in S}\pi'(\{s\})G_{s}^{(t)}(A)$ for every $A\in \mathcal{I}(S)$ and every $t\in \Nats$.

Let $\cD=\{t\in T: (\forall A\in \mathcal{I}(S))(|\pi'(A)-\sum_{s\in S}\pi'(\{s\})G_{s}^{(t)}(A)|<\frac{1}{t})\}$. Then $\cD$ contains every $t\in \Nats$.
By overspill, there exists an infinite $t_0\in T$ such that $|\pi'(A)-\sum_{s\in S}\pi'(\{s\})G_{s}^{(t)}(A)|<\frac{1}{t}$ for all $A\in \mathcal{I}(S)$ and all $t\leq t_0$.
Thus, we have $\pi'(A)\approx \sum_{s\in S}\pi'(\{s\})G_{s}^{(t)}(A)$ for all $A\in \mathcal{I}(S)$ and all $t\leq t_0$.
\end{proof}

\subsection{Hyperfinite Representation of Reversible Markov Processes}\label{secreverse}
Recall that a Markov process is \emph{reversible} if it satisfies Equation \eqref{EqDefRev}, and in particular A Markov chain on a finite state space is reversible if and only if
\[
\pi(\{i\})g(i,1,\{j\})=\pi(\{j\})g(j,1,\{i\})
\]
for every $i,j$ in the state space $X$. If $\{g(x,1,\cdot)\}_{x\in X}$ is reversible, then its hyperfinite representation $\kerp$ is ``almost" reversible in the sense that


\[
\sum_{s\in S_1}G_{s}^{(t)}(S_2)\pi'(\{s\})\approx \sum_{s\in S_2}G_{s}^{(t)}(S_1)\pi'(\{s\})
\]
for every $S_1,S_2\in \mathcal{I}(S)$ and every $t\in \Nats$.
We now show that $\kerp$ is ``infinitesimally close" to a *reversible process.

\begin{thm}\label{closereverse}
Suppose $\{g(x,1,\cdot)\}_{x\in X}$ is reversible with stationary measure $\pi$ and satisfies Section \ref{assumptiondsf}.
Then there exists an internal transition kernel $\{H_{s}(\cdot)\}_{s\in S}$ such that it is *reversible with respect to $\pi'$ and 
\[
\max_{s\in S}\parallel G^{(t)}_{s}(\cdot)-H^{(t)}_{s}(\cdot) \parallel\approx 0.
\]
for every $t\in \Nats$.
\end{thm}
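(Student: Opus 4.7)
The plan is to construct $H$ as an explicit Metropolis-type reversibilization of $G$ with respect to $\pi'$, verify *reversibility algebraically, and prove uniform total variation closeness by combining the *transfer of reversibility of $g$, the strong Feller regularity from Lemma \ref{dsfconsequence}, and a careful handling of the hyperfinite states in $S^{0} = \{s \in S : \pi'(\{s\}) = 0\}$ versus $S^{+} = S \setminus S^{0}$.

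The first key estimate I would prove is an aggregate approximate detailed balance: for every $s \in S^{+}$ and every internal $A \subseteq S$,
\[
\sum_{j \in A} \bigl(\pi'(\{s\}) G_{sj} - \pi'(\{j\}) G_{js}\bigr) \leq \eta \, \pi'(\{s\}),
\]
for some uniform infinitesimal $\eta$ independent of $s$ and $A$. Writing $\pi'(\{s\}) G_{sj} \approx \int_{B(s)} \NSE{g}(y, B(j)) \NSE{\pi}(\dee y)$ via Lemma \ref{dsfconsequence} and summing over $A$ collapses the sum to $\int_{B(s)} \NSE{g}(y, U) \NSE{\pi}(\dee y)$ with $U = \bigcup_{j \in A} B(j)$; *reversibility of $g$ equates it to $\int_{U} \NSE{g}(y, B(s)) \NSE{\pi}(\dee y)$, which the same approximation identifies with $\sum_{j \in A} \pi'(\{j\}) G_{js}$ up to an infinitesimal of size $O(\eta \pi'(\{s\}))$. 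Applying this with $A = \{j \in S : \pi'(\{s\}) G_{sj} > \pi'(\{j\}) G_{js}\}$ (an internal set) then yields
\[
\sum_{j \in S} \bigl|\pi'(\{s\}) G_{sj} - \pi'(\{j\}) G_{js}\bigr| \leq 2\eta \, \pi'(\{s\}).
\]
The second key estimate is that $G_{s}(S^{0}) \approx 0$ uniformly over $s \in S^{+}$: the set $U = \bigcup_{j \in S^{0}} B(j)$ satisfies $\NSE{\pi}(U) = 0$ by internal additivity, so *reversibility forces $\int_{B(s)} \NSE{g}(y, U) \NSE{\pi}(\dee y) = 0$; pick $y^{*} \in B(s)$ with $\NSE{g}(y^{*}, U) = 0$ and invoke the TV-uniform version of Lemma \ref{dsfconsequence} to conclude $\NSE{g}(s, U) \approx 0$.

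With these in hand I define
\[
H_{ij} = \begin{cases}
\dfrac{1}{\pi'(\{i\})} \min\bigl(\pi'(\{i\}) G_{ij},\, \pi'(\{j\}) G_{ji}\bigr), & i, j \in S^{+},\ i \neq j, \\
0, & i \in S^{+},\ j \in S^{0}, \\
G_{ij}, & i \in S^{0},
\end{cases}
\]
and set $H_{ii}$ for $i \in S^{+}$ so that rows sum to $1$ (non-negative since $H_{ij} \leq G_{ij}$ off-diagonal). *Reversibility is immediate from $\pi'(\{i\}) H_{ij} = \min(\pi'(\{i\}) G_{ij}, \pi'(\{j\}) G_{ji})$, symmetric in $i, j$; cross-edges between $S^{+}$ and $S^{0}$ balance trivially because one side of detailed balance is already zero. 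The standard Metropolis TV identity yields
\[
\|G_{s} - H_{s}\| = \frac{1}{\pi'(\{s\})} \sum_{j \in S^{+}} \bigl(\pi'(\{s\}) G_{sj} - \pi'(\{j\}) G_{js}\bigr)^{+} + G_{s}(S^{0})
\]
for $s \in S^{+}$, which is infinitesimal by the two preceding estimates, while $\|G_{s} - H_{s}\| = 0$ for $s \in S^{0}$. The extension to $t$-step kernels follows from the telescoping bound $\max_{s} \|G_{s}^{(t)} - H_{s}^{(t)}\| \leq t \max_{s} \|G_{s} - H_{s}\|$, infinitesimal for every standard $t \in \Nats$.

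The main obstacle I expect is the upgrade from the pointwise-in-$A$ form of Lemma \ref{dsfconsequence} to a \emph{single} uniform infinitesimal $\eta$ controlling $\NSE{\sup}_{A \in \NSE{\BorelSets X}} |\NSE{g}(x_{1}, A) - \NSE{g}(x_{2}, A)|$ simultaneously over all $s \in S$ and all $x_{1}, x_{2} \in B(s)$. This TV-uniform upgrade should follow by an overspill argument from the standard-$\epsilon$ form of Assumption \ref{assumptiondsf} together with the infinitesimal diameter of the $B(s)$'s, but without it the two estimates above do not combine into a true $\max_{s}$ infinitesimal and the argument breaks down on the pathological states with very small $\pi'$-mass.
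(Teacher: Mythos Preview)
Your Metropolis-reversibilization approach is natural, but the first key estimate contains a genuine error that breaks the argument. You claim that
\[
\sum_{j\in A}\bigl(\pi'(\{s\})G_{sj}-\pi'(\{j\})G_{js}\bigr)\le \eta\,\pi'(\{s\}),
\]
but tracing your own derivation gives only an $O(\eta)$ bound, not $O(\eta\,\pi'(\{s\}))$. The point is that when you pass from $\int_{U}\NSE{g}(y,1,B(s))\,\NSE{\pi}(\dee y)$ to $\sum_{j\in A}\NSE{\pi}(B(j))\,\NSE{g}(j,1,B(s))$ using Lemma~\ref{dsfconsequence} cell by cell, each term contributes an error bounded by $\eta\,\NSE{\pi}(B(j))$, and summing gives $\eta\,\NSE{\pi}(U)\le \eta$; nothing forces this to be proportional to $\NSE{\pi}(B(s))$. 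Plugging into your TV identity yields only
\[
\|G_{s}-H_{s}\|\lesssim \frac{\eta}{\pi'(\{s\})}+G_{s}(S^{0}),
\]
and since $\pi'(\{s\})$ is typically itself infinitesimal (and there is no a~priori relation between $\eta$ and $\min_{s\in S^{+}}\pi'(\{s\})$), the first term need not be infinitesimal. The concern you flag at the end---uniformity of $\eta$ over $s$---is not the obstruction; even granting a uniform $\eta$, the division by $\pi'(\{s\})$ destroys the bound.

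The paper sidesteps this by choosing a different $H$: rather than Metropolising $G$, it sets (for $\pi'(\{i\})>0$)
\[
H_{ij}=\frac{1}{\NSE{\pi}(B(i))}\int_{B(i)}\bigl(\NSE{g}(x,1,B(j))+F(x,B(j))\bigr)\,\NSE{\pi}(\dee x),
\]
the $\NSE{\pi}$-average of the kernel over the cell $B(i)$. Exact *reversibility of this $H$ with respect to $\pi'$ then follows directly from the *transfer of reversibility of $g$ (the integrals interchange symmetrically), with no approximation at all. Crucially, the closeness $|G_{i}(A)-H_{i}(A)|$ is now a straight comparison of $\NSE{g}(i,1,U_{A})$ with its average over $B(i)$, which Lemma~\ref{dsfconsequence} bounds by $\eta$ uniformly---no division by $\pi'(\{i\})$ ever occurs. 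This is exactly the structural feature your construction lacks.
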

\begin{proof}
For every $x\in \NSE{X}$ and $A\in \NSE{\BorelSets X}$, define 
\[
F(x,A)=\delta(x,A)\NSE{g}(x,1,\NSE{X}\setminus \bigcup_{s\in S}B(s)) 
\]
for notational convenience. 
Thus, for every $i,j\in S$, by Definition \ref{dtrunchain}, we have $G_{ij}=\NSE{g}(i,1,B(j))+F(i,B(j))$.
Note that $\pi'(\{i\})=\frac{\NSE{\pi}(B(i))}{\NSE{\pi}(\bigcup_{s\in S}B(s))}$.

For every $i,j\in S$ with $\pi'(\{i\})\neq 0$, define
\[
H_{ij}=\frac{\int_{B(i)}\NSE{g}(x,1,B(j))+F(x,B(j))\frac{\NSE{\pi}(\dee x)}{\NSE{\pi}(\bigcup_{s\in S}B(s))}}{\pi'(\{i\})}.
\]
For $i$ with  $\pi'(\{i\})=0$, set $H_{ij}=G_{ij}=\NSE{g}(i,1,B(j))+F(i,B(j))$.
For every $i\in S$ and $A\in \mathcal{I}(S)$, define $H_{i}(A)=\sum_{j\in A}H_{ij}$.
It is straightforward to verify that $H_{i}(\cdot)$ defines an internal probability measure on $(S, \mathcal{I}(S))$ for every $i\in S$.
We now show that the hyperfinite Markov process with internal transition kernel $\{H_i(\cdot)\}_{i\in S}$ is *reversible.

\begin{claim}\label{starproperty}
The internal transition matrix $\{H_i(\cdot)\}_{i\in S}$ is *reversible with *stationary distribution $\pi'$.
\end{claim}
\begin{proof}
We start by showing that $\pi'$ is a *stationary distribution of $\{H_i(\cdot)\}_{i\in S}$.
Let $S_0=\{s\in S: \NSE{\pi}(B(s))>0\}$.
For $s\in S\setminus S_0$, note that $\int_{B(s)}\NSE{g}(x,1,B(j))\NSE{\pi}(\dee x)=0$ for every $j\in S$.
Thus, for every $j\in S$, we have
\[
&\sum_{i\in S}\pi'(\{i\})H_{ij}\\
&=\frac{1}{\NSE{\pi}(\bigcup_{s\in S}B(s))}\sum_{i\in S_{0}}\pi'(\{i\})\frac{\int_{B(i)}\NSE{g}(x,1,B(j))+F(x,B(j))\NSE{\pi}(\dee x)}{\pi'(\{i\})}\\
&=\frac{1}{\NSE{\pi}(\bigcup_{s\in S}B(s))}\sum_{i\in S}\int_{B(i)}\NSE{g}(x,1,B(j))+F(x,B(j))\NSE{\pi}(\dee x)\\
&=\frac{1}{\NSE{\pi}(\bigcup_{s\in S}B(s))}(\NSE{\pi}(B(j))-\int_{\NSE{X}\setminus \bigcup_{s\in S}B(s)}\NSE{g}(x,1,B(j))\NSE{\pi}(\dee x)+
\int_{B(j)}F(x,B(j))\NSE{\pi}(\dee x))\\
&=\frac{\NSE{\pi}(B(j))}{\NSE{\pi}(\bigcup_{s\in S}B(s))}=\pi'(\{j\}).
\]
Hence $\pi'$ is a *stationary distribution of $\{H_i(\cdot)\}_{i\in S}$.

We now show that the hyperfinite Markov process with internal transition kernel $\{H_i(\cdot)\}_{i\in S}$ is *reversible with respect its *stationary distribution $\pi'$.
For $t\in S\setminus S_0$, we have
\[
\pi'(\{t\})H_{tj}=0=\frac{1}{\NSE{\pi}(\bigcup_{s\in S}B(s))}\int_{B(t)}\NSE{g}(x,1,B(j))+F(x,B(j))\NSE{\pi}(\dee x)
\]
for every $j\in S$.
Thus, for every $i,j\in S$, we have
\[
&\pi'(\{i\})H_{ij}\\
&=\frac{1}{\NSE{\pi}(\bigcup_{s\in S}B(s))}\int_{B(i)}\NSE{g}(x,1,B(j))+F(x,B(j))\NSE{\pi}(\dee x)\\
&=\frac{1}{\NSE{\pi}(\bigcup_{s\in S}B(s))}(\int_{B(j)}\NSE{g}(x,1,B(i))\NSE{\pi}(\dee x)+\int_{B(j)}F(x,B(i))\NSE{\pi}(\dee x))\\
&=\frac{1}{\NSE{\pi}(\bigcup_{s\in S}B(s))}(\int_{B(j)}\NSE{g}(x,1,B(i))+F(x,B(i))\NSE{\pi}(\dee x)\\
&=\pi'(\{j\})H_{ji}.
\]
\end{proof}

We now prove the theorem by induction on $t\in \Nats$. Let $t=1$.
Pick $s\in S$ and $A\in \mathcal{I}(S)$.
If $\NSE{\pi}(B(s))=0$, then we have $G_{s}(A)=H_{s}(A)$.
Suppose $\NSE{\pi}(B(s))\neq 0$.
Pick $m\in \Nats$. We have
\[
&|G_{s}(A)-H_{s}(A)|\\
&\leq \frac{\int_{B(s)}|\NSE{g}(s,1,\bigcup_{a\in A}B(a))+F(s,\bigcup_{a\in A}B(a))-\NSE{g}(x,1,\bigcup_{a\in A}B(a))-F(x,\bigcup_{a\in A}B(a))|\NSE{\pi}(\dee x)}{\NSE{\pi}(\dee x)}\\
&\stackrel{Lemma \ref{dsfconsequence}}{\leq} \frac{\frac{1}{m}\NSE{\pi}(B(s))}{\NSE{\pi}(B(s))}=\frac{1}{m}.
\]
As our choices of $s$ and $A$ are arbitrary, we have $\max_{s\in S} \parallel G_{s}(\cdot)-H_{s}(\cdot) \parallel\approx 0$.

Suppose that the theorem holds for $t=n$. We now establish the result for $t=n+1$.
Pick $s\in S$ and $A\in \mathcal{I}(S)$.
By Lemma \ref{tvfunction}, we have
\[
&|G_{s}^{(n+1)}(A)-H_{s}^{(n+1)}(A)|\\
&=|\sum_{i\in S}G_{si}G_{i}^{(n)}(A)-\sum_{i\in S}H_{si}H_{i}^{(n)}(A)|\\
&\leq \parallel G_{s}(\cdot)-H_{s}(\cdot) \parallel\approx 0.
\]
As our choices of $s$ and $A$ are arbitrary, we have $\max_{s\in S} \parallel G_{s}^{(t+1)}(\cdot)-H_{s}^{(t+1)}(\cdot) \parallel\approx 0$, completing the proof.
\end{proof}

Throughout the paper, we shall denote the hyperfinite Markov process on $S$ with the internal transition matrix $\{H_{ij}\}_{i,j\in S}$ by $\{Z_{t}\}_{t\in T}$.
As the total variation distance between $\kerp$ and $\{H_i\}_{i \in S}$ is infinitesimal, it is not surprising that $\{H_i\}_{i \in S}$ can be used as a hyperfinite representation of $\kerp$. The following two theorems follow easily from Theorem \ref{Gapprox}, \ref{disrepresent} and \ref{closereverse} hence proofs are omitted. 

\begin{thm}\label{hGapprox}
Suppose $\{g(x,1,\cdot)\}_{x\in X}$ satisfies Section \ref{assumptiondsf}.
Then for any $t\in \Nats$, any $s\in \NS{S}$ and any $A\in {^{*}\BorelSets X}$, we have ${^{*}g}(s,t,\bigcup_{a\in A\cap S}B(a))\approx H_{s}^{(t)}(A\cap S)$.
\end{thm}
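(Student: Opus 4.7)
The plan is to obtain the statement by a direct triangle-inequality comparison using the two results that have just been established: Theorem \ref{Gapprox}, which relates the nonstandard transition kernel $\NSE{g}$ to the internal kernel $G$ of the hyperfinite representation, and Theorem \ref{closereverse}, which guarantees that the modified kernel $H$ is total-variation infinitesimally close to $G$ at every finite time step. Neither invocation involves any new ingredient beyond combining outputs we already have.

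Concretely, fix $t \in \Nats$, $s \in \NS{S}$, and $A \in \NSE{\BorelSets X}$, and set $A_S \defas A \cap S$. By Theorem \ref{Gapprox},
\[
\NSE{g}\bigl(s, t, \textstyle\bigcup_{a \in A_S} B(a)\bigr) \approx G_{s}^{(t)}(A_S).
\]
By Theorem \ref{closereverse}, $\max_{u \in S} \parallel G_{u}^{(t)}(\cdot) - H_{u}^{(t)}(\cdot) \parallel \approx 0$, so in particular
\[
\bigl| G_{s}^{(t)}(A_S) - H_{s}^{(t)}(A_S) \bigr| \leq \parallel G_{s}^{(t)}(\cdot) - H_{s}^{(t)}(\cdot) \parallel \approx 0.
\]
Adding these two infinitesimal quantities via the triangle inequality on $\HReals$ gives
\[
\NSE{g}\bigl(s, t, \textstyle\bigcup_{a \in A_S} B(a)\bigr) \approx H_{s}^{(t)}(A_S),
\]
which is the claim. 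Since the bound in Theorem \ref{closereverse} is uniform in $s \in S$ and the statement of Theorem \ref{Gapprox} is already taken for $s \in \NS{S}$, no additional care is needed to handle the dependence on $s$, and the choice of $A$ is absorbed entirely into the total-variation estimate.

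There is essentially no obstacle: the two theorems are stated in precisely the right form, and the only thing to verify is that one may combine a pointwise $\approx$ estimate with a uniform total-variation estimate at the same index $s$ and the same set $A_S \in \mathcal{I}(S)$, which is immediate from the definition of $\parallel \cdot \parallel$. For this reason the authors omit the proof, and a written-out version would consist of exactly the two lines displayed above.
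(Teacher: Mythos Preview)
Your proposal is correct and is exactly the argument the paper has in mind: the paper states that Theorems \ref{hGapprox} and \ref{hdisrepresent} ``follow easily from Theorem \ref{Gapprox}, \ref{disrepresent} and \ref{closereverse} hence proofs are omitted,'' and your triangle-inequality combination of \ref{Gapprox} with the uniform total-variation bound from \ref{closereverse} is precisely that easy deduction.
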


\begin{thm}\label{hdisrepresent}
Suppose $\{g(x,1,\cdot)\}_{x\in X}$ satisfies Section \ref{assumptiondsf}.
Then for any $s\in \NS{S}$, any $t\in \Nats$ and any $E\in \BorelSets X$, $g(\ST(s),t,E)=\overline{H}_{s}^{(t)}(\ST^{-1}(E)\cap S)$.
\end{thm}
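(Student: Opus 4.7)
The plan is to combine the three theorems cited immediately before the statement; no fresh induction on $t$ is required. Fix $s \in \NS{S}$, $t \in \Nats$, and $E \in \BorelSets X$.

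First I would invoke Theorem \ref{closereverse} at the fixed power $t$, which gives
\[
\sup_{A \in \mathcal{I}(S)} |G^{(t)}_s(A) - H^{(t)}_s(A)| \approx 0.
\]
Thus $G^{(t)}_s$ and $H^{(t)}_s$ are two internal probability measures on $(S, \mathcal{I}(S))$ that agree up to an infinitesimal on every \emph{internal} subset of $S$. The main content of the proof is to upgrade this internal-set closeness to equality of the Loeb extensions $\Loeb{G}^{(t)}_s = \Loeb{H}^{(t)}_s$ on the full Loeb $\sigma$-algebra $\Loeb{\mathcal{I}(S)}$. For any Loeb measurable $B$ and standard $\epsilon > 0$, choose internal $B_i \subset B \subset B_o$ with $G^{(t)}_s(B_o \setminus B_i) < \epsilon$; then $H^{(t)}_s(B_o \setminus B_i) < \epsilon + \eta$ for some infinitesimal $\eta$, so $B$ is $\Loeb{H}^{(t)}_s$-measurable and $\Loeb{H}^{(t)}_s(B) = \Loeb{G}^{(t)}_s(B)$.

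Next I would specialize $B = \ST^{-1}(E) \cap S$, which is Loeb measurable by the standard Borel measurability of the standard part map (the same measurability used implicitly in Theorem \ref{disrepresent}). Combining the conclusion of the previous paragraph with Theorem \ref{disrepresent} then yields
\[
g(\ST(s), t, E) \;=\; \Loeb{G}_{s}^{(t)}(\ST^{-1}(E) \cap S) \;=\; \Loeb{H}_{s}^{(t)}(\ST^{-1}(E) \cap S),
\]
which is exactly the claimed identity.

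The only mildly delicate point is the passage from uniform-in-$A$ infinitesimal closeness on internal sets to equality on the typically non-internal set $\ST^{-1}(E) \cap S$; this is handled by the inner/outer squeeze above and is the reason the hypothesis in Theorem \ref{closereverse} takes the strong form of total-variation closeness rather than pointwise or weak closeness. Because Theorems \ref{Gapprox}, \ref{disrepresent}, and \ref{closereverse} have already done the real work of controlling $G^{(t)}$, passing to $H^{(t)}$ is essentially bookkeeping, which is presumably why the authors chose to omit the proof.
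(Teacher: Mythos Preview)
Your proposal is correct and is precisely the argument the paper has in mind when it says the result ``follows easily from Theorems \ref{Gapprox}, \ref{disrepresent} and \ref{closereverse}'': use Theorem \ref{closereverse} to get infinitesimal total-variation closeness of $G^{(t)}_s$ and $H^{(t)}_s$, pass to equality of the Loeb extensions via the inner/outer squeeze, and then read off the conclusion from Theorem \ref{disrepresent}. The only cosmetic point is that Theorem \ref{Gapprox} is not needed directly here (it feeds into Theorem \ref{disrepresent}, which you already invoke), so your proof actually isolates the two ingredients that matter for this particular statement.
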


Define the lazy transition kernel $\{I_{ij}\}_{i,j \in S}$ associated with $\{H_{ij}\}_{i,j \in S}$  to be a collection of internal transition probabilities satisfying the initial conditions $I^{(0)}_{ij}=H^{(0)}_{ij}$ and $I_{ij}=\frac{1}{2}H_{ij}+\frac{1}{2}\Delta(i,j)$, where $\Delta(i,j)=1$ if $i=j$ and $\Delta(i,j)=0$ if $i\neq j$. For every $i\in S$ and $A\in \mathcal{I}(S)$ we then have
\[
I_{i}(A) \equiv \sum_{j\in A}I_{ij}=\sum_{j\in A}(\frac{1}{2}H_{ij}+\frac{1}{2}\Delta(i,j))=\frac{1}{2}H_{i}(A)+\frac{1}{2}\Delta(i,A)
\]
where $\Delta(i,A)=1$ if $i\in A$ and $\Delta(i,A)=0$ if $i\not\in A$.

The following result shows that the total variation distance between the lazy chain of $\kerp$ and the lazy chain of $\{H_{i}\}_{i \in S}$ is infinitesimal.

\begin{lemma}\label{lazyclose}
Suppose $\{g(x,1,\cdot)\}_{x\in X}$ is reversible and satisfies Section \ref{assumptiondsf}.
Then we have
\[
\max_{s\in S}\parallel L^{(t)}_{s}(\cdot)-I^{(t)}_{s}(\cdot) \parallel\approx 0
\]
for every $t\in \Nats$.
\end{lemma}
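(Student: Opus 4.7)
The plan is to prove the result by external induction on $t \in \Nats$, using Theorem \ref{closereverse} as the base step and Lemma \ref{tvfunction} to propagate infinitesimal closeness through iteration. The key observation is that the laziness operation is linear, so the ``$+\frac{1}{2}\Delta$'' terms in $L_{ij}$ and $I_{ij}$ cancel when we subtract, and at every step the difference between the two lazy kernels is exactly $\frac{1}{2}$ times the difference between $G$ and $H$.

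For the base case $t = 1$, note that $L_{ij} - I_{ij} = \frac{1}{2}(G_{ij} - H_{ij})$, so
\[
\max_{s\in S} \parallel L_s(\cdot) - I_s(\cdot) \parallel = \tfrac{1}{2} \max_{s\in S} \parallel G_s(\cdot) - H_s(\cdot) \parallel \approx 0
\]
by Theorem \ref{closereverse} applied at $t=1$.

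For the inductive step, assume $\max_{s\in S} \parallel L^{(n)}_s(\cdot) - I^{(n)}_s(\cdot) \parallel \approx 0$. For any $s \in S$ and any $A \in \mathcal{I}(S)$, write
\[
L^{(n+1)}_s(A) - I^{(n+1)}_s(A) = \sum_{i \in S} L_{si}\bigl(L^{(n)}_i(A) - I^{(n)}_i(A)\bigr) + \sum_{i \in S} (L_{si} - I_{si}) I^{(n)}_i(A).
\]
The first sum has magnitude at most $\max_{i\in S} \parallel L^{(n)}_i(\cdot) - I^{(n)}_i(\cdot) \parallel$, which is infinitesimal by the inductive hypothesis. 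Since $L_{si} - I_{si} = \frac{1}{2}(G_{si} - H_{si})$, the second sum equals $\frac{1}{2}\sum_{i\in S}(G_{si} - H_{si})I^{(n)}_i(A)$; by Lemma \ref{tvfunction} applied to the internal probability measures $G_s$ and $H_s$ with the internal test function $i \mapsto I^{(n)}_i(A)$ taking values in $\NSE{[0,1]}$, this is bounded in absolute value by $\frac{1}{2}\parallel G_s(\cdot) - H_s(\cdot) \parallel$, which is infinitesimal uniformly in $s$ by Theorem \ref{closereverse}. Taking $\NSE{\sup}$ over $A \in \mathcal{I}(S)$ and $\max$ over $s \in S$ completes the induction.

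The main conceptual point, rather than any obstacle, is that the induction is performed over standard $n \in \Nats$ externally: at each standard step we combine two infinitesimal quantities to obtain another infinitesimal, and the quantities $\max_{s\in S}\parallel L^{(n)}_s(\cdot) - I^{(n)}_s(\cdot) \parallel$ and $\max_{s\in S}\parallel G^{(n)}_s(\cdot) - H^{(n)}_s(\cdot) \parallel$ are both internal, so the max/$\NSE{\sup}$ operations are well-defined. One might be tempted to try overspill to extend to infinite $t$, but the statement only claims the result for standard $t$, so no such extension is needed.
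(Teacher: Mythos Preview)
Your proof is correct and follows essentially the same approach as the paper: external induction on $t\in\Nats$, with the base case coming from Theorem \ref{closereverse} and the inductive step handled via Lemma \ref{tvfunction}. Your add--and--subtract decomposition into the two sums is in fact more explicit than the paper's presentation, which compresses the inductive step into a single line; your observation that $L_{si}-I_{si}=\tfrac{1}{2}(G_{si}-H_{si})$ is also a nice clarification that the paper leaves implicit.
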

\begin{proof}
We prove the result by induction on $t\in \Nats$.
Let $t=1$. By \ref{closereverse} and the construction of the lazy chain, we have
\[
\max_{s\in S} \parallel L_{s}(\cdot)-I_{s}(\cdot) \parallel \approx 0.
\]

Assume that the theorem holds for $t=n$. We now prove the case for $t=n+1$.
Pick $s\in S$ and $A\in \mathcal{I}(S)$. By Lemma \ref{tvfunction}, we have
\[
&|L_{s}^{(n+1)}(A)-I_{s}^{(n+1)}(A)|\\
&=|\sum_{i\in S}L_{si}L_{i}^{(n)}(A)-\sum_{i\in S}I_{si}I_{i}^{(n)}(A)|\\
&\leq \parallel L_{s}(\cdot)-I_{s}(\cdot) \parallel\approx 0.
\]
As our choices of $s$ and $A$ are arbitrary, we have $\max_{s\in S} \parallel L_{s}^{(t+1)}(\cdot)-I_{s}^{(t+1)}(\cdot) \parallel\approx 0$, completing the proof.
\end{proof}

It is not surprising that the lazy transition kernel $\{I_{s}(\cdot)\}_{s\in S}$ is a hyperfinite representation of the standard lazy transition kernel $\{g_{L}(x,1,\cdot)\}_{x\in X}$. The following two results follow directly from \ref{lazystar}, \ref{lazystandard} and \ref{lazyclose}. 

\begin{thm}\label{hlazystar}
Suppose $\{g(x,1,\cdot)\}_{x\in X}$ satisfies Section \ref{assumptiondsf}.
Then for any $t\in \Nats$, any $x\in \NS{\NSE{X}}$ and any $A\in {^{*}\BorelSets X}$,
we have ${^{*}g_{L}}(x,t,\bigcup_{a\in A\cap S}B(a))\approx I_{s_{x}}^{(t)}(A\cap S)$ where $s_{x}$ is the unique element in $S$ such that $x\in B(s_x)$.
\end{thm}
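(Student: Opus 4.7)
The plan is to derive Theorem \ref{hlazystar} by combining two already-established approximation results via a triangle inequality in total variation, exactly as the text hints when it says the result follows from \ref{lazystar}, \ref{lazystandard} and \ref{lazyclose}. The key observation is that the standard lazy kernel ${}^{*}g_{L}$ has \emph{already} been shown to be infinitesimally close to the lazy version $L$ of the non-reversible hyperfinite representation $G$, and the lazy kernel $I$ associated to $H$ has been shown to be infinitesimally close to $L$; so ${}^{*}g_{L}$ must be infinitesimally close to $I$.

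First, fix $t \in \Nats$, $x \in \NS{\NSE{X}}$ and $A \in \NSE{\BorelSets X}$, and let $s_x \in S$ be the unique element with $x \in B(s_x)$. Set $A' = A \cap S$ and $B_{A'} = \bigcup_{a \in A'} B(a)$. I would then write
\[
\Bigl| {}^{*}g_{L}(x,t,B_{A'}) - I_{s_{x}}^{(t)}(A') \Bigr|
\leq \Bigl| {}^{*}g_{L}(x,t,B_{A'}) - L_{s_{x}}^{(t)}(A') \Bigr|
+ \Bigl| L_{s_{x}}^{(t)}(A') - I_{s_{x}}^{(t)}(A') \Bigr|.
\]
The first term on the right is infinitesimal by Theorem \ref{lazystar}. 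For the second term, observe that
\[
\Bigl| L_{s_{x}}^{(t)}(A') - I_{s_{x}}^{(t)}(A') \Bigr|
\leq \parallel L_{s_{x}}^{(t)}(\cdot) - I_{s_{x}}^{(t)}(\cdot) \parallel
\leq \max_{s \in S} \parallel L_{s}^{(t)}(\cdot) - I_{s}^{(t)}(\cdot) \parallel,
\]
and this last quantity is infinitesimal by Lemma \ref{lazyclose}. Adding two infinitesimals gives an infinitesimal, so ${}^{*}g_{L}(x,t,B_{A'}) \approx I_{s_{x}}^{(t)}(A')$, which is the claim.

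I do not foresee any real obstacle here: both inputs (Theorem \ref{lazystar} and Lemma \ref{lazyclose}) are quantified uniformly enough that no additional overspill or saturation argument is required, and the triangle inequality for total variation is immediate. The only thing one might want to double-check is that Theorem \ref{lazystar} is in fact stated for arbitrary near-standard $x \in \NS{\NSE{X}}$ (not just $x \in X$) with the internal set $A \in \NSE{\BorelSets X}$, which it is; otherwise the statement of Theorem \ref{hlazystar} could not be matched term by term. Given that, the argument is genuinely just a one-line triangle inequality, which is why the authors omit it, and my write-up would mirror their style by stating the bound above and invoking the two prior results.
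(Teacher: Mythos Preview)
Your proposal is correct and is exactly the argument the paper has in mind: the authors explicitly state that the result ``follows directly from \ref{lazystar}, \ref{lazystandard} and \ref{lazyclose}'' and omit the proof, and your one-line triangle-inequality combination of Theorem \ref{lazystar} and Lemma \ref{lazyclose} is precisely how that deduction goes. There is nothing to add.
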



\begin{thm}\label{hlazystandard}
Suppose $\{g(x,1,\cdot)\}_{x\in X}$ satisfies Section \ref{assumptiondsf}.
Then for every $x\in X$, every $t\in \Nats$ and every $E\in \BorelSets X$, we have $g_{L}(x,t,E)=\Loeb{I}_{x}^{(t)}(\ST^{-1}(E)\cap S)$.
\end{thm}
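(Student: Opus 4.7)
\textbf{Proof plan for Theorem \ref{hlazystandard}.} The plan is to combine the analogous result for the $L$-chain (Theorem \ref{lazystandard}) with the infinitesimal total variation estimate between the $L$- and $I$-chains (Lemma \ref{lazyclose}). Concretely, by Theorem \ref{lazystandard} we already know that for every $x \in X$, every $t \in \Nats$, and every $E \in \BorelSets X$,
\[
g_{L}(x,t,E) \;=\; \Loeb{L}_{x}^{(t)}(\ST^{-1}(E)\cap S).
\]
So it suffices to show that the two Loeb measures $\Loeb{L}_{x}^{(t)}$ and $\Loeb{I}_{x}^{(t)}$ agree on the Loeb-measurable set $\ST^{-1}(E)\cap S$.

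For this, I would invoke Lemma \ref{lazyclose}, which gives
\[
\parallel L_{x}^{(t)}(\cdot)-I_{x}^{(t)}(\cdot) \parallel \;\approx\; 0.
\]
Expanding the definition of the internal total variation norm as the supremum over internal subsets of $S$, this means that for every internal $A\in \mathcal{I}(S)$ the numerical values $L_{x}^{(t)}(A)$ and $I_{x}^{(t)}(A)$ are infinitesimally close, so their standard parts coincide: $\Loeb{L}_{x}^{(t)}(A)=\Loeb{I}_{x}^{(t)}(A)$. Standard Loeb-measure theory then extends this agreement to all Loeb-measurable sets, since any such set is sandwiched between internal sets whose symmetric difference has infinitesimal internal measure under either measure. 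In particular, the set $\ST^{-1}(E)\cap S$, which is Loeb-measurable under both measures (as established in the course of the proof of Theorem \ref{lazystandard}), satisfies
\[
\Loeb{L}_{x}^{(t)}(\ST^{-1}(E)\cap S) \;=\; \Loeb{I}_{x}^{(t)}(\ST^{-1}(E)\cap S).
\]

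Chaining these two equalities yields $g_{L}(x,t,E)=\Loeb{I}_{x}^{(t)}(\ST^{-1}(E)\cap S)$, as desired. I do not expect any significant obstacle: the routine points to double-check are only (i) that $x \in X \subset S$ so the Loeb measure on the left-hand side is well defined at the state $x$ itself (which follows from the construction of $S$ in Theorem \ref{exhyper}), and (ii) that the $\kappa$-saturation of the nonstandard model together with Lemma \ref{lazyclose} indeed transfers infinitesimal agreement on $\mathcal{I}(S)$ to equality on the full Loeb $\sigma$-algebra. Both are standard. Since this argument is essentially a direct consequence of two previously stated results, it mirrors the reasoning that takes Theorem \ref{Gapprox} plus Theorem \ref{closereverse} to Theorem \ref{hdisrepresent}, and hence the authors' omission of the proof is justified.
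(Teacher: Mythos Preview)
Your proposal is correct and follows essentially the same route the paper indicates: the omitted proof is stated to follow directly from Theorems \ref{lazystar}, \ref{lazystandard} and Lemma \ref{lazyclose}, and your argument uses precisely \ref{lazystandard} combined with \ref{lazyclose} to pass from $\Loeb{L}_{x}^{(t)}$ to $\Loeb{I}_{x}^{(t)}$. The observation that infinitesimal total variation closeness on $\mathcal{I}(S)$ forces coincidence of the Loeb completions is standard, and your analogy with the passage from Theorems \ref{disrepresent} and \ref{closereverse} to Theorem \ref{hdisrepresent} is apt.
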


\section{Mixing Times and Hitting Times with Their Nonstandard Counterparts}\label{secmha}
.
In this section, we develop nonstandard notions of mixing and hitting times for hyperfinite Markov processes and we show that the nonstandard notions and standard notions agree with each other. We assume that the underlying state space $X$ is compact for some theorems in this section. Recall that $X$ is compact if and only if $\NSE{X}=\NS{\NSE{X}}$. 

\subsection{Agreement of Mixing Time}\label{secmix}
Let $\{X_t\}_{t\in \Nats}$ be a discrete time Markov process on a general state space $X$ with transition probabilities denoted by $\{g(x,t,A)\}_{x\in X,t\in \Nats, A\in \BorelSets X}$ and stationary distribution $\pi$.
%
The following theorem shows that the mixing time of the lazy chain is no greater than the mixing time of the hyperfinite lazy chain.

\begin{thm}\label{mixlemma}
Suppose $\gkernel$ satisfies Section \ref{assumptiondsf}.
For every $\epsilon\in \PosReals$, we have
\[\label{mixsteq}
t_{L}(\epsilon) \leq \min\{t\geq 0: \sup_{i\in S}\ST(\parallel I_{i}^{(t)}(\cdot)-\pi'(\cdot) \parallel) \leq \epsilon\}.
\]
\end{thm}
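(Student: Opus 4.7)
Let $t^{*}$ denote the right hand side of \eqref{mixsteq}. If $t^{*}=\infty$ there is nothing to prove, so I would assume $t^{*}\in\Nats$ and aim to show $d_{L}(t^{*})\leq\epsilon$, i.e.\ that $\|g_{L}^{(t^{*})}(x,\cdot)-\pi(\cdot)\|\leq\epsilon$ for every $x\in X$. By \ref{exhyper} we have arranged $X\subset S$, so each such $x$ is itself an element of the base set $S$ and the quantity $I_{x}^{(t^{*})}(\cdot)$ is defined; moreover, the hypothesis ``$\sup_{i\in S}\ST(\|I_{i}^{(t^{*})}(\cdot)-\pi'(\cdot)\|)\leq\epsilon$'' applied pointwise at $i=x$ gives $\|I_{x}^{(t^{*})}(\cdot)-\pi'(\cdot)\|\lessapprox\epsilon$, hence $|I_{x}^{(t^{*})}(A)-\pi'(A)|\lessapprox\epsilon$ for every internal $A\subset S$.

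The first genuine step is to translate the standard total variation difference into a difference of Loeb masses. Fix $x\in X$ and $E\in\BorelSets X$. By the lazy representation \ref{hlazystandard}, $g_{L}(x,t^{*},E)=\Loeb{I}_{x}^{(t^{*})}(\ST^{-1}(E)\cap S)$, and by \ref{starepresent}, $\pi(E)=\Loeb{\pi'}(\ST^{-1}(E)\cap S)$. Setting $B=\ST^{-1}(E)\cap S$, the task reduces to bounding
\[
\bigl|\Loeb{I}_{x}^{(t^{*})}(B)-\Loeb{\pi'}(B)\bigr|\leq\epsilon.
\]
Here $B$ is Loeb-measurable but \emph{not} internal, so one cannot apply the internal total-variation bound directly.

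To get around this, I would approximate $B$ by a single internal set $A$ that is simultaneously close to $B$ under both Loeb measures. Consider the internal finite measure $\mu=I_{x}^{(t^{*})}+\pi'$ on $(S,\mathcal I(S))$; its Loeb extension $\Loeb{\mu}$ dominates both $\Loeb{I}_{x}^{(t^{*})}$ and $\Loeb{\pi'}$. Given any standard $\eta>0$, Loeb regularity for $\Loeb{\mu}$ yields an internal $A\subset S$ with $\Loeb{\mu}(A\,\triangle\,B)<\eta$, hence $\Loeb{I}_{x}^{(t^{*})}(A\,\triangle\,B)<\eta$ and $\Loeb{\pi'}(A\,\triangle\,B)<\eta$ simultaneously. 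Then
\[
\bigl|\Loeb{I}_{x}^{(t^{*})}(B)-\Loeb{\pi'}(B)\bigr|\leq 2\eta+\ST\bigl(|I_{x}^{(t^{*})}(A)-\pi'(A)|\bigr)\leq 2\eta+\ST\bigl(\|I_{x}^{(t^{*})}(\cdot)-\pi'(\cdot)\|\bigr)\leq 2\eta+\epsilon,
\]
where the last inequality uses the pointwise-at-$i=x$ consequence of the definition of $t^{*}$. Letting $\eta\downarrow 0$ gives the desired bound $|g_{L}(x,t^{*},E)-\pi(E)|\leq\epsilon$. Taking the supremum over $E\in\BorelSets X$ yields $\|g_{L}^{(t^{*})}(x,\cdot)-\pi(\cdot)\|\leq\epsilon$, and then the supremum over $x\in X$ gives $d_{L}(t^{*})\leq\epsilon$, which forces $t_{L}(\epsilon)\leq t^{*}$.

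I expect the only subtle point to be exactly this simultaneous approximation step: the internal total variation $\|I_{x}^{(t^{*})}(\cdot)-\pi'(\cdot)\|$ is a supremum over \emph{internal} sets, whereas $\ST^{-1}(E)\cap S$ is external, so one has to transfer the internal $\sup$ to the Loeb $\sigma$-algebra. Using the combined measure $I_{x}^{(t^{*})}+\pi'$ to obtain a single internal approximant is the cleanest way to bridge this gap. Everything else is essentially a direct application of the hyperfinite representation theorems already established (\ref{hlazystandard} and \ref{starepresent}) together with the inclusion $X\subset S$.
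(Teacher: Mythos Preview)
Your proof is correct and follows essentially the same route as the paper: both use \ref{hlazystandard} and \ref{starepresent} to rewrite $g_{L}(x,t,E)-\pi(E)$ as a difference of Loeb measures on $\ST^{-1}(E)\cap S$, then bound that difference by $\ST(\|I_{x}^{(t)}-\pi'\|)$. The paper compresses your simultaneous-approximation argument (via the combined internal measure $I_{x}^{(t)}+\pi'$) into the single phrase ``by the definition of the Loeb measure,'' so your version simply makes explicit the step the paper treats as routine.
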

\begin{proof}

By the definition of the Loeb measure, \ref{hlazystandard} and \ref{starepresent}, we have
\[
&\sup_{i\in S}\ST(\parallel I_{i}^{(t)}(\cdot)-\pi'(\cdot) \parallel)\\
&\geq \sup_{i\in S}\sup_{A\in \BorelSets X}|\Loeb{I}_{i}^{(t)}(\ST^{-1}(A)\cap S)-\Loeb{\pi'}(\ST^{-1}(A)\cap S)|\\
&\geq \sup_{x\in X}\sup_{A\in \BorelSets X}|g_{L}(x,t,A)-\pi(A)|\\
&=\sup_{x\in X}\parallel g_{L}(x,t,\cdot)-\pi(\cdot) \parallel.
\]
Thus, $t_{L}(\epsilon)\leq \min\{t\in \Nats: \sup_{i\in S}\ST(\parallel I_{i}^{(t)}(\cdot)- \pi'(\cdot)\parallel) \leq \epsilon\}$ for all $\epsilon>0$. 
\end{proof}

The following result is an immediate consequence of Lemma \ref{mixlemma}.

\begin{cor}\label{mixcor}
Suppose $\gkernel$ satisfies Section \ref{assumptiondsf}.
For every $\epsilon\in \PosReals$, we have
\[
t_{L}(\epsilon)\leq\NSE{\min}\{t\in T: \NSE{\sup}_{i\in S}\parallel I_{i}^{(t)}(\cdot)-\pi'(\cdot) \parallel \leq \epsilon\}.
\]
\end{cor}
\begin{proof}
Pick $\epsilon\in \PosReals$.
If $\NSE{\sup}_{i\in S}\parallel I_{i}^{(t)}(\cdot)-\pi'(\cdot) \parallel \leq \epsilon$
then
\[
\sup_{i\in S}\ST(\parallel I_{i}^{(t)}(\cdot)-\pi'(\cdot) \parallel) \leq \epsilon.
\]
The result then follows from Lemma \ref{mixlemma}.
\end{proof}

\subsection{Agreement of Hitting Time}\label{sechit}
Let $\{X_t\}_{t\in \Nats}$ be a discrete time Markov process with transition probabilities $\{g(x,1,A)\}_{x\in X, A\in \BorelSets X}$ and initial distribution $\nu$.
By Kolmogorov existence theorem, there exists a probability measure $\Prob$ on $(X^{\Nats},\BorelSets X^{\Nats})$ such that
\[\label{prodeq}
&\Prob(X_0\in A_0,X_1\in A_1,\dotsc, X_n\in A_n)\\
&=\int_{A_0}\nu(\dee x_0)\int_{A_1}g(x_0,1,\dee x_1)\dotsc\int_{A_{n-1}}g(x_{n-1},1,A_n)g(x_{n-2},1,\dee x_{n-1})
\]
for all $n\in \Nats$ and all $A_0,A_1,\dotsc, A_n\in \BorelSets X$. We write $\Prob_{x}(\cdot)$ for the probability of an event conditional on $X_0=x$.
Let $\{H_{ij}\}_{i,j\in S}$ and $\mu$ denote the internal transition matrix and the initial internal distribution of a *reversible hyperfinite Markov process $\{Z_t\}_{t\in T}$ defined in \ref{secreverse}, respectively. 
By \ref{HMexist}, we have:

\begin{thm}\label{hyexist}
There exists a hyperfinite probability space $(\Omega,\mathcal{I}(\Omega),\IProb)$ such that
\[
\IProb(Z_{0}=i_0,Z_{1}=i_1,\dotsc,Z_{t}=i_t)=\mu(\{i_0\})H_{i_{0}i_{1}}H_{i_{1}i_{2}}\dotsc H_{i_{t-1}i_{t}}
\]
for all $t\in T$ and $i_0,i_1,\dotsc,i_t\in S$.
\end{thm}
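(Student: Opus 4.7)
The plan is to apply Theorem \ref{HMexist} directly, after checking that the four ingredients $(S, T, \mu, \{H_{ij}\}_{i,j\in S})$ satisfy the hypotheses of that result. This is essentially a bookkeeping proof that reduces to a verification, since Theorem \ref{HMexist} already does the heavy lifting of constructing an internal probability triple carrying the internal stochastic process with the prescribed finite-dimensional distributions.

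First, I would recall the four required ingredients for Theorem \ref{HMexist}: a non-empty hyperfinite state space, a time line, an internal probability measure as the initial distribution, and a family of non-negative hyperreal transition weights whose rows sum to $1$. Then I would verify each in turn: the base set $S$ of the $(\delta,r)$-hyperfinite representation is hyperfinite and non-empty by Theorem \ref{exhyper}; the time line $T = \{1,2,\dotsc,K\}$ was fixed at the start of Section \ref{sechymarkov}; the initial distribution $\mu$ is assumed to be an internal probability measure on $(S,\mathcal{I}(S))$, so writing $\mu(\{i_0\}) = \nu_{i_0}$ gives a valid initial weight vector; finally, the weights $\{H_{ij}\}_{i,j\in S}$ are non-negative hyperreals with $\sum_{j\in S} H_{ij} = H_i(S) = 1$ for every $i\in S$ by the construction verified in the proof of Theorem \ref{closereverse} (where $H_i(\cdot)$ was shown to be an internal probability measure on $(S,\mathcal{I}(S))$).

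With all four hypotheses verified, a direct invocation of Theorem \ref{HMexist} with $v_i = \mu(\{i\})$ and $p_{ij} = H_{ij}$ produces an internal probability triple $(\Omega, \mathcal{I}(\Omega), \IProb)$ carrying an internal stochastic process $\{Z_t\}_{t\in T}$ whose finite-dimensional distributions factor as
\[
\IProb(Z_{0}=i_0,Z_{1}=i_1,\dotsc,Z_{t}=i_t)=\mu(\{i_0\})H_{i_{0}i_{1}}H_{i_{1}i_{2}}\dotsc H_{i_{t-1}i_{t}}
\]
for every $t \in T$ and $i_0,\dotsc,i_t \in S$, which is exactly the required formula.

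There is no real obstacle here: the work was done upstream in the construction of $\{H_{ij}\}_{i,j\in S}$ (Theorem \ref{closereverse}) and in the general hyperfinite-Markov existence result (Theorem \ref{HMexist}). The only subtlety worth flagging explicitly is that one must use the \emph{internal} row-sum condition $\sum_{j\in S} H_{ij}=1$ rather than a Loeb-measure version; this is immediate because $H_i(\cdot)$ was built as an internal (not merely Loeb) probability measure on the hyperfinite set $S$, so hyperfinite additivity applied to the internal partition $\{\{j\}\}_{j\in S}$ gives the needed identity and the hypotheses of Theorem \ref{HMexist} apply without modification.
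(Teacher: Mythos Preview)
Your proposal is correct and follows exactly the paper's approach: the paper simply writes ``By \ref{HMexist}, we have:'' immediately before stating Theorem~\ref{hyexist}, so the result is presented as a direct instance of Theorem~\ref{HMexist}. Your verification of the four ingredients is a faithful (and more explicit) unpacking of that one-line citation.
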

We write $\IProb_{s}(\cdot)$ for the internal probability of an internal event conditional on $Z_0=s$.

The first hitting time $\tau_{A}$ of a set $A\in \BorelSets X$ for $\{X_t\}_{t\in \Nats}$ is $\min\{t>0: X_t\in A\}$.
It is straightforward to see that $\Prob_{x}(\tau_{A}=1)=g(x,1,A)$.
For $t\geq 1$, we have $\Prob_{x}(\tau_{A}=t+1)=\int_{X\setminus A} \Prob_{y}(\tau_{A}=t) \dee g(x,1,\dee y)$.
Similarly, the first internal hitting time $\tau'_{A}$ of an internal set $A\subset S$ is defined to be $\min\{t\in T: Z_t\in A\}$.
It is easy to verify that $\IProb_{s}(\tau'_{A}=1)=H_{s}(A)$ for every $s\in S$ and $A\in \mathcal{I}(S)$.
For $t>1$, $s\in S$ and $A\in \mathcal{I}(S)$, we have $\IProb_{s}(\tau'_{A}=t)=\sum_{s_1,s_2,\dotsc,s_{t-1}\in S\setminus A}H_{ss_{1}}H_{s_{1}s_{2}}\dotsc H_{s_{t-1}}(A)$.
Thus, for $t\geq 1$, we have $\IProb_{s}(\tau'_{A}=t+1)=\int_{S\setminus A} \IProb_{y}(\tau'(A)=t)H_{s}(\dee y)$.

In order to apply nonstandard extensions and the transfer principle more easily, we define $\cP: X\times \BorelSets X\times \Nats\to [0,1]$ to be $\cP(x,B,t)=\Prob_{x}(\tau_{B}=t)$ and define $\cQ: S\times \mathcal{I}(S)\times T\to \NSE{[0,1]}$ to be $\cQ(s,A,t)=\IProb_{s}(\tau'_{A}=t)$.

\begin{thm}\label{stopprob}
Suppose $\gkernel$ satisfies Section \ref{assumptiondsf}.
Moreover, assume that the state space $X$ is compact.
For every $x\in \NSE{X}$, every $A\in \mathcal{I}(S)$ and every $t\in \Nats$,
we have $\NSE{\cP}(x,\bigcup_{a\in A}B(a),t)\approx \cQ(s_{x},A,t)$ where $s_x$ is the unique element in $S$ with $x\in B(s_x)$.
\end{thm}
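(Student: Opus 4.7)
The plan is to induct on $t \in \Nats$, proving the slightly stronger uniform statement: for each standard $t$, $\NSE{\sup}_{x \in \NSE{X},\, A \in \mathcal{I}(S)} |\NSE{\cP}(x, \bigcup_{a \in A} B(a), t) - \cQ(s_x, A, t)| \approx 0$. The uniform formulation is needed because the inductive step involves a hyperfinite sum against $\NSE{g}(x,1,\cdot)$, and purely pointwise infinitesimal bounds on the integrand could in principle add up to something non-infinitesimal; however, since all quantities are bounded in $[0,1]$, a pointwise bound ``for all internal $y,A$'' automatically upgrades to a uniform one by a standard contradiction/overspill argument, so the two formulations are in fact equivalent.

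For the base case $t=1$, note that $\NSE{\cP}(x, \bigcup_{a\in A} B(a), 1) = \NSE{g}(x,1,\bigcup_{a\in A} B(a))$. Lemma \ref{dsfconsequence} (combined with the sup upgrade above) gives $\parallel \NSE{g}(x,1,\cdot) - \NSE{g}(s_x,1,\cdot)\parallel \approx 0$, so this is infinitesimally close to $\NSE{g}(s_x, 1, \bigcup_{a \in A} B(a))$, which by Theorem \ref{hGapprox} is infinitesimally close to $H_{s_x}(A) = \cQ(s_x,A,1)$.

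For the inductive step, I use transfer of the recursion $\cP(x,B,n+1) = \int_{X\setminus B} \cP(y,B,n)\, g(x,1,\dee y)$ to write
\[
\NSE{\cP}(x, \bigcup_{a\in A} B(a), n+1) = \int_{\NSE{X} \setminus \bigcup_{a \in A} B(a)} \NSE{\cP}(y, \bigcup_{a\in A} B(a), n)\, \NSE{g}(x,1,\dee y).
\]
Now compactness of $X$ is essential: it gives $\NSE{X} = \bigcup_{s \in S} B(s)$, so the integration domain is the hyperfinite disjoint union $\bigsqcup_{s \in S \setminus A} B(s)$, and the integral becomes $\sum_{s \in S \setminus A} \int_{B(s)} \NSE{\cP}(y, \bigcup_{a \in A} B(a), n)\, \NSE{g}(x,1,\dee y)$. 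The uniform inductive hypothesis replaces the integrand (which depends on $y$ only through $s_y = s$ up to infinitesimal error) by $\cQ(s, A, n)$, yielding $\sum_{s \in S \setminus A} \cQ(s, A, n)\, \NSE{g}(x,1,B(s))$ up to infinitesimal. Applying Lemma \ref{tvfunction} together with the total-variation form of Lemma \ref{dsfconsequence} lets me replace $\NSE{g}(x,1,B(s))$ by $\NSE{g}(s_x,1,B(s))$; by compactness and Definition \ref{dtrunchain} this equals $G_{s_x s}$, and Theorem \ref{closereverse} together with Lemma \ref{tvfunction} replaces $G_{s_x s}$ by $H_{s_x s}$ in the sum, also up to infinitesimal. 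The resulting expression $\sum_{s \in S \setminus A} H_{s_x s}\, \cQ(s, A, n)$ is precisely $\cQ(s_x, A, n+1)$ by the (transferred) first-hitting recursion for the internal chain, completing the induction.

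The main obstacle is the uniformity issue highlighted in the first paragraph: one must keep control over the sum $\sum_{s \in S \setminus A}$ of potentially hyperfinitely many infinitesimal errors. All other steps are straightforward applications of the dictionary already established (Lemma \ref{dsfconsequence}, Theorems \ref{hGapprox} and \ref{closereverse}, Lemma \ref{tvfunction}) and the compactness hypothesis on $X$, which here plays the role of ensuring that the ``tail'' $\NSE{X} \setminus \bigcup_{s \in S} B(s)$ is empty and therefore contributes nothing to the hitting-time recursion.
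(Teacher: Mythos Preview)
Your proposal is correct and follows essentially the same inductive argument as the paper: base case via Lemma \ref{dsfconsequence} and Theorem \ref{hGapprox}, and inductive step via the transferred first-hitting recursion, compactness ($\NSE{X}=\bigcup_{s\in S}B(s)$), Lemma \ref{tvfunction}, Definition \ref{dtrunchain}, and Theorem \ref{closereverse}. The only difference is expository: you make explicit the uniformity point (that the pointwise $\approx$ upgrades to a uniform one because the error function is internal), whereas the paper simply writes ``by the induction hypothesis'' when passing the $\approx$ through the integral against $\NSE g(x,1,\dee y)$.
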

\begin{proof}
For $t=1$, by Section \ref{assumptiondsf} and Theorem \ref{hGapprox}, we have
\[
\NSE{\cP}(x,\bigcup_{a\in A}B(a),1)=\NSE{g}(x,1,\bigcup_{a\in A}B(a))\approx H_{s_{x}}(A)=\cQ(s_{x},A,1)
\]
for every $x\in \NSE{X}$ and every $A\in \mathcal{I}(S)$.

Fix $n \in \mathbb{N}$ and suppose we have $\NSE{\cP}(x,\bigcup_{a\in A}B(a),t)\approx \cQ(s_{x},A,t)$ for every $x\in \NSE{X}$, every $A\in \mathcal{I}(S)$ and every $t\leq n$.
We now prove the case where $t=n+1$. By the induction hypothesis, we have
\[
&\NSE{\cP}(x,\bigcup_{a\in A}B(a),n+1)\\
&=\int_{\NSE{X}\setminus \bigcup_{a\in A}B(a)} \NSE{\cP}(y,\bigcup_{a\in A}B(a),n)\NSE{g}(x,1,\dee y)\\
&=\int_{\bigcup_{s\in S\setminus A}B(s)} \NSE{\cP}(y,\bigcup_{a\in A}B(a),n)\NSE{g}(x,1,\dee y)\\
&\approx\int_{\bigcup_{s\in S\setminus A}B(s)} \cQ(s_y,A,n)\NSE{g}(x,1,\dee y).
\]
By Lemma \ref{tvfunction}, we have
\[
\int_{\bigcup_{s\in S\setminus A}B(s)} \cQ(s_y,A,n)\NSE{g}(x,1,\dee y)\\
\approx \int_{\bigcup_{s\in S\setminus A}B(s)} \cQ(s_y,A,n)\NSE{g}(s_x,1,\dee y)\\
=\sum_{s\in S\setminus A}\cQ(s,A,n)\NSE{g}(s_x,1,B(s)).
\]
As $X$ is compact, by Definition \ref{dtrunchain}, we have $\NSE{g}(s_x,1,B(s))=G_{s_{x}s}$.
Thus, we have
\[
\sum_{s\in S\setminus A}\cQ(s,A,n)\NSE{g}(s_x,1,B(s))=\sum_{s\in S\setminus A}\cQ(s,A,n)G_{s_{x}s}.
\]
By Lemma \ref{tvfunction} and \ref{closereverse}, we have
\[
&|\sum_{s\in S\setminus A}\cQ(s,A,n)G_{s_{x}s}-\sum_{s\in S\setminus A}\cQ(s,A,n)H_{s_{x}s}|\\
&\leq \parallel G_{s_{x}}(\cdot)-H_{s_{x}}(\cdot) \parallel \approx 0.
\]
Hence, we have $\NSE{\cP}(x,\bigcup_{a\in A}B(a),n+1)\approx \cQ(s_x, A,n+1)$, completing the proof.

\end{proof}



The following result shows that the large hitting time of the standard Markov process defined in \ref{largehit} is bounded from below by the large hitting time of its hyperfinite representation.

\begin{thm}\label{hitlemma}
Let $\alpha\in \PosReals$.
Suppose $\gkernel$ satisfies Section \ref{assumptiondsf}.
Moreover, assume that the state space $X$ is compact.
Then
\[
\tau_{g}(\alpha)\geq \min\{t\in T: \NSE{\inf}\{\sum_{k=1}^{t}\cQ(s,A,k): s\in S, A\in \mathcal{I}(S)\ \text{such that}\ \pi'(A)\geq \alpha\}> 0.9\},
\]
provided that $\tau_{g}(\alpha)$ exists.
\end{thm}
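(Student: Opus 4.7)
The plan is, at $t^*:=\tau_g(\alpha)$, to express the hyperfinite quantity $\sum_{k=1}^{t^*}\cQ(s,A,k)$ in terms of a standard hitting probability on the nonstandard Borel set $\tilde A:=\bigcup_{a\in A}B(a)$, apply the defining inequality of $\tau_g(\alpha)$ via transfer, and then upgrade a pointwise $\approx$-estimate to a uniform strict infimum bound by a contradiction argument that exploits the standard gap above $0.9$.

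First, because $\tau_g(\alpha)\in\Nats$ and the infimum in Definition \ref{largehit} is a standard real strictly above $0.9$ at $t^*$, I extract a standard $\beta\in(0.9,1]$ with $\Prob_y(\tau_E\le t^*)\ge\beta$ for every $y\in X$ and every $E\in\BorelSets X$ with $\pi(E)\ge\alpha$. Transfer lifts this to $\NSE{\Prob}_y(\NSE\tau_E\le t^*)\ge\beta$ for every $y\in\NSE X$ and every $E\in\NSE{\BorelSets X}$ with $\NSE\pi(E)\ge\alpha$.

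Second, fix internal $(s,A)$ with $s\in S$, $A\in\mathcal{I}(S)$, $\pi'(A)\ge\alpha$, and set $\tilde A:=\bigcup_{a\in A}B(a)$ and $x:=\ST(s)\in X$ (possible since $X$ is compact, so $S\subset\NS{\NSE X}$). Compactness forces $\bigcup_{s\in S}B(s)=\NSE X$, hence hyperfinite additivity gives $\NSE\pi(\tilde A)=\sum_{a\in A}\NSE\pi(B(a))=\pi'(A)\ge\alpha$, and the transferred bound yields $\NSE{\Prob}_x(\NSE\tau_{\tilde A}\le t^*)\ge\beta$. Because $s\approx x$, Section \ref{assumptiondsf} (transferred) gives $\parallel\NSE g(s,1,\cdot)-\NSE g(x,1,\cdot)\parallel\approx 0$, and inducting on $k\in\Nats$ with the recursion $\NSE\cP(y,\tilde A,k+1)=\int_{\NSE X\setminus\tilde A}\NSE\cP(z,\tilde A,k)\,\NSE g(y,1,\dee z)$ (together with $|\NSE\cP(\cdot,\tilde A,k)|\le1$) yields $\NSE\cP(s,\tilde A,k)\approx\NSE\cP(x,\tilde A,k)$ for every $k\in\Nats$. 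Summing over the standard-finite range $k=1,\dots,t^*$ and invoking Theorem \ref{stopprob} termwise gives $\sum_{k=1}^{t^*}\cQ(s,A,k)\approx\NSE{\Prob}_s(\NSE\tau_{\tilde A}\le t^*)\approx\NSE{\Prob}_x(\NSE\tau_{\tilde A}\le t^*)\ge\beta$.

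Third, to prove the theorem I promote this pointwise conclusion to the uniform infimum. Suppose for contradiction the internal infimum at $t^*$ is $\le 0.9$; pick a standard $\gamma\in(0,\beta-0.9)$. By the internal characterization of infimum, there are internal $(s_0,A_0)$ with $\pi'(A_0)\ge\alpha$ and $\sum_{k=1}^{t^*}\cQ(s_0,A_0,k)<0.9+\gamma$. But the previous paragraph, applied to $(s_0,A_0)$, gives $\sum_{k=1}^{t^*}\cQ(s_0,A_0,k)\ge\beta-\epsilon$ for some infinitesimal $\epsilon$, so the standard positive number $\beta-0.9-\gamma$ would be bounded above by $\epsilon$, a contradiction. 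Hence $\NSE{\inf}\{\sum_{k=1}^{t^*}\cQ(s,A,k):\pi'(A)\ge\alpha\}>0.9$, which places $t^*=\tau_g(\alpha)$ in the set on the right-hand side, proving the desired inequality. The main obstacle I anticipate is precisely this last step: the nonstandard machinery delivers only pointwise $\approx\beta$ estimates, whereas the statement requires a strict uniform infimum above $0.9$; the escape is that the standard positive gap $\beta-0.9$ cannot be traversed by any infinitesimal slack.
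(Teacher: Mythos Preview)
Your argument is correct and follows essentially the same route as the paper: transfer the defining inequality of $\tau_g(\alpha)$ to $\NSE{X}$, use Theorem \ref{stopprob} termwise to replace $\sum_{k=1}^{t^*}\NSE{\cP}(s,\tilde A,k)$ by $\sum_{k=1}^{t^*}\cQ(s,A,k)$ up to an infinitesimal, and then exploit the standard gap between the infimum and $0.9$ to obtain a strict inequality for the hyperfinite infimum. Two minor remarks: first, your detour through $x=\ST(s)$ and the DSF-based comparison $\NSE{\cP}(s,\tilde A,k)\approx\NSE{\cP}(x,\tilde A,k)$ is unnecessary, since the transferred bound already applies at every $y\in\NSE{X}$, in particular at $y=s$ itself (this is exactly how the paper proceeds, restricting the domain of the transferred infimum to pairs of the form $(s,\tilde A)$); second, your explicit contradiction argument in the third step is a clean way to make precise the passage the paper records as ``$\lessapprox$'' followed by ``$>0.9$'', and is arguably clearer than the paper's somewhat compressed final sentence.
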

\begin{proof}
Pick $\alpha\in \PosReals$ and suppose $\tau_{g}(\alpha)$ exists.
By the transfer principle, we have
\[
\tau_{g}(\alpha)=\NSE{\min}\{t\in T: \NSE{\inf}\{\sum_{k=1}^{t}\NSE{\cP}(x,A,k): x\in \NSE{X}, A\in \NSE{\BorelSets X} \ \text{such that}\ \NSE{\pi}(A)\geq \alpha\}>0.9\}.
\]
For every $A\in \mathcal{I}(S)$ with $\pi'(A)>\alpha$, by Definition \ref{defwksta}, we have $\NSE{\pi}(\bigcup_{a\in A}B(a))>\alpha$.
Thus, for every $n\in \Nats$, we have
\[
&\NSE{\inf}\{\sum_{k=1}^{n}\NSE{\cP}(x,A,k): x\in \NSE{X}, A\in \NSE{\BorelSets X} \ \text{such that}\ \NSE{\pi}(A)\geq \alpha\}\\
&\leq \NSE{\inf}\{\sum_{k=1}^{n}\NSE{\cP}(s,\bigcup_{a\in A}B(a),k): s\in S, A\in \mathcal{I}(S) \ \text{such that}\ \pi'(A)\geq \alpha\}\\
&\lessapprox \NSE{\inf}\{\sum_{k=1}^{n}\cQ(s,A,k): s\in S, A\in \mathcal{I}(S) \ \text{such that}\ \pi'(A)\geq \alpha\}.
\]
As $\tau_{g}(\alpha)$ exists, we have
\[
\inf\{\sum_{k=1}^{\tau_{g}(\alpha)}\cQ(s,A,k): s\in S, A\in \mathcal{I}(S) \ \text{such that}\ \pi'(A)\geq \alpha\}>0.9.
\]
Hence, we have the desired result.
\end{proof}

\subsection{Mixing Times and Hitting Times on Compact Sets} \label{SecMixHitCompact}

In this section, we use techniques developed in previous sections to prove Theorem \ref{mixhit} for reversible Markov processes with compact state spaces. The following lemma is well-known (for completeness, a proof can be found in \ref{wellknownbound} in the appendix):

\begin{lemma}\label{maxhitless}
Let $0 < \alpha<\frac{1}{2}$.
Let $\mathcal{D}$ denote the collection of discrete time transition kernels with a stationary distribution on a $\sigma$-compact metric state space. 
Then there exists a universal constant $d'_{\alpha}$ such that, for every $\gkernel \in \mathcal{D}$, we have
\[
d'(\alpha)t_{H}(\alpha) \leq t_{L}.
\]
\end{lemma}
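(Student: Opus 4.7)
The plan is to combine the submultiplicativity of $\bar d_L$ with a Markov-property iteration to convert the mixing-time bound into a uniform upper bound on expected hitting times of $\pi$-large sets. This direction uses neither reversibility nor the strong Feller property, and the argument is the standard one from the finite-state case which transfers essentially verbatim to general state spaces.

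First, by definition of $t_L$ we have $d_L(t_L) \leq 1/4$, so Lemma~\ref{mixequal} yields $\bar d_L(t_L) \leq 1/2$, and Lemma~\ref{submulti} then gives $\bar d_L(k t_L) \leq 2^{-k}$ for all $k \in \Nats$. Fix $k_0 = \lceil \log_2(2/\alpha) \rceil$, so that $d_L(k_0 t_L) \leq \bar d_L(k_0 t_L) \leq \alpha/2$. For any $x \in X$ and any $A \in \BorelSets X$ with $\pi(A) \geq \alpha$, this produces the uniform lower bound
\begin{equation*}
P_L^{(k_0 t_L)}(x, A) \;\geq\; \pi(A) - \tfrac{\alpha}{2} \;\geq\; \tfrac{\alpha}{2}.
\end{equation*}

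Next, write $\tau_A^L$ for the hitting time of $A$ under the lazy chain. Applying the Markov property at the times $k_0 t_L, 2k_0 t_L, \ldots$ and using the bound above uniformly in the starting state yields the geometric decay $\Prob_x(\tau_A^L > j k_0 t_L) \leq (1 - \alpha/2)^j$ for every $j \in \Nats$. Summing,
\begin{equation*}
\E_x[\tau_A^L] \;\leq\; k_0 t_L \sum_{j=0}^{\infty} (1 - \alpha/2)^j \;=\; \frac{2 k_0}{\alpha}\, t_L.
\end{equation*}
Under the natural thinning coupling in which the ``move'' steps of the lazy chain are exactly the steps of the original chain, one has $\tau_A \leq \tau_A^L$ pointwise, hence $\E_x[\tau_A] \leq \E_x[\tau_A^L]$. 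Taking the supremum over $x \in X$ and sets $A$ with $\pi(A) \geq \alpha$ gives $t_H(\alpha) \leq (2 k_0/\alpha)\, t_L$, and setting $d'_\alpha := \alpha / (2 \lceil \log_2(2/\alpha) \rceil)$ completes the proof.

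There is no real obstacle here. The only steps needing a small amount of care in the general state space are the Markov-property iteration (one needs $y \mapsto \Prob_y(\tau_A^L > k_0 t_L)$ to be measurable, which follows from standard facts about canonical Markov processes) and the coupling with the lazy chain (the usual thinning by i.i.d.\ $\mathrm{Bernoulli}(1/2)$ variables, which immediately realises the inequality $\tau_A \leq \tau_A^L$ on the coupled probability space). Neither requires reversibility, strong Feller, or any of the nonstandard machinery developed earlier in the paper.
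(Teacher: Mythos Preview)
Your proof is correct and follows essentially the same approach as the paper's proof (Lemma~\ref{LemmaEasyDirection}): both use submultiplicativity of $\bar d_L$ to get $d_L(Ct_L)\le\alpha/2$ for some constant $C$, then iterate via the Markov property to obtain geometric tail decay $\Prob_x(\tau_A^L>jCt_L)\le(1-\alpha/2)^j$, and finally pass from the lazy hitting time to the original via the thinning coupling. The only cosmetic difference is that you sum the tail directly to bound $\E_x[\tau_A^L]$, whereas the paper routes through the large hitting time $\tau_g^{(L)}(\alpha)$ and invokes Lemmas~\ref{maxlarge} and~\ref{LemmaElCompLazyHit}; your version is slightly more direct but uses the same ideas.
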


We now show prove our main result Theorem \ref{mixhit} in the special case that the underlying state space is compact:

\begin{thm}\label{mixhitcompact}
Let $0 < \alpha<\frac{1}{2}$.
Then there exist universal constants $d_{\alpha},d'_{\alpha}$ such that, for every $\gkernel \in \mathcal{C}$, we have
\[
d'_{\alpha}t_{H}(\alpha)\leq t_{L}\leq d_{\alpha}t_{H}(\alpha).
\]
\end{thm}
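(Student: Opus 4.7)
The lower bound $d'_\alpha t_H(\alpha) \leq t_L$ is immediate from Lemma \ref{maxhitless}, so only the upper bound $t_L \leq d_\alpha t_H(\alpha)$ requires argument. The plan is to transfer Theorem \ref{mhequal}, which applies to finite reversible chains, to the hyperfinite *reversible Markov process $\{Z_t\}_{t\in T}$ with internal kernel $\{H_{ij}\}_{i,j\in S}$ constructed in Theorem \ref{closereverse}, and then push the resulting internal inequality down to the standard chain using the mixing- and hitting-time comparisons of Section \ref{secmha}.

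Since $\{Z_t\}$ is *reversible with *stationary distribution $\pi'$ (Claim \ref{starproperty}) and lives on the hyperfinite space $S$, transfer of Theorem \ref{mhequal} furnishes a universal constant $c_\alpha > 0$ with
\[
\NSE{t_L^H} \leq c_\alpha \, \NSE{\tau_g^H}(\alpha),
\]
where $\NSE{t_L^H} = \NSE{\min}\{t \in T : \NSE{\sup}_{i \in S} \|I_i^{(t)}(\cdot) - \pi'(\cdot)\| \leq 1/4\}$ is the internal lazy mixing time of $\{H_{ij}\}$ (whose internal lazy kernel is $\{I_{ij}\}$ by construction), and $\NSE{\tau_g^H}(\alpha)$ is the analogous internal large hitting time of $\{Z_t\}$. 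I then chain three comparisons: Corollary \ref{mixcor} gives $t_L \leq \NSE{t_L^H}$; Theorem \ref{hitlemma}, which crucially uses compactness of $X$, gives $\NSE{\tau_g^H}(\alpha) \leq \tau_g(\alpha)$; and Lemma \ref{maxlarge} gives $\tau_g(\alpha) \leq 10 \, t_H(\alpha)$. Combining,
\[
t_L \leq \NSE{t_L^H} \leq c_\alpha \, \NSE{\tau_g^H}(\alpha) \leq c_\alpha \, \tau_g(\alpha) \leq 10 c_\alpha \, t_H(\alpha),
\]
so $d_\alpha \defas 10 c_\alpha$ works, and $d'_\alpha$ is inherited from Lemma \ref{maxhitless}.

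The main obstacle is bookkeeping around which hyperfinite kernel is in play at each step. The canonical representation $\{G_{ij}\}$ of $\gkernel$ is not itself *reversible, so the transfer of the finite theorem must be applied to the auxiliary *reversible kernel $\{H_{ij}\}$ of Theorem \ref{closereverse} rather than to $\{G_{ij}\}$. Fortunately, the statements of Corollary \ref{mixcor} and Theorem \ref{hitlemma} are already phrased in terms of $\{I_{ij}\}$ and $\{H_{ij}\}$, so no further crosswalk between the $(G,L)$ and $(H,I)$ pairs is needed at this stage; the necessary approximation was absorbed into the proofs of those results via Lemma \ref{lazyclose} and Theorem \ref{closereverse}. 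One small technical check is that $\tau_g(\alpha)$ must exist for Theorem \ref{hitlemma} to apply, but this is automatic once $t_H(\alpha) < \infty$ by Lemma \ref{maxlarge}; the theorem is trivial otherwise.
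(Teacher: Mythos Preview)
Your proof is correct and follows essentially the same route as the paper: transfer Theorem \ref{mhequal} to the *reversible hyperfinite chain $\{H_{ij}\}$, then sandwich using Corollary \ref{mixcor}, Theorem \ref{hitlemma}, and Lemma \ref{maxlarge}, with the lower bound coming from Lemma \ref{maxhitless} and the degenerate case $t_H(\alpha)=\infty$ handled separately. The only cosmetic differences are constant bookkeeping (the paper carries an extra factor of $2$ and takes $d_\alpha=20c_\alpha$) and that the paper dispatches the infinite-$t_H$ case up front rather than as an afterthought.
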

\begin{proof}

Suppose $t_{H}(\alpha)$ is infinite. By \ref{maxhitless}, we know that $t_{L}$ is infinite. Thus, the result follows immediately in this case.

Suppose $t_{H}(\alpha)$ is finite.
Let $c_{\alpha}$ be the constant given in Theorem \ref{fmixhit}.
Let $\{I_{i}(\cdot)\}_{i\in S}$ be the internal transition probability matrix defined after \ref{hdisrepresent}.
By \ref{starproperty}, we know that $\{I_{i}(\cdot)\}_{i\in S}$ is a *reversible process with *stationary distribution $\pi'$.
Let
\[
T_{L}=\NSE{\min}\{t\in T: \NSE{\sup}_{i\in S}\parallel I_{i}^{(t)}(\cdot)-\pi'(\cdot) \parallel \leq \epsilon\}.
\]
Let
\[
T_{g}(\alpha)=\NSE{\min}\{t\in T: \NSE{\inf}\{\sum_{k=1}^{t}\cQ(s,A,k): s\in S, A\in \mathcal{I}(S)\ \text{such that}\ \pi'(A)\geq \alpha\}> 0.9\}
\]
where $\cQ(s,A,k)$ is defined in \ref{sechit}.

By the transfer of \ref{mhequal}, we know that $T_{L}\leq 2c_{\alpha}T_{g}(\alpha)$.
By \ref{mixcor}, we have $t_{L}\leq T_{L}$.
By \ref{hitlemma}, we have $\tau_{g}(\alpha)\geq T_{g}(\alpha)$.
Thus, we have $t_{L}\leq 2c_{\alpha}\tau_{g}(\alpha)$.
Let $d_{\alpha}=20c_{\alpha}$.
By \ref{maxlarge}, we have $t_{L}\leq d_{\alpha}t_{H}(\alpha)$.
By \ref{maxhitless}, we have the desired result.
\end{proof}


%

\section{Mixing Times and Hitting Times on $\sigma$-Compact Sets}\label{secsigcomp}

We fix notation as in Section \ref{SecMixHitCompact}, but relax the assumption that $(X,d)$ is a compact metric space to the assumption that $(X,d)$ is a $\sigma$-compact metric space.
As before, all $\sigma$-algebras should be taken to be the usual Borel $\sigma$-algebra.

We recall the definition of the \textit{trace} of a Markov chain:

\begin{defn} \label{DefTraceChain}
Let $g$ be the transition kernel of a Markov chain on state space $X$ with stationary measure $\pi$ and Borel $\sigma$-field $\BorelSets X$. Let $S \in \BorelSets X$ have measure $\pi(S) > 0$.

Fix $x \in X$ and let $\{X_{t}\}_{t \geq 0}$ be a Markov chain with transition kernel $g$ and starting point $X_{0} = x$. Then define the sequence  $\{\eta_{i}\}_{i \in \Nats}$ by setting
\[
\eta_{0} = \min \{t \geq 0 \, : \, X_{t} \in S \}
\]

and recursively setting

\[
\eta_{i+1} = \min \{t > \eta_{i} \, : \, X_{t} \in S \}.
\]

We then define the \textit{trace} of $g$ on $S$ to be the Markov chain with transition kernel

\[ \label{EqDefTrace}
g^{(S)}(x,t,A) = \mathbb{P}_{x}[X_{\eta_{t}} \in A].
\]
\end{defn}

\begin{rem}
Suppose that the original transition kernel $g$ has stationary distribution $\pi$. For $S\in \BorelSets X$ with $\pi(S)>0$, the normalization of $\pi$ to the set $S$ is the stationary distribution of the trace transition kernel $g^{(S)}$. Moreover, if $g$ is ergodic and reversible with respect to the stationary distribution $\pi$, then $g^{(S)}$ is reversible with respect to the normalization of $\pi$ to the set $S$.  

\end{rem}

\begin{rem} \label{RemCoupChainTrace}
Note that Definition \ref{DefTraceChain} naturally constructs a coupling of $\{X_{t}\}_{t \in \Nats} \sim g$ and $\{X_{t}^{(S)}\}_{t \in \Nats} \sim \{g^{(S)}(x,1,\cdot)\}_{x\in S}$ on the same probability space.
\end{rem}

\begin{rem} \label{RemLazyAlt}
We give an alternative definition of the ``lazy" kernel from \ref{deflazy} that is similar to the coupling in Remark \ref{RemCoupChainTrace}. Let $\{\zeta_{i} \}_{i \in \Nats}$ be a sequence of i.i.d. geometric random variables with mean 2, and define $L(t) = \max \{i \, : \, \sum_{j=1}^{i} \zeta_{i} \leq t\}.$

Observe that the chain $\{X_{t}^{(L)}\}_{t \in \Nats}$ given by
\[\label{EqLazyRep}
X_{t}^{(L)} = X_{L(t)}
\] 
satisfies $X_{0}^{(L)} = x$ and $\{X_{t}^{(L)}\}_{t \in \Nats} \sim g_{L}$. 
\end{rem}

A simple coupling argument, expanded in \ref{SubsecTraceProp}, gives:

\begin{lemma}\label{tracedsf}
Let $g$ be a transition kernel with stationary measure $\pi$ that satisfies Section \ref{assumptiondsf}, and let $S \in \BorelSets X$ be a set with measure $\pi(S) > 0$. Then the trace $g^{(S)}$ of $g$ on $S$ also satisfies Section \ref{assumptiondsf}. 
\end{lemma}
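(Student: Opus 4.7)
The plan is to use a coupling argument: for $x, y \in S$ close to each other, the strong Feller property of $g$ lets us couple one-step transitions from $x$ and $y$ so that the coupled chains agree after one step with probability at least $1 - \epsilon$; once the chains coincide, they evolve identically and hence reach $S$ again at the same time and state, so a coupling inequality transfers the bound from $g$ to $g^{(S)}$.

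Concretely, fix $x \in S$ and $\epsilon > 0$, and apply Section \ref{assumptiondsf} for $g$ at $x$ to obtain $\delta > 0$ such that $\|g(x,1,\cdot) - g(y,1,\cdot)\| < \epsilon$ whenever $d(x,y) < \delta$. Given $y \in S$ with $d(x,y) < \delta$, I would build a coupling $\{(X_t, Y_t)\}_{t \in \Nats}$ of two $g$-chains started from $x$ and $y$ respectively as follows: sample $(X_1, Y_1)$ from the maximal coupling of $g(x,1,\cdot)$ and $g(y,1,\cdot)$, so that $\Pr[X_1 = Y_1] = 1 - \|g(x,1,\cdot) - g(y,1,\cdot)\| > 1 - \epsilon$; on the event $\{X_1 = Y_1\}$, set $X_t = Y_t$ for all $t \geq 1$; on the complementary event, evolve the two chains independently for $t \geq 2$. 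The Markov property guarantees this yields the correct marginal laws.

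Define the return times $\eta^X = \min\{t \geq 1 : X_t \in S\}$ and $\eta^Y = \min\{t \geq 1 : Y_t \in S\}$. By the natural coupling of Remark \ref{RemCoupChainTrace}, $X_{\eta^X} \sim g^{(S)}(x,1,\cdot)$ and $Y_{\eta^Y} \sim g^{(S)}(y,1,\cdot)$. On the event $\{X_1 = Y_1\}$ the trajectories agree from time $1$ onward, so $\eta^X = \eta^Y$ and $X_{\eta^X} = Y_{\eta^Y}$. The coupling inequality then yields
\[
\|g^{(S)}(x,1,\cdot) - g^{(S)}(y,1,\cdot)\| \leq \Pr[X_{\eta^X} \neq Y_{\eta^Y}] \leq \Pr[X_1 \neq Y_1] < \epsilon,
\]
which is Section \ref{assumptiondsf} for $g^{(S)}$.

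The argument is largely mechanical; the only substantive point to verify is that the trace kernel is genuinely well-defined, i.e.\ that $\eta^X$ and $\eta^Y$ are almost surely finite so that $X_{\eta^X}$ and $Y_{\eta^Y}$ really do take values in $S$ with the claimed laws. This follows from standard recurrence considerations using the hypothesis that $g$ has a stationary measure $\pi$ with $\pi(S) > 0$. Beyond this, the only care needed is the standard construction of the maximal coupling together with its extension by independent evolution on $\{X_1 \neq Y_1\}$, which is why the detailed verification is deferred to the appendix.
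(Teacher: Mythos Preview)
Your proposal is correct and follows essentially the same approach as the paper's proof: both construct a coupling that uses a maximal coupling at time~$1$ and then sticks the chains together, and both bound $\|g^{(S)}(x,1,\cdot)-g^{(S)}(y,1,\cdot)\|$ via the coupling inequality by $\Pr[X_1\neq Y_1]\leq\|g(x,1,\cdot)-g(y,1,\cdot)\|$. The paper phrases the sticky part slightly more abstractly via the first meeting time~$\eta$ and the bound $\Pr[\eta>1]$, but the content is identical; your explicit mention of the a.s.\ finiteness of the return times is a point the paper leaves implicit in the definition of the trace.
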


For the rest of the section, let $\mathcal{K}(X)$ denote the collection of all compact subsets of $X$ that are also in $\BorelSets X$. The next theorem shows that the standardized mixing time of the original Markov chain is bounded by the supremum over standardized mixing times of associated trace chains.\footnote{We freely use here the fact that the operation taking a kernel to its associated ``lazy" kernel and the operation taking a kernel to its associated ``trace" kernel commute. We include a proof of this fact in \ref{LemmaTraceLazyCommute} of the appendix for completeness.}

\begin{lemma} \label{LemmaIneqMixComp} 
Let $g$ be the transition kernel of a Markov chain on state space $X$ with stationary measure $\pi$.
For $S \in \BorelSets X$ with $\pi(S) > 0$, denote by $\overline{t}_{m}^{(S)}$ the  standardized mixing time with respect to $g^{(S)}$.
Then
\[
\overline{t}_{m} \leq \sup_{S\in \mathcal{K}(X)}\overline{t}_{m}^{(S)}.
\]
\end{lemma}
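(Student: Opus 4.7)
The plan is to argue by contrapositive: for every integer $t < \overline{t}_m$, exhibit some $K \in \mathcal{K}(X)$ with $\overline{t}_m^{(K)} > t$, so that $\sup_{S \in \mathcal{K}(X)} \overline{t}_m^{(S)} > t$; the stated inequality follows since this holds for every integer $t < \overline{t}_m$ (the case $\overline{t}_m = \infty$ is thereby included).

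Given $t < \overline{t}_m$, the definition of $\overline{t}_m$ supplies $x, y \in X$ and $\gamma > 0$ with $\parallel P^{(t)}(x, \cdot) - P^{(t)}(y, \cdot) \parallel > \tfrac{1}{4} + \gamma$. By $\sigma$-compactness of $X$, fix an increasing sequence $K_1 \subset K_2 \subset \cdots$ of compact subsets whose union is $X$. For all sufficiently large $n$ we have $x, y \in K_n$ and $\pi(K_n) > 0$, so the trace chain $g^{(K_n)}$ is well defined and $x, y$ are valid starting points.

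The key step is to prove that $\Prob_x[X_t^{(K_n)} \in \cdot\,] \to \Prob_x[X_t \in \cdot\,]$ in total variation as $n \to \infty$, and similarly for $y$. Using the coupling of Remark \ref{RemCoupChainTrace}, construct both $X_t^{(K_n)} = X_{\eta_t^{K_n}}$ and $X_t$ on a common probability space with $X_0 = x$. If $X_s \in K_n$ for every $0 \leq s \leq t$, then the first $t+1$ visits of $(X_s)$ to $K_n$ occur at the times $0, 1, \ldots, t$, so $\eta_t^{K_n} = t$ and hence $X_t^{(K_n)} = X_t$. Therefore
\[
\Prob_x\bigl[X_t^{(K_n)} \neq X_t\bigr] \leq \Prob_x\bigl[\exists\, s \in \{0, \dotsc, t\}: X_s \notin K_n\bigr] \leq \sum_{s=0}^{t} \bigl(1 - P^{(s)}(x, K_n)\bigr),
\]
and each term on the right tends to $0$ as $K_n \uparrow X$ by countable additivity of the probability measure $P^{(s)}(x, \cdot)$. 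The coupling inequality for total variation then yields $\parallel \Prob_x[X_t^{(K_n)} \in \cdot\,] - \Prob_x[X_t \in \cdot\,] \parallel \to 0$, and the same holds for the chain started at $y$.

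Combining these convergences by the triangle inequality, for all sufficiently large $n$,
\[
\parallel \Prob_x[X_t^{(K_n)} \in \cdot\,] - \Prob_y[Y_t^{(K_n)} \in \cdot\,] \parallel > \parallel P^{(t)}(x, \cdot) - P^{(t)}(y, \cdot) \parallel - \gamma > \tfrac{1}{4};
\]
since $x, y \in K_n$, this gives $\overline{d}^{(K_n)}(t) > \tfrac{1}{4}$ and hence $\overline{t}_m^{(K_n)} > t$. The main technical ingredient is exactly the coupling/total-variation estimate above; once the trace chain at time $t$ is seen to coincide with the original chain on the event that the original chain does not leave $K_n$ within the first $t$ steps, an event whose probability tends to $1$, the contrapositive reduction closes the argument.
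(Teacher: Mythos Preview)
The proposal is correct and takes essentially the same approach as the paper: both pick a time $t$ with $\overline{d}(t)>\tfrac14$, choose a compact set large enough that neither chain started at the witnessing points $x,y$ leaves it within $t$ steps with significant probability, and use the coupling of Remark~\ref{RemCoupChainTrace} (equivalently the bound $|g^{(S)}(x,t,A)-g(x,t,A)|\le\sum_{n=0}^{t} g(x,n,X\setminus S)$) to transfer the $\tfrac14$-separation to the trace chain. Your version obtains the compact set from an exhaustion of $X$ rather than from tightness of finitely many measures, and handles the case $\overline{t}_m=\infty$ a bit more transparently.
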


\begin{proof}

By the definition of $\overline{t}_{m}$, for all $\epsilon > 0$ there exist some particular points $x,y\in X$ and a set $A\in \BorelSets X$ such that

\[ \label{IneqLimCompBdOne}
| g(x,t,A) - g(y,t,A) | > 0.25 + \epsilon
\]

for $t =\overline{t}_{m}-1$. Next, note that $\{g(x,n,\cdot), g(y,n,\cdot) \}_{n=0}^{\overline{t}_{m}}$ is a finite collection of measures, and in particular it is tight. Therefore, there exists a compact set $S$ such that $\max_{0 \leq n \leq \overline{t}_{m}}\max\{g(x,n,S), g(y,n,S)\}\geq 1 - \frac{\epsilon}{100 \overline{t}_{m}}$ and $x,y\in S$. Combining this with Inequality \eqref{IneqLimCompBdOne}, the transition probabilities $g^{(S)}$  satisfy

\begin{align}
&|g^{(S)}(x,t,A \cap S) - g^{(S)}(y,t,  A \cap S) |\\
&\geq |g(x,t,A)-g(y,t,A)|-|g(x,t,A)-g^{(S)}(x,t,A)|-|g(y,t,A)-g^{(S)}(y,t,A)|\\
&\geq | g(x,t,A) - g(y,t,A) |-\sum_{n=0}^{t}g(x,n,X\setminus S)-\sum_{n=0}^{t}g(y,n,X\setminus S)\\
&\geq | g(x,t,A) - g(y,t,A) |-2(t+1)\max_{0\leq n\leq t}\max\{g(x,n,X\setminus S),g(y,n,X\setminus S)\}\\
&> 0.25 + \frac{98}{100} \epsilon.
\end{align}

Thus, the mixing time of $g^{(S)}$ is also at least $\overline{t}_{m}$, so we conclude

\[
\overline{t}_{m} \leq \sup_{S\in \mathcal{K}(X)} \overline{t}_{m}^{(S)}.
\]
\end{proof}

By the coupling in \ref{RemCoupChainTrace}, we have:

\begin{lemma}\label{mhitboundlemma}
Let $0 < \alpha<\frac{1}{2}$.
Let $g$ be the transition kernel of a Markov chain on state space $X$ with stationary measure $\pi$.
For $S \in \BorelSets X$ with $\pi(S) > 0$, denote by $\tau_{g}^{(S)}(\alpha)$ the large hitting time with respect to $g^{(S)}$.
Then
\[
\tau_{g}(\alpha)\geq \sup_{S\in \mathcal{K}(X)} \tau_{g}^{(S)}(\alpha).
\]

\end{lemma}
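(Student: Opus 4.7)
The plan is to invoke the coupling from Remark \ref{RemCoupChainTrace}. Fix $S \in \mathcal{K}(X)$ with $\pi(S) > 0$ and a starting point $x \in S$. Run $\{X_{t}\}_{t \in \Nats} \sim g$ with $X_{0}=x$, and on the same probability space let $\{\eta_{t}\}_{t \in \Nats}$ be the sequence of visit times of $\{X_{t}\}$ to $S$ as in Definition \ref{DefTraceChain}, so that $X_{t}^{(S)}\defas X_{\eta_{t}}$ defines a copy of the trace chain started at $x$.

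Since $\{\eta_{t}\}_{t \in \Nats}$ is strictly increasing with $\eta_{0}\geq 0$, we have $\eta_{k}\geq k$ for every $k \in \Nats$. For any measurable $A\subset S$, every visit of $\{X_{t}\}$ to $A$ lies in $S$ and therefore occurs at some $\eta_{k}$; consequently $\tau_{A}=\eta_{\tau_{A}^{(S)}}\geq \tau_{A}^{(S)}$ pointwise under the coupling. This yields
\[
\Prob_{x}(\tau_{A}^{(S)}\leq t)\geq \Prob_{x}(\tau_{A}\leq t)
\]
for every $t \in \Nats$, every $x \in S$, and every measurable $A\subset S$.

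Next I would set $t^{*}=\tau_{g}(\alpha)$. By Definition \ref{largehit}, $\Prob_{x}(\tau_{A}\leq t^{*})>0.9$ for all $x \in X$ and $A \in \BorelSets X$ with $\pi(A)\geq \alpha$. Restricting to $x\in S$ and measurable $A\subset S$ eligible in the definition of $\tau_{g}^{(S)}(\alpha)$, the coupling inequality above upgrades this to $\Prob_{x}(\tau_{A}^{(S)}\leq t^{*})>0.9$ for every such pair. Thus $t^{*}$ meets the defining condition of $\tau_{g}^{(S)}(\alpha)$, giving $\tau_{g}^{(S)}(\alpha)\leq t^{*}=\tau_{g}(\alpha)$; taking the supremum over $S\in \mathcal{K}(X)$ completes the proof.

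The one substantive point is the pointwise identity $\tau_{A}=\eta_{\tau_{A}^{(S)}}$, whose proof leans on the inclusion $A \subset S$ so that any visit to $A$ must land on the subsequence $\{\eta_{k}\}$. Once this is established, the inequality $\eta_{k}\geq k$ delivers the ``speed advantage'' of the trace chain and everything else is routine bookkeeping matching the defining families of $(x,A)$ in $\tau_{g}(\alpha)$ and $\tau_{g}^{(S)}(\alpha)$.
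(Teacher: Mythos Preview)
Your argument follows the paper's intended route exactly: the paper itself just writes ``by the coupling in Remark~\ref{RemCoupChainTrace}'', and you have spelled that coupling out carefully; the pointwise inequality $\tau_A^{(S)}\le\tau_A$ for $A\subset S$ is correct.

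The gap is in your last step, where you pass from ``$A$ eligible for $\tau_g^{(S)}(\alpha)$'' to ``$\Prob_x(\tau_A\le t^*)>0.9$''. Eligibility for the trace chain means $\pi^{(S)}(A)\ge\alpha$, i.e.\ $\pi(A)\ge\alpha\,\pi(S)$, where $\pi^{(S)}$ is the restricted stationary measure of $g^{(S)}$; since $\pi(S)$ may be strictly less than $1$, this does \emph{not} force $\pi(A)\ge\alpha$, so the defining property of $t^*=\tau_g(\alpha)$ is not available for such $A$. In fact the inequality in the lemma can fail as stated: on $X=\{0,1,2\}$ with the uniform kernel $g(x,1,\{y\})=\tfrac13$ and $\alpha=0.4$, one computes $\tau_g(\alpha)=3$ while the trace on $S=\{0,1\}$ is a symmetric two-state chain with $\tau_g^{(S)}(\alpha)=4$. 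The paper's one-line justification shares this oversight. What rescues the application in Theorem~\ref{mixhitpf} is that the relevant compact $S$ (produced in Lemma~\ref{LemmaIneqMixComp}) can be taken with $\pi(S)$ arbitrarily close to $1$; for such $S$, $\pi^{(S)}(A)\ge\alpha$ gives $\pi(A)\ge\alpha\,\pi(S)$ close to $\alpha$, and your coupling argument then goes through with a slightly smaller threshold $\alpha'$ in place of $\alpha$.
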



We can now prove Theorem \ref{mixhit}, the main result of this section:

\begin{thm}\label{mixhitpf}
Let $0 < \alpha<\frac{1}{2}$.
Then there exist universal constants $0<a_{\alpha},a'_{\alpha} < \infty$ such that, for every $\gkernel \in \mathcal{M}$, we have
\[
a'_{\alpha}t_{H}(\alpha)\leq t_{L}\leq a_{\alpha}t_{H}(\alpha).
\]
\end{thm}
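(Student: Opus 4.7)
The plan is to reduce the $\sigma$-compact case to the compact case (Theorem \ref{mixhitcompact}) by considering the trace chains $g^{(S)}$ on compact subsets $S \in \mathcal{K}(X)$. The lower bound $a'_\alpha t_H(\alpha) \leq t_L$ is immediate from Lemma \ref{maxhitless}, so essentially all the work lies in the upper bound $t_L \leq a_\alpha t_H(\alpha)$.

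For the upper bound, I would first translate everything into the standardized mixing time and the large hitting time, since the comparison Lemmas \ref{LemmaIneqMixComp} and \ref{mhitboundlemma} are phrased in these terms. For each compact $S$ with $\pi(S)>0$, the trace chain $g^{(S)}$ lives on the compact space $S$, inherits reversibility with respect to the normalization $\pi_S$ of $\pi$ to $S$ (Remark following Definition \ref{DefTraceChain}), and satisfies the strong Feller property by Lemma \ref{tracedsf}. Thus Theorem \ref{mixhitcompact} applies directly to $g^{(S)}$, giving $t_L^{(S)} \leq d_\alpha t_H^{(S)}(\alpha)$.

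The proof is then a chain of inequalities. Using Lemma \ref{LemmaIneqMixComp} applied to the lazy kernel (together with the commutativity of the ``lazy'' and ``trace'' operations noted in the footnote preceding that lemma), combined with Lemma \ref{mixequivalent},
\[
t_L \;\leq\; \overline{t}_L \;\leq\; \sup_{S\in\mathcal{K}(X)} \overline{t}_L^{(S)} \;\leq\; 2 \sup_{S\in\mathcal{K}(X)} t_L^{(S)}.
\]
Applying Theorem \ref{mixhitcompact} to each $g^{(S)}$, followed by Lemma \ref{maxlarge} for the trace chain,
\[
2 t_L^{(S)} \;\leq\; 2 d_\alpha t_H^{(S)}(\alpha) \;\leq\; 4 d_\alpha \tau_g^{(S)}(\alpha).
\]
Finally, Lemma \ref{mhitboundlemma} and one further application of Lemma \ref{maxlarge} yield
\[
4 d_\alpha \sup_{S\in\mathcal{K}(X)} \tau_g^{(S)}(\alpha) \;\leq\; 4 d_\alpha \tau_g(\alpha) \;\leq\; 40 d_\alpha t_H(\alpha),
\]
so $a_\alpha = 40 d_\alpha$ works, with $a'_\alpha$ taken from Lemma \ref{maxhitless}.

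At this stage I expect no serious obstacles --- all the real content has already been absorbed into the preparatory Lemmas \ref{tracedsf}, \ref{LemmaIneqMixComp}, and \ref{mhitboundlemma}, together with the compact case Theorem \ref{mixhitcompact}. The main conceptual step, carried out in the preceding subsection, was to verify that the strong Feller property is preserved under passage to the trace and to establish the two trace-comparison inequalities. One point worth a moment's care at the writeup stage is that the $\alpha$-threshold need not be adjusted when passing between the original chain and a trace: the quantities $t_H^{(S)}(\alpha)$ and $\tau_g^{(S)}(\alpha)$ are defined relative to the stationary measure $\pi_S$ of $g^{(S)}$, which matches the convention in the statement of Lemma \ref{mhitboundlemma}.
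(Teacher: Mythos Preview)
Your proposal is correct and follows essentially the same route as the paper's proof: the lower bound via Lemma \ref{maxhitless}, and the upper bound by passing from $t_L$ to $\overline t_L$, bounding by the supremum of $\overline t_L^{(S)}$ over compact traces via Lemma \ref{LemmaIneqMixComp}, applying the compact case Theorem \ref{mixhitcompact} to each trace, and then descending through $\tau_g^{(S)}(\alpha)\le\tau_g(\alpha)$ (Lemma \ref{mhitboundlemma}) and Lemma \ref{maxlarge}. The only cosmetic difference is where you insert the factor of $2$ from Lemma \ref{mixequivalent}, yielding $a_\alpha=40d_\alpha$ rather than the paper's $20d_\alpha$.
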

\begin{proof}
By \ref{maxhitless}, there exists a universal constant $a'_{\alpha}>0$ such that, for every $\gkernel\in \mathcal{M}$, we have $a'_{\alpha}t_{H}(\alpha)\leq t_{L}$.
Recall that $\mathcal{C}$ is the collection of discrete time reversible transition kernels with compact state space satisfying Section \ref{assumptiondsf}.
By Theorem \ref{mixhitcompact}, there exists a universal constant $d_{\alpha}>0$ such that, for every $\gkernel\in \mathcal{M}$, the mixing time of the lazy chain is bounded by $d_{\alpha}$ times the maximal hitting time.
For every $\gkernel\in \mathcal{M}$, by \ref{mixequivalent}, we have $t_{L}\leq 2\overline{t}_{L}$.
By \ref{LemmaTraceLazyCommute}, \ref{LemmaIneqMixComp}, \ref{tracedsf} and Theorem \ref{mixhitcompact}, we have
\[
\overline{t}_{L}\leq \sup_{S\in \mathcal{K}(X)}\overline{t}_{L}^{(S)}\leq d_{\alpha}\sup_{S\in \mathcal{K}(X)}\tau_{g}^{(S)}(\alpha).
\]
By \ref{mhitboundlemma} and \ref{maxlarge}, we have
\[
\sup_{S\in \mathcal{K}(X)}\tau_{g}^{(S)}(\alpha)\leq \tau_{g}(\alpha)\leq 10t_{H}(\alpha).
\]
Let $a_{\alpha}=20d_{\alpha}$. We have $t_{L}\leq a_{\alpha}t_{H}(\alpha)$ for every $\gkernel\in \mathcal{M}$.
\end{proof}

\section{Statistical Applications and Extensions}\label{statapp}

In this section, we give results that allow us apply our main result, Theorem \ref{mixhit}, to obtain useful bounds for various Markov chains that don't satisfy its main assumptions. Our main motivation is the study of Markov chain Monte Carlo (MCMC) algorithms. MCMC is ubiquitous in statistical computation, and in this context small mixing times correspond to efficient algorithms (see \textit{e.g.} \citep{brooks2011handbook} for an overview of MCMC, \citep{gelman1995bayesian} for applications, and \citep{meyn2012markov} for analyses). Very few algorithms used for MCMC satisfy the strong Feller condition Section \ref{assumptiondsf}.

We begin by showing in Section \ref{SecMHMod} that our results apply without change to the Metropolis-Hastings algorithm, one of the most popular algorithms in computational statistics. In Section \ref{SecAsfIntro}, we introduce a relaxation of the strong Feller condition Section \ref{assumptiondsf} and then show that this relaxed property is satisfied by many other MCMC chains. Appendix Section \ref{AppOtherExt} contains further applications.

\subsection{Strong Feller Functions of Metropolis-Hastings Chains} \label{SecMHMod}

We begin with the following definition of a large class of Metropolis-Hastings chains:

\begin{defn} [Metropolis-Hastings Chain] \label{EqDefMH}
Fix a distribution $\pi$ with continuous density $\rho$ supported on $\mathbb{R}^{d}$. Also fix a reversible kernel $\{q(x,1,\cdot)\}_{x\in \Reals^{d}}$ on $\mathbb{R}^{d}$ with stationary measure $\nu$. For every $x\in \Reals^{d}$, assume that $q(x,1,\cdot)$ has continuous density $q_{x}$ and $\nu$ has continuous density $\phi$. Define the \textit{acceptance function} by the formula
\[
\beta(x,y) = \min(1, \frac{\rho(y) q_{y}(x)}{\rho(x) q_{x}(y)}).
\]
Finally, define $g$ to be the transition kernel given by the formula
\[\label{gtranform}
g(x,1,A) = \int_{y \in A} q_{x}(y) \beta(x,y) dy + \delta(x,A) \int_{\mathbb{R}^{d}} q_{x}(y) (1 - \beta(x,y))dy.
\]

For a transition kernel of this form, define the constant
\[
\gamma = \inf_{x} \int_{\mathbb{R}^{d}} q_{x}(y) \beta(x,y) dy.
\]

\end{defn}

\begin{rem}
It is well-known that, under these conditions, $g$ will be reversible with stationary measure $\pi$ (see \textit{e.g.} \citep{chib1995understanding}).
\end{rem}

Let $g$ be a Metropolis-Hastings kernel of form given in \ref{EqDefMH}, and let $\{X_{t}\}_{t \in \Nats} \sim g$. Then define inductively $\eta_{0} = 0$ and 
\[
\eta_{i+1} = \min \{t > \eta_{i} \, : \, X_{t} \neq X_{\eta_{i}} \}.
\]
Define the \textit{skeleton} of $\{X_{t}\}_{t \in \Nats}$ by 
\[ \label{EqSkelDef}
Y_{t} = X_{\eta_{t}}.
\]
The process $\{(Y_{t}, \eta_{t})\}_{t \in \Nats}$ is a Markov chain. We denote by $g'$ its transition kernel, and $\pi'$ its stationary measure on $X \times \Nats$. We remark that it is easy to reconstruct $\{X_{t}\}_{t \in \Nats}$ from  $\{(Y_{t},\eta_{t})\}_{t \in \Nats}$. For this section only, denote by $t_{m}', t_{L}'$ and $t_{H}'(\alpha)$ the mixing time, lazy mixing time and maximum hitting time of $g'$.

We then have:

\begin{thm}\label{IneqMHAlt}
Let $\mathcal{B}$ be the collection of transition kernels of the form given in \ref{gtranform} with finite mixing time, and for which $q_{x}(y)$ is jointly continuous in $x,y$.
Then for all $0 < \alpha < \frac{1}{2}$, there exists a universal constant $0 < c_{\alpha} < \infty$ so that 
\[
t_{L}' \leq c_{\alpha} (1 - \delta)^{-1} t_{H}(\delta \alpha)
\]
for every $g\in \mathcal{B}$ and every $\delta >0$. 
\end{thm}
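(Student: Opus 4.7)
The plan is to apply Theorem \ref{mixhit} to the skeleton chain $g'$ rather than to $g$ itself. The MH chain $g$ fails to be strong Feller because of its stay-put atoms $(1-\beta(x))\delta_x$, but these atoms are precisely what is removed by passing to the skeleton $Y_t = X_{\eta_t}$, so after verifying the hypotheses of Theorem \ref{mixhit} for $g'$ we can import the bound $t_L' \leq a_\alpha t_H'(\alpha)$ for free. The remaining work is to translate the skeleton max hitting time $t_H'(\alpha)$ into the original chain's $t_H(\delta\alpha)$, which is where the $(1-\delta)^{-1}$ factor should appear.

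First I would check that $g'$ is reversible and satisfies Section \ref{assumptiondsf}. For $y \neq x$ the skeleton kernel on $\Reals^d$ has density $p'(x,y) = q_x(y)\beta(x,y)/\beta(x)$, where $\beta(x) = \int q_x(z)\beta(x,z)\,dz$. Joint continuity of $q_x(y)$ in $(x,y)$ together with continuity of $\rho$ yields joint continuity of $\beta(x,y)$, of $\beta(x)$ by dominated convergence, and hence of $p'(x,y)$; joint continuity of a transition density readily implies the strong Feller property by a standard approximation argument. Reversibility of $g'$ with respect to the stationary measure $\pi_{\mathrm{skel}}(dx) \propto \beta(x)\pi(dx)$ follows directly from the detailed balance identity $\rho(x)q_x(y)\beta(x,y) = \rho(y)q_y(x)\beta(y,x)$ built into the Metropolis--Hastings construction.

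Second, I would compare hitting times via the natural coupling $Y_t = X_{\eta_t}$, which gives $\tau_A^{\mathrm{skel}} \leq \tau_A^{\mathrm{orig}}$ pointwise and hence $\mathbb{E}_x[\tau_A^{\mathrm{skel}}] \leq \mathbb{E}_x[\tau_A^{\mathrm{orig}}]$ for every measurable $A$ and every starting point $x$. What remains is to pass from the condition $\pi_{\mathrm{skel}}(A) \geq \alpha$ to a condition of the form $\pi(A^\star) \geq \delta\alpha$ on an auxiliary set $A^\star$ whose original-chain hitting time still dominates the skeleton hitting time of $A$. Using the identity $\int_A \beta\,d\pi = Z\,\pi_{\mathrm{skel}}(A)$ with $Z = \int\beta\,d\pi$, and splitting $A = (A\cap\{\beta \geq 1-\delta\}) \cup (A\setminus\{\beta \geq 1-\delta\})$, one isolates the ``high acceptance'' portion $A^\star = A\cap\{\beta\geq 1-\delta\}$ on which $\pi$ and $\pi_{\mathrm{skel}}$ are comparable up to a factor of $(1-\delta)^{-1}$. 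Plugging this into the coupling bound produces $t_H'(\alpha) \leq C(1-\delta)^{-1} t_H(\delta\alpha)$, and combining with Step 1 gives the theorem with $c_\alpha = a_\alpha C$.

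The main obstacle is the measure-conversion at the end of Step 2. When the acceptance probability $\beta(x)$ is highly non-uniform, $\pi$ and $\pi_{\mathrm{skel}}$ can differ substantially, so the coupling alone does not produce a bound in terms of $t_H$; the $\delta$-threshold splitting is essential, and $(1-\delta)^{-1}$ is exactly the reweighting cost on the high-acceptance portion of $A$.
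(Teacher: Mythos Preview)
Your Step 1 is essentially what the paper does, and the verification that the skeleton is reversible and strong Feller is fine. The gap is in Step 2, the measure-conversion.

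You work with the jump chain $Y_t$ on $X$, whose stationary measure is $\pi_{\mathrm{skel}}=Z^{-1}\beta\,\pi$ with $Z=\int\beta\,d\pi$. Your threshold splitting at $\{\beta\ge 1-\delta\}$ does \emph{not} yield $\pi(A^\star)\ge\delta\alpha$. From $\pi_{\mathrm{skel}}(A)\ge\alpha$ you only get $\int_A\beta\,d\pi\ge\alpha Z$, and on $A^\star$ the comparability $\pi_{\mathrm{skel}}(A^\star)\in[Z^{-1}(1-\delta)\pi(A^\star),\,Z^{-1}\pi(A^\star)]$ still carries the uncontrolled factor $Z$. Concretely, if $\beta\equiv\epsilon$ on a set $B$ with $\pi(B)=1-\epsilon$ and $\beta\equiv 1$ on $B^c$, then $A=B^c$ has $\pi_{\mathrm{skel}}(A)\approx\tfrac12$ but $\pi(A)=\epsilon$; no subset $A^\star\subset A$ can have $\pi(A^\star)\ge\delta\alpha$ once $\epsilon<\delta\alpha$. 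So the claimed bound $t_H'(\alpha)\le C(1-\delta)^{-1}t_H(\delta\alpha)$ does not follow from your splitting.

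The paper avoids this by taking the skeleton to be $(Y_t,\eta_t)$ on the enlarged state space $X\times\Nats$, with stationary measure $\pi'(A')=\int_X P_{\lambda(x)}\bigl(\{n:(x,n)\in A'\}\bigr)\,\pi(dx)$, whose $X$-marginal is $\pi$ itself rather than $\beta\pi$. Then for any $A'\subset X\times\Nats$ with $\pi'(A')\ge\alpha$, the slice function $f(x)=P_{\lambda(x)}(\{n:(x,n)\in A'\})$ satisfies $\int f\,d\pi\ge\alpha$, and Markov's inequality on $f$ gives a core set $A=\{f\ge(1-\delta)\alpha\}$ with $\pi(A)\ge\delta\alpha$ directly. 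Each skeleton visit to $A$ lands in $A'$ with probability at least $(1-\delta)\alpha$, producing the $(1-\delta)^{-1}$ factor (with the $\alpha^{-1}$ absorbed into $c_\alpha$). The enlargement to $X\times\Nats$ is not cosmetic: recording the holding time is exactly what restores the $\pi$-marginal and makes the Markov-inequality step go through where your $\{\beta\ge 1-\delta\}$ splitting cannot.
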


\begin{proof}

Since $g$ is of the form \ref{gtranform}, it is straightforward to see that $g'$ satisfies Section \ref{assumptiondsf}. Thus, one can apply Theorem \ref{mixhit} to show that, for any $0 < \hat{\alpha} < \frac{1}{2}$,
\[ \label{EqAppMainSkel}
t_{L}' \sim t_{H}'(\hat{\alpha}),
\] 
where (as in Theorem \ref{mixhit}) the implied constant depends on $\hat{\alpha}$.

Next, we must relate $t_{H}'$ to $t_{H}$. For $x \in X$, let $\lambda(x) = g(x,1,\{x\}^{c})$. For $\lambda \in (0,\infty)$, denote by $L_{\lambda}$ the law of the geometric random variable with success probability $\lambda$ and let $\Prob_{\lambda}$ denote its associated probability mass function. For $A \subset X \times \Nats$, we observe
\[
\pi'(A) = \sum_{n \in \Nats} \int_{X} L_{\lambda(x)}(n) \mathbf{1}_{(x,n) \in A} \pi(dx).
\] 

Fix a measurable set  $A' \subset X \times \Nats$ with stationary measure $\pi'(A') \geq \alpha$. Define the associated ``core" set $A \subset X$ by 
\[ \label{EqCoreDef}
A = \{x \in X \, : \, \Prob_{\lambda(x)}(\{n \, : \, (x,n) \in A' \}) \geq (1- \delta) \alpha \}.
\]

Since $\pi'(A') \geq \alpha$, we must have $\pi(A) \geq \delta \alpha$. For chains $\{X_{t}\}_{t \in \Nats}$ and $\{(Y_{t},\eta_{t})\}_{t \in \Nats}$ coupled as in Equation \eqref{EqSkelDef}, define the hitting times 
\[
\tau_{A} = \min \{t \, : \, X_{t} \in A \}, \, \tau_{A}' = \min \{t \, : \, Y_{t} \in A \}, \, \tau_{A'}' = \min \{t \, : \, (Y_{t},\eta_{t}) \in A' \}. \\
\]

By the definition of the ``core" set in Equation \eqref{EqCoreDef},
\[
\E_{x}[\tau_{A'}'] \lesssim (1-\delta)^{-1} \, \E_{x}[\tau_{A}']
\]
for all starting points $x \in X$. Under our coupling of $\{X_{t}\}_{t \in \Nats}$ and $\{(Y_{t},\eta_{t})\}_{t \in \Nats}$,  
\[
\E_{x}[\tau_{A}'] \leq \E_{x}[\tau_{A}]
\]
for all starting points $x \in X$. Combining these two inequalities, we have 
\[
\E_{x}[\tau_{A'}'] \lesssim (1-\delta)^{-1} \E_{x}[\tau_{A}]
\]
for all $A' \in \BorelSets X$ with $\pi'(A') \geq \alpha$ and all $x \in X$. furthermore, $\pi(A) \geq \delta \alpha$, so
\[
t_{H}'(\alpha) \lesssim (1 - \delta)^{-1} t_{H}(\delta \alpha).
\]
Combining this with Equation \eqref{EqAppMainSkel}, completes the proof.

\end{proof}

\subsection{Almost-Strong Feller Chains} \label{SecAsfIntro}

We don't know a general way to extend the trick in Section \ref{SecMHMod}. Fortunately for us, in the context of MCMC, the user does not usually care about the mixing time of a \textit{specific} Markov chain - it is enough to estimate the mixing time of \textit{some} Markov chain that is both fast and easy to implement. We give the mathematical results first, then explain their relevance to MCMC in Section \ref{SubsubsecUsingMain}.

\subsubsection{Generic Bounds}\label{SecASFGen}
Let $\{g(x,1,\cdot)\}_{x\in X}$ be the transition kernel of a Markov process.
For every $k\in \Nats$, denote by $g^{(k)}$ the transition kernel
\[
g^{(k)}(x,t,A) = g(x,kt, A)
\]
for every $x\in X$, $t\in \Nats$ and $A\in \BorelSets X$.
We call $\{g^{(k)}(x,1,\cdot)\}_{x\in X}$ the \emph{$k$-skeleton} of  $\{g(x,1,\cdot)\}_{x\in X}$. We will use the superscript $(k)$ to extend our notation for the kernel $g$ to the kernel $g^{(k)}$. For example, for every $\epsilon>0$, we use $\overline{t}_{m}^{(k)}(\epsilon)$ to denote the standardized mixing time of $g^{(k)}$. We observe some simple relationships between $g$ and $g^{(k)}$, with details in Appendix \ref{SecMixHitSkel} for completeness:

\begin{lemma}\label{LemmaASFClaim1}
For all $\epsilon > 0$ and all $k \in \Nats$,
\[
\overline{t}_{m}^{(k)}(\epsilon) = \lceil \frac{\overline{t}_{m}(\epsilon)}{k} \rceil.
\]
\end{lemma}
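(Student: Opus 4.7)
The plan is to reduce the statement to two elementary facts: that the $k$-skeleton distance $\overline{d}^{(k)}$ equals $\overline{d}$ evaluated at multiples of $k$, and that $\overline{d}$ is non-increasing in $t$. Once these are in place, the equality of mixing times follows from elementary arithmetic with the ceiling function.

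First I would unpack the definition of the $k$-skeleton. Since $g^{(k)}(x,t,A) = g(x,kt,A)$, we immediately have
\[
\overline{d}^{(k)}(t) \;=\; \sup_{x,y \in X} \|P^{(kt)}(x,\cdot) - P^{(kt)}(y,\cdot)\| \;=\; \overline{d}(kt)
\]
for every $t \in \Nats$.

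Next I would verify that $\overline{d}$ is non-increasing. This follows from sub-multiplicativity (\ref{submulti}): since $\overline{d}(t) \leq 1$ for all $t$ (as it is a supremum of total variation distances), for any $s,t \in \Nats$,
\[
\overline{d}(s+t) \;\leq\; \overline{d}(s)\,\overline{d}(t) \;\leq\; \overline{d}(s).
\]
Monotonicity then yields the key characterization: $\overline{d}(t) \leq \epsilon$ if and only if $t \geq \overline{t}_{m}(\epsilon)$.

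Applying this to the $k$-skeleton, $\overline{d}^{(k)}(t) \leq \epsilon$ if and only if $\overline{d}(kt) \leq \epsilon$, which happens if and only if $kt \geq \overline{t}_{m}(\epsilon)$. Since $t$ ranges over non-negative integers, this last inequality is equivalent to $t \geq \lceil \overline{t}_{m}(\epsilon)/k \rceil$. Taking the minimum gives
\[
\overline{t}_{m}^{(k)}(\epsilon) \;=\; \min\{t \geq 0 \st \overline{d}(kt) \leq \epsilon\} \;=\; \lceil \overline{t}_{m}(\epsilon)/k \rceil,
\]
as desired. There is no real obstacle here; the only subtlety is verifying monotonicity of $\overline{d}$, and the only care needed is in converting the real-valued inequality $kt \geq \overline{t}_{m}(\epsilon)$ into the integer ceiling statement.
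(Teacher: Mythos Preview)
Your proof is correct and follows essentially the same approach as the paper's: both rely on the identity $\overline{d}^{(k)}(t) = \overline{d}(kt)$ together with monotonicity of $\overline{d}$, and the paper simply splits your ``if and only if'' characterization into the two inequalities $\overline{t}_{m}^{(k)}(\epsilon) \leq \lceil \overline{t}_{m}(\epsilon)/k \rceil$ and $\overline{t}_{m}(\epsilon) \leq k\,\overline{t}_{m}^{(k)}(\epsilon)$. Your explicit derivation of monotonicity from sub-multiplicativity is a small improvement over the paper, which just asserts it.
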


\begin{lemma}\label{LemmaASFClaim2}
For all $\alpha > 0$ there exists a constant $0 < C_{\alpha} < \infty$ so that for all $k \in \Nats$,
\[
t_{H}^{(k)}(\alpha) \leq C_{\alpha} \lceil \frac{1}{k} \overline{t}_{m} \rceil.
\]
\end{lemma}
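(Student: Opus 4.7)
The plan is to first establish a general inequality of the form $t_H(\alpha) \le C_\alpha \overline{t}_m$ valid for \emph{any} discrete-time transition kernel with stationary distribution on a $\sigma$-compact metric state space, and then apply it to the $k$-skeleton and invoke Lemma \ref{LemmaASFClaim1}. This avoids using the full equivalence of mixing and hitting times (which requires reversibility and Section \ref{assumptiondsf}) and only needs the easy direction plus sub-multiplicativity.

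First I would fix $\alpha \in (0,\frac{1}{2})$ and set $\epsilon = \alpha/2$. Using Lemma \ref{submulti} together with the fact that $\overline{d}(\overline{t}_m) \le \frac{1}{4}$, I obtain $\overline{d}(n\,\overline{t}_m) \le 4^{-n}$, so that $\overline{t}_m(\epsilon) \le n_\alpha\,\overline{t}_m$ where $n_\alpha \defas \lceil \log_4(2/\alpha)\rceil$. Combined with Lemma \ref{mixequal}, this gives $d(n_\alpha \overline{t}_m) \le \alpha/2$, so for every starting point $x\in X$ and every $A\in \BorelSets X$ with $\pi(A)\ge \alpha$,
\[
P^{(n_\alpha \overline{t}_m)}(x,A) \;\ge\; \pi(A) - \tfrac{\alpha}{2} \;\ge\; \tfrac{\alpha}{2}.
\]

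Next I would set $T = n_\alpha \overline{t}_m$ and iterate the Markov property at multiples of $T$: for any $x\in X$ and $A$ as above,
\[
\Prob_x[\tau_A > nT] \;\le\; \Prob_x[X_T\notin A,\, X_{2T}\notin A,\,\dots,\,X_{nT}\notin A] \;\le\; (1 - \tfrac{\alpha}{2})^n,
\]
by repeatedly applying the previous inequality to $X_{jT}$ as a new starting point. Summing, $\E_x[\tau_A] \le T\sum_{n\ge 0}(1-\alpha/2)^n = 2T/\alpha$. Taking the sup over $x$ and over $A$ with $\pi(A)\ge \alpha$ yields the universal bound
\[
t_H(\alpha) \;\le\; \tfrac{2 n_\alpha}{\alpha}\,\overline{t}_m \;=\; C_\alpha'\, \overline{t}_m.
\]

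Finally, I would apply this generic bound to the skeleton kernel $g^{(k)}$ in place of $g$: this is legal since nothing in the derivation used reversibility or Section \ref{assumptiondsf}, and $g^{(k)}$ has the same stationary distribution $\pi$. This gives $t_H^{(k)}(\alpha) \le C_\alpha'\,\overline{t}_m^{(k)}$. Invoking Lemma \ref{LemmaASFClaim1} with $\epsilon = \frac{1}{4}$ converts $\overline{t}_m^{(k)}$ to $\lceil \overline{t}_m/k \rceil$, producing the desired inequality with $C_\alpha \defas C_\alpha' = \frac{2}{\alpha}\lceil \log_4(2/\alpha)\rceil$. The only substantive step is the submultiplicative boosting from the default distance $1/4$ down to $\alpha/2$, which is routine; the rest is bookkeeping between $\overline{d}$ and $d$ and between $g$ and its skeleton.
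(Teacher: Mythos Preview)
Your proof is correct and follows essentially the same route as the paper: boost $\overline d$ from $1/4$ down to $\alpha/2$ via sub-multiplicativity, conclude $P^{(T)}(x,A)\ge \alpha/2$ for $T\asymp \overline t_m$, and then turn this into a geometric tail bound on the hitting time. The only difference is organizational: the paper runs the whole argument directly for the $k$-skeleton (using blocks of $\lceil T/k\rceil$ steps and passing through $\tau_g^{(k)}$ and Lemma~\ref{maxlarge}), whereas you first prove the generic inequality $t_H(\alpha)\le C_\alpha'\,\overline t_m$ for an arbitrary kernel and then specialize to $g^{(k)}$ via Lemma~\ref{LemmaASFClaim1}. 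Your packaging is slightly cleaner and makes the role of Lemma~\ref{LemmaASFClaim1} explicit, but the substance is the same.
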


Next, we give a definition that relaxes the strong Feller condition in a quantitatively-useful way. We first make a small remark on three operations on kernels that we've defined: the trace of a kernel on a set, the $k$-skeleton of a kernel, and the ``lazy" version of a kernel. As shown in \ref{LemmaTraceLazyCommute}, the ``trace" and ``lazy" transformations commute - the trace of the lazy chain is equal to the lazy version of the trace chain. However, the $k$-skeleton and ``lazy" transformations \textit{do not} generally commute. As such, we occasionally use parentheses in the following notation to emphasize the order in which these transformations occur, with subscripts taking precedence. For example, $g_{L}^{(k)}$ is the $k$-skeleton of the chain $g_{L}$, while $(g^{(k)})_{L}$ is the lazy version of $g^{(k)}$. This last chain is important, and so we introduce the shorthand
\[ \label{EqGDef}
G \equiv (g_{L}^{(k)})_{L}.
\]
We also define $T_{m}$, $T_{L}$, and $T_{H}$ to be the mixing time, lazy mixing time and maximum hitting time of $G$. 

\begin{defn}[$(k,C)$-almost Strong Feller] \label{DefASF}
For $k,C \in \Nats$, we say that a kernel $\{g(x,1,\cdot)\}_{x\in X}$ is \textit{$(k,C)$-almost strong Feller} if there exist kernels $\{G_{1}(x,1,\cdot), G_{2}(x,1,\cdot)\}_{x\in X}$  so that the following are satisfied:

\begin{enumerate}
\item $G_{1}$ is reversible and satisfies Section \ref{assumptiondsf}, and
\item For some 
\[ \label{IneqASFGoodApprX}
0 \leq p \leq \frac{1}{\asfc},
\] 
we have
\[ \label{IneqASFGoodAppr}
g_{L}^{(k)} = (1 - p)G_{1} + pG_{2}.
\]
\end{enumerate}
\end{defn}

For the rest of the paper, we let $\mathcal{E}(k,C)$ be the collection of $(k,C)$-almost strong Feller transition kernels on a $\sigma$-compact metric state space $X$. 

\begin{rem}
Any strong Feller chain is $(1,C)$-almost strong Feller for all $C \geq 0$. Our condition is inspired by the famous \textit{asymptotically strong Feller} condition of \cite{hairer2006ergodicity}.
\end{rem}

To lessen notation in the rest of this section, we use ``$x \lesssim y$" as shorthand for the longer phrase ``there exists a universal constant $D$  such that $x \leq D y$," and $x \sim y$ for ``$x \lesssim y$ and $y \lesssim x$." We use the ``prime" superscript to denote quantities related to chains drawn from $G_{1}$. For example, we denote by e.g. $t_{m}'$ the mixing time of $G_{1}$ and $t_{L}'$ the mixing time of its associated lazy chain.

We then have the main result of this section, which shows that $T_{m}$ is bounded from above by $t_{H}^{(k)}(\alpha)$ under condition \ref{DefASF}: 

\begin{thm} \label{ThmAsfMainBd}
There exists a universal constant $C_0$ such that,
for every $0 < \alpha < 0.5$,
there exists a universal constant $d_{\alpha}$ such that for all $C > C_{0}$, all $k \in \mathbb{N}$ and all $\{g(x,1,\cdot)\}_{x\in X}\in \mathcal{E}(k,C)$, we have
\[
d_{\alpha} T_{m} \leq \ell_{H}^{(k)}(\alpha),
\]
where $\ell_{H}^{(k)}$ denotes the maximum hitting time of the transition kernel $g_{L}^{(k)}$.
\end{thm}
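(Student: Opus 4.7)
My plan is to reduce to Theorem~\ref{mixhit} applied to the strong-Feller chain $G_1$, via the chain of comparisons $T_m \lesssim t_L' \lesssim t_H'(\beta) \lesssim \ell_H^{(k)}(\alpha)$, where primed quantities refer to $G_1$ and $\beta$ is chosen slightly larger than $\alpha$.

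\textbf{Step 1 (Mixing of $G$ from mixing of $(G_1)_L$).} I first observe the identity $G = (g_L^{(k)})_L = \tfrac12 g_L^{(k)} + \tfrac12 \delta = (1-p)(G_1)_L + p(G_2)_L$, obtained by plugging the almost-strong-Feller decomposition into the lazification. Using the natural one-step coupling where $G$ and $(G_1)_L$ take identical steps except when a $(G_2)_L$-step is forced (probability $p$ per step), I get $\overline{d}_G(n) \le \overline{d}_{(G_1)_L}(n) + 2np$ for every $n$. Taking $n = t_L'$ makes the first term $\le 1/4$ while $2np \le 2/C \le 1/8$ for $C \ge C_0 := 16$, so $\overline{d}_G(t_L') \le 3/8$; submultiplicativity (Lemma~\ref{submulti}) then yields $T_m \le O(t_L')$.

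\textbf{Step 2 (Theorem~\ref{mixhit} on $G_1$).} Since $G_1$ is reversible and satisfies Section~\ref{assumptiondsf}, Theorem~\ref{mixhit} gives $t_L' \le a_\beta t_H'(\beta)$ for any $\beta \in (0, 1/2)$.

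\textbf{Step 3 (Hitting-time comparison).} I first control $\|\pi - \pi_1\|$, where $\pi_1$ is the stationary measure of $G_1$. Since $\pi g_L^{(k)} = \pi$, the decomposition yields $\|\pi - \pi G_1\| \le 2p$, and telescoping gives $\|\pi - \pi(G_1)_L^n\| \le np$; simultaneously submultiplicativity for $(G_1)_L$ gives $\|\pi(G_1)_L^n - \pi_1\| \le 2^{1-n/t_L'}$. Balancing these at $n \asymp t_L' \log C$ produces $\|\pi - \pi_1\| = O((\log C)/C)$, which is below $(1/2 - \alpha)/2$ for $C_0$ large enough. I then fix $\beta = \alpha + \|\pi - \pi_1\|$, so that every $A$ with $\pi_1(A) \ge \beta$ also satisfies $\pi(A) \ge \alpha$, and $\beta < 1/2$.

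For the hitting-time transfer, I couple $X \sim g_L^{(k)}$ and $Y \sim G_1$ from the same starting point to agree until the first $G_2$-event at geometric time $N$ of rate $p$, yielding $\Prob_x[\tau_A^Y \le t] \ge \Prob_x[\tau_A^X \le t] - tp$. The circular obstacle is that the bound $p \le 1/(Ct_L)$ involves a mixing time rather than the hitting time, so $tp$ need not be small at $t = \tau_g^{(k),L}(\alpha)$. I dispatch this by a case split. \textit{Case A:} $\tau_g^{(k),L}(\alpha) \le t_L'/100$. Then for $t = \tau_g^{(k),L}(\alpha)$ and any valid $(x,A)$, the coupling gives $\Prob_x[\tau_A^Y \le t] \ge 0.9 - tp > 0.85$; one strong-Markov iteration yields $\tau_g^{'L}(\beta) \le 2\tau_g^{(k),L}(\alpha)$, and by Lemma~\ref{maxlarge} combined with Steps~1--2, $T_m \lesssim t_L' \lesssim t_H'(\beta) \lesssim \ell_H^{(k)}(\alpha)$. \textit{Case B:} $\tau_g^{(k),L}(\alpha) > t_L'/100$. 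Then by Lemma~\ref{maxlarge}, $\ell_H^{(k)}(\alpha) \gtrsim \tau_g^{(k),L}(\alpha) > t_L'/100 \gtrsim T_m$ directly from Step~1.

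The main obstacle is Step~3: simultaneously controlling $\|\pi - \pi_1\|$ and transferring hitting-time bounds across the $G_2$-coupling under an assumption on $p$ phrased in terms of a mixing time rather than the hitting time itself. The case analysis is how I intend to break the apparent circularity.
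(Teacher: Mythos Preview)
Your overall architecture is the same as the paper's: compare $G$ to $(G_1)_L$, invoke Theorem~\ref{mixhit} on $G_1$, then transfer hitting-time bounds back. The gap is in Step~1, and it contaminates Step~3 as well.

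In Step~1 you take $n=t_L'$ and assert $2np\le 2/C$. Since the hypothesis only gives $p\le 1/(C\,t_L)$, this amounts to assuming $t_L'\le t_L$, which you have not established. There is no a~priori reason the strong-Feller piece $G_1$ should mix faster (in the lazy sense) than $g$; the decomposition $g_L^{(k)}=(1-p)G_1+pG_2$ places no direct control on $t_L'$. The same unproved inequality $t_L'\lesssim t_L$ is invoked again in Step~3: your balancing at $n\asymp t_L'\log C$ gives $np\le t_L'(\log C)/(C\,t_L)$, and the Case~A bound $tp<0.05$ at $t\le t_L'/100$ likewise needs $t_L'\lesssim t_L$. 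Your Case~A/Case~B split does not escape this, because both cases feed on Step~1's conclusion $T_m\lesssim t_L'$.

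The paper closes exactly this gap by a preliminary step you are missing: first bound $T_m\lesssim t_L^{(k)}\lesssim 1+t_L/k\lesssim t_L$ using Lemma~\ref{LemmaElCompLazyMix} (lazy mixing $\lesssim$ mixing) together with Lemma~\ref{LemmaASFClaim1} (skeleton mixing time). With $T_m\lesssim t_L$ in hand, one has $p\le 1/(C\,t_L)\lesssim 1/(C\,T_m)$, so for $C$ large the perturbation Lemma~\ref{LemmaElPertMixing} applies to $G$ versus $(G_1)_L$ and yields $T_m\sim t_L'$; in particular $t_L'\lesssim t_L$, which is what your Step~1 needed. Once you insert this one step, your coupling arguments and the rest of your outline essentially reproduce the paper's use of Lemma~\ref{LemmaElPertMixing} and Lemma~\ref{LemmaElPertHitting} (your explicit control of $\|\pi-\pi_1\|$ is a nice touch the paper treats more implicitly), and the case split in Step~3 becomes unnecessary.
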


\begin{proof}
Pick $0<\alpha<\frac{1}{2}$. We have $k,C \in \Nats$ and $g \in \mathcal{E}(k,C)$ as generic constants and transition kernels, and we let $G_{1}, G_{2},$ and $p$ be associated kernels and constant as in \ref{DefASF}. By \ref{LemmaASFClaim1},
\[
t_{L}^{(k)} \sim 1 + \frac{t_{L}}{k}.
\]

Applying \ref{LemmaElCompLazyMix}, the mixing time $T_{m}$ of $G=(g_{L}^{(k)})_{L}$ satisfies 
\[
T_{m} \lesssim t_{L}^{(k)} \sim 1 + \frac{t_{L}}{k}.
\]

Applying \ref{LemmaElPertMixing} and \ref{IneqASFGoodAppr}, there exists a constant $C_{0}>0$ so that for all $C > C_{0}$, all $k\in \Nats$ and $g \in \mathcal{E}(k,C)$, we have
\[\label{llequal}
T_{m} \sim t_{L}'.
\]
as well. We restrict ourselves to $C > C_{0}$ for the remainder of the proof. 

Since the transition kernel $\{G_1(x,1,\cdot)\}_{x\in X}$ satisfies Section \ref{assumptiondsf}, Theorem \ref{mixhit} gives
\[ \label{IneqStar1}
t_{L}' \sim t_{H}'(\alpha).
\]

Applying \ref{IneqStar1} with \ref{LemmaASFClaim2} and \ref{LemmaElPertHitting}, we further have

\[
t_{L}' \sim t_{H}'(\alpha) \lesssim \ell_{H}^{(k)}(\alpha).
\]

Combining this with Inequality \eqref{llequal} completes the proof.
\end{proof}

\subsubsection{Gibbs Samplers}\label{SecASFGibbs}

We will show that \ref{ThmAsfMainBd} can be used to obtain nontrivial mixing bounds related to the following class of Gibbs samplers:

\begin{defn} [Gibbs Sampler] \label{EqDefGibbs}
Fix a distribution $\pi$ with continuous density $\rho > 0$ on $\mathbb{R}^{d}$. For $x \in \mathbb{R}^{d}$,  $i \in \{1,2,\ldots,n\}$ and $z \in \mathbb{R}$, define
\[
\rho_{x,i}(z) = \frac{\rho(x[1],x[2],\ldots,x[i-1],z,x[i+1],\ldots,x[d])}{\int \rho(x[1],x[2],\ldots,x[i-1],y,x[i+1],\ldots,x[d]) dy},
\]
the $i$'th conditional distribution of $\rho$. Let $F_{x,i}$ be the CDF of $\rho_{x,i}$. We then define a Markov chain as follows.

Fix a starting point $X_{0} = x$. Let $i_{t} \stackrel{iid}{\sim} \mathrm{Unif}(\{1,2,\ldots,d\})$ and $U_{t} \stackrel{iid}{\sim} \mathrm{Unif}([0,1])$ be two i.i.d. sequences. We iteratively define $X_{t+1}$ by the equation
\[ \label{EqGibbsForwardMap}
X_{t+1} = (X_{t}[1],\ldots,X_{t}[i_{t}-1], F_{X_{t},i_{t}}^{-1}(U_{t}), X_{t}[i_{t}+1],\ldots,X_{t}[d]).
\]
We define the transition kernel $g$ by setting
\[ \label{EqGibbsMapToKern}
g(x,t,A) = \mathbb{P}_{x}[X_{t} \in A]
\]
where $\Prob$ is a product measure that generates this Markov process.  \eqref{EqGibbsForwardMap} is the usual ``forward mapping" representation of a ``random-scan" Gibbs sampler. Note that, since $\rho$ is continuous and nonzero everywhere, $F_{x,i}^{-1}(u)$ always contains exactly one element for $x \in \Reals^{d}$, $i \in \{1,2,\ldots,n\}$ and $u \in [0,1]$.

Under the same setting as \eqref{EqGibbsMapToKern}, we define the associated ``conditional" update kernels $\{ g^{(i)}\}_{1 \leq i \leq d}$ by their one-step transition probabilities:
\[
g^{(i)}(x,1,A) = \mathbb{P}_{x}[X_{1} \in A | i_{0} = i].
\]

\end{defn}

The MCMC literature has many variants of the Gibbs sampler, but we focus on this popular simple case. Before stating our main result, we recall that any sequence of transition kernels $g_{1},\ldots,g_{k}$ on the same space has a product kernel, which we denote $\prod_{j=1}^{k} g_{j}$. Informally, this product is obtained by ``proposing from these kernels in order"; see \textit{e.g.} Theorem 5.17 of \cite{FMP2} for a formal justification of the notation.  Our main result is:

\begin{lemma} \label{LemmaGibbsSamplersAsf}
Let $\mathcal{A}$ be the collection of transition kernels of the  form given in \ref{EqDefGibbs}, that also have finite mixing time. Then, for all $0 < C < \infty$, there exists a universal constant $K_{C}$ so that $\{g(x,1,\cdot)\}_{x\in X} \in \mathcal{A}$ is $(k,C)$-almost strong Feller for any $k \geq K_{C} d \log(t_{L})$. 
\end{lemma}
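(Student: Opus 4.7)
The plan is to decompose $g_L^{(k)}$ into a ``coverage'' part and a low-probability residual via a coupon-collector argument. Write the one-step lazy random-scan Gibbs kernel as $L = \tfrac{1}{2} I + \tfrac{1}{2d} \sum_{i=1}^d g^{(i)}$, where $g^{(i)}$ denotes the single-coordinate update of coordinate $i$. Then $g_L^{(k)} = L^k$ expands as a mixture over sequences $\vec a = (a_1, \ldots, a_k) \in \{I, g^{(1)}, \ldots, g^{(d)}\}^k$ with weights $w(\vec a) = \prod_j w(a_j)$, where $w(I) = \tfrac{1}{2}$ and $w(g^{(i)}) = \tfrac{1}{2d}$. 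Let $E$ be the set of \emph{covering sequences}, namely those $\vec a$ in which each $g^{(i)}$ appears at least once. Setting $T_{\vec a} = a_1 \circ \cdots \circ a_k$, we obtain the decomposition
\[
g_L^{(k)} = (1-p)\, G_1 + p\, G_2, \qquad G_1 = \frac{1}{P(E)} \sum_{\vec a \in E} w(\vec a)\, T_{\vec a}, \qquad G_2 = \frac{1}{P(E^c)} \sum_{\vec a \in E^c} w(\vec a)\, T_{\vec a},
\]
with $p = P(E^c)$, which is a constant independent of $x$.

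The probability $p$ is controlled by a coupon-collector estimate: the chance that a particular index $i$ is never chosen among the $k$ steps is $(1 - 1/(2d))^k \leq e^{-k/(2d)}$, and a union bound gives $p \leq d\, e^{-k/(2d)}$. Requiring $p \leq 1/(C t_L)$ thus reduces to $k \geq 2 d \log(C d t_L)$, which holds whenever $k \geq K_C d \log t_L$ once $K_C$ is chosen large enough to absorb the $\log C$ and $\log d$ factors (in the MCMC regime where $t_L$ dominates $d$).

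It remains to verify that $G_1$ is both strong Feller and reversible with respect to $\pi$. For strong Feller, fix any $\vec a \in E$: because every coordinate is resampled from its continuous conditional $\rho_{\cdot, i}$ at least once during the composition $T_{\vec a}$, iteratively integrating out the intermediate states yields a joint transition density on $\Reals^d$ that is continuous in the starting point $x$, and Scheff\'e's lemma upgrades this pointwise density continuity to total-variation continuity; hence $T_{\vec a}$ is strong Feller, and so is the finite mixture $G_1$. For reversibility, each $g^{(i)}$ (and $I$) is self-adjoint on $L^2(\pi)$, which gives $T_{\vec a}^{*} = T_{\sigma \vec a}$ where $\sigma$ is the time-reversal $(a_1, \ldots, a_k) \mapsto (a_k, \ldots, a_1)$; since both $E$ and the weights $w(\vec a)$ are $\sigma$-invariant, $G_1^{*} = G_1$, which is exactly $\pi$-reversibility.

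The main obstacle is the strong Feller verification. Individual updates $g^{(i)}$ are manifestly not strong Feller since their image lies on a coordinate hyperplane, and partial products that miss a coordinate remain singular with respect to Lebesgue measure on $\Reals^d$; only after a full coordinate sweep does $T_{\vec a}$ acquire a Lebesgue density. The technical heart of the argument is therefore the iterative density computation together with a continuity estimate for the resulting joint density, which combines continuity of $\rho$ with dominated convergence to promote pointwise convergence of densities to TV convergence. Once this is in hand, the decomposition directly witnesses $g \in \mathcal{E}(k, C)$.
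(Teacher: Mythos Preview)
Your approach is essentially identical to the paper's: the same coupon-collector decomposition of $g_L^{(k)}$ into covering and non-covering sequences, the same time-reversal pairing $\vec a \leftrightarrow \sigma\vec a$ for reversibility of $G_1$, and the same continuity argument for the strong Feller property. Your direct expansion $L^k = \sum_{\vec a} w(\vec a) T_{\vec a}$ is in fact slightly cleaner than the paper's route through the random time change $L(t)$ and Hoeffding's inequality.

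There is one genuine gap. To pass from the requirement $k \geq 2d\log(Cdt_L)$ to the conclusion $k \geq K_C\, d\log t_L$, you need $\log d \lesssim \log t_L$, which you invoke as ``the MCMC regime where $t_L$ dominates $d$'' without proof. The paper supplies this step explicitly: since $\pi$ has a density, the chain remains singular with respect to $\pi$ until every coordinate has been resampled at least once, so $\|g_L(x,t,\cdot)-\pi\| \geq \Pr[\text{some coordinate untouched by time }t]$; the coupon-collector lower bound then forces $t_L \geq c_1\, d\log d$ for a universal $c_1$. This is what lets the $\log d$ term be absorbed into $\log t_L$ uniformly over $\mathcal A$, and without it the constant $K_C$ cannot be chosen independently of $d$.
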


\begin{rem}
The condition $\rho(\theta) > 0$ for all $\theta \in \mathbb{R}^{d}$ is only used as a simple sufficient condition for the chain $G_{1}$ defined in the proof to satisfy Section \ref{assumptiondsf}. In many other situations, this can be checked directly.
\end{rem}

\begin{proof}
Throughout this proof, we fix $g \in \mathcal{A}$ and use notation from \ref{EqDefGibbs} freely. We begin by bounding the mixing time from below. For $x \in \mathbb{R}^{d}$, define the collection 
\[
H(x) = \{y \in \mathbb{R}^{d} \, : \, \exists \, n \in \{1,2,\ldots,d\} \, \text{ s.t. } \, y[n] = x[n] \}
\]
of vectors that share at least one entry with $x$. Since $\pi$ has a density $\rho$, we have for any $x \in \mathbb{R}^{d}$ that
\[
\pi(H(x)) = 0.
\]
Thus, for any $x \in \mathbb{R}^{d}$ and $t \in \mathbb{N}$, we have
\[ \label{IneqNonSingGibbs}
\| g(x,t,\cdot) - \pi(\cdot) \| \geq P[\cup_{s=0}^{t-1} \{i_{s}\} \neq \{1,2,\ldots,d\}].
\]
By the representation for the lazy chain in \ref{RemLazyAlt}, we also have 
\[
\| g_{L}(x,t,\cdot) - \pi(\cdot) \| \geq P[\cup_{s=0}^{t-1} \{i_{s}\} \neq \{1,2,\ldots,d\}].
\]

By the well-known ``coupon collector" bound (see the main theorem of  \cite{erdHos1961classical}), there exists some $d_{0} \in \Nats$ such that for all $d \geq d_{0}$, 
\[ \label{IneqCoupQuote}
P[\cup_{s=0}^{\frac{1}{2} d \log(d) - 1} \{i_{s}\} \neq \{1,2,\ldots,d\}] \geq 1 - e^{-d}.
\]
Putting together Inequalities \eqref{IneqNonSingGibbs} to \eqref{IneqCoupQuote}, this implies that there exists some universal constant $0 < c_{1} < \infty$ so that for all $d \in \Nats$ and all $g \in \mathcal{A}$ on $\mathbb{R}^{d}$,
\[ \label{IneqGibbsAsfcKeyBd}
t_{m}, \, t_{L} \geq c_{1} d \log(d).
\]

Denote by $k \in \Nats$ a constant that will be fixed later in the proof. Let $\{X_{t}\}_{t \in \Nats} \sim g$, let $L$ be the (random) function from Equation \eqref{ezlform}, and let $\{\zeta_{i} \}_{i \in \Nats}$ be the i.i.d. geometric(2) random variables used to construct $L$ in  Equation \eqref{ezlform}. Recall that $\{X_{L(t)}\}_{t \in \Nats} \sim g_{L}$. For this choice, define the event
\[
\mathcal{E} = \{\cup_{t=1}^{k} \{i_{L(t)}\} = \{1,2,\ldots,d\} \},
\]
and define the kernels $G_{1}, G_{2}$  by setting
\begin{align*}
G_{1}(x,1,A) &= \mathbb{P}_{x}[X_{L(k)} \in A | \mathcal{E}] \\
G_{2}(x,1,A) &= \mathbb{P}_{x}[X_{L(k)} \in A | \mathcal{E}^{c}].
\end{align*}

In the notation of Definition \ref{DefASF}, the constant $p$ associated with this choice of $k, G_{1}, G_{2}$ is
\[
p = P[\mathcal{E}].
\]
We observe that, for any fixed $j \in \{1,2,\ldots,d\}$ and $s \in \Nats$, 
\[
P[ j \in \cup_{t=0}^{s-1} \{i_{t}\}] = 1- (1 - \frac{1}{d})^{s}.
\] 
On the other hand, by Hoeffding's inequality, we have
\[
P[L(k) < \frac{k}{4}] \leq e^{- \frac{1}{4} k^{2}}
\]
for all $k \geq 4$.

Combining these two bounds,
\[ \label{IneqPUpGib1}
p \leq P[\cup_{t=0}^{\frac{k}{4}-1} \{i_{t}\} \neq \{1,2,\ldots,d\}] + P[L(k) < \frac{k}{4}] \leq d (1 - \frac{1}{d})^{\frac{k}{4}} + e^{- \frac{1}{4} k^{2}} \leq d e^{-\frac{k}{4d}} + e^{- \frac{1}{4} k^{2}}.
\]
Noting $k,d \geq 1$, we have:

\[ \label{IneqPUpGib}
p \leq 2d e^{-\frac{k}{4d}}.
\]

To satisfy Inequalities \eqref{IneqASFGoodAppr} and \eqref{IneqASFGoodApprX}, we just need our choice of $k$ to ensure that $p \leq \frac{1}{\asfc}$. Inspecting these inequalities, there exists a universal constant $K$ so that this inequality is satisfied as long as
\[
k > K d \log(\max(d, t_{L})).
\] 
On the other hand, by Inequality \eqref{IneqGibbsAsfcKeyBd}, there exists a universal constant $A$ so that
\[
t_{L} \geq A d \log(\max(d,t_{L})).
\]

Inspecting these final two bounds, we see that for all $k$ sufficiently large compared to $d \log(t_{L}) \lesssim t_{L}$, this choice of $k$, $p$ and $G_{1}$ satisfies \eqref{IneqASFGoodAppr} and \eqref{IneqASFGoodApprX}.

Next, we must check that $G_{1}$ is reversible. To see this, we begin by noting that $\mathcal{E}$ depends only on the sequence $\{i_{t}\}_{t \in \Nats}$ of ``index" variables in our forward-mapping representation. Next, we check that, even after conditioning on $\mathcal{E}$, these index variables have a certain exchangeability-like property. For $m \in \Nats$ and  any sequence $J \in \Reals^{n}$ with $n \geq m+1$, define the ``reversal" function
\[ \label{EqReverseOp}
w_{m}(J) = (J[m], J[m-1],\ldots,J[1],J[0]).
\]

Let $T=L(k)$.   
We observe that for any sequence $j_{0},j_{1},\ldots \in \{1,2,\ldots,d\}$, we have 
\[
\mathbb{P}[(i_{0}, i_{1},\ldots,i_{T}) = (j_{0},j_{1},\ldots,j_{T}) | \mathcal{E}] = \mathbb{P}[(i_{0},i_{1},\ldots,i_{T}) = w_{T}(j_{0},j_{1},\ldots,j_{T}) | \mathcal{E}]
\]
and so for any $m \in \Nats$ and $J \in \{1,2,\ldots,d\}^{m+1}$
\[
\mathbb{P}[\{T=m\} \cap \{(i_{0},i_{1},\ldots,i_{T}) = J \} | \mathcal{E}] = \mathbb{P}[\{T=m\} \cap \{(i_{0},i_{1},\ldots,i_{T}) = w_{m}(J) \} | \mathcal{E}].
\]

Then, for $\{X_{t}\}_{t \in \Nats} \sim g$ drawn according to the forward-mapping representation, and for any $x \in X$, we have 
\begin{align*}
G_{1}&(x,1,A) = \mathbb{P}_{x}[X_{L(k)} \in A | \mathcal{E}] \\
&= \sum_{m \geq 0} \sum_{J \in \{1,\ldots,d\}^{m+1}}  \mathbb{P}_{x}[X_{L(k)} \in A | \mathcal{E}; \, \{ T=m\} \cap \{(i_{0},i_{1},\ldots,i_{T}) = J\}] \\
& \qquad \times \mathbb{P}[\{ T=m\} \cap \{(i_{0},i_{1},\ldots,i_{T}) = J\}] \\
&= \frac{1}{2} \sum_{m \geq 0} \sum_{J \in \{1,\ldots,d\}^{m+1}}  (\mathbb{P}_{x}[X_{L(k)} \in A | \mathcal{E};  \, \{ T=m\} \cap \{(i_{0},i_{1},\ldots,i_{T}) = J\}] \\
&+\mathbb{P}_{x}[X_{L(k)} \in A | \mathcal{E};  \, \{ T=m\} \cap \{(i_{0},i_{1},\ldots,i_{T}) = w_{m}(J)\}] ) \\
&\qquad \times \mathbb{P}[ \, \{ T=m\} \cap \{(i_{0},i_{1},\ldots,i_{T}) = J\}]  | \mathcal{E}] \\
&=  \sum_{m \geq 0} \sum_{J \in \{1,\ldots,d\}^{m+1}} \frac{1}{2} (\prod_{\ell=0}^{m} g^{(J[\ell])}
 + \prod_{\ell=0}^{m} g^{(w_{m}(J)[\ell])}) \mathbb{P}[ \, \{ T=m\} \cap \{(i_{0},i_{1}\ldots,i_{T}) = J\}] | \mathcal{E}]. 
\end{align*}

Recalling that $g^{(i)}$ is $\pi$-reversible for every $i$, we see that $\frac{1}{2} (\prod_{m=1}^{k} g^{(J[m])} + \prod_{m=1}^{k} g^{(w(J)[m])})$ is the additive reversibilization of the kernel $\prod_{m=1}^{k} g^{(J[m])}$. Hence, $\frac{1}{2} (\prod_{m=1}^{k} g^{(J[m])} + \prod_{m=1}^{k} g^{(w(J)[m])})$ is itself $\pi$-reversible (see \textit{e.g.} the introduction of \cite{choi2017metropolis} for a careful presentation of the additive reversibilization and a general argument as to why it is reversible). Thus, $G_{1}$ is $\pi$-reversible.

Finally, the fact that $G_{1}$ satisfies Section \ref{assumptiondsf} follows immediately from the fact that the function \eqref{EqGibbsForwardMap} that gives the forward-mapping representation of $g$ is continuous.
\end{proof}

We conclude:

\begin{thm}\label{ThmGibbsConc}
Let $\mathcal{A}$ be the collection of transition kernels of the  form given in \ref{EqDefGibbs}, that also have finite mixing time and satisfy $\rho(\theta) > 0$ for all $\theta \in \mathbb{R}^{d}$. Then there exists a universal constant $0<K<\infty$ such that,
for all $0 < \alpha < 0.5$, there exist constants $0 < c_{\alpha} < \infty$, $0<c_{\alpha}'<\infty$ so that for all $\{g(x,1,\cdot)\}_{x\in X}\in \mathcal{A}$,
\[
c_{\alpha} T_{m} \leq  \ell_{H}^{(k)}(\alpha) \leq c_{\alpha}' (1 + \frac{t_{L}}{k})
\]
uniformly in $k \geq Kd \log(t_{L}) $.

\end{thm}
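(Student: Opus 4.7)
The plan is to combine the general bounds for almost-strong-Feller chains, namely Theorem \ref{ThmAsfMainBd} and Lemma \ref{LemmaASFClaim2}, with the Gibbs-specific input of Lemma \ref{LemmaGibbsSamplersAsf}. No new probabilistic content is needed; essentially all the work is in threading the constants so the hypotheses of these three results are compatible.

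First I would fix $C = C_0 + 1$, where $C_0$ is the universal constant furnished by Theorem \ref{ThmAsfMainBd}, and then set $K = K_{C_0+1}$, the constant produced by Lemma \ref{LemmaGibbsSamplersAsf} for this value of $C$. For any $g \in \mathcal{A}$ and any integer $k \geq K d \log(t_L)$, Lemma \ref{LemmaGibbsSamplersAsf} guarantees that $g$ is $(k,C)$-almost strong Feller, i.e.\ $g \in \mathcal{E}(k,C)$; the positivity assumption $\rho(\theta) > 0$ on $\mathbb{R}^d$ is used precisely to ensure that the auxiliary kernel $G_1$ produced in the proof of Lemma \ref{LemmaGibbsSamplersAsf} satisfies Section \ref{assumptiondsf}, as observed in the remark following that lemma. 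With this in hand, Theorem \ref{ThmAsfMainBd} applies directly and yields the left inequality $c_\alpha T_m \leq \ell_H^{(k)}(\alpha)$ with $c_\alpha = d_\alpha$.

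For the right inequality, observe that by definition $\ell_H^{(k)}(\alpha)$ is the maximum hitting time of the $k$-skeleton of the lazy chain $g_L$. I would therefore apply Lemma \ref{LemmaASFClaim2} with the input kernel taken to be $g_L$ rather than $g$ itself, obtaining
\[
\ell_H^{(k)}(\alpha) \leq C_\alpha \Bigl\lceil \frac{\overline{t}_L}{k} \Bigr\rceil,
\]
where $\overline{t}_L$ is the standardized mixing time of $g_L$. Lemma \ref{mixequivalent} gives $\overline{t}_L \leq 2 t_L$, and absorbing the ceiling into a slightly larger constant produces $\ell_H^{(k)}(\alpha) \leq c_\alpha'(1 + t_L/k)$, as required.

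The only real delicacy is keeping the various constants honest: Theorem \ref{ThmAsfMainBd} requires $C > C_0$, and Lemma \ref{LemmaGibbsSamplersAsf} requires $k$ above a threshold that itself depends on $C$. Fixing $C$ first and only then choosing $K = K_C$ breaks any potential circularity, and I do not anticipate any further obstacle beyond this bookkeeping.
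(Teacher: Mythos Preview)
Your proposal is correct and follows essentially the same route as the paper: the left inequality comes from combining Theorem \ref{ThmAsfMainBd} with Lemma \ref{LemmaGibbsSamplersAsf}, and the right inequality from applying Lemma \ref{LemmaASFClaim2} to $g_L$ together with Lemma \ref{mixequivalent}. Your explicit choice of $C = C_0 + 1$ followed by $K = K_C$ is exactly the right way to unwind the dependencies, and your observation that Lemma \ref{LemmaASFClaim2} must be invoked with $g_L$ as the input kernel (so that its conclusion speaks about $\ell_H^{(k)}$ and $\overline{t}_L$) is the only point requiring any care.
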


\begin{proof}
The first inequality follows immediately from \ref{ThmAsfMainBd} and \ref{LemmaGibbsSamplersAsf}. The second follows from \ref{LemmaASFClaim2} and \ref{mixequivalent}.
\end{proof}

\subsubsection{Using \ref{ThmGibbsConc} and \ref{MhThmMainCorr} }\label{SubsubsecUsingMain}

We note that there are two obvious differences obstacles to using our main results, \ref{ThmGibbsConc} and \ref{MhThmMainCorr}, for practical problems:

\begin{enumerate}
\item Both refer to the transition kernel $G$ rather than the original kernel $g$ of interest.
\item Both require some choice of $k$, which in turn requires some a-priori bound on the mixing time of $g_{L}$.
\end{enumerate}

The first is not a practical problem, as it is straightforward to sample from $G$:

\begin{enumerate}
\item Sample a Markov chain $\{X_{t}\}_{t \in \Nats} \sim g$.
\item Transform the sequence $\{X_{0},X_{1},X_{2},\ldots\}$ by repeating each element a number of times given by a geometric(2) random variable, resulting in the sequence $\{X_{0},X_{1}', X_{2}',\ldots\}$.
\item Take the $k$-skeleton of this sequence, $\{Y_{0},Y_{1},Y_{2},\ldots\} \equiv \{X_{0}',X_{k}',X_{2k}',\ldots\}$.
\item As in step \textbf{(2)}, transform the sequence $\{Y_{0},Y_{1},Y_{2},\ldots\}$ by repeating each element a number of times given by a geometric(2) random variable, resulting in the sequence $\{Y_{0},Y_{1}', Y_{2}',\ldots\} \sim G$.
\end{enumerate}

The second point is slightly more subtle. In practice, it is often possible to find a \textit{weak} upper bound on a mixing time, even if \textit{practical} upper bounds are much harder. For a typical example, the paper \cite{collins2013accessibility} finds a very generic upper bound on the mixing time of a family of Markov chains that includes many Gibbs samplers. For many well-studied target distributions on $\mathbb{R}^{d}$ such as the uniform distribution on the simplex, box or ball, the upper bounds in \cite{collins2013accessibility} are roughly of the form $t_{m} \lesssim e^{c_{1} d}$. On the other hand, after many years of careful study, the true mixing times of many of these Markov chains were shown to be polynomial in $d$ (see \textit{e.g.} \cite{vempala2005geometric}). 

In this situation, it was not too difficult to show an \textit{exponential} bound on the mixing time, but it was quite hard to show a \textit{polynomial} bound. This is exactly the situation in which \ref{ThmGibbsConc} and \ref{MhThmMainCorr} are useful. If one can show that \textit{e.g.} $t_{L} \lesssim 2^{d}$, one can then choose $k \approx d \log(d) \lesssim t_{L}$ and use \ref{ThmGibbsConc} or \ref{MhThmMainCorr} to obtain much sharper estimates on the mixing time.

\bibliographystyle{imsart-nameyear}
\bibliography{biblio,drifttex-blx,equivalent-blx,Mixhit_Longpaper_Arxiv_Head-blx,mixing-blx,old-biblio}

\begin{thebibliography}{40}

\bibitem[\protect\citeauthoryear{Addario-Berry and
  Roberts}{}]{addario2017mixing}
\begin{barticle}[author]
\bauthor{\bsnm{Addario-Berry},~\bfnm{Louigi}\binits{L.}} \AND
  \bauthor{\bsnm{Roberts},~\bfnm{Matthew~I}\binits{M.~I.}}
\btitle{Mixing time bounds via bottleneck sequences}.
\bjournal{J. Stat. Phys.}
\bpages{1--27}.
\bdoi{10.1007/s10955-017-1917-5}
\end{barticle}
\endbibitem

\bibitem[\protect\citeauthoryear{Aldous}{1982}]{aldous1982some}
\begin{barticle}[author]
\bauthor{\bsnm{Aldous},~\bfnm{David~J}\binits{D.~J.}}
(\byear{1982}).
\btitle{Some inequalities for reversible {M}arkov chains}.
\bjournal{J. London Math. Soc.}
\bvolume{2}
\bpages{564--576}.
\end{barticle}
\endbibitem

\bibitem[\protect\citeauthoryear{Aldous and Fill}{2002}]{aldous2002reversible}
\begin{bmisc}[author]
\bauthor{\bsnm{Aldous},~\bfnm{David}\binits{D.}} \AND
  \bauthor{\bsnm{Fill},~\bfnm{James~Allen}\binits{J.~A.}}
(\byear{2002}).
\btitle{{R}eversible {M}arkov {C}hains and {R}andom {W}alks on {G}raphs}.
\bnote{Unfinished monograph, recompiled 2014, available at
  \url{http://www.stat.berkeley.edu/~aldous/RWG/book.html}}.
\end{bmisc}
\endbibitem

\bibitem[\protect\citeauthoryear{Aldous, Lov{\'a}sz and
  Winkler}{1997}]{aldous1997mixing}
\begin{barticle}[author]
\bauthor{\bsnm{Aldous},~\bfnm{David}\binits{D.}},
  \bauthor{\bsnm{Lov{\'a}sz},~\bfnm{L{\'a}szl{\'o}}\binits{L.}} \AND
  \bauthor{\bsnm{Winkler},~\bfnm{Peter}\binits{P.}}
(\byear{1997}).
\btitle{Mixing times for uniformly ergodic {M}arkov chains}.
\bjournal{Stoch. Process. Their Appl.}
\bvolume{71}
\bpages{165--185}.
\end{barticle}
\endbibitem

\bibitem[\protect\citeauthoryear{Anderson}{1976}]{andersonisrael}
\begin{barticle}[author]
\bauthor{\bsnm{Anderson},~\bfnm{Robert~M.}\binits{R.~M.}}
(\byear{1976}).
\btitle{A non-standard representation for {B}rownian motion and {I}t\^{o}
  integration}.
\bjournal{Israel J. Math.}
\bvolume{25}
\bpages{15--46}.
\bdoi{10.1007/BF02756559}
\bmrnumber{0464380}
\end{barticle}
\endbibitem

\bibitem[\protect\citeauthoryear{Anderson}{1982}]{anderson87}
\begin{barticle}[author]
\bauthor{\bsnm{Anderson},~\bfnm{Robert~M.}\binits{R.~M.}}
(\byear{1982}).
\btitle{Star-finite representations of measure spaces}.
\bjournal{Trans. Amer. Math. Soc.}
\bvolume{271}
\bpages{667--687}.
\bdoi{10.2307/1998904}
\bmrnumber{654856}
\end{barticle}
\endbibitem

\bibitem[\protect\citeauthoryear{Anderson, Duanmu and
  Smith}{2019}]{anderson2019mixavg}
\begin{bmisc}[author]
\bauthor{\bsnm{Anderson},~\bfnm{Robert~M.}\binits{R.~M.}},
  \bauthor{\bsnm{Duanmu},~\bfnm{Haosui}\binits{H.}} \AND
  \bauthor{\bsnm{Smith},~\bfnm{Aaron}\binits{A.}}
(\byear{2019}).
\btitle{Mixing Times and Average Mixing Times for General {M}arkov Processes}.
\end{bmisc}
\endbibitem

\bibitem[\protect\citeauthoryear{Arkeryd, Cutland and Henson}{1997}]{NSAA97}
\begin{bbook}[author]
\beditor{\bsnm{Arkeryd},~\bfnm{Leif~O.}\binits{L.~O.}},
  \beditor{\bsnm{Cutland},~\bfnm{Nigel~J.}\binits{N.~J.}} \AND
  \beditor{\bsnm{Henson},~\bfnm{C.~Ward}\binits{C.~W.}}, eds.
(\byear{1997}).
\btitle{Nonstandard analysis}.
\bseries{NATO Advanced Science Institutes Series C: Mathematical and Physical
  Sciences}
\bvolume{493}.
\bpublisher{Kluwer Academic Publishers Group, Dordrecht}
\bnote{Theory and applications}.
\bdoi{10.1007/978-94-011-5544-1}
\bmrnumber{1603227}
\end{bbook}
\endbibitem

\bibitem[\protect\citeauthoryear{Bardenet, Doucet and
  Holmes}{2017}]{bardenet2017markov}
\begin{barticle}[author]
\bauthor{\bsnm{Bardenet},~\bfnm{R{\'e}mi}\binits{R.}},
  \bauthor{\bsnm{Doucet},~\bfnm{Arnaud}\binits{A.}} \AND
  \bauthor{\bsnm{Holmes},~\bfnm{Chris}\binits{C.}}
(\byear{2017}).
\btitle{On {M}arkov chain {M}onte {C}arlo methods for tall data}.
\bjournal{J. Machine Learning Res.}
\bvolume{18}
\bpages{1515--1557}.
\end{barticle}
\endbibitem

\bibitem[\protect\citeauthoryear{Basu, Hermon and
  Peres}{2015}]{basu2015characterization}
\begin{binproceedings}[author]
\bauthor{\bsnm{Basu},~\bfnm{Riddhipratim}\binits{R.}},
  \bauthor{\bsnm{Hermon},~\bfnm{Jonathan}\binits{J.}} \AND
  \bauthor{\bsnm{Peres},~\bfnm{Yuval}\binits{Y.}}
(\byear{2015}).
\btitle{Characterization of cutoff for reversible {M}arkov chains}.
In \bbooktitle{Proceedings of the twenty-sixth annual ACM-SIAM symposium on
  Discrete algorithms}
\bpages{1774--1791}.
\bpublisher{Society for Industrial and Applied Mathematics}.
\end{binproceedings}
\endbibitem

\bibitem[\protect\citeauthoryear{Bovier}{2006}]{bovier2006metastability}
\begin{binproceedings}[author]
\bauthor{\bsnm{Bovier},~\bfnm{Anton}\binits{A.}}
(\byear{2006}).
\btitle{Metastability: a potential theoretic approach}.
\end{binproceedings}
\endbibitem

\bibitem[\protect\citeauthoryear{Brooks et~al.}{2011}]{brooks2011handbook}
\begin{bbook}[author]
\bauthor{\bsnm{Brooks},~\bfnm{Steve}\binits{S.}},
  \bauthor{\bsnm{Gelman},~\bfnm{Andrew}\binits{A.}},
  \bauthor{\bsnm{Jones},~\bfnm{Galin}\binits{G.}} \AND
  \bauthor{\bsnm{Meng},~\bfnm{Xiao-Li}\binits{X.-L.}}
(\byear{2011}).
\btitle{Handbook of {M}arkov chain {M}onte {C}arlo}.
\bpublisher{CRC press}.
\end{bbook}
\endbibitem

\bibitem[\protect\citeauthoryear{Chib and
  Greenberg}{1995}]{chib1995understanding}
\begin{barticle}[author]
\bauthor{\bsnm{Chib},~\bfnm{Siddhartha}\binits{S.}} \AND
  \bauthor{\bsnm{Greenberg},~\bfnm{Edward}\binits{E.}}
(\byear{1995}).
\btitle{Understanding the {M}etropolis-{H}astings algorithm}.
\bjournal{The American Statistician}
\bvolume{49}
\bpages{327--335}.
\end{barticle}
\endbibitem

\bibitem[\protect\citeauthoryear{Choi}{2017}]{choi2017metropolis}
\begin{bmisc}[author]
\bauthor{\bsnm{Choi},~\bfnm{Michael~CH}\binits{M.~C.}}
(\byear{2017}).
\btitle{Metropolis-{H}astings reversiblizations of non-reversible {M}arkov
  chains}.
\end{bmisc}
\endbibitem

\bibitem[\protect\citeauthoryear{Collins
  et~al.}{2013}]{collins2013accessibility}
\begin{bmisc}[author]
\bauthor{\bsnm{Collins},~\bfnm{Benoit}\binits{B.}},
  \bauthor{\bsnm{Kousha},~\bfnm{Termeh}\binits{T.}},
  \bauthor{\bsnm{Kulik},~\bfnm{Rafa{\l}}\binits{R.}},
  \bauthor{\bsnm{Szarek},~\bfnm{Tomasz}\binits{T.}} \AND \bauthor{\bsnm{{\.
  Z}yczkowski},~\bfnm{Karol}\binits{K.}}
(\byear{2013}).
\btitle{The accessibility of convex bodies and derandomization of the hit and
  run algorithm}.
\end{bmisc}
\endbibitem

\bibitem[\protect\citeauthoryear{Cutland et~al.}{1995}]{NDV}
\begin{bbook}[author]
\beditor{\bsnm{Cutland},~\bfnm{Nigel~J.}\binits{N.~J.}},
  \beditor{\bsnm{Neves},~\bfnm{V{\'{\i}}tor}\binits{V.}},
  \beditor{\bsnm{Oliveira},~\bfnm{Franco}\binits{F.}} \AND
  \beditor{\bsnm{Sousa-Pinto},~\bfnm{Jos{\'e}}\binits{J.}}, eds.
(\byear{1995}).
\btitle{Developments in nonstandard mathematics}.
\bseries{Pitman Research Notes in Mathematics Series}
\bvolume{336}.
\bpublisher{Longman, Harlow}
\bnote{Papers from the International Colloquium (CIMNS94) held in memory of
  Abraham Robinson at the University of Aveiro, Aveiro, July 18--22, 1994}.
\bmrnumber{1394201}
\end{bbook}
\endbibitem

\bibitem[\protect\citeauthoryear{den Hollander}{}]{den2012probability}
\begin{bbook}[author]
\bauthor{\bparticle{den} \bsnm{Hollander},~\bfnm{Frank}\binits{F.}}
\btitle{Probability theory: The coupling method}.
\end{bbook}
\endbibitem

\bibitem[\protect\citeauthoryear{Diaconis}{2009}]{diaconis2009markov}
\begin{barticle}[author]
\bauthor{\bsnm{Diaconis},~\bfnm{Persi}\binits{P.}}
(\byear{2009}).
\btitle{The {M}arkov chain {M}onte {C}arlo revolution}.
\bjournal{Bulletin American Math. Soc.}
\bvolume{46}
\bpages{179--205}.
\end{barticle}
\endbibitem

\bibitem[\protect\citeauthoryear{Duanmu, Rosenthal and
  Weiss}{2018}]{Markovpaper}
\begin{bunpublished}[author]
\bauthor{\bsnm{Duanmu},~\bfnm{Haosui}\binits{H.}},
  \bauthor{\bsnm{Rosenthal},~\bfnm{J.~S.}\binits{J.~S.}} \AND
  \bauthor{\bsnm{Weiss},~\bfnm{William}\binits{W.}}
(\byear{2018}).
\btitle{Ergodicity of Markov processes via non-standard analysis}.
\bnote{Memoirs of the American Mathematical Society, to appear}.
\end{bunpublished}
\endbibitem

\bibitem[\protect\citeauthoryear{Duanmu and Roy}{2016}]{nsbayes}
\begin{bmisc}[author]
\bauthor{\bsnm{Duanmu},~\bfnm{Haosui}\binits{H.}} \AND
  \bauthor{\bsnm{Roy},~\bfnm{Daniel~M.}\binits{D.~M.}}
(\byear{2016}).
\btitle{On extended admissible procedures and their nonstandard Bayes risk}.
\bnote{Submitted}.
\end{bmisc}
\endbibitem

\bibitem[\protect\citeauthoryear{Erd{\H{o}}s}{1961}]{erdHos1961classical}
\begin{barticle}[author]
\bauthor{\bsnm{Erd{\H{o}}s},~\bfnm{Paul}\binits{P.}}
(\byear{1961}).
\btitle{On a classical problem of probability theory}.
\end{barticle}
\endbibitem

\bibitem[\protect\citeauthoryear{Gelman et~al.}{1995}]{gelman1995bayesian}
\begin{bbook}[author]
\bauthor{\bsnm{Gelman},~\bfnm{Andrew}\binits{A.}},
  \bauthor{\bsnm{Carlin},~\bfnm{John~B}\binits{J.~B.}},
  \bauthor{\bsnm{Stern},~\bfnm{Hal~S}\binits{H.~S.}} \AND
  \bauthor{\bsnm{Rubin},~\bfnm{Donald~B}\binits{D.~B.}}
(\byear{1995}).
\btitle{Bayesian data analysis}.
\bpublisher{Chapman and Hall/CRC}.
\end{bbook}
\endbibitem

\bibitem[\protect\citeauthoryear{Hairer and
  Mattingly}{2006}]{hairer2006ergodicity}
\begin{barticle}[author]
\bauthor{\bsnm{Hairer},~\bfnm{Martin}\binits{M.}} \AND
  \bauthor{\bsnm{Mattingly},~\bfnm{Jonathan~C}\binits{J.~C.}}
(\byear{2006}).
\btitle{Ergodicity of the 2D Navier-Stokes equations with degenerate stochastic
  forcing}.
\bjournal{Ann. Math.}
\bpages{993--1032}.
\end{barticle}
\endbibitem

\bibitem[\protect\citeauthoryear{Hermon}{2018}]{hermon2018sensitivity}
\begin{binproceedings}[author]
\bauthor{\bsnm{Hermon},~\bfnm{Jonathan}\binits{J.}}
(\byear{2018}).
\btitle{On sensitivity of uniform mixing times}.
In \bbooktitle{Annales de l'Institut Henri Poincar{\'e}, Probabilit{\'e}s et
  Statistiques}
\bvolume{54}
\bpages{234--248}.
\bpublisher{Institut Henri Poincar{\'e}}.
\end{binproceedings}
\endbibitem

\bibitem[\protect\citeauthoryear{Herv{\'e} and
  Ledoux}{2014}]{herve2014approximating}
\begin{barticle}[author]
\bauthor{\bsnm{Herv{\'e}},~\bfnm{Lo{\"\i}c}\binits{L.}} \AND
  \bauthor{\bsnm{Ledoux},~\bfnm{James}\binits{J.}}
(\byear{2014}).
\btitle{Approximating Markov chains and V-geometric ergodicity via weak
  perturbation theory}.
\bjournal{Stoch. Process. Their Appl.}
\bvolume{124}
\bpages{613--638}.
\end{barticle}
\endbibitem

\bibitem[\protect\citeauthoryear{Kallenberg}{2002}]{FMP2}
\begin{bbook}[author]
\bauthor{\bsnm{Kallenberg},~\bfnm{Olav}\binits{O.}}
(\byear{2002}).
\btitle{Foundations of modern probability},
\bedition{2nd} ed.
\bpublisher{Springer}, \baddress{New York}.
\bmrnumber{MR1876169 (2002m:60002)}
\end{bbook}
\endbibitem

\bibitem[\protect\citeauthoryear{Keisler}{1984}]{Keisler87}
\begin{barticle}[author]
\bauthor{\bsnm{Keisler},~\bfnm{H.~Jerome}\binits{H.~J.}}
(\byear{1984}).
\btitle{An infinitesimal approach to stochastic analysis}.
\bjournal{Mem. Amer. Math. Soc.}
\bvolume{48}
\bpages{x+184}.
\bdoi{10.1090/memo/0297}
\bmrnumber{732752}
\end{barticle}
\endbibitem

\bibitem[\protect\citeauthoryear{Levin, Peres and Wilmer}{2009}]{markovmix}
\begin{bbook}[author]
\bauthor{\bsnm{Levin},~\bfnm{David~A.}\binits{D.~A.}},
  \bauthor{\bsnm{Peres},~\bfnm{Yuval}\binits{Y.}} \AND
  \bauthor{\bsnm{Wilmer},~\bfnm{Elizabeth~L.}\binits{E.~L.}}
(\byear{2009}).
\btitle{Markov chains and mixing times}.
\bpublisher{American Mathematical Society, Providence, RI}
\bnote{With a chapter by James G. Propp and David B. Wilson}.
\bmrnumber{2466937}
\end{bbook}
\endbibitem

\bibitem[\protect\citeauthoryear{Loeb}{1975}]{Loeb75}
\begin{barticle}[author]
\bauthor{\bsnm{Loeb},~\bfnm{Peter~A.}\binits{P.~A.}}
(\byear{1975}).
\btitle{Conversion from nonstandard to standard measure spaces and applications
  in probability theory}.
\bjournal{Trans. Amer. Math. Soc.}
\bvolume{211}
\bpages{113--122}.
\bmrnumber{0390154}
\end{barticle}
\endbibitem

\bibitem[\protect\citeauthoryear{Meyn and Tweedie}{2012}]{meyn2012markov}
\begin{bbook}[author]
\bauthor{\bsnm{Meyn},~\bfnm{Sean~P}\binits{S.~P.}} \AND
  \bauthor{\bsnm{Tweedie},~\bfnm{Richard~L}\binits{R.~L.}}
(\byear{2012}).
\btitle{Markov chains and stochastic stability}.
\bpublisher{Springer Science \& Business Media}.
\end{bbook}
\endbibitem

\bibitem[\protect\citeauthoryear{Mitrophanov}{2005}]{mitrophanov2005sensitivity}
\begin{barticle}[author]
\bauthor{\bsnm{Mitrophanov},~\bfnm{A~Yu}\binits{A.~Y.}}
(\byear{2005}).
\btitle{Sensitivity and convergence of uniformly ergodic {M}arkov chains}.
\bjournal{J. Appl. Prob.}
\bvolume{42}
\bpages{1003--1014}.
\end{barticle}
\endbibitem

\bibitem[\protect\citeauthoryear{Montenegro and
  Tetali}{2006}]{montenegro2006mathematical}
\begin{barticle}[author]
\bauthor{\bsnm{Montenegro},~\bfnm{Ravi}\binits{R.}} \AND
  \bauthor{\bsnm{Tetali},~\bfnm{Prasad}\binits{P.}}
(\byear{2006}).
\btitle{Mathematical aspects of mixing times in {M}arkov chains}.
\bjournal{Foundations and Trends{\textregistered} in Theoretical Computer
  Science}
\bvolume{1}
\bpages{237--354}.
\end{barticle}
\endbibitem

\bibitem[\protect\citeauthoryear{Negrea and Rosenthal}{2017}]{negrea2017error}
\begin{bmisc}[author]
\bauthor{\bsnm{Negrea},~\bfnm{Jeffrey}\binits{J.}} \AND
  \bauthor{\bsnm{Rosenthal},~\bfnm{Jeffrey~S}\binits{J.~S.}}
(\byear{2017}).
\btitle{Error Bounds for Approximations of Geometrically Ergodic {M}arkov
  Chains}.
\end{bmisc}
\endbibitem

\bibitem[\protect\citeauthoryear{Oliveira}{2012}]{oliveira2012mixing}
\begin{barticle}[author]
\bauthor{\bsnm{Oliveira},~\bfnm{Roberto}\binits{R.}}
(\byear{2012}).
\btitle{Mixing and hitting times for finite {M}arkov chains}.
\bjournal{Elec. J. Prob.}
\bvolume{17}.
\end{barticle}
\endbibitem

\bibitem[\protect\citeauthoryear{Peres and Sousi}{2015}]{finitemixhit}
\begin{barticle}[author]
\bauthor{\bsnm{Peres},~\bfnm{Yuval}\binits{Y.}} \AND
  \bauthor{\bsnm{Sousi},~\bfnm{Perla}\binits{P.}}
(\byear{2015}).
\btitle{Mixing times are hitting times of large sets}.
\bjournal{J. Theoret. Probab.}
\bvolume{28}
\bpages{488--519}.
\bdoi{10.1007/s10959-013-0497-9}
\bmrnumber{3370663}
\end{barticle}
\endbibitem

\bibitem[\protect\citeauthoryear{Pillai and Smith}{2014}]{pillai2014ergodicity}
\begin{bmisc}[author]
\bauthor{\bsnm{Pillai},~\bfnm{Natesh~S}\binits{N.~S.}} \AND
  \bauthor{\bsnm{Smith},~\bfnm{Aaron}\binits{A.}}
(\byear{2014}).
\btitle{Ergodicity of approximate {MCMC} chains with applications to large data
  sets}.
\end{bmisc}
\endbibitem

\bibitem[\protect\citeauthoryear{Robinson}{1966}]{AR65}
\begin{bbook}[author]
\bauthor{\bsnm{Robinson},~\bfnm{Abraham}\binits{A.}}
(\byear{1966}).
\btitle{Non-standard analysis}.
\bpublisher{North-Holland Publishing Co., Amsterdam}.
\bmrnumber{0205854}
\end{bbook}
\endbibitem

\bibitem[\protect\citeauthoryear{Rudolf and
  Schweizer}{2018}]{rudolf2018perturbation}
\begin{barticle}[author]
\bauthor{\bsnm{Rudolf},~\bfnm{Daniel}\binits{D.}} \AND
  \bauthor{\bsnm{Schweizer},~\bfnm{Nikolaus}\binits{N.}}
(\byear{2018}).
\btitle{Perturbation theory for {M}arkov chains via {W}asserstein distance}.
\bjournal{Bernoulli}
\bvolume{24}
\bpages{2610--2639}.
\end{barticle}
\endbibitem

\bibitem[\protect\citeauthoryear{Vempala}{2005}]{vempala2005geometric}
\begin{barticle}[author]
\bauthor{\bsnm{Vempala},~\bfnm{Santosh}\binits{S.}}
(\byear{2005}).
\btitle{Geometric random walks: a survey}.
\bjournal{Combinatorial and Computational Geometry}
\bpages{573--612}.
\end{barticle}
\endbibitem

\bibitem[\protect\citeauthoryear{Wolff and Loeb}{2000}]{NAW}
\begin{bbook}[author]
\beditor{\bsnm{Wolff},~\bfnm{Manfred}\binits{M.}} \AND
  \beditor{\bsnm{Loeb},~\bfnm{Peter~A.}\binits{P.~A.}}, eds.
(\byear{2000}).
\btitle{Nonstandard analysis for the working mathematician}.
\bseries{Mathematics and its Applications}
\bvolume{510}.
\bpublisher{Kluwer Academic Publishers, Dordrecht}.
\bdoi{10.1007/978-94-011-4168-0}
\bmrnumber{1790871}
\end{bbook}
\endbibitem

\end{thebibliography}

\appendix

\section{Some Elementary Bounds}\label{secebound}

In this section, we prove a few useful bounds on mixing times and hitting times. Results in this section are used throughout the paper.

\subsection{Elementary Equivalences} \label{SecElEq}

\begin{proof} [Proof of Lemma \ref{maxlarge}]

Pick $\alpha\in \PosReals$.
First, suppose $0 \leq \tau_{g}(\alpha) < \infty$ exists.
Pick $x_0\in X$ and $A_0\in \BorelSets X$ with $\pi(A_0)\geq \alpha$ such that $\Prob_{x_0}(\tau_{A_0}\leq \tau_{g}(\alpha)-1)\leq 0.9$. Then we have
\[
\expect_{x_0}(\tau_{A_0})\geq \tau_{g}(\alpha)\Prob_{x_0}(\tau_{A_0}>\tau_{g}(\alpha)-1)\geq 0.1\tau_{g}(\alpha).
\]
Thus, we immediately have $t_{H}(\alpha)\geq 0.1\tau_{g}(\alpha)$.

Conversely, by a regeneration argument, we have
\[
\inf\{\Prob_{x}(\tau_{A}\leq k\tau_{g}(\alpha)): x\in X, A\in \BorelSets X \ \text{such that}\ \pi(A)\geq \alpha\}>1-0.1^{k}
\]
for all $k\in \Nats$. Thus, for every $x\in X$ and every $A\in \BorelSets X$ with $\pi(A)\geq \alpha$, we have
\[
\expect_{x}(\tau_{A})&\leq \sum_{k=0}^{\infty}\tau_{g}(\alpha)\Prob_{x}(\tau_{A}>k \tau_{g}(\alpha))\\
&\leq \tau_{g}(\alpha)\sum_{k=0}^{\infty}0.1^{k}\\
&\leq 2\tau_{g}(\alpha).
\]
Thus, we have $t_{H}(\alpha)\leq 2\tau_{g}(\alpha)$.

On the other hand, the same calculations show that $\tau_{g}(\alpha)$ is infinite if and only if $t_{H}(\alpha)$ is infinite.
\end{proof}

\subsection{Mixing and Hitting Times of Perturbed Chains}

\begin{lemma} [Submultiplicative Bounds on Hitting Times] \label{LemmaElSub}
Let $\{g(x,1,\cdot)\}_{x\in X}$ be a transition kernel on a $\sigma$-compact metric state space $X$ with stationary distribution $\pi$.  
Fix a constant $\alpha > 0$ and set $A \in \BorelSets X$ with $\pi(A)>\alpha$. 
Then for all $k \in \Nats$ and all $x\in X$,
\[
\Prob_{x}[\tau_{A}>k \tau_{g}(\alpha)] \leq (0.1)^{k},
\]
where $\Prob$ is a product measure that generates $\{g(x,1,\cdot)\}_{x\in X}$.  
\end{lemma}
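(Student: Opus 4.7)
The plan is to prove this by induction on $k$, where each step uses a regeneration argument at the deterministic time $\tau_g(\alpha)$ combined with the Markov property. The key observation is that Definition \ref{largehit} gives a bound on the hitting probability that is uniform over all starting points $x \in X$ and all measurable sets with $\pi(A) \geq \alpha$, so once we condition on the chain's position at time $\tau_g(\alpha)$, we can reapply the same bound regardless of where the chain ended up.

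\textbf{Base case.} For $k=0$ the bound is trivial. For $k=1$, the claim $\Prob_x[\tau_A > \tau_g(\alpha)] \leq 0.1$ follows immediately from Definition \ref{largehit}, since $\Prob_x[\tau_A \leq \tau_g(\alpha)] > 0.9$ uniformly over $x \in X$ and $A$ with $\pi(A) \geq \alpha$.

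\textbf{Inductive step.} Assume the claim holds for some $k \geq 1$. Writing $T = \tau_g(\alpha)$ for brevity, I will decompose
\[
\Prob_x[\tau_A > (k+1)T] = \E_x\!\left[ \mathbf{1}_{\{\tau_A > T\}} \, \Prob_{X_T}[\tau_A > kT] \right]
\]
using the Markov property at the deterministic time $T$, noting that on the event $\{\tau_A > T\}$ we have $X_T \notin A$ and the residual time to hit $A$ starting from $X_T$ must still exceed $kT$. Applying the inductive hypothesis pointwise in the conditional probability $\Prob_{X_T}[\tau_A > kT] \leq (0.1)^k$ (this uses the uniformity over starting points built into $\tau_g(\alpha)$, and the fact that $\pi(A) \geq \alpha$ does not depend on the starting point) and then bounding the outer expectation by the base case $\Prob_x[\tau_A > T] \leq 0.1$ yields $(0.1)^{k+1}$, completing the induction.

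The main (minor) subtlety will be verifying the decomposition with proper measurability: the event $\{\tau_A > (k+1)T\}$ factors as $\{\tau_A > T\} \cap \{\text{shifted chain does not hit } A \text{ in } kT \text{ steps}\}$, and the Markov property then pushes the conditional probability inside. No serious obstacle is expected; this is a standard regeneration bound, and the only care needed is to invoke the uniformity clause in Definition \ref{largehit} so that the inductive bound applies irrespective of $X_T$.
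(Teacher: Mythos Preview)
Your proposal is correct and follows essentially the same approach as the paper: the paper bounds the conditional probability $\Prob_{x}[\tau_{A}>(\ell+1)\tau_{g}(\alpha)\mid \tau_{A}>\ell\tau_{g}(\alpha)]\leq 0.1$ using the definition of $\tau_{g}(\alpha)$ and then iterates over $0\leq \ell<k$, which is exactly your induction written in conditional-probability form.
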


\begin{proof}
Pick a constant $\alpha>0$.
By definition of $\tau_{g}(\alpha)$, we have for $\ell \in \Nats$, $x \in X$ and $A \in \BorelSets A$ with $\pi(A) \geq \alpha$ that
\[
P_{x}[\tau_{A}>(\ell + 1)\tau_{g}(\alpha) | \tau_{A} > \ell \tau_{g}(\alpha)] \leq 0.1.
\]
Iterating this bound over $0 \leq \ell < k$, we have the desired result. 
\end{proof}

\begin{lemma} [Comparison of Lazy Hitting Times] \label{LemmaElCompLazyHit}
Let $\{g(x,1,\cdot)\}_{x\in X}$ be a transition kernel on a $\sigma$-compact metric state space $X$ with stationary distribution $\pi$. 
Let $\{g_{L}(x,1,\cdot)\}_{x\in X}$ be its associated lazy transition kernel as defined in \ref{deflazy}. 
For every $0<\alpha<\frac{1}{2}$, let $t_{H}(\alpha)$ be the maximum hitting time for the kernel $\{g(x,1,\cdot)\}_{x\in X}$, and let $\ell_{H}(\alpha)$ be the maximum hitting time for the kernel $g_{L}$. Then there exists a universal constant $0 < C < \infty$, not depending on $\alpha$, so that
\[
t_{H}(\alpha) \leq \ell_{H}(\alpha) \leq Ct_{H}(\alpha).
\]
\end{lemma}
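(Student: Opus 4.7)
The plan is to use the coupling from Remark \ref{RemLazyAlt}. Fix $x \in X$ and $A \in \BorelSets X$ with $\pi(A) \geq \alpha$. Start a Markov chain $\{X_t\}_{t \in \Nats} \sim g$ at $X_0 = x$, draw i.i.d.\ Geometric(2) waiting times $\{\zeta_i\}_{i \geq 1}$ independent of $\{X_t\}$, set $L(t) = \max\{i : \sum_{j=1}^{i} \zeta_j \leq t\}$, and define $Y_t = X_{L(t)}$. Then $\{Y_t\}_{t \in \Nats} \sim g_L$ by Remark \ref{RemLazyAlt}, so we may analyze both hitting times $\tau_A = \min\{t : X_t \in A\}$ and $\tau_A^{(L)} = \min\{t : Y_t \in A\}$ on a single probability space.

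The first inequality $t_H(\alpha) \leq \ell_H(\alpha)$ follows at once from the pointwise bound $\tau_A \leq \tau_A^{(L)}$. Indeed, $L$ is non-decreasing with $L(t) \leq t$, so $Y_t \in A$ forces $X_{L(t)} \in A$ and hence $\tau_A \leq L(t) \leq t$. Taking expectations and then the supremum over admissible $x$ and $A$ gives the claim.

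For the reverse inequality I would first establish the identity
\[
\tau_A^{(L)} \;=\; \sum_{i=1}^{\tau_A} \zeta_i,
\]
with the empty sum interpreted as $0$ when $x \in A$. This holds because $L$ has unit increments, so the first time $L(t)$ reaches the value $\tau_A$ is exactly the $\tau_A$-th partial sum of waiting times. Since $\tau_A$ depends only on $\{X_t\}$, it is independent of $\{\zeta_i\}$, and Wald's identity yields $\E_x[\tau_A^{(L)}] = \E_x[\tau_A]\,\E[\zeta_1] = 2\,\E_x[\tau_A]$. Taking the supremum over $x$ and admissible $A$ then gives $\ell_H(\alpha) \leq 2\, t_H(\alpha)$, so the lemma holds with the universal constant $C = 2$.

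The only mild subtlety is the case $\E_x[\tau_A] = \infty$, but this is absorbed by the pointwise bound: both $\E_x[\tau_A]$ and $\E_x[\tau_A^{(L)}]$ are infinite simultaneously, so the inequalities hold trivially. I do not expect a real obstacle here; this is a short Wald-type calculation on top of a coupling that the paper has already constructed, and the constant $C$ is explicit and independent of $\alpha$ and of the kernel $g$.
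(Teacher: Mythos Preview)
Your proof is correct and in fact cleaner than the paper's. Both arguments start from the same coupling $Y_t = X_{L(t)}$ of Remark \ref{RemLazyAlt} and both obtain the lower bound $\tau_A \le \tau_A^{(L)}$ from $L(t)\le t$. The difference is in the upper bound. The paper observes $\tau_A^{(L)} = L_{\mathrm{inv}}(\tau_A)$ (which is exactly your identity $\tau_A^{(L)} = \sum_{i=1}^{\tau_A}\zeta_i$), but then, instead of taking expectations directly, it applies Markov's inequality to get $\Prob[\tau_A^{(L)} \ge 50\tau_A]\le 1/25$, combines this with the submultiplicative tail bound of \ref{LemmaElSub} to control $\Prob[\tau_A^{(L)} \ge 150\,\tau_g(\alpha)]$, and finally passes back to $t_H$ via \ref{maxlarge}. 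This detour through the large hitting time $\tau_g(\alpha)$ produces a universal constant on the order of a few thousand. You bypass all of this: since $\tau_A$ depends only on $\{X_t\}$ and is independent of the $\zeta_i$'s, a one-line Wald/Fubini computation gives $\E_x[\tau_A^{(L)}]=2\,\E_x[\tau_A]$ and hence the sharp constant $C=2$. Your handling of the $\E_x[\tau_A]=\infty$ case is also fine, since the Tonelli version of Wald holds without any integrability assumption for nonnegative summands.
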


\begin{proof}
Pick a constant $\alpha>0$. 
Fix $x \in X$ and $A \in \BorelSets X$ with $\pi(A) \geq \alpha$. Let $\{X_{t}\}_{t \in \Nats} \sim g$ with starting point $X_{0} = x$. Let $\{\zeta_{i} \}_{i \in \Nats}$ be a sequence of i.i.d. geometric random variables with mean 2, and define 
\[
L(t) = \max \{i \, : \, \sum_{j=1}^{i} \zeta_{i} \leq t\}.
\]
Then the chain $\{Y_{t}\}_{t \in \Nats}$ given by the formula $Y_{t} = X_{L(t)}$ satisfies $Y_{0} = x$ and $\{Y_{t}\}_{t \in \Nats} \sim g_{L}$. Let $\tau_{A}'$ be the first hitting time of $A$ for the chain $\{Y_{t}\}$.

We observe that 
\begin{align*}
\tau_{A}' &= \min \{t \, : \, Y_{t} \in A \} \\
&= \min \{t \, : \, X_{L(t)} \in A \} \\
&= \min \{t \, : \,  L(t) = \tau_{A} \}. \\
\end{align*}
Setting the notation 
\[
L_{\mathrm{inv}}(t) = \min \{s \, : \, L(s) = t \},
\]
this implies
\[
\tau_{A}' = L_{\mathrm{inv}}(\tau_{A}).
\]
This immediately implies $\tau_{A}' \geq \tau_{A}$ which further the first inequality in the statement of the lemma. 
On the other hand, by Markov's inequality, we have
\[
P[\tau_{A}' \geq 50 \tau_{A}] \leq P[\sum_{i=0}^{\tau_{A}} \eta_{i} \geq 50 \tau_{A}] \leq \frac{1}{25}.
\]
Let $\Prob$ be a product measure that generates $\{g(x,1,\cdot)\}_{x\in X}$.
By \ref{LemmaElSub}, we have 
\begin{align*}
\Prob[\tau_{A}' \geq 150 \tau_{g}(\alpha)] &\leq \Prob[\tau_{A}' \geq 50 \tau_{A}] + P[\tau_{A} \geq 3 \tau_{g}(\alpha)] \\
&\leq \frac{1}{25} + \frac{1}{1000} < 0.1.
\end{align*}
Let $\tau'_{g}(\alpha)$ denote the large hitting time for $\{g_{L}(x,1,\cdot)\}_{x\in X}$. 
Then we have $\tau'_{g}(\alpha)\leq 150\tau_{g}(\alpha)$. 
By \ref{maxlarge}, there exists a universal constant $C$, not depending on $\alpha$, such that $\ell_{H}(\alpha)\leq Ct_{H}(\alpha)$.
\end{proof}

We quote the following lemma from \citep{anderson2019mixavg}.
\begin{lemma}[{\citep[][Lemma.~5.1]{anderson2019mixavg}}]\label{emulti}
For every $0<\epsilon_{1}<\epsilon_{2}<\frac{1}{2}$, there exists a positive universal constant $c_{\epsilon_{1}\epsilon_{2}}$ such that
\[
t_{m}(\epsilon_{2})\leq t_{m}(\epsilon_{1})\leq c_{\epsilon_{1}\epsilon_{2}}t_{m}(\epsilon_{2})
\]
for every Markov process with unique stationary distribution. 
\end{lemma}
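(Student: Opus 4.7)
The first inequality is immediate: the map $\epsilon \mapsto t_{m}(\epsilon)$ is non-increasing, since the defining event $d(t) \leq \epsilon$ becomes harder to satisfy as $\epsilon$ shrinks. So the entire content of the lemma lies in the upper bound $t_{m}(\epsilon_{1}) \leq c_{\epsilon_{1}\epsilon_{2}} t_{m}(\epsilon_{2})$. The strategy is the standard one: pass from $d$ to $\overline{d}$ (losing at most a factor of $2$, via Lemma \ref{mixequal}), exploit submultiplicativity of $\overline{d}$ (Lemma \ref{submulti}) to obtain exponential decay on the scale of $t_{m}(\epsilon_{2})$, and finally pass back from $\overline{d}$ to $d$.

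Concretely, set $T = t_{m}(\epsilon_{2})$. By definition $d(T) \leq \epsilon_{2}$, and Lemma \ref{mixequal} gives $\overline{d}(T) \leq 2\epsilon_{2}$. The essential point is that the hypothesis $\epsilon_{2} < \tfrac{1}{2}$ makes $\gamma \defas 2\epsilon_{2}$ strictly less than $1$. Iterating Lemma \ref{submulti}, we obtain
\[
\overline{d}(kT) \leq \overline{d}(T)^{k} \leq \gamma^{k}
\]
for every $k \in \Nats$. Using $d(t) \leq \overline{d}(t)$ once more yields $d(kT) \leq \gamma^{k}$.

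Now let $k_{0}$ be the least positive integer with $\gamma^{k_{0}} \leq \epsilon_{1}$, i.e.
\[
k_{0} = \Bigl\lceil \frac{\log(1/\epsilon_{1})}{\log(1/(2\epsilon_{2}))} \Bigr\rceil.
\]
This is a finite positive integer because $\gamma \in (0,1)$ and $\epsilon_{1} \in (0,\tfrac{1}{2})$, and it depends only on $\epsilon_{1}, \epsilon_{2}$, not on the chain. Then $d(k_{0} T) \leq \epsilon_{1}$, so $t_{m}(\epsilon_{1}) \leq k_{0} T = k_{0} \, t_{m}(\epsilon_{2})$, and the claim holds with $c_{\epsilon_{1}\epsilon_{2}} = k_{0}$.

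There is no real obstacle: every ingredient is already in place in Section \ref{secequivalent}. The only subtle point is to verify that $\gamma < 1$ so that submultiplicativity yields geometric decay rather than blow-up, and this is precisely what the assumption $\epsilon_{2} < \tfrac{1}{2}$ guarantees. (Without this assumption, one would first have to iterate to a time at which $\overline{d}$ is strictly less than $1$ before invoking Lemma \ref{submulti}, but here we avoid that step entirely.)
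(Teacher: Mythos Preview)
Your proof is correct and is exactly the standard argument via Lemmas \ref{mixequal} and \ref{submulti}. Note that the paper does not actually prove this lemma: it is quoted verbatim from \cite{anderson2019mixavg} and invoked as a black box, so there is no ``paper's own proof'' to compare against. Your argument supplies the missing details cleanly; the only point worth mentioning is the degenerate case $t_{m}(\epsilon_{2})=0$ (i.e.\ $d(0)\le\epsilon_{2}$), where the iteration $\overline d(kT)\le\overline d(T)^{k}$ is vacuous---but for any chain with $d(0)\le\epsilon_{2}<\tfrac12$ one has $\overline d(0)\le 2\epsilon_{2}<1$ and hence $\overline d(1)\le\overline d(0)\overline d(1)$, forcing $\overline d(1)=0$ and $t_{m}(\epsilon_{1})\le 1$, so the inequality holds (trivially, or with constant $1$ after adjusting conventions).
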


\begin{lemma} [Comparison of Lazy Mixing Times] \label{LemmaElCompLazyMix}
Let $\{g(x,1,\cdot)\}_{x\in X}$ be a transition kernel on a $\sigma$-compact metric state space $X$ with stationary distribution $\pi$. 
Let $\{g_{L}(x,1,\cdot)\}_{x\in X}$ be its associated lazy transition kernel as defined in \ref{deflazy}. 
For every $0<\epsilon<\frac{1}{2}$, let $t_{m}(\epsilon)$ be the mixing time for the kernel $\{g(x,1,\cdot)\}_{x\in X}$, and let $t_{L}(\epsilon)$ be the mixing time for the lazy kernel $g_{L}$. Then there exists a constant $0 < C_{\epsilon} < \infty$,  depending only on $\epsilon$, so that
\[
t_{L}(\epsilon) \leq C_{\epsilon}\, t_{m}(\epsilon).
\]
\end{lemma}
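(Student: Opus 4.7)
The strategy is to use the binomial representation of the lazy kernel and combine it with a concentration argument, handing off the "slop" to the multiplicative equivalence of mixing times at different tolerances from \ref{emulti}.

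First, I would expand the lazy kernel: since $g_L = \tfrac{1}{2}\delta + \tfrac{1}{2}g$, a straightforward induction gives
\[
g_L^t(x,A) = \sum_{k=0}^{t}\binom{t}{k}2^{-t}\, g^k(x,A).
\]
Equivalently, $g_L^t(x,\cdot) = \E\ooof{g^{N_t}(x,\cdot)}$ where $N_t \dist \text{Binomial}(t,\tfrac{1}{2})$. By the triangle inequality for total variation together with Jensen, one gets
\[
\sup_{x\in X}\parallel g_L^t(x,\cdot)-\pi(\cdot)\parallel \;\leq\; \E\oof{d(N_t)},
\]
where $d(\cdot)$ is the mixing function of $g$.

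Next, I would use the fact that $d$ is non-increasing (this is standard and follows because $\pi$ is $g$-invariant and $g$ is a TV contraction on signed measures with zero total mass). Setting $T \defas t_m(\epsilon/2)$, we have $d(T)\leq \epsilon/2$ and $d(k)\leq 1$ for $k<T$, yielding
\[
\E\oof{d(N_t)} \;\leq\; \epsilon/2 \;+\; \Prob(N_t < T).
\]
Now I would take $t = KT$ with $K$ a constant depending only on $\epsilon$: for $K\geq 4$, a Chernoff/Hoeffding bound gives $\Prob(N_{KT}<T)\leq e^{-KT/16}\leq e^{-K/16}$ whenever $T\geq 1$, and choosing $K = \ceiling{16\ln(2/\epsilon)}$ makes this at most $\epsilon/2$. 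Hence $\sup_x\parallel g_L^{KT}(x,\cdot) - \pi\parallel \leq \epsilon$, so $t_L(\epsilon)\leq KT = K\, t_m(\epsilon/2)$.

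Finally, apply \ref{emulti} with tolerances $\epsilon/2 < \epsilon$ to get a constant $c_{\epsilon/2,\epsilon}$ with $t_m(\epsilon/2)\leq c_{\epsilon/2,\epsilon}\, t_m(\epsilon)$, and set $C_\epsilon = K\, c_{\epsilon/2,\epsilon}$. The trivial degenerate cases ($T=0$, i.e.\ $\sup_x\parallel \delta_x - \pi\parallel\leq \epsilon/2$) are handled directly since then $d_L(0) = d(0) \leq \epsilon/2 \leq \epsilon$, forcing $t_L(\epsilon)=0$. The only real step requiring care is the concentration bound when $T$ is small; that is precisely why one needs to work at tolerance $\epsilon/2$ and invoke \ref{emulti} rather than trying to get a universal constant independent of $\epsilon$.
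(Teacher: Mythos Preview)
Your proposal is correct and follows essentially the same approach as the paper: expand $g_L^t$ via the binomial representation, split the sum at the threshold $t_m(\epsilon/2)$, control the lower tail of the binomial by a concentration bound, and finish with Lemma~\ref{emulti}. If anything, your concentration step is cleaner than the paper's, which splits at $t_0/2$ and implicitly relies on $t_0\ge \lceil 10\epsilon^{-1}\rceil$ in a way that is not fully spelled out.
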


\begin{rem}
In contrast to \ref{LemmaElCompLazyHit}, the reverse inequality is not true; $t_{L}$ may be \textit{much} smaller than $t_{m}$.
\end{rem}

\begin{proof}
Let $t_{0} = \max(2t_{m}(\frac{1}{2} \epsilon), \lceil 10 \epsilon^{-1} \rceil)$. Then for any $x \in X$,
\begin{align*}
\| g_{L}(x,t_{0},\cdot) - \pi(\cdot) \| &= \| \sum_{s=0}^{t_{0}} 2^{-t_{0}} {t_{0} \choose s} g(x,s,\cdot) - \pi(\cdot) \| \\
&\leq \sum_{s=0}^{t_{0}} 2^{-t_{0}} {t_{0} \choose s} \| g(x,s,\cdot) - \pi(\cdot) \| \\
&\leq \sum_{s=0}^{\frac{1}{2} t_{0}} 2^{-t_{0}} {t_{0} \choose s} + \sum_{s=\frac{1}{2} t_{0}}^{ t_{0}} 2^{-t_{0}} {t_{0} \choose s} \, \| g(x,s,\cdot) - \pi(\cdot) \| \\
&\leq \frac{\epsilon}{2} + \frac{\epsilon}{2} \leq \epsilon.
\end{align*}
Thus, $t_{L}(\epsilon) \leq max(2t_{m}(\frac{1}{2} \epsilon), 10 \epsilon^{-1})$. Applying \ref{emulti} completes the proof.
\end{proof}

\begin{lemma} [Perturbation Bound on Mixing Times] \label{LemmaElPertMixing}
Let $\{g(x,1,\cdot)\}_{x\in X}, \{g'(x,1,\cdot)\}_{x\in X}$ be two transition kernels on a $\sigma$-compact metric state space $X$ with stationary distributions $\pi$ and $\pi'$, respectively. 
Let $t_{m}$, $t_{m}'$ be the mixing time of $g,g'$ respectively. If
\[ \label{IneqAsPert1}
\sup_{x} \| g(x,1,\cdot) - g'(x,1,\cdot) \| \leq \frac{1}{256 t_{m}},
\]
then\footnote{Note that the assumption treats $g,g'$ asymmetrically, but the conclusion is symmetric. This is not an accident.}
\[
\frac{1}{4} t_{m}' \leq t_{m} \leq 4 t_{m}'.
\]

\end{lemma}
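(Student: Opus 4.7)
The plan is to use a standard telescoping bound to compare iterates of $g$ and $g'$, and then lift this to mixing times via submultiplicativity of $\overline{d}$ (\ref{submulti}) combined with the equivalence $\overline{d} \asymp d$ from \ref{mixequal}. Write $\delta = \sup_x \|g(x,1,\cdot) - g'(x,1,\cdot)\| \leq \frac{1}{256 t_m}$. The first step is to prove by induction on $t$ that
\[
\sup_x \|g^{(t)}(x,\cdot) - g'^{(t)}(x,\cdot)\| \leq t\,\delta.
\]
The inductive step rewrites $g^{(t+1)} - g'^{(t+1)} = g\cdot(g^{(t)} - g'^{(t)}) + (g - g')\cdot g'^{(t)}$; the first piece contributes at most the supremum of $\|g^{(t)}(\cdot, \cdot) - g'^{(t)}(\cdot, \cdot)\|$ (integrating a function valued in $[-1,1]$ against the probability measure $g(x,\cdot)$), and the second piece contributes at most $\delta$ (integrating a function valued in $[0,1]$ against the signed measure $g(x,\cdot) - g'(x,\cdot)$).

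For the forward direction $t_m' \leq 4 t_m$, instantiate the telescoping bound at $t = t_m$ to get $\sup_x \|g^{(t_m)}(x,\cdot) - g'^{(t_m)}(x,\cdot)\| \leq 1/256$. A triangle inequality then gives
\[
\overline{d}'(t_m) \leq \overline{d}(t_m) + 2\sup_x \|g^{(t_m)}(x,\cdot) - g'^{(t_m)}(x,\cdot)\| \leq 2 d(t_m) + \frac{1}{128} \leq \frac{1}{2} + \frac{1}{128},
\]
using $\overline{d}(t_m) \leq 2 d(t_m) \leq \frac{1}{2}$ from \ref{mixequal} and the definition of $t_m$. Submultiplicativity (\ref{submulti}) yields $\overline{d}'(3 t_m) \leq (\tfrac{1}{2} + \tfrac{1}{128})^3 < \tfrac{1}{4}$, so $d'(3 t_m) \leq \overline{d}'(3 t_m) < \tfrac{1}{4}$, forcing $t_m' \leq 3 t_m < 4 t_m$.

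For the reverse direction $t_m \leq 4 t_m'$, the inequality just established gives $t_m \geq t_m'/4$, hence $\delta \leq \frac{1}{64 t_m'}$. Repeating the same argument with the roles of $g$ and $g'$ interchanged, but now with the weaker one-step bound $\delta \leq \frac{1}{64 t_m'}$, yields
\[
\overline{d}(t_m') \leq \overline{d}'(t_m') + 2 t_m' \delta \leq \frac{1}{2} + \frac{1}{32},
\]
and $(\tfrac{1}{2} + \tfrac{1}{32})^3 < \tfrac{1}{4}$ gives $t_m \leq 3 t_m' < 4 t_m'$.

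There is no deep obstacle; the only thing to watch is bookkeeping of the numerical constants so that the third power of $\overline{d}'(t_m)$ (respectively $\overline{d}(t_m')$) stays below $\tfrac{1}{4}$. The asymmetry in the hypothesis is absorbed at the cost of a factor of $4$ in the one-step perturbation bound, which is why the argument uses the slack constant $256$ rather than a smaller one.
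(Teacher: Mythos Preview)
Your proof is correct and follows the same high-level structure as the paper's: establish the telescoping bound $\sup_x \|g^{(t)}(x,\cdot) - g'^{(t)}(x,\cdot)\| \leq t\delta$, use it together with submultiplicativity to get $t_m' \leq 4t_m$, and then exploit the resulting inequality $\delta \leq \tfrac{1}{64 t_m'}$ to run the argument in reverse.

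The route differs in one genuine way. The paper works directly with $d'(t) = \sup_x \|g'^{(t)}(x,\cdot) - \pi'(\cdot)\|$, which forces it to first control $\|\pi - \pi'\|$ via a separate triangle-inequality computation (this is where the intermediate bound $\|\pi - \pi'\| \leq \tfrac{1}{8}$ appears). You instead work with the pairwise quantity $\overline{d}'(t) = \sup_{x,y}\|g'^{(t)}(x,\cdot) - g'^{(t)}(y,\cdot)\|$, which never mentions $\pi$ or $\pi'$ at all; the stationary measures drop out entirely and you save that step. Your approach is therefore slightly more economical. The cost is only that you must invoke Lemma~\ref{mixequal} at the end to translate $\overline{d}' < \tfrac{1}{4}$ back into a statement about $t_m'$, but that is already part of your argument and introduces no extra loss. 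Both proofs land on the same constants $4$ and $256$ via the same symmetry trick for the reverse inequality.
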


\begin{proof}

We will show that $t_{m}' \leq 4 t_{m}$ under the assumption that 
\[ \label{IneqAsPert2}
\sup_{x} \| g(x,1,\cdot) - g'(x,1,\cdot) \| \leq \frac{1}{64 t_{m}},
\]
a slight weakening of \eqref{IneqAsPert1}. We first show that $\| \pi - \pi' \|$ is small. To do this, we make the following small extension of our notation: for any probability measure $\mu$, time $t \in \Nats$ and set $A \in \BorelSets X$, we define 
\[
g(\mu,t,A) = \int g(x,t,A) \mu(dx),
\]
and similarly for $g'$. Next, set $t_{0} = 4 t_{m}$. We have 
\begin{align*}
\| \pi - \pi' \| &= \| \pi(\cdot) - g'(\pi', t_{0}, \cdot) \| \\
&\leq \| \pi(\cdot) - g(\pi', t_{0}, \cdot) \| + \| g(\pi',t_{0},\cdot) - g'(\pi',t_{0},\cdot) \| \\
&\leq 2^{-4} + t_{0} \sup_{x \in X} \| g(x,1,\cdot) - g'(x,1,\cdot) \| \\
&\leq 2^{-4} + \frac{4}{64}.
\end{align*}

Thus,
\[\label{IneqStatPertBd}
\| \pi - \pi' \|  \leq \frac{1}{8}.
\]

We now prove our main bound. For all $x \in X$, we have 
\begin{align*}
\| g'(x,t_0,\cdot) - \pi'(\cdot) \| &= \| (g'(x,t_0,\cdot)- g(x,t_0,\cdot)) + (g(x,t_0,\cdot) - \pi(\cdot)) + (\pi(\cdot) - \pi'(\cdot)) \| \\
&\leq \| g'(x,t_0,\cdot)- g(x,t_0,\cdot) \| + \| g(x,t_0,\cdot) - \pi(\cdot) \| + \| \pi - \pi' \| \\ 
&\leq t_0 \, \sup_{y \in X} \| g'(y,1,\cdot) - g(y,1,\cdot) \| +  \| g(x,t_0,\cdot) - \pi(\cdot) \| + \frac{1}{8} \\
&\leq \frac{4}{64} + 2^{-4} + \frac{1}{8} = \frac{1}{4},
\end{align*}
where the inequality in the second-last line is Inequality \eqref{IneqStatPertBd}, the first inequality in the final line comes from assumption \eqref{IneqAsPert2}, and the second inequality in that line follows from \ref{mixequal} and \ref{submulti}. This shows that $t'_{m} \leq 4 t_{m}$, proving the first half of the inequality in the statement of the lemma.

However, we note that the inequalities $t'_{m} \leq 4 t_{m}$, combined with \eqref{IneqAsPert1}, show that 
\[ 
\sup_{x} \| g(x,1,\cdot) - g'(x,1,\cdot) \| \leq \frac{1}{64 t_{m}'}.
\]
This is exactly the weakened assumption \eqref{IneqAsPert2} with the roles of $g$ and $g'$ swapped, and so we conclude that $t_{m}\leq 4 t'_{m}$ as well.
\end{proof}

\begin{lemma} [Perturbation Bound on Hitting Times] \label{LemmaElPertHitting}
Let $\{g(x,1,\cdot)\}_{x\in X}, \{g'(x,1,\cdot)\}_{x\in X}$ be two transition kernels on a $\sigma$-compact metric state space $X$ with stationary distributions. 
Fix some constant $0 < \alpha < 0.5$. 
Let $t_{H}(\alpha)$, $t_{H}'(\alpha)$ be the maximum hitting times of $\{g(x,1,\cdot)\}_{x\in X},\{g'(x,1,\cdot)\}_{x\in X}$ respectively.
If 
\[ \label{IneqAsPert21}
\sup_{x} \| g(x,1,\cdot) - g'(x,1,\cdot) \| \leq \frac{1}{30 t_{H}(\alpha)},
\]
then
\[
t_{H}'(\alpha) \leq 60t_{H}(\alpha)
\]

\end{lemma}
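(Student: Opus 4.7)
The plan is to mirror the structure of Lemma \ref{LemmaElPertMixing}, but working with large hitting times rather than mixing times: reduce to $\tau_g(\alpha)$ via Lemma \ref{maxlarge}, build a coupling of the two chains, and then apply a regeneration argument to pass from a one-block hitting probability to a bound on the expected hitting time.

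First, by Lemma \ref{maxlarge} we have $\tau_g(\alpha) \leq 10\, t_H(\alpha)$, so the hypothesis reads
\[
T \cdot \epsilon \leq \tfrac{1}{3}, \quad \text{where } T \defas \tau_g(\alpha) \text{ and } \epsilon \defas \sup_{x} \|g(x,1,\cdot) - g'(x,1,\cdot)\|.
\]
I would then, for each $x$, take a maximal coupling of $g(x,1,\cdot)$ and $g'(x,1,\cdot)$ and iterate this one-step construction to produce coupled processes $\{X_t\} \sim g$ and $\{X'_t\} \sim g'$ with common starting point $x$. Each per-step coupling fails with probability at most $\epsilon$, so by a union bound
\[
\Prob_x[X_s = X'_s \text{ for all } 0 \leq s \leq T] \geq 1 - T\epsilon \geq \tfrac{2}{3}.
\]

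Next, for any set $A$ with $\pi(A) \geq \alpha$ the definition of $\tau_g$ gives $\Prob_x[\tau_A \leq T] > 0.9$ under $g$, so combining with the coupling estimate above yields
\[
\Prob_x[\tau_A \leq T] > 0.9 - \tfrac{1}{3} > \tfrac{1}{2} \quad \text{under } g'.
\]
A standard regeneration argument via the strong Markov property, exactly as in the proof of Lemma \ref{LemmaElSub}, then gives $\Prob_x[\tau_A > kT] \leq 2^{-k}$ for every $k \in \Nats$ under $g'$, and summing the resulting tail bound yields $\E_x[\tau_A] \leq 2T \leq 20\, t_H(\alpha)$ under $g'$.

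The main obstacle, and the reason the constant in the conclusion is $60$ rather than $20$, is that $t'_H(\alpha)$ is taken over sets with $\pi'(A) \geq \alpha$ while the argument above only controls hitting times of sets with $\pi(A) \geq \alpha$. To bridge the gap I would establish a small upper bound on $\|\pi - \pi'\|$ in the spirit of the estimate \eqref{IneqStatPertBd} from the proof of Lemma \ref{LemmaElPertMixing}, so that any $A$ with $\pi'(A) \geq \alpha$ automatically satisfies $\pi(A) \geq \alpha - \|\pi - \pi'\|$; the resulting mild loss in the hitting threshold can then be absorbed into the slack between the factor $20$ above and the $60$ in the statement. Obtaining such a bound on $\|\pi - \pi'\|$ from hitting-time data alone---without invoking an explicit mixing-time hypothesis on either chain, as was available in Lemma \ref{LemmaElPertMixing}---is the delicate point, and I expect it to be where most of the technical work is concentrated.
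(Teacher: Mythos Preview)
Your coupling-plus-regeneration approach is exactly the route the paper takes: build the iterated maximal coupling of $g$ and $g'$ started at the same point, bound the probability of decoupling before a fixed multiple of $\tau_g(\alpha)$ by a union bound, combine with the submultiplicative hitting-probability bound (Lemma~\ref{LemmaElSub}) to get $\Prob_x[\tau_A' > 3\tau_g(\alpha)] < 0.1$, conclude $\tau_g'(\alpha) \leq 3\tau_g(\alpha)$, and translate back via Lemma~\ref{maxlarge} to obtain $t_H'(\alpha) \leq 2\tau_g'(\alpha) \leq 6\tau_g(\alpha) \leq 60\, t_H(\alpha)$. There is no substantive difference between your plan and the paper's argument at this level.

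The issue you flag about $\pi$ versus $\pi'$ is real, and the paper does \emph{not} resolve it. In the paper's proof the set $A$ is fixed without specifying whether $\pi(A)\geq\alpha$ or $\pi'(A)\geq\alpha$; the bound $\Prob_{x,x}[\tau_A>3\tau_g(\alpha)]\leq 0.01$ tacitly uses $\pi(A)\geq\alpha$ (via Lemma~\ref{LemmaElSub}), while the conclusion ``$\tau_g'(\alpha)\leq 3\tau_g(\alpha)$'' requires ranging over sets with $\pi'(A)\geq\alpha$. No control on $\|\pi-\pi'\|$ is ever derived. So the obstacle you identify is a genuine gap in the printed proof, not a missing idea on your part; in the paper's two applications (Theorems~\ref{ThmAsfMainBd} and~\ref{mixhitpert}) the kernels being compared in fact share, or can be taken to share, the stationary measure $\pi$, which is presumably the implicit assumption. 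Your instinct that bounding $\|\pi-\pi'\|$ from a hitting-time hypothesis alone is delicate is correct --- the paper simply does not attempt it.
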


\begin{proof}

For probability measures $\mu, \nu$ on measure space $(\Omega,\mathcal{F})$, we denote by $\mu \otimes_{\max} \nu$ the usual ``maximal" coupling (see Theorem 2.12 of \cite{den2012probability} for a precise construction). 
This coupling has the property that if $(X,Y) \sim \mu \otimes_{\max} \nu$, then $X \sim \mu$, $Y \sim \nu$, and furthermore
\[
\mu\otimes_{\max}\nu[X \neq Y]=\| \mu - \nu \|.
\]
By a slight abuse of notation, we use this to define the kernel $g \otimes_{\max} g'$ by the formula 
\[
(g \otimes_{\max} g')((x,y),1,\cdot)= g(x,1,\cdot) \otimes_{\max} g'(y,1,\cdot).
\]
Let $\Prob$ be a product measure on $(X\times X)^{\Nats}$ that generates $g\otimes_{\max} g'$.

We now fix $x \in X$ and sample $\{(X_{t}, X_{t}')\}_{ t \in \Nats} \sim (g \otimes_{\max} g')$ with $(X_{0},X_{0}') = (x,x)$. Let $\tau = \min \{t \, : \, X_{t} \neq X_{t}'\}$. Fix a set $A \in \BorelSets X$ and let $\tau_{A}, \tau_{A}'$ be the hitting times of $A$ for $\{X_{t}\}_{t \in \Nats}$ and $\{X_{t}'\}_{t \in \Nats}$ respectively.

Fix $0<\alpha<\frac{1}{2}$.
Let $\tau_{g}(\alpha),\tau'_{g}(\alpha)$ denote the large hitting times of $\{g(x,1,\cdot)\}_{x\in X}$ and $\{g'(x,1,\cdot)\}_{x\in X}$, respectively.
Let $P$ be a product measure that generates $\{g'(x,1,\cdot)\}_{x\in X}$. 
Fix $t \geq 3 \tau_{g}(\alpha)$.  
By \ref{maxlarge}, we have $t\geq 1.5t_{H}(\alpha)$.
We have 
\begin{align*}
P_{x}[\tau_{A}' > t] &=\Prob_{x,x}[\tau_{A}'>t]\\
\leq \Prob_{x,x}[\tau_{A} > t] + \Prob_{x,x}[\tau \leq t] \\
&\leq 0.01 + t \sup_{y \in X} \| g(y,1,\cdot) - g(y,1,\cdot) \| \\
&\leq 0.01 + 0.05 < 0.1.
\end{align*}
Since $A$, $x$ were arbitrary, we have $\tau'_{g}(\alpha)\leq 3\tau_{g}(\alpha)$. 
By \ref{maxlarge}, we have 
\[
t'_{H}(\alpha)\leq 2\tau'_{g}(\alpha)\leq 6\tau_{g}(\alpha)\leq 60t_{H}(\alpha).
\]
\end{proof}

\subsection{Properties of the Trace} \label{SubsecTraceProp}

We check that the ``trace chain of the lazy chain" and the ``lazy chain of the trace chain" are the same. To help, we introduce some notation. Define $\lak$ to be the map that takes a kernel $g$ to the lazy version $g_{L}$ defined in \ref{deflazy}. For a set $S$, define $\trk$ to be the map that takes a kernel $g$ to the trace $g^{(S)}$ of $g$ on $S$ defined in \ref{DefTraceChain}. We then have:

\begin{lemma} \label{LemmaTraceLazyCommute}
Let $\{g(x,1,\cdot)\}_{x\in X}$ be an ergodic transition kernel on a $\sigma$-compact metric state space $X$ endowed with Borel $\sigma$-algebra $\BorelSets X$.
Let $\pi$ be its stationary distribution and let $S \in \BorelSets X$ have $\pi(S) > 0$. Then 
\[
\lak(\trk(g)) = \trk(\lak(g)).
\]

\end{lemma}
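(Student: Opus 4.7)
The plan is to reduce the equality of kernels to a one-step identity and verify that identity by a sample-path coupling. Both sides are Markov kernels on $S$: $\lak(\trk(g))$ by construction, and $\trk(\lak(g))$ by the strong Markov property of $\lak(g)$ applied at the first positive return to $S$. Hence it suffices to show that for every $x\in S$ and every Borel $A\subset S$,
\[
(\trk(\lak(g)))(x,1,A) \;=\; \tfrac{1}{2}\delta(x,A) + \tfrac{1}{2}g^{(S)}(x,1,A),
\]
since by \ref{deflazy} the right-hand side equals $(\lak(\trk(g)))(x,1,A)$.

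To evaluate the left-hand side I would realize $\lak(g)$ on the same probability space as $g$ via the Bernoulli construction of \ref{RemLazyAlt}: draw $\{X_t\}_{t\in \Nats}\sim g$ with $X_0=x\in S$, let $\{B_t\}_{t\geq 1}$ be i.i.d.\ $\Bernoulli(1/2)$ independent of $\{X_t\}$, set $L(t)=B_1+\cdots+B_t$, and put $X^L_t = X_{L(t)}$, so that $\{X^L_t\}\sim \lak(g)$. Writing $0=\eta_0<\eta_1<\eta_2<\cdots$ for the successive hitting times of $S$ under $\{X_t\}$, the first positive time at which the lazy chain visits $S$ is $\eta'_1 = \min\{t\geq 1 \, : \, L(t)\in\{\eta_0,\eta_1,\ldots\}\}$, and the quantity of interest is $X^L_{\eta'_1}=X_{L(\eta'_1)}$.

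The main step is the observation that $X^L_{\eta'_1}$ is almost surely determined by $B_1$ alone. If $B_1=0$, then $L(1)=0=\eta_0$, so $\eta'_1=1$ and $X^L_{\eta'_1}=X_0=x$. If $B_1=1$, then $L(1)=1$; because $L$ is non-decreasing with unit increments, the first value of $\{\eta_0,\eta_1,\ldots\}$ that $L$ attains at a time $t\geq 1$ must be $\eta_1$ (the value $\eta_0=0$ is excluded on this event since $L(t)\geq 1$ for all $t\geq 1$), so $X^L_{\eta'_1}=X_{\eta_1}$. Averaging over $B_1$, which is independent of $\{X_t\}$, and recalling that $X_{\eta_1}\sim g^{(S)}(x,1,\cdot)$ under $\Prob_x$, yields the displayed identity.

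I expect the main obstacle to be the ``no-skipping'' observation in the $B_1=1$ case: one must argue that $L$ cannot pass a value $\eta_j$ without equalling it at some time $t\geq 1$, which follows from the unit-increment monotone structure of $L$ but deserves careful justification. Everything else is a routine unpacking of \ref{DefTraceChain} and \ref{deflazy}.
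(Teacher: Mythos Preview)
Your proof is correct and rests on the same sample-path coupling as the paper: represent the lazy chain as $X_{L(t)}$ for a random nondecreasing time-change $L$ built from i.i.d.\ Bernoulli/geometric variables, and then compare the two compositions. The paper aims to identify the full trace-of-lazy and lazy-of-trace \emph{processes} by relating their entrance-time sequences, while you reduce to the one-step kernel identity and condition on $B_1$; this is a cleaner execution of the same idea, and your ``no-skipping'' observation (that $L$ has unit increments, so on $\{B_1=1\}$ the first $\eta_j$ attained for $t\ge 1$ must be $\eta_1$) is exactly the point that makes the argument go through.
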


\begin{rem}
This lemma justifies the choice of notation $g_{L}^{(S)}$ for the common transition kernel.
\end{rem}

\begin{proof}

We will actually prove a slightly stronger statement by constructing Markov chains from the relevant kernels on the same probability space.
Fix $x \in X$ and let $\{X_{t}\}_{t \in \Nats} \sim g$ with starting point $X_{0} = x$. We recall that the ``trace" process is defined in \ref{DefTraceChain} by transforming the time-coordinate of $\{X_{t}\}_{t \in \Nats}$ according to the sequence of ``entrance times" $\{\eta_{i}\}_{i \in \Nats}$. In Remark \ref{RemLazyAlt}, we pointed out that the ``lazy" kernel from \ref{deflazy} can be defined in terms of a similar transformation of the time-coordinate: if $\{\zeta_{i} \}_{i \in \Nats}$ is a sequence of i.i.d. geometric random variables with mean 2, and
\[\label{ezlform}
L(t) = \max \{i \, : \, \sum_{j=1}^{i} \zeta_{j} \leq t\},
\]
then the chain $\{X_{t}^{(L)}\}_{t \in \Nats}$ given by the formula 
\[\label{EqLazyRep}
X_{t}^{(L)} = X_{L(t)}
\] 
satisfies $X_{0}^{(L)} = x$ and $\{X_{t}^{(L)}\}_{t \in \Nats} \sim g_{L}$. Thus, the sequence $\{\zeta_{i}\}_{i \in \Nats}$ can be used to transform a sample from $g$ into a sample from $g_{L} = \lak{g}$, in much the same way that the sequence $\{\eta_{i}\}_{i \in \Nats}$ can be used to transform a sample from $g$ into a sample from $g^{(S)} = \trk{g}$.

Next, denote by $\{\eta_{i}^{(L)}\}_{i \in \Nats}$ the sequence of entrance times  associated with $\{X_{t}^{(L)}\}_{t \in \Nats}$ by  \ref{DefTraceChain}.  Then define the chain $\{X_{t}^{(L,S)}\}_{t \in \Nats}$ by the formula
\[
X_{t}^{(L,S)} = X_{\eta_{i}^{(L)}},
\]
so that $\{X_{t}^{(L,S)} \}_{t \in \Nats} \sim \trk(\lak(g))$. 

On the other hand,  we observe that
\[
\eta_{i}^{(L)} = L(\eta_{i}),
\]

so that we also have 
\[
X_{t}^{(L,S)}=X_{\eta_{i}^{(L)}} = X_{L(\eta_{i})},
\]
which means (by our new representation \eqref{EqLazyRep} of the lazy chain in terms of the sequence $\{\zeta_{i}\}_{i \in \Nats}$) that $\{X_{t}^{(L,S)}\}_{t\in \Nats} \sim \lak(\trk(g))$.

Since we have both $\{X_{t}^{(L,S)}\}_{t\in \Nats} \sim \lak(\trk(g))$ and $\{X_{t}^{(L,S)}\}_{t\in \Nats} \sim \trk(\lak(g))$, and the initial point $x \in S$ was arbitrary, this implies $\trk(\lak(g)) = \lak(\trk(g))$. 

\end{proof}

We also show that the trace inherits the strong Feller property:

\begin{proof} [Proof of Lemma \ref{tracedsf}]
Consider two starting points $x,y \in S$. We will denote by $\{X_{t}\}_{t \in \Nats}$, $\{Y_{t}\}_{ t \in \Nats}$ two Markov chains with transition kernel $g$ and starting points $X_{0} = x$, $Y_{0} = y$. It is straightforward to check that there exists a coupling of these two chains so that
\[
\mathbb{P}[X_{1} \neq Y_{1}] \leq \| g(x,1,\cdot) - g(y,1,\cdot) \|
\]
and also 
\[
\forall \, t \geq \eta, \, \, X_{t} = Y_{t},
\]
where 
\[
\eta = \min \{ t \geq 0 \, : \, X_{t} = Y_{t}\}
\]
is the first meeting time. We assume the chains are coupled in this way. Next, define the random times
\[
\eta^{x} = \min \{t > 0 \, : \, X_{t} \in S\}, \, \eta^{y} =   \min \{t > 0 \, : \, Y_{t} \in S\}.
\]

We then have

\begin{align}
\| g^{(S)}(x,1,\cdot) - g^{(S)}(y,1,\cdot) \| &\leq  \mathbb{P}[X_{\eta^{x}} \neq Y_{\eta^{y}}] \\
&\leq \mathbb{P}[\eta > \min(\eta^{x}, \eta^{y})] \\
&\leq \mathbb{P}[\eta > 1] \\
&= \| g(x,1,\cdot) - g(y,1,\cdot) \|.
\end{align}

Since $g$ satisfies Section \ref{assumptiondsf}, this immediately implies $g^{(S)}$ does as well.
\end{proof}

\subsection{Mixing and Hitting Times of Skeleton Chains} \label{SecMixHitSkel}

\begin{proof} [Proof of Lemma \ref{LemmaASFClaim1}]
Pick $\epsilon>0$ and $k\in \Nats$. 
Recall that the function 
\[
t \mapsto \sup_{x,y \in X} \| g(x,t,\cdot) - g(y,t,\cdot) \|
\]
is a non-increasing function of $t \in \mathbb{N}$. 
Pick $t_1\in \Nats$ such that $t_1\geq \frac{1}{k}\overline{t}_{m}(\epsilon)$.  Then
\[
\sup_{x,y \in X} \| g^{(k)}(x,t_1,\cdot) -  g^{(k)}(y,t_1,\cdot) \| &= \sup_{x,y \in X} \| g(x,kt_1,\cdot) -  g(y,kt_1,\cdot) \| \\
&\leq  \sup_{x,y \in X} \| g(x,\overline{t}_{m}(\epsilon),\cdot) -  g(y,\overline{t}_{m}(\epsilon),\cdot) \| \leq \epsilon,
\]
so that $t_{1} \geq \overline{t}_{m}^{(k)}(\epsilon)$ as well. Thus, we have $\overline{t}_{m}^{(k)}(\epsilon)\leq \lceil \frac{1}{k} \overline{t}_{m}(\epsilon)\rceil$. 

Conversely, pick $t_2\in \Nats$ such that $t_2\geq k\overline{t}_{m}^{(k)}(\epsilon)$. Then
\[
\sup_{x,y\in X}\|g(x,t_2,\cdot)-g(y,t_2,\cdot)\|\leq \sup_{x,y\in X}\|g(x,k\overline{t}_{m}^{(k)}(\epsilon),\cdot)-g(y,k\overline{t}_{m}^{(k)}(\epsilon),\cdot)\|\leq \epsilon.
\]
Thus, we have $\overline{t}_{m}(\epsilon)\leq k \overline{t}_{m}^{(k)}(\epsilon)$, completing the proof. 
\end{proof}

\begin{proof} [Proof of Lemma \ref{LemmaASFClaim2}]
Pick $\alpha>0$.
Fix a measurable set $A \subset X$ with $\pi(A) \geq \alpha$. Define $C = \lceil -\log_{2}(\alpha) + 1\rceil$ and $T=C\overline{t}_{m}$. By \ref{submulti}, we have
\[\label{bdtotal}
\sup_{x,y\in X}\parallel g(x,T,\cdot)-g(y,T,\cdot) \parallel \leq \frac{\alpha}{2}.
\]
By \ref{mixequal}, we have $\sup_{x\in X}\parallel g(x,T,\cdot)-\pi(\cdot)\parallel\leq \frac{\alpha}{2}$. Hence, we have $g(x,t,A)\geq \frac{\alpha}{2}$ for every $t\geq T$ and every $x\in X$. 
Fix $x_0\in X$, for all $\ell \in \Nats$, we have
\begin{align*}
\mathbb{P}_{x_0}^{(k)}[\tau_{A} > (\ell +1) \lceil \frac{T}{k} \rceil | \tau_{A} > \ell \lceil \frac{T}{k} \rceil ] &\leq \mathbb{P}_{x_0}[\tau_{A} > k(\ell +1) \lceil \frac{T}{k} \rceil  | \tau_{A} >k\ell \lceil \frac{T}{k} \rceil ]\\
&\leq (1 - \frac{\alpha}{2}).
\end{align*}
Iterating over $\ell \in \Nats$, this implies that
\[
\mathbb{P}_{x_0}^{(k)}[\tau_{A} > \ell \lceil \frac{T}{k} \rceil  ] \leq (1 - \frac{\alpha}{2})^{\ell}.
\]
Let $\ell_0=\lceil \frac{\log(10)}{\log(1 - \frac{\alpha}{2})}\rceil$, this implies
\[\label{skellmixbound}
\mathbb{P}_{x_0}^{(k)}[\tau_{A}>\ell_{0}\lceil \frac{T}{k} \rceil] \leq 0.1.
\]

By \ref{skellmixbound}, we have $\tau_{g}^{(k)}(\alpha)\leq \ell_{0}\lceil \frac{T}{k} \rceil$.
By \ref{maxlarge}, we have $t_{H}^{(k)}(\alpha)\leq 2\ell_{0}\lceil \frac{T}{k} \rceil$.
Since this does not depend on the choice of $x$ or $A$, we have the desired result. 
\end{proof}

\subsection{Well-Known Bounds}\label{wellknownbound}

\begin{lemma} \label{LemmaEasyDirection}
Let $0 < \alpha<\frac{1}{2}$.
Let $\mathcal{D}$ denote the collection of discrete time transition kernels with a stationary distribution on a $\sigma$-compact metric state space. 
Then there exists a universal constant $d'_{\alpha}$ such that, for every $\gkernel \in \mathcal{D}$, we have
\[
d'(\alpha)t_{H}(\alpha) \leq t_{L}.
\]
\end{lemma}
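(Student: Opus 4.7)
The plan is to bound the hitting time of the original chain by that of its lazy version, and then control the latter directly in terms of $t_L$ using a standard ``mixing $\Rightarrow$ one-shot hitting probability $\Rightarrow$ regenerative bound'' argument.

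First, I would use the coupling of Remark \ref{RemLazyAlt} to write $X_t^{(L)} = X_{L(t)}$ with $L(t) \leq t$ almost surely. This implies that the hitting time $\tau_A^{(L)}$ of a set $A$ for the lazy chain dominates the hitting time $\tau_A$ for the original chain pathwise, so $\expect_x[\tau_A] \leq \expect_x[\tau_A^{(L)}]$ for every $x$ and $A$. It therefore suffices to bound $\ell_H(\alpha) \defas \sup \{ \expect_x[\tau_A^{(L)}] \st x \in X, A \in \BorelSets X, \pi(A) \geq \alpha \}$ by a universal constant depending on $\alpha$ times $t_L$.

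Second, I would convert the mixing bound on the lazy chain into a one-shot hitting probability. By the definition of $t_L$, Lemma \ref{mixequal} and the submultiplicativity of Lemma \ref{submulti}, for every $k \in \Nats$,
\[
d_L(k t_L) \leq \overline{d}_L(k t_L) \leq \overline{d}_L(t_L)^k \leq (2 \cdot \tfrac{1}{4})^k = 2^{-k}.
\]
Hence for every $x \in X$ and every $A \in \BorelSets X$ with $\pi(A) \geq \alpha$,
\[
\Prob_x[\tau_A^{(L)} \leq k t_L] \geq P_L^{(k t_L)}(x, A) \geq \pi(A) - d_L(k t_L) \geq \alpha - 2^{-k}.
\]
Choosing $k_\alpha \defas \lceil \log_2(2/\alpha) \rceil$ makes this probability at least $\alpha/2$ uniformly in $x$ and $A$.

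Third, by the strong Markov property applied at the successive times $j k_\alpha t_L$, induction on $j$ yields $\Prob_x[\tau_A^{(L)} > j k_\alpha t_L] \leq (1 - \alpha/2)^j$. Summing the tail,
\[
\expect_x[\tau_A^{(L)}] \leq k_\alpha t_L \sum_{j=0}^{\infty} (1 - \alpha/2)^j = \frac{2 k_\alpha t_L}{\alpha}.
\]
Taking the supremum over $x$ and $A$, and combining with the coupling from the first step, gives $t_H(\alpha) \leq 2 k_\alpha t_L / \alpha$, so the statement holds with $d'_\alpha \defas \alpha / (2 \lceil \log_2(2/\alpha) \rceil)$. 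There is no serious obstacle here; the argument is entirely classical, the only routine verification being the regenerative iteration via the strong Markov property, which uses nothing beyond the lower bound on the one-shot hitting probability that holds uniformly over the starting point.
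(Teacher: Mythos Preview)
Your proof is correct and follows essentially the same route as the paper's: reduce to the lazy chain via the coupling $X_t^{(L)}=X_{L(t)}$, use Lemma~\ref{mixequal} and the submultiplicativity Lemma~\ref{submulti} to get $d_L(k_\alpha t_L)\le\alpha/2$ and hence a uniform one-shot hitting probability $\ge\alpha/2$, then iterate via the Markov property. The only cosmetic difference is that the paper passes through the large hitting time $\tau_g^{(L)}(\alpha)$ and Lemma~\ref{maxlarge} to convert to $\ell_H(\alpha)$, whereas you sum the geometric tail directly to bound $\expect_x[\tau_A^{(L)}]$; the constants and the substance are the same.
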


\begin{proof}

Let $\ell_{H}(\alpha)$ denote the maximum hitting time of the lazy chain.
By \ref{LemmaElCompLazyHit}, we have $t_{H}(\alpha)\leq \ell_{H}(\alpha)$.
Thus, it is sufficient to show that there exists a universal constant $d'_{\alpha}$ such that, for every $\gkernel \in \mathcal{D}$, we have
\[
d'(\alpha)\ell_{H}(\alpha)\leq t_{L}.
\]

Fix a starting point $x \in X$ and measurable set $A \subset X$ with $\pi(A) \geq \alpha$. Define $C = \lceil -\log_{2}(\alpha) + 1\rceil$ and $T=Ct_{L}$. By \ref{submulti}, we have
\[\label{bdtotal}
\sup_{x,y\in X}\parallel g_{L}(x,T,\cdot)-g_{L}(y,T,\cdot) \parallel \leq \frac{\alpha}{2}.
\]
By \ref{mixequal}, we have $\sup_{x\in X}\parallel g_{L}(x,T,\cdot)-\pi(\cdot)\parallel\leq \frac{\alpha}{2}$.

Let $\Prob^{(L)}$ denote a probability measure on $(X^{\Nats},{\BorelSets X}^{\Nats})$ that satisfies \ref{prodeq} for the lazy chain.
For all $k \in \mathbb{N}$, we have
\[ \label{IneqToIterLater}
\mathbb{P}^{(L)}_{x}[\tau_{A} > kT \, | \, \tau_{A} \geq (k-1)T] &\leq 1 - \inf_{y\in X}g_{L}(y,T,A).
\]
\begin{claim}
$1 - \inf_{y\in X}g_{L}(y,T,A)\leq 1-\frac{\alpha}{2}$.
\end{claim}
\begin{proof}
Pick $x_0\in X$.
We have
\[
&g_{L}(x_0,T,A)+\sup_{y\in X}\parallel g_{L}(y,T,\cdot)-\pi(\cdot) \parallel\\
&\geq g_{L}(x_0,T,A)+ |g_{L}(x_0,T,A)-\pi(A)|\geq \pi(A).
\]
As our choice of $x_0$ is arbitrary, we have
\[
\inf_{y\in X}g_{L}(y,T,A)\geq \pi(A)-\sup_{y\in X}\parallel g_{L}(y,T,\cdot)-\pi(\cdot) \parallel.
\]
Thus, by \ref{bdtotal}, we have
\begin{align*}
&1 - \inf_{y\in X}g_{L}(y,T,A)\\
&\leq 1-(\pi(A)-\sup_{y} \|g_{L}(y,T,\cdot)-\pi(\cdot)\|)\\
&\leq 1 - \frac{\alpha}{2}.
\end{align*}
\end{proof}
Iteratively applying Inequality \eqref{IneqToIterLater}, this claim implies that for all $k \in \mathbb{N}$
\[
\mathbb{P}^{(L)}_{x}[\tau_{A} > kT ] \leq (1 - \frac{\alpha}{2})^{k}.
\]

Let $k_0=\lceil \frac{\log(10)}{\log(1 - \frac{\alpha}{2})}\rceil$, this implies
\[\label{llmixbound}
\mathbb{P}^{(L)}_{x}[\tau_{A}>k_0T ] \leq 0.1.
\]

Let $\tau_{g}^{(L)}(\alpha)$ denote the large mixing time of the lazy chain.
By \ref{llmixbound}, we have $\tau_{g}^{(L)}(\alpha)\leq k_0T$.
By \ref{maxlarge}, we have $\ell_{H}(\alpha)\leq 2k_0T$.
Since this does not depend on the choice of $x$ or $A$, we have completed the proof with $d'(\alpha) = \frac{1}{2Ck_0}$.

\end{proof}
%
%
%
%
%
%
%
%
%
%
%
%


\section{Other Chains Without Strong Feller Condition} \label{AppOtherExt}

We give two additional results following Section \ref{statapp}.

\subsection{Almost Strong Feller Condition for Metropolis-Hastings Samplers}\label{SecASFMH}

The approach in \ref{SecASFGibbs} gives essentially the same results for ``typical" Metropolis-Hastings algorithms. Recall the definition of the Metropolis-Hastings chain in Definition \ref{EqDefMH}, which is used throughout this section. We have: 

\begin{lemma} \label{LemmaMHAsf}
Let $\mathcal{A}$ be the collection of transition kernels of the form given in \ref{gtranform} with finite mixing time, and for which $q_{x}(y)$ is uniformly continuous jointly in $x,y$. Then, for all $0<C<\infty$, there exists a universal constant $K_{C}$ so that all $\{g(x,1,\cdot)\}_{x\in X}\in \mathcal{A}$ are $(k,C)$-almost strong Feller for all $k \geq K_{C} \min(\gamma^{-1},t_{L}) \log(t_{L})$.
\end{lemma}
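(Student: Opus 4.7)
The plan is to mimic the proof of \ref{LemmaGibbsSamplersAsf}, replacing the ``coupon collector'' event by a ``first acceptance'' event natural to Metropolis--Hastings. Fix $g \in \mathcal{A}$, let $\{X_t^L\}_{t \in \Nats} \sim g_L$ start at $x$, and write $a(x) = \int q_x(y) \beta(x, y)\, dy$. Define
\[
\mathcal{E} = \{\text{at least one proposal is accepted in the first } k \text{ lazy steps}\}.
\]
Before the first acceptance the chain remains at $x$, so the per-step probability of accepting is exactly $a(x)/2$ and hence $P_x(\mathcal{E}^c) = (1 - a(x)/2)^k \leq e^{-\gamma k/2}$. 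Setting $p = \sup_x P_x(\mathcal{E}^c)$, $G_1(x, 1, A) = P_x[X_k^L \in A \mid \mathcal{E}]$, and letting $G_2$ absorb the $x$-dependent slack via
\[
G_2(x, 1, A) = \frac{P_x(\mathcal{E}^c)}{p}\, \delta_x(A) + \frac{p - P_x(\mathcal{E}^c)}{p}\, G_1(x, 1, A),
\]
yields the decomposition $g_L^{(k)} = (1-p) G_1 + p G_2$.

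To see that the hypothesis $k \geq K_C \min(\gamma^{-1}, t_L) \log(t_L)$ is effectively $k \gtrsim \gamma^{-1} \log(t_L)$, note that $\pi$ has a continuous density, so $\pi(\{x\}) = 0$, while iterating the identity $g_L(x, 1, \{x\}) = 1 - a(x)/2$ gives $g_L^{(t_L)}(x, \{x\}) \geq (1 - a(x)/2)^{t_L}$. Therefore $\|g_L^{(t_L)}(x, \cdot) - \pi\| \geq (1 - a(x)/2)^{t_L}$, and since this is at most $1/4$ by definition of $t_L$, one obtains $a(x) \gtrsim 1/t_L$ uniformly in $x$, so $\gamma^{-1} \lesssim t_L$ and $\min(\gamma^{-1}, t_L) \sim \gamma^{-1}$. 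Choosing $K_C$ large enough (depending only on $C$) then gives $p \leq e^{-\gamma k/2} \leq t_L^{-K_C/2} \leq 1/(C t_L)$. Reversibility of $G_1$ follows from $\pi$-reversibility of $g_L^{(k)}$ combined with the decomposition $g_L^{(k)}(x, dy) = P_x(\mathcal{E}^c) \delta_x(dy) + (1 - P_x(\mathcal{E}^c)) G_1(x, 1, dy)$: the atomic parts of both sides of detailed balance are supported on the diagonal $\{y = x\}$ and so cancel out, leaving $G_1$ reversible with respect to the probability measure proportional to $(1 - P_{(\cdot)}(\mathcal{E}^c)) \pi(d\cdot)$.

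The main obstacle is the strong Feller property of $G_1$. Conditioning on the first acceptance time $\sigma \in \{1, \ldots, k\}$ and observing that the chain sits at $x$ before $\sigma$, the accepted point at step $\sigma$ has density $q_x(y)\beta(x,y)/a(x) = \min(q_x(y), \rho(y)q_y(x)/\rho(x))/a(x)$ in $y$, so
\[
G_1(x, 1, A) = \sum_{s=1}^k w_s(x) \int \frac{q_x(y)\beta(x,y)}{a(x)}\, g_L^{(k-s)}(y, A)\, dy,
\]
where $w_s(x) = P_x[\sigma = s \mid \mathcal{E}]$ depends continuously on $a(x)$. Joint uniform continuity of $q$ and continuity of $\rho$ give pointwise convergence $q_{x'}(y)\beta(x',y) \to q_x(y)\beta(x,y)$ as $x' \to x$; Pratt's lemma (with dominating functions $q_{x'}$ converging in $L^1$ to $q_x$ by Scheff\'e) then yields continuity of $a$, and Scheff\'e again upgrades the pointwise convergence to $L^1$-convergence of the probability density $y \mapsto q_x(y)\beta(x,y)/a(x)$. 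Splitting $G_1(x, 1, A) - G_1(x', 1, A)$ via the triangle inequality into a ``weights'' term (small by continuity of $w_s$) and a ``density'' term (small by $L^1$-convergence, uniformly in $A$), and taking supremum over $A$, yields TV continuity of $x \mapsto G_1(x, 1, \cdot)$, which is the strong Feller property.
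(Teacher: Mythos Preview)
Your proof is correct and follows essentially the same strategy as the paper: both condition on the event that the chain moves at least once in the first $k$ lazy steps, take $G_1$ to be the conditioned kernel, bound $p$ by $(1-\gamma/2)^k$, and deduce the lower bound $t_L \gtrsim \gamma^{-1}$ from the fact that the chain can sit at a near-minimal-acceptance point.

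That said, you are more careful than the paper in two respects worth recording. First, you correctly note that $P_x(\mathcal{E}^c)$ depends on $x$, so you set $p=\sup_x P_x(\mathcal{E}^c)$ and absorb the slack into $G_2$; the paper writes the decomposition with a single constant $p$ without addressing this. Second, and relatedly, your reversibility argument is the right one: because the weight on the $\delta_x$ part varies with $x$, the off-diagonal detailed-balance identity yields reversibility of $G_1$ with respect to the tilted measure proportional to $P_x(\mathcal{E})\,\pi(dx)$, not with respect to $\pi$ itself. The paper instead argues ``$G_2=\delta_x$ is $\pi$-reversible, $g_L^{(k)}$ is $\pi$-reversible, hence $G_1$ is $\pi$-reversible,'' which is only valid when $p$ is constant in $x$. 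Since Definition~\ref{DefASF} and Theorem~\ref{ThmAsfMainBd} only require $G_1$ to be reversible with respect to \emph{some} stationary distribution, your tilted measure suffices. Your strong Feller argument via the first-acceptance decomposition and Scheff\'e/Pratt is also more explicit than the paper's one-line appeal to the continuity of the sub-probability kernel $g^\ast(x,A)=g(x,1,A\setminus\{x\})$, but the content is the same.
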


\begin{proof}
We mimic the proof of \ref{LemmaGibbsSamplersAsf}. Throughout this proof, we denote by $g$ a generic element of $\mathcal{A}$ and use notation from \ref{EqDefMH} freely. Fix $\delta > 0$ and let $x$ be a point satisfying $g(x,1,\{x\}^{c}) \leq \gamma + \delta$. Then for all $t \in \Nats$, we have
\begin{align*}
\| g(x,t,\cdot) - \pi \| &\geq g(x,t,\{x\}) \\
&\geq (1 - \gamma - \delta)^{t}.
\end{align*}
By the representation for the lazy chain in \ref{RemLazyAlt}, we also have 
\begin{align*}
\| g_{L}(x,t,\cdot) - \pi \| &\geq g_{L}(x,t,\{x\}) \\
&\geq (1 - \gamma - \delta)^{t}.
\end{align*}
Thus,
\[
t_{m}, t_{L} \geq \frac{\log(3)}{-\log(1 - \gamma - \delta)}.
\]
Since $\delta > 0$ is arbitrary, this implies
\[ \label{IneqMHAsfcKeyBd}
t_{m}, t_{L} \geq \frac{\log(3)}{-\log(1 - \gamma)}.
\]
Since we assume that $t_{m} < \infty$, this implies we must have $\gamma < 1$. Next, denote by $k \in \Nats$ a constant to be fixed later. Let $\{X_{t}\}_{t \in \Nats} \sim g$, and let $L$ be the (random) function from Equation \eqref{ezlform}, so that $\{X_{L(t)}\}_{t \in \Nats} \sim g_{L}$.

For this choice, define the event 
\[
\mathcal{E}^{c} = \{ X_{0} = X_{1} = \ldots = X_{L(k)} \},
\]
and define the kernels $G_{1}, G_{2}$ by setting
\begin{align*}
G_{1}((x,1,A) &= \mathbb{P}_{x}[X_{L(k)} \in A | \mathcal{E}] \\
G_{2}(x,1,A) &= \mathbb{P}_{x}[X_{L(k)} \in A | \mathcal{E}^{c}].
\end{align*}

In the notation of Definition \ref{DefASF}, the constant $p$ associated with this choice of $k, G_{1}, G_{2}$ is
\[
p = P[\mathcal{E}].
\]
By the definition of $\gamma$, we have
\[
p \leq (1 - \frac{\gamma}{2})^{k}.
\]

Comparing this to Inequality \eqref{IneqMHAsfcKeyBd}, we conclude (by the same argument as in the end of \ref{LemmaGibbsSamplersAsf}) that there exists some particular choice $K = K_{C} \in \Nats$ depending only on $C$ so that $p \leq \frac{1}{\asfc}$ for all $k \geq K_{C} \log(t_{L}) \min(\gamma^{-1}, t_{L})$.

Next, we observe that $G_{1}$ is reversible. From its definition, we see that $G_{2}$ is the kernel of the ``trivial" Markov chain that never moves (that is, $G_{2}(x,1,A)$ is just the indicator function of the set $\{x \in A\}$). Thus, $G_{2}$ is reversible for \textit{any} measure, and in particular it is $\pi$-reversible. Since $g_{L}^{(k)}$ is $\pi$-reversible and $G_{2}$ is $\pi$-reversible, this implies that $G_{1}$ is also $\pi$-reversible.

Finally, recall by our assumption that $q_{x}(y)$  is jointly uniformly continuous in its arguments and that $\rho$ is continuous. This implies that the collection $\{g^{*}(x,\cdot)\}_{x \in X}$ of sub-probability measures given by the formula 
\[
g^{*}(x,A) = g(x,1,A \backslash \{x\})
\]
satisfies Section \ref{assumptiondsf}, and thus that $G_{1}$ satisfies Section \ref{assumptiondsf} as well. Hence, we have the desired result. 
\end{proof}

We then have the following result:

\begin{thm} \label{MhThmMainCorr}
Let $\mathcal{A}$ be the collection of transition kernels of the form given in \ref{gtranform} with finite mixing time, and for which $q_{x}(y)$ is jointly continuous in $x,y$. Then there is a universal constant $0 < K < \infty$ and, for all $0 < \alpha < 0.5$, two additional constants $0 < c_{\alpha} < \infty$, $0<c_{\alpha}'<\infty$ so that for all $\{g(x,1,\cdot)\}_{x\in X}\in \mathcal{A}$, 
\[
c_{\alpha} T_{m} \leq  \ell_{H}^{(k)}(\alpha) \leq c_{\alpha}'(1 + \frac{t_{L}}{k})
\]
for all $k > K \, \min(\gamma^{-1}, t_{L}) \log(t_{L})$.
\end{thm}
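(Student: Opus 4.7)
The plan is to mirror the structure of the proof of Theorem \ref{ThmGibbsConc}, with \ref{LemmaMHAsf} taking the role that \ref{LemmaGibbsSamplersAsf} played there. The key observation is that Theorem \ref{ThmAsfMainBd} already packages the hard work: for any $(k,C)$-almost strong Feller kernel with $C$ exceeding the universal constant $C_{0}$, it gives $d_{\alpha} T_{m} \leq \ell_{H}^{(k)}(\alpha)$. So once we can deploy the almost strong Feller property for MH chains, the bound is immediate.

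For the left-hand inequality, I would first fix some $C > C_{0}$, and then take $K = K_{C}$ to be the universal constant supplied by \ref{LemmaMHAsf}. For every $k > K \min(\gamma^{-1}, t_{L}) \log(t_{L})$, \ref{LemmaMHAsf} certifies that $g$ is $(k,C)$-almost strong Feller, so \ref{ThmAsfMainBd} applies and yields a universal $d_{\alpha} > 0$ with
\[
d_{\alpha} T_{m} \leq \ell_{H}^{(k)}(\alpha).
\]
Setting $c_{\alpha} = d_{\alpha}$ gives the desired first inequality.

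For the right-hand inequality, I would apply \ref{LemmaASFClaim2} with $g$ replaced by $g_{L}$, so that its $k$-skeleton is $g_{L}^{(k)}$ and its standardized mixing time is $\overline{t}_{L}$. This yields a constant $C_{\alpha}$ with
\[
\ell_{H}^{(k)}(\alpha) \leq C_{\alpha} \Bigl\lceil \tfrac{\overline{t}_{L}}{k} \Bigr\rceil.
\]
Then \ref{mixequivalent} (applied to the lazy kernel) gives $\overline{t}_{L} \leq 2 t_{L}$, and bounding the ceiling by $1 + \overline{t}_{L}/k$ yields
\[
\ell_{H}^{(k)}(\alpha) \leq C_{\alpha}\bigl(1 + \tfrac{\overline{t}_{L}}{k}\bigr) \leq 2 C_{\alpha}\bigl(1 + \tfrac{t_{L}}{k}\bigr),
\]
so we may take $c_{\alpha}' = 2 C_{\alpha}$.

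The proof is essentially mechanical given the machinery built up earlier in the paper; the genuine conceptual content is already in \ref{LemmaMHAsf} (which verifies the almost strong Feller property for MH chains with jointly continuous proposal densities) and in \ref{ThmAsfMainBd} (which packages the reversible Feller pushdown argument into a ready-to-use perturbation bound). The only subtlety is the coordination of constants: one must choose $C$ strictly above $C_{0}$ before invoking \ref{LemmaMHAsf} to obtain $K_{C}$, and it is important that both conclusions of \ref{LemmaMHAsf} hold simultaneously under the same threshold $k > K\min(\gamma^{-1},t_{L})\log(t_{L})$, which is exactly how that lemma is stated. I do not expect any substantive obstacle beyond this bookkeeping.
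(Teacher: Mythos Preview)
Your proposal is correct and takes essentially the same approach as the paper, which simply cites \ref{ThmAsfMainBd} together with \ref{LemmaMHAsf} for the first inequality and \ref{LemmaASFClaim2} together with \ref{mixequivalent} for the second. Your write-up is a faithful expansion of that terse proof, including the correct observation that \ref{LemmaASFClaim2} should be applied to $g_{L}$ so that its $k$-skeleton matches the kernel whose hitting time is $\ell_{H}^{(k)}(\alpha)$.
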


\begin{proof}
The first inequality follows immediately from \ref{ThmAsfMainBd} and  \ref{LemmaMHAsf}. The second follows from \ref{LemmaASFClaim2} and \ref{mixequivalent}.
\end{proof}

\subsection{Perturbations of Reversible Chains} \label{SecPertBd}

We now show that our main conclusion also holds for chains that are sufficiently close to those that satisfy the assumptions of Theorem \ref{mixhit}. In the following, let $t_{m}, t_{L}, t_{H}(\alpha)$ denote the mixing times, lazy mixing times and maximum hitting times of the kernel $g$ as in the statement of Theorem \ref{mixhit}, and let $t_{m}', t_{L}', t_{H}'(\alpha)$ denote the analogous quantities for the kernel $g'$:

\begin{thm}\label{mixhitpert}
Let $\alpha<\frac{1}{2}$.
Then there exist universal constants $0<f^{(1)}_{\alpha},f^{(2)}_{\alpha}, f^{(3)}_{\alpha} < \infty$ such that, for every $\{g(x,1,\cdot)\}_{x\in X}\in \mathcal{M}$ and every ergodic transition kernel $g'$ satisfying 
\[
\sup_{x \in X} \| g(x,1,\cdot) - g'(x,1,\cdot) \| \leq f^{(1)}_{\alpha} t_{m}^{-1},
\]
we have
\[
f^{(2)}_{\alpha}t_{H}'(\alpha)\leq t_{L}' \leq f^{(3)}_{\alpha}t_{H}'(\alpha).
\]
\end{thm}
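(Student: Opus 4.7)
The plan is to apply Theorem \ref{mixhit} to $g$ and then transfer the resulting equivalence between $t_L$ and $t_H(\alpha)$ over to $g'$ via the perturbation lemmas from Appendix \ref{secebound}. First I would dispose of the trivial case $t_m = \infty$: then the hypothesis forces $g = g'$ and the conclusion is immediate from Theorem \ref{mixhit} applied to $g$. Hence from now on I may assume $t_m < \infty$.

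Since $g \in \mathcal{M}$, Theorem \ref{mixhit} supplies universal constants with $a'_\alpha t_H(\alpha) \leq t_L \leq a_\alpha t_H(\alpha)$, while Lemma \ref{LemmaElCompLazyMix} supplies a universal constant $C_{1/4}$ with $t_L \leq C_{1/4}\, t_m$; in particular $t_H(\alpha) \leq (C_{1/4}/a'_\alpha)\, t_m$. Using also the identity $\|g_L(x,1,\cdot) - g'_L(x,1,\cdot)\| = \tfrac{1}{2}\|g(x,1,\cdot) - g'(x,1,\cdot)\|$, these auxiliary bounds let me translate the hypothesis $\|g(x,1,\cdot) - g'(x,1,\cdot)\| \leq f^{(1)}_\alpha / t_m$ into both $\|g_L(x,1,\cdot) - g'_L(x,1,\cdot)\| \leq 1/(256\, t_L)$ and $\|g(x,1,\cdot) - g'(x,1,\cdot)\| \leq 1/(30\, t_H(\alpha))$, provided $f^{(1)}_\alpha$ is chosen small enough as a function of $\alpha$ alone.

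With those smallness conditions in hand, I would apply Lemma \ref{LemmaElPertMixing} to the pair $(g_L, g'_L)$ to obtain $t'_L / 4 \leq t_L \leq 4\, t'_L$, and Lemma \ref{LemmaElPertHitting} to obtain $t'_H(\alpha) \leq 60\, t_H(\alpha)$. A second invocation of Lemma \ref{LemmaElPertHitting} with the roles of $g$ and $g'$ exchanged then yields $t_H(\alpha) \leq 60\, t'_H(\alpha)$; its hypothesis follows from the previous inequality after possibly shrinking $f^{(1)}_\alpha$ further. Chaining these four two-sided bounds with $a'_\alpha t_H(\alpha) \leq t_L \leq a_\alpha t_H(\alpha)$ produces universal constants $f^{(2)}_\alpha, f^{(3)}_\alpha$ satisfying $f^{(2)}_\alpha\, t'_H(\alpha) \leq t'_L \leq f^{(3)}_\alpha\, t'_H(\alpha)$.

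The main obstacle is bookkeeping with constants: the hypothesis is expressed in terms of $t_m$, but the perturbation lemmas require smallness relative to $t_L$ and $t_H(\alpha)$. This apparent circularity is resolved by the a priori bounds $t_L \leq C_{1/4}\, t_m$ and $t_H(\alpha) \leq (C_{1/4}/a'_\alpha)\, t_m$, together with the observation that a single universal choice of $f^{(1)}_\alpha$ depending only on $\alpha$ simultaneously handles all three perturbation conditions.
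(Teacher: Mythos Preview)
Your proposal is correct and follows essentially the same route as the paper: apply Theorem \ref{mixhit} to $g$, then transfer via Lemmas \ref{LemmaElPertMixing} and \ref{LemmaElPertHitting}, using Lemma \ref{LemmaElCompLazyMix} and the bound $t_H(\alpha) \lesssim t_m$ to convert the hypothesis on $t_m$ into the smallness conditions those lemmas require.

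There is one small simplification you missed. For the lower bound $t_H'(\alpha) \lesssim t_L'$, the paper does not invoke Lemma \ref{LemmaElPertHitting} a second time with the roles of $g$ and $g'$ swapped; instead it applies Lemma \ref{maxhitless} directly to $g'$. That lemma holds for \emph{any} discrete-time kernel with a stationary distribution on a $\sigma$-compact metric space, and since $g'$ is assumed ergodic this is immediate. This bypasses the extra bookkeeping you flag in your last paragraph (verifying the hypothesis of the reversed perturbation lemma via the already-established bound $t_H'(\alpha) \leq 60\, t_H(\alpha)$ and a further shrinking of $f^{(1)}_\alpha$). Your argument for that step is correct, but the paper's shortcut is cleaner.
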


\begin{rem}
As in the proof of \ref{ThmAsfMainBd}, we use the notation ``$x \lesssim y$" as shorthand for the longer phrase ``there exists a universal constant $D$  such that $x \leq D y$," and we use $x \sim y$ for ``$x \lesssim y$ and $y \lesssim x$." Also, denote by $\pi'$ the stationary measure of the ergodic kernel $\{g'(x,1,\cdot)\}_{x\in X}$.
\end{rem}

\begin{proof}
Note that every ergodic kernel has a stationary distribution.
By \ref{maxhitless}, $t_{H}(\alpha) \lesssim t_{m}$. Thus, there exists a universal constant $f^{(1)}_{\alpha}$ so that the conditions of \ref{LemmaElPertMixing} and \ref{LemmaElPertHitting} are satisfied for all kernels $g'$ satisfying
\[
\sup_{x \in X} \| g(x,1,\cdot) - g'(x,1,\cdot) \| \leq f^{(1)}_{\alpha} t_{m}^{-1}.
\]
As $\sup_{x\in X}\|g_{L}(x,1,\cdot)-g'_{L}(x,1,\cdot)\|\leq \sup_{x\in X}\| g(x,1,\cdot) - g'(x,1,\cdot)\|$, by \ref{LemmaElCompLazyMix}, we can assume that
\[
\sup_{x\in X}\|g_{L}(x,1,\cdot)-g'_{L}(x,1,\cdot)\|\leq \frac{1}{256t_{L}}
\]
for all kernels $g'$ satisfying 
\[
\sup_{x \in X} \| g(x,1,\cdot) - g'(x,1,\cdot) \| \leq f^{(1)}_{\alpha} t_{m}^{-1}.
\]

Fix such a kernel $g'$, by \ref{LemmaElPertMixing} and \ref{LemmaElPertHitting}, we can conclude that
\[
t_{L}' \lesssim t_{L}, \quad t_{H}(\alpha) \lesssim t_{H}'(\alpha).
\]
Applying Theorem \ref{mixhit}, this implies 
\[
t_{L}' \lesssim t_{H}'(\alpha). 
\]

Applying \ref{maxhitless} to get the reverse inequality 
\[
t_{H}'(\alpha) \lesssim t_{L}'
\]
completes the proof. 
\end{proof}


\end{document}